\definecolor{red}{rgb}{1,0,0}
\definecolor{gre}{rgb}{0,0.7,0}
\definecolor{blu}{rgb}{0,0,1}
\newtheorem{thm}{Theorem}[section]
\newtheorem{lem}[thm]{Lemma}
\newtheorem{prop}[thm]{Proposition}
\newtheorem{ques}[thm]{Question}
\theoremstyle{definition}
\newtheorem{defi}[thm]{Definition}
\theoremstyle{remark}
\numberwithin{equation}{section}
\definecolor{esperance}{rgb}{0.0,0.5,0.0}
\newcommand{\bd}{\mathbf{d}}
\newcommand{\be}{\mathbf{e}}
\newcommand{\bm}{\mathbf{m}}
\newcommand{\bp}{\mathbf{p}}
\newcommand{\bxi}{\boldsymbol{\xi}}
\newcommand{\btau}{\boldsymbol{\tau}}
\newcommand{\bsig}{\boldsymbol{\sig}}
\newcommand{\btheta}{\boldsymbol{\theta}}
\newcommand{\sh}{\mathsf{h}}
\newcommand{\sB}{\mathsf{B}}
\newcommand{\del}{\delta}
\newcommand{\eps}{\epsilon}
\newcommand{\sig}{\sigma}
\newcommand{\Om}{\Omega}
\newcommand{\cD}{\mathcal{D}}
\newcommand{\cE}{\mathcal{E}}
\newcommand{\cK}{\mathcal{K}}
\newcommand{\cL}{\mathcal{L}}
\newcommand{\cP}{\mathcal{P}}
\newcommand{\cQ}{\mathcal{Q}}
\newcommand{\cR}{\mathcal{R}}
\newcommand{\cT}{\mathcal{T}}
\newcommand{\cW}{\mathcal{W}}
\newcommand{\cY}{\mathcal{Y}}
\newcommand{\cZ}{\mathcal{Z}}
\newcommand{\bR}{\mathbb{R}}
\newcommand{\bZ}{\mathbb{Z}}
\newcommand{\bQ}{\mathbb{Q}}
\newcommand{\bN}{\mathbb{N}}
\newcommand{\bT}{\mathbb{T}}
\newcommand{\Id}{\operatorname{Id}}
\DeclareMathAlphabet{\mathpzc}{OT1}{pzc}{m}{it}
\newcommand\set[1]{\left\{#1\right\}}
\newcommand\on[1]{\operatorname{#1}}
\newcommand\diag[1]{\operatorname{diag}\left(#1\right)}
\newcommand\tb[1]{\textbf{#1}}
\newcommand{\wstar}{\overset{\on{w}^*}{\lra}}
\newcommand{\Supp}{\on{Supp}}
\newcommand{\lra}{\longrightarrow}
\newcommand{\onto}{\xymatrix{\ar@{>>}[r]&}}
  \newcounter{constantC} 
  \newcommand{\newconC}[1]{\refstepcounter{constantC}\label{#1}} 
  \newcommand{\useconC}[1]{C_{\ref{#1}}}
  \newcounter{constantE} 
  \newcommand{\newconE}[1]{\refstepcounter{constantE}\label{#1}} 
  \newcommand{\useconE}[1]{E_{\ref{#1},\lambda}}
\newcommand{\eq}[1]
{
\begin{equation*}
{#1}
\end{equation*}
}
\newcommand{\eqlabel}[2]
{
\begin{equation}
{#2}\label{#1}
\end{equation}
}
\newcommand*{\rom}[1]{\expandafter\@slowromancap\romannumeral #1@}
\begin{document}

\title[Higher rank Divergent on average]{On divergent on average trajectories for higher rank actions}
\author{Wooyeon Kim}

\begin{abstract}
For $d\ge 3$ we first show that the Hausdorff dimension of the set of $A$-divergent on average points in the $(d-1)$-dimensional closed horosphere in the space of $d$-dimensional Euclidean lattices, where $A$ is the group of positive diagonal matrices, is at most $\frac{d-1}{2}$. In particular, this upper bound is sharp for $d=3$. 

We apply this to compute the Hausdorff dimension of the set of exceptions to the inhomogeneous uniform version of Littlewood conjecture. We say that a pair $(\xi_1,\xi_2)\in\bR^2$ satisfies the inhomogeneous Littlewood conjecture if
$$\liminf_{q\to\infty}q\|q\xi_1-\theta_1\|_\bZ\|q\xi_2-\theta_2\|_\bZ=0$$
for all $(\theta_1,\theta_2)\in\bR^2$, where $\|\cdot\|_\bZ$ denotes the distance to the nearest integer. We prove that the Hausdorff dimension of the set of pairs $(\xi_1,\xi_2)\in\bR^2$ not satisfying the inhomogeneous Littlewood conjecture is $1$, which is equal to the Hausdorff dimension of the conjectural set of exceptions.
\end{abstract}

\maketitle
\section{Introduction}

The study of orbits in the space of lattices, identified with $\operatorname{SL}_d(\mathbb{R})/\operatorname{SL}_d(\mathbb{Z})$, offers a rich interplay between geometry, topology, and arithmetic. As this homogeneous space is non-compact, understanding the behavior of orbits near the cusp is particularly significant, both as an intrinsic interest and due to its profound connections to number theory.

For $d\geq 2$ and a one-parameter unipotent subgroup $\set{u_t}_{t\in\mathbb{R}}$ of $\operatorname{SL}_d(\mathbb{R})$, Margulis proved that the trajectory $u_tx$ is non-divergent for any $x\in\operatorname{SL}_d(\mathbb{R})/\operatorname{SL}_d(\mathbb{Z})$ \cite{Mrg71}. This result was used as an ingredient in his proof of the arithmeticity of nonuniform lattices in semisimple Lie groups of higher rank \cite{Mrg75}. Building on this, Dani strengthened Margulis's non-divergence theorem by showing that the mass of the trajectory $u_tx$ does not escape to infinity \cite{Dan79,Dan86}. More precisely, he showed that for any $\eta>0$ there exists a compact set $K\subset \operatorname{SL}_d(\mathbb{R})/\operatorname{SL}_d(\mathbb{Z})$ such that
\eqlabel{eq:Daninondivergence}{m_{\mathbb{R}}\big(\{t\in[0,T]:\; u_tx\notin K \}\big)<\eta T}
for sufficiently large $T$ and $x\in K$, where $m_{\mathbb{R}}$ stands for the Lebesgue measure on $\mathbb{R}$. This result laid the groundwork for Ratner’s celebrated theorem, which provides a complete classification of invariant measures and orbit closures for unipotent flows.

In contrast, the dynamics of diagonalizable subgroup actions can exhibit much wilder behavior. For instance, there are divergent orbits and those whose mass escapes to infinity (divergent on average) for diagonalizable subgroups. This naturally leads to the question: What is the Hausdorff dimension of such escaping orbits? The Hausdorff dimension of the set of divergent (or divergent on-average) points for one-dimensional diagonalizable subgroups has been extensively studied by many authors, as will be detailed in the next subsection.

Although one-dimensional diagonalizable subgroups do not exhibit the measure rigidity phenomena exemplified by Ratner's theorem, Furstenberg, Margulis, and others conjectured that higher-rank diagonalizable actions have rigidity properties analogous to those of unipotent flows. Over the last two decades, there has been significant progress towards the conjectural rigidity properties of higher-rank actions. In particular, Einsiedler, Katok, and Lindenstrauss characterized invariant and ergodic measures for higher-rank actions under the assumption of positive entropy \cite{EKL06,EL15,EL18,EL23}. These rigidity results have led to striking consequences in number theory, including a significant advance towards the Littlewood conjecture in \cite{EKL06}. 

However, unlike unipotent dynamics, the link between individual orbits and invariant measures for higher-rank diagonalizable actions is weaker, primarily due to the absence of a non-divergence theorem analogous to that in unipotent dynamics. Consequently, characterizing divergent on-average orbits becomes essential for understanding individual orbits in higher-rank actions. The goal of this paper is to estimate the Hausdorff dimension of the set of divergent on-average points for higher-rank actions. As an application, we use this result to determine the Hausdorff dimension of the set of exceptions to the inhomogeneous uniform version of the Littlewood conjecture.

\subsection{one-dimensional diagonal flows}
We begin by reviewing previous results on the Hausdorff dimension of divergent trajectories (or those that are divergent on average) for one-dimensional diagonal flows. For $d\ge 3$, let $G=\operatorname{SL}_d(\bR)$, $\Gamma=\operatorname{SL}_d(\bZ)$, and $X=G/\Gamma$. We consider a one-parameter diagonal subgroup $\{g_t\}< G$ given by $g_t:=\diag{e^{t},\ldots,e^{t},e^{-(d-1)t}}$.

\begin{defi}\label{def:1dim}
    Let $x\in X$.
    \begin{enumerate}
        \item We say that $x$ is $g_t$-divergent if for any compact set $K\subset X$ there exists $T(K)>0$ such that $g_tx\notin K$ for all $t\geq T(K)$. 
        \item We say that $x$ is $g_t$-divergent on average if
        $$\lim_{N\to\infty}\frac{1}{N}\#\set{t\in\set{1,\ldots,N}: g_tx\in K}=0$$
        for any compact set $K\subset X$. 
    \end{enumerate}
\end{defi}

Let $U$ be the (unstable) horospherical subgroup for $g_t$, i.e.
\eq{\begin{aligned}
    U&=\set{g\in G: g_tgg_{-t}\rightarrow e \quad\textrm{as}\quad t\to -\infty}\\&=\set{u(\bxi):=\left(\begin{matrix} \Id_{d-1}& \bxi\\
0&  1
\end{matrix}\right)\in G: \bxi\in\bR^{d-1}}.
\end{aligned}}
It is well known from the Dani correspondence that Diophantine properties of a vector $\bxi\in\bR^{d-1}$ are characterized by dynamical properties of the $g_t$-orbit of the point $u(\bxi)\Gamma$ on the submanifold $U\Gamma$ of $X$ \cite{Dan85, KM98}. In particular, $u(\bxi)\Gamma$ is $g_t$-divergent if and only if $\bxi$ is \emph{singular}, i.e. for every $\eps>0$ there exists $T_0$ such that for all $T>T_0$ the system of inequalities
$$\|q\bxi\|_\bZ<\eps T^{-\frac{1}{d}} \quad \textrm{and}\quad 0<q<T$$
admits an integral solution $q\in\bN$ \cite[Theorem 2.14]{Dan85}. Similarly, $u(\bxi)\Gamma$ is $g_t$-divergent on average if and only if $\bxi$ is \emph{singular on average}, i.e. for every $\eps>0$ 
$$\lim_{N\to\infty}\frac{1}{N}\left| \set{l\in\set{1,\ldots,N}:\textrm{there exists } q\in\bN \ \text{s.t.} \ 
\|q\bxi\|_{\bZ}<\eps 2^{-l} \ \textrm{and} \ 0<q<2^l}\right| =1.$$

It is natural to ask about the set of singular (or singular on average) vectors, or equivalently the set of $g_t$-divergent (or divergent on average) points in the space of lattices. %While it is not hard to see that these sets are Lebesgue nullset, calculating the Hausdorff dimension of them is far from trivial. 
In the last decade the Hausdorff dimension of these sets has been calculated by several authors: \cite{Che11} proved that
\eq{\begin{aligned}
    \dim_H\set{\bxi\in\bR^{d-1}: \bxi \textrm{ is singular}}&=\dim_H\set{x\in U\Gamma: x \textrm{ is } g_t\textrm{-divergent}}\\&=d-1-\frac{d-1}{d}
\end{aligned}}
for $d=3$, and \cite{CC16} extended this result for all $d\ge 3$. For the set of singular on average vectors, \cite{EK12} showed that
\eq{\begin{aligned}
    \dim_H\set{\bxi\in\bR^{d-1}: \bxi \textrm{ is singular on average}}&=\dim_H\set{x\in U\Gamma: x \textrm{ is } g_t\textrm{-divergent on average}}\\&=d-1-\frac{d-1}{d}
\end{aligned}}
for $d=3$, and \cite{KKLM17} extended this for all $d\ge 3$. More generally, \cite{KKLM17} considered the set of singular (or singular on average) linear forms and gave an upper bound for the Hausdorff dimension, and \cite{DFSU19} later proved that this upper bound is indeed sharp. See also \cite{LSST19,Sol21,KP22} for the results for weighted $1$-dimensional diagonal flows on the space of lattices, and \cite{GS20,AGMS21} for the results on more general homogeneous spaces.

\subsection{Higher rank diagonal actions}
We now turn to higher-rank diagonal actions. We denote
$$a_{\btau}:=\diag{e^{\tau_1},\ldots,e^{\tau_{d-1}},e^{-(\tau_1+\cdots+\tau_{d-1})}}$$
for $\btau=(\tau_1,\ldots,\tau_{d-1})\in\bR^{d-1}$.
Let $A<G$ denote the group of $d\times d$ positive diagonal matrices, i.e. $A=\set{a_{\btau}:\btau\in\bR^{d-1}}$. We also denote $\bR^{d-1}_{>0}$ the set of vectors with positive entries, and let $A^+$ be the subsemigroup $\set{a_{\btau}:\btau\in\bR_{>0}^{d-1}}$ of $A$. Note that $A^+$ consists of the diagonal elements in $A$ expanding the horospherical subgroup $U$. As we defined for $1$-dimensional diagonal subgroups, let us define the notion of divergent (divergent on average) trajectories for higher-rank diagonal actions.

\begin{defi}\label{def:high}
    Let $x\in X$.
    \begin{enumerate}
        \item We say that $x$ is $A^+$-divergent if for any compact set $K\subset X$ there exists $T(K)>0$ such that $a_{\btau} x\notin K$ for all $\btau$ with $\displaystyle\min_{1\leq i\leq d-1}\tau_i\geq T(K)$. 
        \item We say that $x$ is $A^+$-divergent on average if
        $$\displaystyle\lim_{N\to\infty}\frac{1}{N^{d-1}}\#\set{\btau\in\set{1,\ldots,N}^{d-1}:a_{\btau}x\in K}=0$$
        for any compact set $K\subset X$. 
    \end{enumerate}
\end{defi}

%For $\btau=(\tau_1,\cdots,\tau_{d-1})\in\bR^{d-1}$ and $\bxi=(\xi_1,\cdots,\xi_{d-1})$, we let
%$$a_{\btau}:=\left(\begin{matrix} e^{\tau_1} & & & \\ & \ddots & & \\ & & e^{\tau_{d-1}} & \\ & & & e^{-(\tau_1+\cdots+\tau_{d-1})}\end{matrix}\right),\quad u(\bxi):=\left(\begin{matrix} 1 & & & \xi_1 \\ & \ddots & & \vdots\\ & & 1 & \xi_{d-1} \\ & & & 1\end{matrix}\right).$$

The set of divergent trajectories for higher-rank actions can be characterized algebraically. Margulis showed that $u(\bxi)\Gamma$ is $A^+$-divergent if and only if $\bxi\in\bQ^{d-1}$. In general homogeneous spaces of reductive real algebraic groups,  \cite{TW03,ST23} showed that every divergent trajectory of a real torus diverges due to a purely algebraic reason if the dimension of the torus is at least the $\bQ$-rank of the lattice subgroup.

In contrast to the set of divergent trajectories, not much has been known about the set of divergent on average trajectories for higher-rank actions. The first main result of this paper is an upper bound for the Hausdorff dimension of the set of $A^+$-divergent on average points on the closed horosphere $U\Gamma$.

\begin{thm}\label{dimupperbdd}
We have
\eq{\dim_H\set{x\in U\Gamma: x \textrm{ is }A^+\textrm{-divergent on average}}\leq \frac{d-1}{2}.}
\end{thm}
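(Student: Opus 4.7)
The approach is to adapt the integral-inequality technique of Kleinbock--Lindenstrauss--Margulis, as refined in \cite{KKLM17}, to the multi-parameter action of $A^+$. The first step is to establish a Margulis--Eskin contraction inequality for each $a_\btau$: for a cusp-detecting function $\widetilde{\alpha}:X\to[1,\infty)$ (built from reciprocals of covolumes of primitive sub-lattices, as in \cite[\S2]{KKLM17}), any $\btau\in A^+$, and any bounded box $B_0\subset U\simeq\bR^{d-1}$,
\begin{equation*}
\int_{B_0}\widetilde{\alpha}(a_\btau u(\by) x)\,d\by \;\leq\; c\,e^{-\delta\min_j\tau_j}\,\widetilde{\alpha}(x) \;+\; C,
\end{equation*}
with constants $c,\delta,C>0$ depending only on $d$. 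The key feature of this inequality is that the decay $e^{-\delta\min_j\tau_j}$ is dictated by the least expansive direction in $\btau$ and is nontrivial precisely when $\btau$ lies in the interior of the Weyl chamber.

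Chebyshev's inequality then converts this into a single-time covering estimate. For fixed $\btau\in A^+$ and a threshold $M\gg1$, dividing $B_0$ into a grid of cubes of side $e^{-r}$ with $r:=\min_j\tau_j$ yields a bound of order $(C'/M)\cdot e^{r(d-1)}$ on the number of subcubes meeting the bad set $\{\by:\widetilde{\alpha}(a_\btau u(\by)x)>M\}$. The divergence-on-average hypothesis on $x$ ensures that the density of $\btau\in\{1,\ldots,N\}^{d-1}$ with $\widetilde{\alpha}(a_\btau x)>M$ tends to $1$ as $N\to\infty$. Selecting a chain $\btau^{(1)},\ldots,\btau^{(k)}$ from this full-density set with $r_i:=\min_j\tau^{(i)}_j$ rapidly increasing, and iterating the Chebyshev estimate along the chain -- refining each stage-$i$ bad subcube at the finer scale dictated by $\btau^{(i+1)}$ -- produces a cover of the divergent-on-average subset of $B_0$ by at most $\prod_i (C'/M)\,e^{r_i(d-1)}$ cubes at scale $e^{-\sum_i r_i}$, giving $\dim_H\leq(d-1)-k\log(M/C')/\sum_i r_i$.

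The crucial new feature absent in the rank-one case is that the chain $\{\btau^{(i)}\}$ can be drawn from the $(d-1)$-dimensional parameter space $\{1,\ldots,N\}^{d-1}$, rather than from a single ray. By distributing the $\btau^{(i)}$ across directions of $A^+$ so that their expansion profiles on $U$ are essentially independent, one gains additional contractive factors of $1/M$ per unit of refinement beyond what is available in rank one. A careful optimization balancing the chain length $k$, the per-step depths $r_i$, and the threshold $M$ -- taking $M\sim e^{\eta N}$ for a judicious $\eta>0$ -- yields the improved limiting exponent $(d-1)/2$ in the bound for $\dim_H$.

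The main obstacle is the last step: constructing the chain $\{\btau^{(i)}\}$ so that (i) each $\btau^{(i)}$ lies in the density-$1$ set of Chebyshev-good times furnished by the divergence-on-average hypothesis, (ii) the covers at successive $\btau^{(i)}$ nest consistently so that the factors $C'/M$ multiply rather than merely add along the chain, and (iii) the divergence-on-average property descends to each nested bad subcube, ensuring that the iteration can continue at every stage. The exponent $(d-1)/2$, and its sharpness for $d=3$, emerges from the natural balance between the rank of $A^+$, which caps the effective length of the productive chain, and the dimension of $U$, which limits the scale refinement at each step.
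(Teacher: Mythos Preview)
Your proposal describes the rank-one KKLM machinery (a single height function, a contraction inequality with decay $e^{-\delta\min_j\tau_j}$, Chebyshev, and iterated refinement along a chain of times), and then asserts that choosing the chain $\{\btau^{(i)}\}$ ``across directions of $A^+$ so that their expansion profiles on $U$ are essentially independent'' yields enough extra contraction to reach $(d-1)/2$. This last step is the entire content of the theorem, and the mechanism you describe does not produce it. A sequential refinement along any chain, however the $\btau^{(i)}$ are distributed, gives at most one factor of $C'/M$ per refinement step; the resulting covering bound is of the shape $\prod_i (C'/M)\,e^{r_i(d-1)}$ at scale $e^{-\sum r_i}$, and optimising this yields the rank-one exponent $(d-1)-\frac{d-1}{d}$ (as in \cite{KKLM17}), not $(d-1)/2$. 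The phrase ``additional contractive factors of $1/M$ per unit of refinement'' is exactly what needs to be proved, and nested refinement does not supply it.

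The paper's argument is not a chain but a \emph{product}. It introduces $(d-1)$ separate height functions $\widetilde{\alpha}_i$, one for each one-parameter subgroup $A_i\subset A$, with the crucial property that $\widetilde{\alpha}_i$ is (up to a controlled factor) invariant under $U_i^{\perp}$. The associated dynamical height functions $\beta_{dN,\del,i,\sh}$ then satisfy one-dimensional contraction estimates along $U_i$ alone. The geometric point is anisotropy: $a_{dNt\be_i}$ expands $U_i$ at rate $e^{2dNt}$ but $U_j$ ($j\neq i$) only at rate $e^{dNt}$, so the support of $\bxi\mapsto\beta_{dN,\del,i,\sh}(a_{\btau_i}u(\bxi)x)$ is covered by boxes of width $e^{-2dNt}$ in the $\be_i$-direction and essentially full width in the others. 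Intersecting these $(d-1)$ families of orthogonal thin slabs gives cubes of side $e^{-2dNt}$, and the \emph{product} $\prod_i\int\beta_{dN,\del,i,\sh}$ (justified via the approximate $U_i^{\perp}$-invariance) yields a total contraction of order $e^{-d(d-1)\lambda^2 Nt}$ at that scale. This is what produces the codimension $\frac{d-1}{2}$. Your proposal contains neither the family of direction-adapted height functions, nor the $U_i^{\perp}$-invariance that makes the factors genuinely independent, nor the box-intersection volume computation; without these ingredients the claimed optimisation cannot close.
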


We remark that the upper bound in Theorem \ref{dimupperbdd} is indeed sharp for $d=3$. Let us denote by $\cR_d$ the set of $(\xi_1,\ldots,\xi_{d-1})\in[-\frac{1}{2},\frac{1}{2}]^{d-1}$ such that \eq{\dim \operatorname{span}_\bQ(1,\xi_1,\ldots,\xi_{d-1})\leq2.}
Since $\cR_d$ is a countable union of rational lines, we have $\dim_H \cR_d=1$ for all $d\ge 3$.

\begin{thm}\label{dimlowbdd}
For any $\bxi\in\cR_d$, $u(\bxi)\Gamma$ is $A^+$-divergent on average.
\end{thm}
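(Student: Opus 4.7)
The plan is to construct short vectors in the lattice $a_\btau u(\bxi)\bZ^d$ for most $\btau \in \set{1,\ldots,N}^{d-1}$. By Mahler's compactness criterion, any compact $K \subset X$ is contained in
\eq{K_\veps := \set{x \in X : \text{every nonzero vector in the lattice } x \text{ has norm } \ge \veps}}
for some $\veps > 0$, so it suffices to show for each fixed $\veps > 0$ that the density $\frac{1}{N^{d-1}} \#\set{\btau \in \set{1,\ldots,N}^{d-1} : a_\btau u(\bxi)\Gamma \in K_\veps}$ tends to $0$. Writing $\sigma := \tau_1 + \cdots + \tau_{d-1}$ and inspecting the lattice vectors $a_\btau u(\bxi)(\bq, p)^T$ with $(\bq, p) \in \bZ^d$, the condition $a_\btau u(\bxi)\Gamma \notin K_\veps$ amounts (after observing that $\tau_i \ge 1$ together with $\veps \le 1$ forces $p \neq 0$) to the existence of $p \in \bZ\setminus\set{0}$ with $|p| < \veps e^\sigma$ and $\|p\xi_i\|_\bZ < \veps e^{-\tau_i}$ for all $i$.

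The hypothesis $\bxi \in \cR_d$ lets us write $\xi_i = \alpha_i + \beta_i \eta$ for some $\alpha_i, \beta_i \in \bQ$ and $\eta \in \bR$. Picking $Q \in \bN$ with $a_i := Q\alpha_i, b_i := Q\beta_i \in \bZ$ and setting $M := \max_i |b_i|$, for any $m \in \bZ$ the choice $p := Qm$ satisfies
\eq{\|p\xi_i\|_\bZ = \|m b_i \eta\|_\bZ \le |b_i|\, \|m\eta\|_\bZ \le M\, \|m\eta\|_\bZ.}
The multidimensional Diophantine problem is thereby reduced to a one-dimensional approximation problem for $\eta$. Applying Dirichlet's theorem with $T := M e^{\tau_{\max}}/\veps$ (where $\tau_{\max} := \max_i \tau_i$), one produces $1 \le m \le T$ with $\|m\eta\|_\bZ \le 1/T = \veps e^{-\tau_{\max}}/M$, so that $p = Qm$ satisfies $\|p\xi_i\|_\bZ \le \veps e^{-\tau_i}$ for every $i$ and $|p| \le QM e^{\tau_{\max}}/\veps$, which is strictly less than $\veps e^\sigma$ precisely when
\eq{\sigma - \tau_{\max} > C_\veps := \log(MQ/\veps^2).}
The degenerate case $M = 0$ (so $\bxi \in \bQ^{d-1}$) is handled directly by taking $p = Q$ and noting $\|p\xi_i\|_\bZ = 0$.

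The final step is a combinatorial count. Since $\sigma - \tau_{\max}$ equals the sum of the $d-2$ smallest coordinates of $\btau$, the constraint $\sigma - \tau_{\max} \le C_\veps$ together with $\tau_i \ge 1$ forces at least $d-2$ of the coordinates $\tau_i$ to lie in $[1, C_\veps]$. The set of such $\btau \in \set{1,\ldots,N}^{d-1}$ is contained in a union of $d-1$ ``slabs'' (indexed by which coordinate is left free), each containing at most $\lceil C_\veps\rceil^{d-2}\cdot N$ integer points, yielding a total density of $O(N^{-(d-2)}) \to 0$ since $d \ge 3$. This completes the proof. No substantial obstacle arises; the argument is a clean application of Dirichlet's theorem, made possible by the observation that the $\cR_d$ hypothesis packages the entire Diophantine content of $\bxi$ into a single real parameter $\eta$, so only \emph{one} direction of $\btau$ needs to expand in order to produce a short lattice vector.
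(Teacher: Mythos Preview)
Your proof is correct and follows essentially the same approach as the paper: both reduce the multidimensional Diophantine problem to a one-dimensional Dirichlet application using the $\cR_d$ hypothesis, arrive at the same sufficient condition $\sigma-\tau_{\max}>C_\veps$, and conclude by the same density count. The only cosmetic difference is that the paper first packages the dynamics-to-Diophantine translation into a separate Dani-correspondence lemma, whereas you work directly with Mahler's criterion.
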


Combining Theorem \ref{dimupperbdd} and Theorem \ref{dimlowbdd} we have
\eqlabel{dim1}{1\leq \dim_H\set{x\in U\Gamma: x\textrm{ is }A^+\textrm{-divergent on average}}\leq \frac{d-1}{2},}
hence for $d=3$ the Hausdorff dimension is exactly $1$.

\subsection{Inhomogeneous uniform version of Littlewood conjecture}
Theorem \ref{dimupperbdd} has number-theoretic consequences, particularly in Diophantine approximation. Around 1930, Littlewood conjectured that
\eqlabel{eq:Littlewood}{\liminf_{q\to\infty}q\|q\xi_1\|_\bZ\|q\xi_2\|_\bZ=0}
for any $\xi_1,\xi_2\in\bR$. Although Littlewood conjecture remains open, \cite{EKL06} established that the set of exceptions to Littlewood conjecture has Hausdorff dimension zero. The key ingredient to this result was their classification theorem of ergodic $A$-invariant measures with positive entropy.

The Littlewood conjecture can be extended in a natural way to inhomogeneous Diophantine approximation. Shapira \cite{Sha11} proved that almost every pair of real numbers $(\xi_1,\xi_2)$ satisfies the following inhomogeneous uniform version of Littlewood conjecture:
\begin{equation}\label{eq1}\liminf_{q\to\infty}q\|q\xi_1-\theta_1\|_\bZ\|q\xi_2-\theta_2\|_\bZ=0\;\textrm{ for all }\;\theta_1,\theta_2\in\mathbb{R},\end{equation}
answering a question of Cassels \cite[p.307]{Cas72}. Furthermore, \cite{Sha11} also proved that \eqref{eq1} holds if $1,\xi_1,\xi_2$ forms a basis of a real cubic field, and does not hold if $(\xi_1,\xi_2)\in\cR_3$, i.e. $1,\xi_1,\xi_2$ are linearly dependent over $\bQ$. In \cite{GV17}, Gorodnik and Vishe obtained a quantitative refinement of \eqref{eq1} by a factor of $(\log \log \log \log \log q)^{\lambda}$ for almost every pair, for some constant $\lambda>0$. Recently, in \cite{CT23} Chow and Technau established an inhomogeneous analog of Gallagher's theorem, i.e. a quantitative refinement of \eqref{eq1} by a factor of $(\log q)^{2}$ for almost every pair.

Motivated by the results in \cite{Sha11}, Bugeaud asked the following question:

\begin{ques}\cite[Problem 24]{Bug14}\label{Ques:InhomLittlewood}
    Is it true that, we have \eqref{eq1} for all $(\xi_1,\xi_2)\notin \cR_3$?
\end{ques}

Theorem~\ref{dimupperbdd}, as well as its refinement which will be described in \S5, has the following implication toward Question~\ref{Ques:InhomLittlewood}, in the sense of the Hausdorff dimension of the set of exceptions.
\begin{thm}\label{InhomLittlewood}
For $d\ge 3$, let $\Xi$ be the set of $(\xi_1,\ldots,\xi_{d-1})\in\bR^{d-1}$ such that
\begin{equation}\label{eq1'}\liminf_{q\to\infty}q\prod_{i=1}^{d-1}\|q\xi_i-\theta_i\|_\bZ>0\;\textrm{ for some }\;\theta_1,\ldots,\theta_{d-1}\in\mathbb{R}.\end{equation}Then we have $1\leq \dim_H \Xi\leq \frac{d-1}{2}$.\end{thm}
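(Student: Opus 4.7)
The statement has two parts.

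\textbf{Lower bound.} Since $\cR_d$ is a countable union of rational lines and $\dim_H \cR_d = 1$, it suffices to prove $\cR_d \subseteq \Xi$. Given $\bxi \in \cR_d$, fix a nontrivial integral relation $a_0 + \sum_{i=1}^{d-1} a_i \xi_i = 0$. Because this relation forces $(q\xi_1, \ldots, q\xi_{d-1}) \bmod \bZ^{d-1}$ to lie on a fixed proper rational subtorus of $\bT^{d-1}$ for every $q \in \bZ$, one can choose $\btheta \in \bR^{d-1}$ whose reduction modulo $\bZ^{d-1}$ is bounded away from that subtorus; a short Minkowski-type argument then shows that at least one factor $\|q\xi_i - \theta_i\|_\bZ$ stays bounded below by a positive constant, while the remaining factors cannot combine to make the product $q\prod_i\|q\xi_i - \theta_i\|_\bZ$ arbitrarily small. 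This adapts the construction of Shapira~\cite{Sha11} for $d = 3$ to arbitrary $d \ge 3$.

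\textbf{Upper bound.} The heart of the argument is the inclusion
\[
\Xi \subseteq \set{\bxi \in \bR^{d-1} : u(\bxi)\Gamma \text{ is } A^+\text{-divergent on average}},
\]
which, combined with Theorem~\ref{dimupperbdd}, immediately gives $\dim_H \Xi \le \frac{d-1}{2}$. By a Dani-type correspondence, the condition $\bxi \in \Xi$ is equivalent to the existence of $\btheta$ such that the $A^+$-orbit of the affine lattice $L_\btheta^\bxi := u(\bxi)\bZ^d - (\btheta, 0)$, regarded as a point of $Y = \operatorname{ASL}_d(\bR)/\operatorname{ASL}_d(\bZ)$, keeps a uniform positive distance from $0 \in \bR^d$ in the affine sense. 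The contrapositive of the desired inclusion reads: if the homogeneous orbit of $u(\bxi)\Gamma$ visits some fixed compact $K \subset X$ along a subset of $\btau \in [1,N]^{d-1}$ of positive density, then along those good times the affine orbit, trapped in the compact preimage $\pi^{-1}(K) \subset Y$, approaches $0$ arbitrarily closely in the torus fibers for \emph{every} shift $\btheta$, yielding $\liminf_{q\to\infty}q\prod_i\|q\xi_i-\theta_i\|_\bZ=0$.

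The principal obstacle is making this fiberwise near-equidistribution quantitative and uniform enough to yield the inclusion at the level of Hausdorff dimension, rather than merely for almost every $\bxi$ as in Shapira's measure-theoretic argument. The refinement of Theorem~\ref{dimupperbdd} promised in \S5 plays exactly this role: its proof enhances the covering and entropy estimates behind Theorem~\ref{dimupperbdd} so that the same bound $\frac{d-1}{2}$ controls simultaneously the set of $\bxi$ whose homogeneous orbit is divergent on average and the a priori larger set $\Xi$ of $\bxi$ admitting some affine shift $\btheta$ preventing good inhomogeneous approximations.
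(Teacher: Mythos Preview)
Your upper-bound strategy rests on the inclusion
\[
\Xi \subseteq \set{\bxi : u(\bxi)\Gamma \text{ is } A^+\text{-divergent on average}},
\]
but this inclusion is not proved anywhere, and the contrapositive argument you sketch does not work. If $u(\bxi)\Gamma$ is not $A^+$-divergent on average, then for a positive-density set of times the affine point $a_{\btau}\widehat{x}_{\bxi,\btheta}$ lies in the compact set $\pi^{-1}(K)\subset\widehat{X}$; however $\pi^{-1}(K)\cap\cL_\eps$ is a perfectly good nonempty compact set, so there is no reason the affine orbit must approach $0$ in the torus fibre. Proving your inclusion for all $\bxi$ would amount to showing that every non-$A^+$-divergent-on-average $\bxi$ satisfies the inhomogeneous Littlewood property, which is essentially Question~\ref{Ques:InhomLittlewood} itself.

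The paper's argument runs in the opposite direction. One assumes $\dim_H\Xi>\frac{d-1}{2}$ for contradiction; Theorem~\ref{dimupperbdd} then guarantees a set of $\bxi\in\Xi$ of Hausdorff dimension strictly larger than $\frac{d-1}{2}$ that are \emph{not} $A^+$-divergent on average. From these one builds, by averaging along $A^+$-orbits of the associated affine lattices $\widehat{x}_{\bxi,\btheta(\bxi)}$ and passing to a weak-$*$ limit, an $A$-invariant probability measure $\mu$ on $\widehat{X}\cup\{\infty\}$ with $\mu(\widehat{X})>0$ and $\Supp\mu\subset\cL_\eps$. The role of \S5 is not to extend the covering estimate to $\Xi$ directly, but to supply trajectory-dependent height functions (Proposition~\ref{nontrivbdd:traj}) whose contraction bounds force the counting measures to have enough atom-decay that $h_{\pi_*\mu}(a_{t\be_i})>0$ for some $i$. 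One then invokes the Einsiedler--Lindenstrauss measure classification \cite{EL18} to conclude $\mu$ is homogeneous, and Proposition~\ref{Lesupp} shows no such homogeneous measure can sit inside $\cL_\eps$, giving the contradiction.
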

In particular, for $d=3$ the Hausdorff dimension of the set of exceptions to the inhomogeneous uniform version of Littlewood conjecture is $1$, which is equal to the Hausdorff dimension of the conjectural set of exceptions $\cR_3$.

%\textbf{Question 1.} For $d\ge 4$, we only have an estimate
%\eq{1\leq \dim_H\set{\bxi\in[-\frac{1}{2},\frac{1}{2}]^{d-1}: u(\bxi)x \textrm{ is }A^+\textrm{-divergent on average}}\leq \frac{d-1}{2}.}
%What should be the correct dimension? Is there another example of $\bxi$ which is $A^+$-divergent on average and not $\bQ$-dependent?

%\textbf{Question 2.} We have $\cR\subseteq\Xi$ by \cite[Theorem 1.4(ii)]{Sha11} and $$\Xi\subseteq\set{\bxi\in[-\frac{1}{2},\frac{1}{2}]^{d-1}: u(\bxi)x \textrm{ is }A^+\textrm{-divergent on average}}$$ up to dimension $0$ by Theorem \ref{InhomLittlewood}. Can one replace one of ``$\subseteq$" by ``$=$"?

\subsection{Discussion of the proofs}
Throughout this paper, we denote by $\set{\be_1,\ldots,\be_{d-1}}$ the standard basis of $\bR^{d-1}$. For $1\leq i\leq d-1$ we let
$$A_i:=\set{a_{t\be_i}\in G: t\in\bR},\qquad\qquad U_i:=\set{u_i(s):=u(s\be_i)\in G:s\in\bR}.$$
We also let $U_i^{\perp}$ denote the orthogonal complement of $U_i$ in $U$, and let $H_i$ denote the $\operatorname{SL}_2(\bR)$-copy in $G$ containing $A_i$ and $U_i$.

The proofs of Theorem~\ref{dimupperbdd} and Theorem~\ref{InhomLittlewood} rely heavily on various height functions and their associated contraction properties. In \S2, we construct height functions for one-dimensional diagonal subgroups and establish their contraction hypotheses. Let $0<\lambda<1$ be fixed, sufficiently close to 1, and $t$ be sufficiently large depending on $\lambda$. For each $1\leq i\leq d-1$, the Benoist-Quint height function $\alpha_i:X\to[1,\infty]$ satisfies
\begin{enumerate}
    \item (Subharmonic estimate) For any $x\in X$, $$\int_{-\frac{1}{2}}^{\frac{1}{2}}\alpha_i(a_{t\be_i}u_i(s)x)ds\leq e^{-\lambda t}\alpha_i(x)+C,$$
    \item (Log-Lipschitz property) For any $g\in B^{H_i}(\operatorname{id},1)$ and $x\in X$, $$C^{-1}\alpha_i(x)\leq \alpha_i(gx)\leq C\alpha_i(x),$$
\end{enumerate}
where $C>1$ is a constant depending only on $\lambda$ and $t$ \cite{BQ12,Shi20,Kha20}. Notably, the log-Lipschitz property for $\alpha_i$ is restricted to perturbation under the subgroup $H_i$, which is insufficient for our purposes. To address this limitation, we define modified height functions $\widetilde{\alpha}_i:X'\to[1,\infty]$ for $1\leq i\leq d-1$, where $X'=A^+U\Gamma$. While these functions are defined only on a subset $X'$ of $X$, their log-Lipschitz property extends beyond $H_i$ to the subgroup $U$ as well. 

We then construct \textit{dynamical height functions} $\beta_{N,\del,i,\mathsf{h}}:X'\to[1,\infty]$ for $N\in\bN$, $0<\del<1$, $1\leq i\leq d-1$, and $\mathsf{h}\geq 1$. These functions $\beta_{N,\del,i,\mathsf{h}}$ capture the averaged behavior of $A_i$-trajectories near the cusp. Specifically, $\beta_{N,\del,i,\mathsf{h}}(x)$ is positive only if the trajectory $\set{a_{t\be_i}x,\ldots,a_{Nt\be_i}x}$ remains close to the cusp most of time, as measured by $\del$ (representing the proportion of mass escaping to infinity as $1-\del$) and a cut-off height $\mathsf{h}$ defined in terms of $\widetilde{\alpha}_i$. In particular, if $x$ is $A_i$-divergent on average, then \eqlabel{eq:divergentonaverageonedim}{x\in \bigcap_{\del>0}\bigcap_{\mathsf{h}>1}\limsup_{N\to\infty}\Supp\beta_{N,\del,i,\mathsf{h}}.}

The following contraction hypothesis for $\beta_{N,\del,i,\mathsf{h}}$ (Proposition~\ref{betadecay}) is derived from the above contraction hypothesis for $\alpha_i$ (or for $\widetilde{\alpha}_i$): if $\mathsf{h}$ is sufficiently large, then
\eqlabel{eq:contractionhypothesisonedimintro}{\int_{-\frac{1}{2}}^{\frac{1}{2}}\beta_{N,\del,i,\mathsf{h}}(u_{i}(s)x)ds\leq e^{-(\lambda^2-O(\delta))Nt}\widetilde{\alpha}_i(x).}
The proof of Proposition~\ref{betadecay} is inspired by the approach used in \cite[Theorem 1.5]{KKLM17}. The above inequality \eqref{eq:contractionhypothesisonedimintro} provides a covering number estimate for the sets $\Supp\big(\bxi\mapsto \beta_{N,\del,i,\mathsf{h}}(u(\bxi)x)\big)$. In view of \eqref{eq:divergentonaverageonedim}, this establishes that the Hausdorff dimension of the set of points on $Ux$ that are $A_i$-divergent on average is at most $\dim U-\frac{1}{2}$.

In \S 4.3 we introduce a dynamical height function $\psi_{N,\del,\mathsf{h}}$ for $A^+$ and prove its contraction hypothesis. Constructed from $\beta_{N,\del,i,\mathsf{h}}$'s, $\psi_{N,\del,\mathsf{h}}$ detects the averaged behavior of an $A^+$-trajectory near the cusp. Specifically, $\psi_{N,\del,\mathsf{h}}(x)$ is positive only if the $A^+$-trajectory $\set{a_{\btau}x: \btau\in\set{t,\ldots,dNt}^{d-1}}$ spends most of the time close to the cusp. Furthermore, if the $A^+$-trajectory exhibits an escape of mass exceeding $1-\del$, then there exists $\bsig_N\in (t\bN)^{d-1}$ with $\|\bsig_N\|=O(\del Nt)$ such that $\set{a_{\bsig_N+t\be_i}x,\ldots,a_{\bsig_N+dNt\be_i}}$ also has an escape of mass $1-O(\del)$ for every $1\leq i\leq d-1$. To summarize, for fixed $x\in X'$ if $u(\bxi)x$ is $A^+$-divergent then
\eqlabel{eq:divergentonaveragehigher}{\begin{aligned}\bxi&\in \bigcap_{\del>0}\bigcap_{\mathsf{h}>1}\limsup_{N\to\infty}\Supp\big(\bxi\mapsto\psi_{N,\del,\mathsf{h}}(u(\bxi)x)\big)\\&\subseteq\bigcap_{\del>0}\bigcap_{\mathsf{h}>1}\limsup_{N\to\infty}\bigcap_{i=1}^{d-1}\Supp\big(\bxi\mapsto\beta_{dN,\del,i,\mathsf{h}}(a_{\bsig_N}u(\bxi)x)\big).\end{aligned}}
In other words, to calculate the Hausdorff dimension of $A^+$-divergent on average points on $Ux$ it suffices to estimate
$$ \mathsf{Q}:=\int_{[-\frac{1}{2},\frac{1}{2}]^{d-1}}\prod_{i=1}^{d-1}\beta_{dN,\del,i,\mathsf{h}}(a_{\bsig_N}u(\bxi)x)d\bxi.$$

For a moment, let us assume that the maps $\bxi\mapsto\beta_{dN,\del,i,\mathsf{h}}(a_{\bsig_N}u(\bxi)x)$ for $1\leq i\leq d-1$ are independent. In combination with \eqref{eq:contractionhypothesisonedimintro} one can deduce that
$$\mathsf{Q}\asymp \prod_{i=1}^{d-1}\int_{[-\frac{1}{2},\frac{1}{2}]^{d-1}}\beta_{dN,\del,i,\mathsf{h}}(a_{\bsig_N}u(\bxi)x)d\bxi\ll e^{-d(d-1)(\lambda^2-o(1))Nt}\prod_{i=1}^{d-1}\widetilde{\alpha}_i(x).$$
This in turn implies that for $x\in X'$ the Hausdorff dimension of the set of points on $Ux$ that are $A^+$-divergent on average  is at most $\dim U-\frac{d-1}{2}=\frac{d-1}{2}$. Therefore, the proof of Theorem \ref{dimupperbdd} is reduced to verifying a certain independence among the maps $\bxi\mapsto\beta_{dN,\del,i,\mathsf{h}}(a_{\bsig_N}u(\bxi)x)$ for $1\leq i\leq d-1$, which constitutes the technical heart of \S3. The key observation for proving Proposition \ref{multicont} is that different one-dimensional subgroups in $A^+$ expand each direction in $U$ at different expansion rates. Namely, a diagonal element $a_{t\be_i}$ in $A_i$ expands $U_j$ at a rate of $e^{2t}$ if $i=j$, and $e^t$ otherwise. Using this, we approximate $\Supp\big(\bxi\mapsto\beta_{dN,\del,i,\mathsf{h}}(a_{\bsig_N}u(\bxi)x)\big)$ as the union of boxes in $U$ with side lengths $e^{-2dNt}$ along $U_i$ and $e^{-dNt}$ along $U_j$ for $j\neq i$. Then the intersection of these supports for $1\leq i\leq d-1$ is then described as the union of cubes of side length $e^{-2dNt}$. Computing the volume of this intersection yields the optimal contraction rate, which is crucial for bounding the Hausdorff dimension. The proof of Theorem~\ref{dimupperbdd}, based on this optimal contraction rate, is completed in \S4.4.

We now turn to the proof of Theorem~\ref{InhomLittlewood}. We identify $\widehat{X}=\operatorname{ASL}_d(\bR)/\operatorname{ASL}_d(\bZ)$ with the space of unimodular affine lattices in $\bZ^d$. The proof relies on a measure rigidity theorem from \cite{EL18}, which applies to $A$-action on this space.

For $\bxi\in\bR^{d-1}$ and $\btheta\in\bR^{d-1}$ we let $\widehat{x}_{\bxi,\btheta}\in \widehat{X}$ denote the corresponding affine lattice $u(\bxi)\bZ^d-\btheta$. By Dani's correspondence on the space of affine lattices (Lemma~\ref{InhomDani}), if $\bxi=(\xi_1,\ldots,\xi_{d-1})$ satisfies Littelwood's conjecture and $\displaystyle\liminf_{q\to\infty}q\prod_{i=1}^{d-1}\|q\xi_i-\theta_i\|_\bZ>0$ for some $\btheta=(\theta_1,\ldots,\theta_{d-1})$, then there exist $\eps>0$ such that $a_{\btau}\widehat{x}_{\bxi,\btheta}\subseteq \cL_\eps$ for any sufficiently large $\btau\in\bR^{d-1}_{>0}$, where
$$\cL_\eps:=\set{\Lambda\in\widehat{X}: \Lambda\cap B^{\bR^d}(0,\eps)=\emptyset}.$$ Note that the set $\cL_\eps$ is not compact for any $\eps>0$. In contrast, in the setting of homogeneous Diophantine approximation, the analogous set
$\set{\Lambda\in X: \Lambda\cap B^{\bR^d}(0,\eps)=\set{0}} $
in the space of lattices is compact by Mahler's compactness criterion. This is the principal reason why a more careful treatment on the escape of mass is necessary in the setting of the inhomogeneous Diophantine approximation. 

For the sake of contradiction, suppose that $\dim_H \Xi>\frac{d-1}{2}$. Theorem~\ref{dimupperbdd} implies that most of the points in the set $\Xi$ are not $A^+$-divergent on average. From the $A^+$-orbits of such points, one can construct an $A$-invariant measure $\mu$ on $\widehat{X}$ supported on $\cL_\eps$ for some $\eps>0$. Moreover, $\mu$ is nonzero, as it is constructed from the points that are not $A^+$-divergent on average. This measure construction procedure is similar in spirit to those in \cite{EKL06, LSS19, KKL}.

In Lemma \ref{Lesupp} we show that for any $\eps>0$ there is no homogeneous $A$-invariant measure supported on $\cL_\eps$. This contradicts the measure rigidity theorem \cite[Theorem 1.3]{EL18}, hence we conclude that $\dim_H\Xi\leq\frac{d-1}{2}$.

However, to apply \cite[Theorem 1.3]{EL18}, we must verify that $h_\mu(a)>0$ for some $a\in A$, as the measure rigidity theorem requires a positive entropy assumption. To establish this, in \S5 we strengthen the results in \S2 and \S3. Specifically, instead of focusing on the behavior of trajectories approaching the cusp, we construct dynamical height functions that capture the interaction of trajectories relative to a given reference trajectory. The contraction properties of these dynamical height functions ensure that the measure $\mu$ not only has positive mass on $\widehat{X}$, but also exhibits positive entropy for some diagonal element $a\in A$. This allows us to apply the measure rigidity theorem and derive the desired dimension bound.

\subsection{Structure of the paper}
This paper is organized as follows.

In Section~2 we modify the Benoist-Quint height function constructed in \cite{BQ12}. We then use these modified Benoist-Quint height functions to construct \textit{dynamical height functions} for one-dimensional diagonal subgroups and establish the contraction hypotheses for those dynamical height functions (Proposition \ref{betadecay}). The proof of Proposition \ref{betadecay} is inspired by \cite[Proof of Theorem 1.5]{KKLM17}. 

In Section~3 we introduce dynamical height functions for higher-rank actions and establish the contraction hypothesis for higher-rank actions (Proposition \ref{multicont}). 
%The key observation for the proof of Proposition \ref{multicont} is that different one-dimensional subgroups in $A^+$ expand each direction in $U$ with different expanding rates. We will investigate the intersection of $(d-1)$ boxes in $U$ for which the lengths of the edges of each box are given by the expanding rates of the corresponding one-dimensional subgroups in $A^+$, and the volume computation for the intersection yields the optimal contraction rate in Proposition \ref{multicont}.

In Section~4 we establish the upper bound for the Hausdorff dimension of $A^+$-divergent on average trajectories (Theorem \ref{dimupperbdd}) using Proposition \ref{multicont}. Additionally, we derive the lower bound (Theorem \ref{dimlowbdd}) by employing Dani's correspondence in Diophantine approximation.

In Section~5 we discuss variants of the dynamical height functions defined in Sections~2 and 3. We then establish the contraction hypotheses for these dynamical height functions for a given trajectory in Proposition~\ref{nontrivbdd:traj}.

In Section~6 we deduce Theorem \ref{InhomLittlewood} from several ingredients: Dani's correspondence on the space of affine lattices (Lemma \ref{InhomDani}), a measure rigidity result in \cite{EL18} for higher-rank diagonal actions on the space of affine lattices, and Theorem \ref{dimupperbdd}. We also make use of Proposition \ref{nontrivbdd:traj} to verify the positive entropy assumption of the measure rigidity theorem \cite[Theorem 1.3]{EL18}.

\vspace{3mm}
\tb{Acknowledgments}. This paper is part of my Ph.D. thesis conducted at ETH Z\"{u}rich under the guidance of Prof. Manfred Einsiedler. I am deeply grateful to him for numerous enlightening discussions and continuous encouragement throughout this work. I also thank Pengyu Yang for valuable conversations on height functions.

\section{Height functions for one-dimensional actions}
In this section, we construct and modify several height functions for one-dimensional diagonal subgroups to control their recurrence to compact subsets.

\subsection{Notations}
We shall use the standard notation $A\ll B$ or $A=O(B)$ to mean that $A\leq CB$ for some constant $C>0$ that depends only on $d$, unless other dependencies are explicitly stated. We shall also write $A\asymp B$ to mean both $A\ll B$ and $B\ll A$. 

We use the notation $C_1, C_2, ...$ to denote positive constants that depend only on the dimension $d$. In the course of the proofs, we will introduce an auxiliary parameter $0<\lambda<1$ for which we will take $\lambda\to 1$ in the end. The notation $\useconE{101},\useconE{102}, ...$ will denote positive constants that depend only on $d$ and $\lambda$.

Throughout this paper, we denote by $\set{\be_1,\ldots,\be_{d-1}}$ the standard basis of $\bR^{d-1}$. For $1\leq i\leq d-1$ we let
$$A_i:=\set{a_{t\be_i}\in G: t\in\bR},\qquad\qquad U_i:=\set{u(s\be_i)\in G:s\in\bR}.$$
We also denote by $U_i^{\perp}$ the orthogonal complement of $U_i$ in $U$, and $H_i$ the $\operatorname{SL}_2(\bR)$-copy in $G$ containing $A_i$ and $U_i$.

Let $\bd_G$ be a right-invariant metric on $G$. By multiplying a constant if necessary, we may assume that 
$$\bd_G(\operatorname{id},a_{\be_i})\leq 1, \quad  \bd_G(\operatorname{id},u(\be_i))\leq 1$$
for any $1\leq i\leq d-1$. Given a Lie subgroup $S$ of $G$, we denote by $\bd_S$ the metric on $S$ induced by $\bd_G$, and denote by $B^S(g,r)$ be the open $r$-ball around $g$ with respect to the metric $\bd_S$ for $g\in S$ and $r>0$.

\subsection{Benoist-Quint height function}
We first recall the construction of the height function from \cite{BQ12}. We decompose $\oplus_{k=1}^{d-1}\bigwedge^k\bR^d$ into the direct sum of irreducible sub-representations of $H_i$. For the standard basis $\set{\be_1,\ldots,\be_d}$ of $\bR^d$ denote by $W_i$ the subspace of $\bR^d$ spanned by $\set{\be_i,\be_d}$ and $W_i^{\perp}$ be the subspace of $\bR^d$ spanned by $\set{\be_1,\ldots,\be_{d-1}}\setminus\set{\be_i}$. For $1\leq i\leq d-1$, the projections from $\bR^{d-1}$ to $W_i^{\perp}$ will be denoted by $\pi_i^{\perp}$.

Note that $\bigwedge^j W_i=0$ for any $j>2$. It follows that for any $1\leq k\leq d-1$, $\bigwedge^k\bR^d$ is the direct sum of $\bigwedge^k W_i^{\perp}$, $W_i\wedge(\bigwedge^{k-1} W_i^{\perp})$, and $(\bigwedge^{2} W_i)\wedge(\bigwedge^{k-2} W_i^{\perp})$, where the last one appears only if $k\ge 2$. Let us denote
\eqlabel{eq:Vdef}{V_{i,k,0}:=\bigwedge\nolimits^{\!k} W_i^{\perp}\oplus\left(\big(\bigwedge\nolimits^{\!2} W_i\big)\wedge \big(\bigwedge\nolimits^{\!k-2}W_i^{\perp}\big)\right), \quad V_{i,k,1}:=\left(\bigwedge\nolimits^{\!k-1} W_i\right)\wedge W_i^{\perp}.}
As $H_i$ acts on $W_i$ and $\bigwedge^{2} W_i^{\perp}$ trivially, $H_i$ acts on $V_{i,k,0}$ trivially. Also, $V_{i,k,1}$ is the direct sum of all irreducible sub-representations of $\bigwedge^k\bR^d$ whose highest weight is $1$ as the highest weight of $W_i^{\perp}$ is $1$. We denote by $\pi_{i,k,0}$ and $\pi_{i,k,1}$ the projections onto $V_{k,0}$ and $V_{k,1}$, respectively.

For any $1\leq i,k\leq d-1$ and $\eps>0$ we define $\varphi_{i,k,\eps}:\bigwedge^k\bR^d\to\bR_{\ge0}$ by
\eq{\varphi_{i,k,\eps}(v)=\begin{dcases}\eps^{k(d-k)}\|\pi_{i,k,1}(v)\|^{-\lambda}\qquad \textrm{if }\|\pi_{i,k,0}(v)\|<\eps^{k(d-k)},\\ 0 \qquad\qquad\qquad\qquad\quad \textrm{otherwise}\end{dcases}} for $v\in \bigwedge^k\bR^d$.

For $x\in X$, we can identify $x$ with a unimodular lattice in $\bR^d$ and denote by $P(x)$ the set of all primitive subgroups of the lattice $x$. A non-zero monomial $v_1\wedge\cdots\wedge v_k\in \bigwedge^k\bR^d$ is said to be $x$-integral if the subgroup generated by $v_1,\ldots,v_k$ belongs to $P(x)$. We define $\alpha_{i,\eps}:X\to\bR_{\ge0}$ by
$$\alpha_{i,\eps}(x):=\max \varphi_{i,k,\eps}(v),$$
where the maximum is taken over all the non-zero $x$-integral monomial $v\in \bigwedge^k\bR^d$ for some $1\leq k\leq d-1$. The following properties of the Benoist-Quint height function were first proved by \cite{BQ12} and further refined by \cite{Shi20} and \cite{Kha20}.

\begin{prop}\label{alpha}
There exists \newconC{1}$\useconC{1}>1$ such that for any $0<\lambda<1$, there exists $T_\lambda>0$ such that the following holds. For any $t\ge T_{\lambda}$ there exist $\eps_{\lambda,t}>0$ and $C_{\lambda,t}>1$ such that for each $1\leq i \leq d-1$ the function $\alpha_{i,\lambda,t}:=\alpha_{i,\eps_{\lambda,t}}$ is a proper lower semi-continuous function with the following properties: 
\begin{enumerate}
    \item (Subharmonic estimate) For any $x\in X$ \eqlabel{alphacont}{\int_{-\frac{1}{2}}^{\frac{1}{2}}\alpha_{i,\lambda,t}(a_{t\be_i}u_i(s)x)ds\leq e^{-\lambda t}\alpha_{i,\lambda,t}(x)+C_{\lambda,t},}
    \item (Log-Lipschitz property) For any $g\in B^{H_i}(\operatorname{id},1)$ and $x\in X$ \eqlabel{Lipschitz}{\useconC{1}^{-1}\alpha_{i,\lambda,t}(x)\leq \alpha_{i,\lambda,t}(gx)\leq \useconC{1}\alpha_{i,\lambda,t}(x).}
\end{enumerate}
\end{prop}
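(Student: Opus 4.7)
This is a restatement of the Benoist--Quint height bound specialized to the $\operatorname{SL}_2$-copy $H_i$, with the log-Lipschitz refinement from \cite{Shi20, Kha20}. My plan is to verify the two claimed properties by first treating a single $x$-integral monomial, then passing to the maximum, and finally deducing the log-Lipschitz bound directly from the $H_i$-action on the representations $V_{i,k,0}$ and $V_{i,k,1}$.

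\textbf{Step 1: $H_i$-representation setup.} Since $H_i\cong \operatorname{SL}_2(\bR)$ acts trivially on $V_{i,k,0}$ (because $H_i$ acts trivially on $W_i^\perp$ and on $\bigwedge^2 W_i$) and acts on $V_{i,k,1}$ as a direct sum of irreducible $\operatorname{SL}_2$-representations of highest weight $1$, the operator $a_{t\be_i}u_i(s)$ preserves $\|\pi_{i,k,0}(v)\|$ and acts on $V_{i,k,1}$ with matrix blocks of the form $\bigl(\begin{smallmatrix} e^{t/2}&e^{t/2}s\\ 0&e^{-t/2}\end{smallmatrix}\bigr)$ (in appropriate bases). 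The classical $\operatorname{SL}_2$ averaging argument, already the heart of \cite{BQ12}, then yields
\eq{\int_{-1/2}^{1/2}\|\pi_{i,k,1}(a_{t\be_i}u_i(s)v)\|^{-\lambda}\,ds\ \leq\ e^{-\lambda t}\|\pi_{i,k,1}(v)\|^{-\lambda}+C'_{\lambda,t}}
for any $v\in\bigwedge^k\bR^d$ with $\pi_{i,k,1}(v)\neq 0$, provided $\lambda<1$ and $t\geq T_\lambda$. The constant $C'_{\lambda,t}$ absorbs the contribution of the measure-zero singularity and is uniform in $v$.

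\textbf{Step 2: Per-monomial contraction of $\varphi_{i,k,\eps}$.} Fix an $x$-integral monomial $v\in\bigwedge^k\bR^d$. I split the integral over $s\in[-1/2,1/2]$ into the set $S_v$ where $\|\pi_{i,k,0}(a_{t\be_i}u_i(s)v)\|<\eps^{k(d-k)}$ and its complement. Since $\pi_{i,k,0}$ is $H_i$-invariant, $S_v$ either equals $[-1/2,1/2]$ (if $v$ itself is in the cusp region) or is empty. In the first case, Step 1 gives the desired contraction. In the second case, the integrand is zero on the complement of $S_v$, and the contribution to $\alpha_{i,\eps}$ comes only from the "entry" of another rank-$k$ flag entering the cusp during the flow; one shows (this is a standard discreteness argument using that there are only finitely many $x$-integral $v$ with $\|\pi_{i,k,0}(v)\|$ bounded) that this entry is controlled by a constant $C_{\lambda,t}$, independent of $x$.

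\textbf{Step 3: Passing to the maximum.} To upgrade the per-monomial estimate to the stated bound for $\alpha_{i,\lambda,t}=\max_v\varphi_{i,k,\eps}(v)$, I use the Benoist--Quint wedge-product argument. The key point is: if two distinct $x$-integral monomials $v_1,v_2$ (possibly of different ranks) both satisfy $\|\pi_{i,k_j,0}(v_j)\|<\eps^{k_j(d-k_j)}$, then $v_1\wedge v_2$, if nonzero, is $x$-integral of higher rank and yields a dominant $\varphi$-value; if it is zero, the two monomials lie in compatible primitive subspaces whose sum has strictly smaller index. Choosing $\eps_{\lambda,t}>0$ small enough (depending on $t$, namely so that $\eps^{k(d-k)}$ is small compared to the minimal covolume gaps arising from the wedge argument), I obtain that the witness monomial maximizing $\alpha_{i,\lambda,t}(a_{t\be_i}u_i(s)x)$ can be traced, for almost every $s$, to a single maximizing $v$ at $x$. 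Combining this with Step 2 gives \eqref{alphacont} after adjusting $C_{\lambda,t}$.

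\textbf{Step 4: Log-Lipschitz.} For (2), fix $g\in B^{H_i}(\operatorname{id},1)$. Since $g$ preserves $V_{i,k,0}$ (acting trivially) and acts on $V_{i,k,1}$ via a bounded $\operatorname{SL}_2$-representation with operator norm $\leq C_0$ depending only on $d$, both $\|\pi_{i,k,0}(gv)\|=\|\pi_{i,k,0}(v)\|$ and $C_0^{-1}\|\pi_{i,k,1}(v)\|\leq\|\pi_{i,k,1}(gv)\|\leq C_0\|\pi_{i,k,1}(v)\|$. Thus the cusp-region condition is preserved and $\varphi_{i,k,\eps}(gv)$ is within a factor $C_0^\lambda\leq C_0$ of $\varphi_{i,k,\eps}(v)$. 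Since $g\Lambda$ has the same primitive subgroups (up to the $g$-action) as $\Lambda$, the maximum defining $\alpha_{i,\lambda,t}$ is comparable with constant $\useconC{1}:=C_0$.

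\textbf{Main obstacle.} Steps 1, 2, and 4 are essentially local representation-theoretic computations and pose no real difficulty. The technical heart is Step 3: the passage from a per-monomial contraction to a contraction of the maximum is delicate because several monomials may simultaneously lie in the cusp region, and one needs a careful discrete geometry of numbers argument (wedging distinct primitive $x$-integral vectors) together with a judicious, $t$-dependent choice of $\eps_{\lambda,t}$, to ensure no two witnesses compete. This is precisely where the refinements of \cite{Shi20,Kha20} over \cite{BQ12} play a role, and I would follow their formulation with only cosmetic changes.
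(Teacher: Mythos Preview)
Your outline is the paper's approach in expanded form: the paper's own proof is a bare citation to \cite[Lemma~4.1]{Shi20} (with $L=\operatorname{SL}_d(\bR)$ and $G=H_i$) together with \cite[Theorem~12.3]{Kha20} for the optimal range of $\lambda$, and what you have sketched is precisely the Benoist--Quint argument those references carry out. Two small expository corrections: the eigenvalues of $a_{t\be_i}$ on $V_{i,k,1}$ are $e^{\pm t}$, not $e^{\pm t/2}$, since $a_{t\be_i}|_{W_i}=\operatorname{diag}(e^t,e^{-t})$ and $H_i$ acts trivially on $\bigwedge^{k-1}W_i^\perp$; and because the cusp condition $\|\pi_{i,k,0}(v)\|<\eps^{k(d-k)}$ is $H_i$-invariant (as you yourself observe), no monomial can \emph{enter} the cusp along the flow---the additive $C_{\lambda,t}$ arises instead from the max-versus-integral mismatch and the handling of multiple shallow-cusp monomials that you correctly isolate as the crux in Step~3.
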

\begin{proof}
This proposition is a special case of \cite[Lemma 4.1]{Shi20} with $L=\operatorname{SL}_d(\bR)$ and $G=H_i$. The range of $\lambda$ is optimized in \cite[Theorem 12.3]{Kha20}.
\end{proof}

%We remark that \eqref{Lipschitz} only holds for the elements in $H_i$. However, we will also need some regularities with respect to the directions outside $H_i$. For $1\leq i\leq d-1 $ we define $\alpha_i:X\to \bR_{\ge 0}$ by
%\eq{\alpha_i(x)=\int_{[-\frac{1}{2},\frac{1}{2}]^{d-1}}\widetilde{\alpha}_i(u(\bxi)x)d\bxi.}

%\begin{prop}\label{alpha'}
%There exist $T_0>0$ and $\useconC{5}>1$ such that for any $t\ge T_0$ and $0<\lambda<1$, there exists $C_{t,\lambda}>0$ such that the following holds. For each $1\leq i \leq d-1$ there exists a proper lower semicontinuous function $\alpha_i:X\to [0,\infty]$ with the following properties: 
%\begin{enumerate}\item For any $x\in X$ \eqlabel{alphacont'}{\int_{[-\frac{1}{2},\frac{1}{2}]^{d-1}}\alpha_i(a_{t\be_i}u(\bxi)x)d\bxi\leq e^{-\lambda t}\alpha_i(x)+C_{t,\lambda},}   \item For any $g\in B^{H_i}(\operatorname{id},1)\cup B^{U}(1)$ and $x\in X$ \eqlabel{Lipschitz'}{\useconC{5}^{-1}\alpha_i(x)\leq \alpha_i(gx)\leq \useconC{5}\alpha_i(x).}\end{enumerate}\end{prop}

We denote
$$\overline{a}_t:=\left(\begin{matrix} e^t & \\ & e^{-t}\end{matrix}\right),\quad \overline{u}(s):=\left(\begin{matrix} 1 &  s \\ & 1\end{matrix}\right)$$
for $s,t\in\bR$. For $0<\lambda<1$ we also define a height function $\operatorname{ht}_\lambda:\operatorname{SL}_2(\bR)/\operatorname{SL}_2(\bZ)\to[1,\infty)$ by 
$$\operatorname{ht}_\lambda(x):=\sup_{v\in g\bZ^2}\|v\|^{-\lambda}$$
for $x=g\operatorname{SL}_2(\bZ)$ with $g\in \operatorname{SL}_2(\bR)$.
\begin{lem}
    The function $\operatorname{ht}_\lambda$ is a proper continuous function. There exists \newconC{2}$\useconC{2}>1$ such that for any $0<\lambda<1$, there exists $T_{\lambda}'>0$ such that the following holds. For any $t\ge T_{\lambda}'$ there exists $C_{\lambda,t}'>1$ such that $\operatorname{ht}_\lambda$ satisfies the following properties: 
\begin{enumerate}
    \item For any $x\in \operatorname{SL}_2(\bR)/\operatorname{SL}_2(\bZ)$ \eqlabel{htcont}{\int_{-\frac{1}{2}}^{\frac{1}{2}}\operatorname{ht}_\lambda(\overline{a}_{t}\overline{u}(s)z)ds\leq e^{-\lambda t}\operatorname{ht}_\lambda(x)+C_{\lambda,t}',}
    \item For any $g\in B^{\operatorname{SL}_2(\bR)}(\operatorname{id},1)$ and $x\in X$ \eqlabel{htLipschitz}{\useconC{2}^{-1}\operatorname{ht}_\lambda(x)\leq \operatorname{ht}_\lambda(gx)\leq \useconC{2}\operatorname{ht}_\lambda(x).}
\end{enumerate}
\end{lem}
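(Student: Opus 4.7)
The plan is to verify the three claims separately: proper continuity and the log-Lipschitz bound follow directly from standard properties of shortest vectors, while the contraction hypothesis is obtained as a direct specialisation of Proposition \ref{alpha} to $d=2$.

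Since the supremum defining $\operatorname{ht}_\lambda(x)$ is realised on a shortest nonzero vector of the lattice, we have $\operatorname{ht}_\lambda(x) = \|v_{\min}(x)\|^{-\lambda}$. Continuity then reduces to continuity of the shortest-vector length, while properness is Mahler's compactness criterion. For the log-Lipschitz bound \eqref{htLipschitz}, I would fix an absolute constant $C_0\geq 1$ with $\|g\|_{\on{op}},\|g^{-1}\|_{\on{op}}\leq C_0$ for every $g\in B^{\operatorname{SL}_2(\bR)}(\on{id},1)$; since $\|gv\|/\|v\|$ is then pinched between $C_0^{-1}$ and $C_0$ uniformly in $v$, the defining supremum scales with bounded distortion, giving \eqref{htLipschitz} with $\useconC{2}=C_0$, and in particular independent of both $\lambda$ and $t$.

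For the contraction \eqref{htcont}, the plan is to invoke the same argument as for Proposition \ref{alpha}. When $d=2$ the only relevant wedge degree is $k=1$, and the representation of $H_1=\operatorname{SL}_2(\bR)$ on $\bigwedge^1\bR^2=\bR^2$ is already irreducible with highest weight $1$; consequently the Benoist-Quint-Shi maximum collapses, up to an $\eps$-dependent normalisation that does not affect the contraction estimate, to $\sup_{v\in g\bZ^2}\|v\|^{-\lambda}=\operatorname{ht}_\lambda(x)$. Hence the proof of \cite[Lemma 4.1]{Shi20}, with the sharp exponent range from \cite[Theorem 12.3]{Kha20}, applied with $L=G=\operatorname{SL}_2(\bR)$, furnishes constants $T_\lambda'$ and $C_{\lambda,t}'$ satisfying \eqref{htcont}.

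If one preferred a self-contained route, the classical Eskin-Margulis style computation writes $\widebar{a}_t \widebar{u}(s) v=(e^t(v_1+sv_2),e^{-t}v_2)$, splits into the cases $v_2=0$ and $v_2\neq 0$, and exploits convergence of $\int|v_1+sv_2|^{-\lambda}\,ds$ for $\lambda<1$; the $v_2=0$ case contributes exactly $e^{-\lambda t}\|v\|^{-\lambda}$ to give the linear term, and the $v_2\neq 0$ case is absorbed into $C_{\lambda,t}'$. The main delicate point is to absorb the contributions of \emph{non-minimal} primitive vectors into the additive constant, which in dimension two is automatic since a covolume-one lattice in $\bR^2$ contains at most one primitive direction shorter than a fixed threshold less than $1$, reducing the bound to the one-vector integral above.
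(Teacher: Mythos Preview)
Your proposal is correct and follows essentially the same approach as the paper: both reduce the contraction hypothesis to \cite[Lemma 4.1]{Shi20} with $L=G=\operatorname{SL}_2(\bR)$ and the sharp exponent range from \cite[Theorem 12.3]{Kha20}. You additionally spell out the elementary verifications of properness, continuity, and the log-Lipschitz bound, and sketch a self-contained Eskin--Margulis argument; the paper simply subsumes all of this under the same citation.
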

\begin{proof}
As in Proposition~\ref{alpha}, this proposition is a special case of \cite[Lemma 4.1]{Shi20} with $L=G=\operatorname{SL}_2(\bR)$. The range of $\lambda$ is optimized in \cite[Theorem 12.3]{Kha20}.
\end{proof}

\subsection{Modification of Benoist-Quint height function}
The Benoist-Quint height functions in Proposition~\ref{alpha} satisfy the log-Lipschitz condition \eqref{Lipschitz} along directions within the subgroup $H_i$, but may exhibit wild behavior along directions outside $H_i$. In this subsection, we shall modify the Benoist-Quint height functions to ensure that the modified functions provide control even along directions outside $H_i$.

Let $X':=A^+U\Gamma\subset X$. Note that any element in $X'$ can be uniquely parameterized in the form $a_{\btau}u(\bxi)\Gamma$, where $\btau\in\bR^{d-1}_{>0}$ and $\bxi\in[-\frac{1}{2},\frac{1}{2})^{d-1}$. For each $1\leq i\leq d-1$ define $\kappa_i:X'\to [1,\infty)$ and $\phi_i:X'\to Z$ by
$$\kappa_i(a_{\btau}u(\bxi)\Gamma)=e^{(\tau_1+\cdots+\tau_{d-1})-\tau_i},$$
$$\phi_i(a_{\btau}u(\bxi)\Gamma)=\overline{a}_{\tau_1+\cdots+\tau_{d-1}}\overline{u}(\xi_i)\operatorname{SL}_2(\bZ),$$
where $\btau=(\tau_1,\ldots,\tau_{d-1})\in\bR^{d-1}_{>0}$ and $\bxi=(\xi_1,\ldots,\xi_{d-1})\in\bR^{d-1}$. Note that $\kappa_i$ is invariant under $A_i^{+}$ and $U$, and there exists a constant \newconC{3}$\useconC{3}>1$ such that
\eqlabel{kappaLipschitz}{\useconC{3}^{-1}\kappa_i(x)\leq \kappa_i(gx)\leq \useconC{3}\kappa_i(x)}
for any $1\leq i\leq d-1$, $g\in B^{A^+U}(\operatorname{id},1)$, and $x\in X'$. We also define $\kappa:X'\to[1,\infty)$ by $$\kappa(x):=\displaystyle\prod_{i=1}^{d-1}\kappa_i(x).$$

For each $1\leq i\leq d-1$ and $0<\lambda<1$ we define an auxiliary height function $\alpha_{i,\lambda}':X'\to[1,\infty)$ by
$$\alpha_{i,\lambda}'(x)=\kappa_i(x)\operatorname{ht}_{\lambda}(\phi_i(x)).$$

\begin{prop}\label{alpha'}
Let $1\leq i\leq d-1$ and $0<\lambda<1$. There exists \newconC{4}$\useconC{4}>1$ such that the function $\alpha_{i,\lambda}'$ is a proper function with the following properties: 
\begin{enumerate}
    \item (Subharmonic estimate for $\alpha'$) For any $x\in X'$ \eqlabel{alpha'cont}{\int_{-\frac{1}{2}}^{\frac{1}{2}}\alpha_{i,\lambda}'(a_{t\be_i}u_i(s)x)ds\leq e^{-\lambda t}\alpha_{i,\lambda}'(x)+C_{\lambda,t}'\kappa_i(x),}
    \item (Log-Lipshictz property along $A^+U$) For any $g\in B^{A^+U}(\operatorname{id},1)$ and $x\in X'$ \eqlabel{Lipschitz'}{\useconC{4}^{-1}\alpha_{i,\lambda}'(x)\leq \alpha_{i,\lambda}'(gx)\leq \useconC{4}\alpha_{i,\lambda}'(x).}
    \item (Stability along $U_i^{\perp}$) For any $g\in U_i^{\perp}$ and $x\in X'$, $\alpha_{i,\lambda}'(gx)=\alpha_{i,\lambda}'(x)$.
\end{enumerate}
\end{prop}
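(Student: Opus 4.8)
The plan is to verify the three properties by reducing each to the corresponding property already established for $\operatorname{ht}_\lambda$ and for the factor $\kappa_i$, exploiting the product structure $\alpha_{i,\lambda}'(x)=\kappa_i(x)\operatorname{ht}_\lambda(\phi_i(x))$ and the $A_i^+$- and $U$-invariance of $\kappa_i$.

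First, for property (3): given $g\in U_i^\perp$ and $x=a_{\btau}u(\bxi)\Gamma\in X'$, I would check that multiplying on the left by an element of $U_i^\perp$ changes $\bxi$ only in coordinates $\bxi_j$ with $j\neq i$ (and possibly requires re-normalizing into the fundamental domain, which does not affect the $i$-th coordinate), while leaving $\btau$ and $\xi_i$ unchanged. Since both $\kappa_i$ and $\phi_i$ depend only on $\btau$ and $\xi_i$, both factors are unchanged, giving $\alpha_{i,\lambda}'(gx)=\alpha_{i,\lambda}'(x)$. Properness of $\alpha_{i,\lambda}'$ follows because $\operatorname{ht}_\lambda$ is proper on $\operatorname{SL}_2(\bR)/\operatorname{SL}_2(\bZ)$ and $\kappa_i\geq 1$ is proper on $X'$ in the relevant direction; together they force $\alpha_{i,\lambda}'\to\infty$ as one leaves compact subsets of $X'$ in any direction.

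For property (2), the log-Lipschitz estimate: write $\alpha_{i,\lambda}'(gx)=\kappa_i(gx)\operatorname{ht}_\lambda(\phi_i(gx))$. The factor $\kappa_i$ is controlled by \eqref{kappaLipschitz} with constant $\useconC{3}$ for $g\in B^{A^+U}(\operatorname{id},1)$. For the second factor I would observe that $\phi_i$ is a Lipschitz map $X'\to\operatorname{SL}_2(\bR)/\operatorname{SL}_2(\bZ)$ in the sense that $g\in B^{A^+U}(\operatorname{id},1)$ moves $\phi_i(x)$ by an element of $B^{\operatorname{SL}_2(\bR)}(\operatorname{id},C)$ for a fixed $C$ depending only on $d$ (because the map $a_{\btau}u(\bxi)\mapsto \widebar a_{\tau_1+\cdots+\tau_{d-1}}\widebar u(\xi_i)$ is a composition of a linear projection on the Lie algebra level and exponentiation, hence locally Lipschitz); then \eqref{htLipschitz}, iterated a bounded number of times to cover a ball of radius $C$, gives the bound on $\operatorname{ht}_\lambda(\phi_i(gx))$. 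Multiplying the two estimates yields \eqref{Lipschitz'} with $\useconC{4}=\useconC{3}\cdot\useconC{2}^{\lceil C\rceil}$ or similar.

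The main obstacle — and the technical heart of the proof — is property (1), the contraction hypothesis. Here I would use that $\kappa_i$ is invariant under both $A_i^+$ and $U$ (hence under $a_{t\be_i}u_i(s)$), so that $\kappa_i(a_{t\be_i}u_i(s)x)=\kappa_i(x)$; pulling this constant out of the integral reduces the claim to
\[
\int_{-\frac12}^{\frac12}\operatorname{ht}_\lambda\!\bigl(\phi_i(a_{t\be_i}u_i(s)x)\bigr)\,ds\leq e^{-\lambda t}\operatorname{ht}_\lambda(\phi_i(x))+C_{\lambda,t}'.
\]
The key computation is that $\phi_i$ intertwines the action of $a_{t\be_i}u_i(s)$ on $X'$ with the action of $\widebar a_{\,?}\,\widebar u(s)$ on $\operatorname{SL}_2(\bR)/\operatorname{SL}_2(\bZ)$: acting by $a_{t\be_i}$ adds $t$ to $\tau_i$, hence adds $t$ to $\tau_1+\cdots+\tau_{d-1}$, matching $\widebar a_t$; acting by $u_i(s)$ adds $s$ to $\xi_i$, matching $\widebar u(s)$. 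Thus $\phi_i(a_{t\be_i}u_i(s)x)=\widebar a_t\widebar u(s)\phi_i(x)$, and the displayed inequality is exactly \eqref{htcont} with $z=\phi_i(x)$. I expect the only subtlety to be bookkeeping around the fundamental-domain normalization of $\bxi$ (which is harmless since $\operatorname{ht}_\lambda$ and $\phi_i$ are defined on the quotient by $\operatorname{SL}_2(\bZ)$) and keeping track that the additive constant acquires the harmless multiplicative factor $\kappa_i(x)$ as stated.
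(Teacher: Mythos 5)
Your three-part strategy — pull $\kappa_i$ out of the integral for (1) using its $A_i^{+}U$-invariance, control the two factors separately for (2), and check $\btau$- and $\xi_i$-dependence for (3) — matches the paper's proof, and the treatment of (3) is correct. Two points, however, are too hasty (in one case the paper's own writeup shares the gap).

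\textbf{Properness.} The paper proves a concrete Claim: if $\alpha_{i,\lambda}'(a_{\btau}u(\bxi)\Gamma)\leq\mathsf{h}$, then every nonzero $v\in a_{\btau}u(\bxi)\bZ^d$ satisfies $\|v\|\geq\mathsf{h}^{-1/\lambda}$, and properness then follows from Mahler's criterion. The key step is the matrix inequality \eqref{TwoDimensionalBound} together with the observation that each $v_{(m_1,\dots,m_d)}$ with $m_d\neq 0$ dominates the two-dimensional vector $(e^{\tau_i}(m_i+m_d\xi_i),e^{-T}m_d)$. Your phrase "$\kappa_i$ proper in the relevant direction" does not address what happens when $\btau$ escapes along $\be_i$ only (where $\kappa_i$ stays bounded and one must use the $\operatorname{ht}_\lambda$ factor); you need to actually carry out the lattice-vector estimate.

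\textbf{Equivariance.} Writing $x=a_{\btau}u(\bxi)\Gamma$ with $T=\sum_j\tau_j$, the conjugation $a_{\btau}^{-1}u_i(s)a_{\btau}=u_i\!\big(e^{-(\tau_i+T)}s\big)$ shows that left multiplication by $u_i(s)$ changes $\xi_i$ by $e^{-(\tau_i+T)}s$, not by $s$; and $\widebar{u}(\sigma)\widebar{a}_T\widebar{u}(\xi_i)=\widebar{a}_T\widebar{u}\!\big(\xi_i+e^{-2T}\sigma\big)$ changes $\xi_i$ by $e^{-2T}\sigma$. Matching rates gives $\sigma=e^{T-\tau_i}s=\kappa_i(x)\,s$, so the actual intertwining is
\[
\phi_i\big(a_{t\be_i}u_i(s)x\big)=\widebar{a}_t\,\widebar{u}\!\big(\kappa_i(x)\,s\big)\,\phi_i(x),
\]
not $\widebar{a}_t\widebar{u}(s)\phi_i(x)$ as you assert. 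After pulling $\kappa_i(x)$ out of the integral one therefore has $\kappa_i(x)^{-1}\int_{-\kappa_i(x)/2}^{\kappa_i(x)/2}\operatorname{ht}_\lambda(\widebar{a}_t\widebar{u}(s')\phi_i(x))\,ds'$, which is \emph{not} what \eqref{htcont} directly bounds. The same computation shows that for $g\in B^{A^{+}U}(\operatorname{id},1)$ one has $\phi_i(gx)=h\,\phi_i(x)$ where the $\widebar{u}$-component of $h$ has parameter of order $\kappa_i(x)$, which is unbounded on $X'$, so the claim that $g$ moves $\phi_i(x)$ by a bounded $\operatorname{SL}_2(\bR)$-element (and hence that \eqref{htLipschitz} applies) is not justified. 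The paper's proof of (1) and its one-line proof of (2) share exactly these gaps; they disappear if one defines $\phi_i$ with $\widebar{a}_{(\tau_i+T)/2}$ in place of $\widebar{a}_T$, which makes the intertwining exact and the $\operatorname{SL}_2$-perturbation bounded while the properness Claim still holds. So your proposal is faithful to the paper's method, but as written both proofs leave the $\kappa_i(x)$-rescaling of the $\widebar{u}$-parameter unaddressed, and you should state explicitly where it enters and why it is harmless.
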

\begin{proof}
    We first show that $\alpha_{i,\lambda}'$ is proper. By Mahler's compactness criterion, it suffices to prove the following claim:

    \textbf{Claim.} Let $\btau\in\bR^{d-1}_{>0}$, $\bxi\in\bR^{d-1}$, and $\mathsf{h}\geq1$. If $\alpha_{i,\lambda}'(a_{\btau}u(\bxi)\Gamma)\leq \mathsf{h}$ then $\|v\|\geq \mathsf{h}^{-\frac{1}{\lambda}}$ for any $v\in a_{\btau}u(\bxi)\bZ^d$.

    \begin{proof}[Proof of \textbf{Claim}]
        If $\alpha_{i,\lambda}'(a_{\btau}u(\bxi)\Gamma)\leq \mathsf{h}$, then $\operatorname{ht}_\lambda(\phi_i(a_{\btau}u(\bxi)\Gamma))\leq \kappa_i(x)^{-1}\mathsf{h}$. Note that the nonzero vectors in $\phi_i(a_{\btau}u(\bxi)\Gamma)$ can be written as $\left(\begin{matrix}
        e^{\tau_1+\cdots+\tau_{d-1}}(m+n\xi_i)\\ e^{-(\tau_1+\cdots+\tau_{d-1})}n
    \end{matrix}\right)$, where $(m,n)\in\bZ^2\setminus\set{0}$. It follows that for any $(m,n)\in\bZ^2\setminus\set{0}$,
    $$\left\|\left(\begin{matrix}
        e^{\tau_1+\cdots+\tau_{d-1}}(m+n\xi_i)\\ e^{-(\tau_1+\cdots+\tau_{d-1})}n
    \end{matrix}\right)\right\|\geq \big(\kappa_i(x)^{-1}\sh\big)^{-\frac{1}{\lambda}}\geq (e^{-(\tau_1+\cdots+\tau_{d-1})+\tau_i}\mathsf{h})^{-\frac{1}{\lambda}},$$
    hence
    \eqlabel{TwoDimensionalBound}{\left\|\left(\begin{matrix}
        e^{\tau_i}(m+n\xi_i)\\ e^{-(\tau_1+\cdots+\tau_{d-1})}n
    \end{matrix}\right)\right\|\geq e^{-(\tau_1+\cdots+\tau_{d-1})+\tau_i}\left\|\left(\begin{matrix}
        e^{\tau_1+\cdots+\tau_{d-1}}(m+n\xi_i)\\ e^{-(\tau_1+\cdots+\tau_{d-1})}n
    \end{matrix}\right)\right\| \geq \mathsf{h}^{-\frac{1}{\lambda}}.}
    Observe that any nonzero vector in $a_{\btau}u(\bxi)\bZ^d$ can be written as $$v_{(m_1,\ldots,m_d)}=\left(\begin{matrix}
        e^{\tau_1}(m_1+m_d\xi_1)\\ \vdots \\ e^{\tau_{d-1}}(m_{d-1}+m_d\xi_{d-1}) \\  e^{-(\tau_1+\cdots+\tau_{d-1})}m_d
    \end{matrix}\right)$$ for some $(m_1,\ldots,m_d)\in\bZ^d\setminus\set{0}$. If $m_d=0$, then we have $\|v_{(m_1,\ldots,m_d)}\|\geq 1$. If $m_d\neq 0$, then $$\|v_{(m_1,\ldots,m_d)}\|\geq \left\|\left(\begin{matrix}
        e^{\tau_i}(m_i+m_d\xi_i)\\ e^{-(\tau_1+\cdots+\tau_{d-1})}m_d
    \end{matrix}\right)\right\|\geq\mathsf{h}^{-\frac{1}{\lambda}}$$ by \eqref{TwoDimensionalBound}. We thus conclude that $\|v\|\geq \mathsf{h}^{-\frac{1}{\lambda}}$ for any $v\in a_{\btau}u(\bxi)\bZ^d$.
    \end{proof}

    We now verify the properties (1)-(3).

    (1) It follows from the definition of $\alpha_i'$ and \eqref{htcont}:
    \eq{\begin{aligned}
        \int_{-\frac{1}{2}}^{\frac{1}{2}}\alpha_{i,\lambda}'(a_{t\be_i}u_i(s)x)ds&=\int_{-\frac{1}{2}}^{\frac{1}{2}}\kappa_i(a_{t\be_i}u_i(s)x)\operatorname{ht}_\lambda(\phi_i(a_{t\be_i}u_i(s)x))ds\\
        &=\kappa_i(x)\int_{-\frac{1}{2}}^{\frac{1}{2}}\operatorname{ht}_\lambda(\phi_i(a_{t\be_i}u_i(s)x))ds\\
        &\leq \kappa_i(x)(e^{-\lambda t}\operatorname{ht}_\lambda(\phi_i(x))+C_{\lambda,t}')\\&=e^{-\lambda t}\alpha_{i,\lambda}'(x)+C_{\lambda,t}'\kappa_i(x).
    \end{aligned}}

    (2) This is a direct consequence of \eqref{htLipschitz} and \eqref{kappaLipschitz}.

    (3) One can easily check that both $\kappa_i$ and $\phi_i$ are invariant under $U_i^{\perp}$.
\end{proof}

The following lemma gives an estimate between the height functions $\alpha_{i,\lambda,t}$ and $\alpha_{i,\lambda}'$.
\begin{lem}\label{alpha'upperbound}
     For any $1\leq i\leq d-1$, $0<\lambda<1$, $t\ge T_{\lambda}$ and $x\in X'$ we have \eq{\alpha_{i,\lambda,t}(x)\geq \eps_{\lambda,t}^{d-1}\kappa_i(x)^{-2}\alpha_{i,\lambda}'(x).}
\end{lem}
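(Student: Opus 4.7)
The plan is to test the maximum defining $\alpha_{i,\lambda,t}(x)$ against a single, carefully chosen $x$-integral decomposable $(d-1)$-form $v\in\bigwedge^{d-1}\bR^d$; the choice will force $\pi_{i,d-1,0}(v)=0$ automatically, and a short bookkeeping of the diagonal weights will show that $\varphi_{i,d-1,\eps_{\lambda,t}}(v)$ already beats $\eps_{\lambda,t}^{d-1}\kappa_i(x)^{-2}\alpha_{i,\lambda}'(x)$. Since $\alpha_{i,\lambda,t}(x)$ is by definition the maximum of $\varphi_{i,k,\eps_{\lambda,t}}$ over all degrees and all $x$-integral monomials, this immediately yields the lemma.

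Writing $x=a_\btau u(\bxi)\Gamma$ with $\btau\in\bR^{d-1}_{>0}$ and $\bxi\in[-\tfrac12,\tfrac12)^{d-1}$, pick a primitive pair $(m^*,n^*)\in\bZ^2$ realising the shortest nonzero vector of the $2$-dimensional lattice $\phi_i(x)$, so that $\operatorname{ht}_\lambda(\phi_i(x))=\|W_1\|^{-\lambda}$ with
\[
W_1:=\begin{pmatrix}e^{\tau_1+\cdots+\tau_{d-1}}(m^*+n^*\xi_i)\\ e^{-(\tau_1+\cdots+\tau_{d-1})}n^*\end{pmatrix}.
\]
For $\{j_1,\ldots,j_{d-2}\}:=\{1,\ldots,d-1\}\setminus\{i\}$, set
\[
v\,:=\,\bigl(a_\btau u(\bxi)\be_{j_1}\bigr)\wedge\cdots\wedge\bigl(a_\btau u(\bxi)\be_{j_{d-2}}\bigr)\wedge\bigl(a_\btau u(\bxi)(m^*\be_i+n^*\be_d)\bigr).
\]
The preimage vectors $\be_{j_1},\ldots,\be_{j_{d-2}},\,m^*\be_i+n^*\be_d\in\bZ^d$ have only two nontrivial $(d{-}1)\times(d{-}1)$ minors, obtained by omitting column $i$ or column $d$, and equal to $\pm n^*$ and $\pm m^*$ respectively. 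Hence $\gcd(m^*,n^*)=1$ guarantees primitivity, so $v$ is $x$-integral.

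Since $a_\btau u(\bxi)\be_{j_\ell}=e^{\tau_{j_\ell}}\be_{j_\ell}\in W_i^\perp$, the first $d-2$ factors of $v$ span the one-dimensional space $\bigwedge^{d-2}W_i^\perp$; wedging them with the last factor kills every $\be_j$-component with $j\neq i,d$ and leaves only a linear combination of $\be_{j_1}\wedge\cdots\wedge\be_{j_{d-2}}\wedge\be_i$ and $\be_{j_1}\wedge\cdots\wedge\be_{j_{d-2}}\wedge\be_d$. Both lie in the highest-weight summand $W_i\wedge\bigwedge^{d-2}W_i^\perp=V_{i,d-1,1}$, so $\pi_{i,d-1,0}(v)=0$ and the cutoff $\|\pi_{i,d-1,0}(v)\|<\eps_{\lambda,t}^{d-1}$ in the definition of $\varphi_{i,d-1,\eps_{\lambda,t}}$ is trivially met. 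A direct evaluation then gives $\|v\|=\kappa_i(x)\,\|W_2\|$, where $W_2:=(e^{\tau_i}(m^*+n^*\xi_i),\,e^{-(\tau_1+\cdots+\tau_{d-1})}n^*)^{\top}$, and $e^{\tau_i}\le e^{\tau_1+\cdots+\tau_{d-1}}$ forces $\|W_2\|\le\|W_1\|$, whence $\|W_2\|^{-\lambda}\ge\operatorname{ht}_\lambda(\phi_i(x))$.

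Combining this with $\kappa_i(x)^{-\lambda}\ge\kappa_i(x)^{-1}$ (valid since $\kappa_i(x)\ge 1$ and $\lambda<1$) and the identity $\alpha_{i,\lambda}'(x)=\kappa_i(x)\operatorname{ht}_\lambda(\phi_i(x))$ produces
\[
\alpha_{i,\lambda,t}(x)\,\ge\,\varphi_{i,d-1,\eps_{\lambda,t}}(v)\,=\,\eps_{\lambda,t}^{d-1}\|v\|^{-\lambda}\,\ge\,\eps_{\lambda,t}^{d-1}\kappa_i(x)^{-2}\alpha_{i,\lambda}'(x),
\]
which is the asserted bound. The only genuinely subtle step is pinpointing the correct test monomial; once the $(d{-}1)$-wedge above is fixed, the vanishing of $\pi_{i,d-1,0}(v)$ is automatic, and all remaining work reduces to a single determinantal primitivity check together with a one-line comparison of the two-dimensional norms $\|W_1\|$ and $\|W_2\|$ via the weight factor $\kappa_i(x)$.
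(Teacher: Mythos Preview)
Your proof is correct and follows essentially the same route as the paper's: both test $\alpha_{i,\lambda,t}(x)$ against an $x$-integral $(d-1)$-monomial whose $V_{i,d-1,0}$-component vanishes, corresponding in the dual basis $\widehat{\be}_j$ to an integer vector supported on the $i$-th and $d$-th coordinates. The paper parametrizes all such monomials and takes the supremum over $(m_i,m_d)\in\bZ^2\setminus\{0\}$, while you directly pick the optimizer $(m^*,n^*)$ realizing the shortest vector of $\phi_i(x)$; your factorization $\|v\|=\kappa_i(x)\|W_2\|$ is exactly the paper's step $\|\pi_{i,d-1,1}(\widehat{v})\|=e^{(\tau_1+\cdots+\tau_{d-1})-\tau_i}\|(\,\cdot\,)\|$ rewritten, and the final chain of inequalities is identical.
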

\begin{proof}
By definition of $\alpha_{i,\lambda,t}$ it suffices to prove that $$\varphi_{i,d-1,\eps_{\lambda,t}}(x)\geq \eps_{\lambda,t}^{d-1}\kappa_i(x)^{-2}\alpha_{i,\lambda}'(x).$$ For $1\leq i\leq d$ let us write $\widehat{\be}_i=\bigwedge_{j\neq i}\be_j$. It follows from \eqref{eq:Vdef} that $V_{i,d-1,0}$ is the $(d-2)$-dimensional subspace of $\bigwedge^{d-1}\bR^{d}$ spanned by $\set{\widehat{\be}_1,\ldots,\widehat{\be}_{d-1}}\setminus\set{\widehat{\be}_{i}}$ and $V_{i,d-1,1}$ is the $2$-dimensional subspace of $\bigwedge^{d-1}\bR^{d}$ spanned by $\set{\widehat{\be}_i,\widehat{\be}_{d}}$.

Let $x=a_{\btau}u(\bxi)\Gamma$ with $\btau=(\tau_1,\ldots,\tau_{d-1})\in \bR^{d-1}_{>0}$ and $\bxi=(\xi_1,\ldots,\xi_{d-1})\in \bR^{d-1}$. By straightforward matrix calculation one can check that any non-zero $x$-integral monomial in $\bigwedge^{d-1}\bR^{d}$ is a $\bZ$-linear combination of $$\set{e^{-\tau_1}\widehat{\be}_1+e^{\tau_1+\cdots+\tau_{d-1}}\widehat{\be}_d,\ldots,e^{-\tau_{d-1}}\widehat{\be}_{d-1}+e^{\tau_1+\cdots+\tau_{d-1}}\widehat{\be}_d,e^{\tau_1+\cdots+\tau_{d-1}}\widehat{\be}_d},$$
hence can be written as
$$\widehat{v}_{(m_1,\ldots,m_d)}=\left(\begin{matrix}
        e^{-\tau_1}m_1\\ \vdots \\ e^{-\tau_{d-1}}m_{d-1} \\  e^{\tau_1+\cdots+\tau_{d-1}}(m_d+\xi_1m_1+\cdots+\xi_{d-1}m_{d-1})
    \end{matrix}\right)$$
    with respect to the basis $\set{\widehat{\be}_1,\ldots,\widehat{\be}_d}$. Note that $\pi_{i,d-1,0}(\widehat{v}_{(m_1,\ldots,m_d)})=0$ if $m_j=0$ for all $j\neq i,d$. It follows that
\eq{\varphi_{i,d-1,\eps_{\lambda,t}}(x)\geq \eps_{\lambda,t}^{d-1}\sup\|\pi_{i,d-1,1}(\widehat{v}_{(m_1,\ldots,m_d)})\|^{-\lambda}}
where the supremum is taken over all $(m_1,\cdots,m_d)\in\bZ^{d}\setminus\set{0}$ with $m_j=0$ for all $j\neq i,d$. Hence,
\eq{\begin{aligned}\varphi_{i,d-1,\eps_{\lambda,t}}(x)&\geq \eps_{\lambda,t}^{d-1}\sup_{(m_i,m_d)\in\bZ^2\setminus\set{0}}\left\|\left(\begin{matrix}e^{-\tau_i}m_i \\ e^{\tau_1+\cdots+\tau_{d-1}}(m_d+\xi_im_i) \end{matrix}\right)\right\|^{-\lambda}\\&\geq\eps_{\lambda,t}^{d-1}\sup_{(m_i,m_d)\in\bZ^2\setminus\set{0}}\left\|e^{(\tau_1+\cdots+\tau_{d-1})-\tau_i}\left(\begin{matrix}e^{-(\tau_1+\cdots+\tau_{d-1})}m_i \\ e^{\tau_1+\cdots+\tau_{d-1}}(m_d+\xi_im_i) \end{matrix}\right)\right\|^{-\lambda}\\&=\eps_{\lambda,t}^{d-1}e^{-\lambda\{(\tau_1+\cdots+\tau_{d-1})-\tau_i\}}\operatorname{ht}_\lambda(\overline{a}_{\tau_1+\cdots+\tau_{d-1}}\overline{u}(\xi_i)\Gamma)\\&\geq \eps_{\lambda,t}^{d-1}\kappa_i(x)^{-1}\operatorname{ht}_\lambda(\phi_i(x))=\eps_{\lambda,t}^{d-1}\kappa_i(x)^{-2}\alpha_{i,\lambda}'(x), \end{aligned}}
completing the proof.
\end{proof}

We are now ready to construct modified Benoist-Quint functions. Let \newconC{5}$\useconC{5}:=\max\set{\useconC{1},\useconC{2}}$,  $t_\lambda:=\lceil\max\set{T_\lambda,T_{\lambda}',\frac{\log (10\useconC{5})}{\lambda-\lambda^2}}\rceil$, and \newconE{101}$\useconE{101}:=\max\set{5C_{\lambda,t_\lambda},5C_{\lambda,t_\lambda}',\eps_{\lambda,t_\lambda}^{-(d-1)}}$. For $0<\lambda<1$ and $1\leq i\leq d-1$ we define $\widetilde{\alpha}_{i,\lambda}:X'\to[1,\infty)$ by
\eqlabel{alphatildedefinition}{\widetilde{\alpha}_{i,\lambda}(x)=\begin{cases}\max\set{\alpha_{i,\lambda,t_{\lambda}}(x),1} & \text{if $\alpha_{i,\lambda,t_\lambda}(x)\leq \useconE{101}\kappa_i(x)$,}\\ \min\set{\alpha_{i,\lambda,t_{\lambda}}(x),\alpha_{i,\lambda}'(x)} & \text{otherwise}.\end{cases}}

\begin{prop}\label{alphatilde}
    Let $0<\lambda<1$, $1\leq i\leq d-1$, and $t\in t_{\lambda}\bN$. The function $\widetilde{\alpha}_{i,\lambda}$ is a proper lower semi-continuous function with the following properties: 
\begin{enumerate}
    \item For any $x\in X'$
    \eqlabel{alphatildeupperbound}{\int_{-\frac{1}{2}}^{\frac{1}{2}}\widetilde{\alpha}_{i,\lambda}(a_{t\be_i}u_i(s)x)ds\leq \useconE{101}\widetilde{\alpha}_{i,\lambda}(x).}
    \item For any $x\in X'$ with $\widetilde{\alpha}_{i,\lambda}(x)\ge \useconE{101}e^{t}$, \eqlabel{alphatildecont}{\int_{-\frac{1}{2}}^{\frac{1}{2}}\widetilde{\alpha}_{i,\lambda}(a_{t\be_i}u_i(s)x)ds\leq 2e^{-\lambda^2 t}\widetilde{\alpha}_{i,\lambda}(x).}
    \item For any $g\in B^{A_i^{+}U_i}(\operatorname{id},1)$ and $x\in X'$ \eqlabel{alphatildeLipschitz}{\useconC{5}^{-1}\widetilde{\alpha}_{i,\lambda}(x)\leq \widetilde{\alpha}_{i,\lambda}(gx)\leq \useconC{5}\widetilde{\alpha}_{i,\lambda}(x).}
    \item For any $g\in U_i^{\perp}$ and $x\in X'$, \eqlabel{LipschitzOrthogonal}{(\useconE{101}\kappa_i(x)^{2})^{-1}\widetilde{\alpha}_{i,\lambda}(x)\leq \widetilde{\alpha}_{i,\lambda}(gx)\leq \useconE{101}\kappa_i(x)^{2}\widetilde{\alpha}_{i,\lambda}(x).}
\end{enumerate}
\end{prop}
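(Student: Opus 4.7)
The plan is to verify each of the four conclusions separately, with the piecewise definition of $\widetilde{\alpha}_{i,\lambda}$ dictating a case analysis based on whether $\alpha_{i,\lambda,t_\lambda}(x)\leq\kappa_i(x)\useconE{101}$ (case~$1$) or the reverse strict inequality holds (case~$2$). For lower semi-continuity and properness, note that $\alpha_{i,\lambda,t_\lambda}$ is lsc while $\alpha_{i,\lambda}'$ and $\kappa_i$ are continuous, so on the interior of case~$1$, $\widetilde{\alpha}_{i,\lambda}=\max\{\alpha_{i,\lambda,t_\lambda},1\}$ is a max of lsc functions and on case~$2$, $\widetilde{\alpha}_{i,\lambda}=\min\{\alpha_{i,\lambda,t_\lambda},\alpha_{i,\lambda}'\}$ has superlevel sets that are intersections of open sets; the value can only decrease when crossing the boundary, so lsc is preserved globally. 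Properness follows since $\widetilde{\alpha}_{i,\lambda}(x_n)\to\infty$ forces $\alpha_{i,\lambda,t_\lambda}(x_n)\to\infty$ in case~$1$, and both $\alpha_{i,\lambda,t_\lambda}(x_n)$ and $\alpha_{i,\lambda}'(x_n)$ to $\infty$ in case~$2$, each of which implies $x_n\to\infty$ by properness of the underlying functions.

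For the integral bounds (1) and (2), I would use the universal pointwise majoration $\widetilde{\alpha}_{i,\lambda}(y)\leq\alpha_{i,\lambda,t_\lambda}(y)+1$ together with the case-$2$ refinement $\widetilde{\alpha}_{i,\lambda}(y)\leq\alpha_{i,\lambda}'(y)$. Applying the contractions of Proposition~\ref{alpha}(1) for $\alpha_{i,\lambda,t_\lambda}$ and Proposition~\ref{alpha'}(1) for $\alpha_{i,\lambda}'$, and invoking Lemma~\ref{alpha'upperbound} to compare $\alpha_{i,\lambda,t_\lambda}(x)$ and $\alpha_{i,\lambda}'(x)$ to $\widetilde{\alpha}_{i,\lambda}(x)$ within the case distinction, one obtains an estimate of the schematic form
\[\int_{-1/2}^{1/2}\widetilde{\alpha}_{i,\lambda}(a_{t\be_i}u_i(s)x)\,ds\leq e^{-\lambda t}\cdot(\text{coefficient})\cdot\widetilde{\alpha}_{i,\lambda}(x)+C_{\lambda,t}+C_{\lambda,t}'\kappa_i(x)+1.\]
For (1), the choice $\useconE{101}\geq\max\{5C_{\lambda,t_\lambda},5C_{\lambda,t_\lambda}',\eps_{\lambda,t_\lambda}^{-(d-1)}\}$ is loose enough to swallow these errors into a single factor $\useconE{101}\widetilde{\alpha}_{i,\lambda}(x)$. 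For (2), the stronger hypothesis $\widetilde{\alpha}_{i,\lambda}(x)\geq\useconE{101}e^t$ makes the leading contraction term dominate the errors by the factor $e^{(\lambda-\lambda^2)t}$, which is $\geq 10\useconC{5}$ at every $t\in t_\lambda\bN$ by the choice $t_\lambda\geq\log(10\useconC{5})/(\lambda-\lambda^2)$; this upgrades the rate from $e^{-\lambda t}$ to $2e^{-\lambda^2 t}$.

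For the log-Lipschitz (3), each of $\alpha_{i,\lambda,t_\lambda}$, $\alpha_{i,\lambda}'$, and $\kappa_i$ is log-Lipschitz along $A_i^+U_i\subset H_i\cap A^+U$ with the constants recorded in Proposition~\ref{alpha}(2), Proposition~\ref{alpha'}(2), and \eqref{kappaLipschitz}. These properties pass to $\widetilde{\alpha}_{i,\lambda}$ via max/min when $x$ and $gx$ lie in the same case; at boundary crossings, the relation $\alpha_{i,\lambda,t_\lambda}\asymp\kappa_i\useconE{101}$ combined with Lemma~\ref{alpha'upperbound} pins the two candidate values of $\widetilde{\alpha}_{i,\lambda}$ to agree up to a universal constant, absorbed into $\useconC{5}$. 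For the weaker Lipschitz (4), the key inputs are the $U_i^\perp$-invariance of $\alpha_{i,\lambda}'$ (Proposition~\ref{alpha'}(3)), the $U$-invariance of $\kappa_i$, and the pointwise comparison $\alpha_{i,\lambda}'(y)\leq\useconE{101}\kappa_i(y)^2\widetilde{\alpha}_{i,\lambda}(y)$, obtained from Lemma~\ref{alpha'upperbound} in case~$2$ (where $\widetilde{\alpha}_{i,\lambda}\leq\alpha_{i,\lambda,t_\lambda}$ and $\alpha_{i,\lambda}'\leq\eps_{\lambda,t_\lambda}^{-(d-1)}\kappa_i^2\alpha_{i,\lambda,t_\lambda}$) and from the case-$1$ constraint $\alpha_{i,\lambda,t_\lambda}\leq\kappa_i\useconE{101}$ with $\widetilde{\alpha}_{i,\lambda}\geq 1$ otherwise. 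In case~$2$ at $gx$ this gives $\widetilde{\alpha}_{i,\lambda}(gx)\leq\alpha_{i,\lambda}'(gx)=\alpha_{i,\lambda}'(x)\leq\useconE{101}\kappa_i(x)^2\widetilde{\alpha}_{i,\lambda}(x)$, and in case~$1$ at $gx$ the defining constraint yields $\widetilde{\alpha}_{i,\lambda}(gx)\leq\kappa_i(gx)\useconE{101}=\kappa_i(x)\useconE{101}\leq\useconE{101}\kappa_i(x)^2\widetilde{\alpha}_{i,\lambda}(x)$, using $\kappa_i\geq 1$.

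The principal obstacle is the sharpened rate $e^{-\lambda^2 t}$ in (2). The argument must simultaneously absorb the Benoist-Quint error $C_{\lambda,t_\lambda}$ and the cusp-proportional error $C_{\lambda,t_\lambda}'\kappa_i(x)$ from the $\alpha'$-contraction into the leading term, and the calibration of $\useconE{101}$ (in particular its dominance over $5C_{\lambda,t_\lambda}$ and $5C_{\lambda,t_\lambda}'$) together with the threshold $t_\lambda\geq\log(10\useconC{5})/(\lambda-\lambda^2)$ is engineered precisely to make this absorption uniform in $t\in t_\lambda\bN$.
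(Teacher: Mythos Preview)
Your outline for (3) and (4) is in the right spirit and matches the paper's approach, but there is a genuine gap in your treatment of (1) and (2): you never perform the iteration needed to pass from the single-step contraction at time $t_\lambda$ to a contraction at general $t\in t_\lambda\bN$.

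The function entering the definition of $\widetilde{\alpha}_{i,\lambda}$ is $\alpha_{i,\lambda,t_\lambda}=\alpha_{i,\eps_{\lambda,t_\lambda}}$, and Proposition~\ref{alpha}(1) gives the contraction \eqref{alphacont} for this function \emph{only at the step $t_\lambda$}, with additive constant $C_{\lambda,t_\lambda}$. For $t=mt_\lambda$ with $m>1$ one cannot simply invoke Proposition~\ref{alpha} at time $t$: that would produce a bound for the \emph{different} function $\alpha_{i,\lambda,t}=\alpha_{i,\eps_{\lambda,t}}$, which is not the one used to build $\widetilde{\alpha}_{i,\lambda}$. Your schematic estimate, with leading term $e^{-\lambda t}$ and constants $C_{\lambda,t}$, $C_{\lambda,t}'$, reflects exactly this confusion. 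The paper instead partitions $[-\tfrac12,\tfrac12]$ at scale $e^{-2mt_\lambda}$ and applies \eqref{alphacont} once per step, picking up a Lipschitz factor $2\useconC{1}$ each time; iterating yields
\[
\int_{-\frac12}^{\frac12}\alpha_{i,\lambda,t_\lambda}(a_{t\be_i}u_i(s)x)\,ds\leq (2\useconC{1}e^{-\lambda t_\lambda})^{t/t_\lambda}\,\useconC{1}\,\alpha_{i,\lambda,t_\lambda}(x)+4C_{\lambda,t_\lambda},
\]
and the choice $t_\lambda\geq\log(10\useconC{5})/(\lambda-\lambda^2)$ is what forces $(2\useconC{1}e^{-\lambda t_\lambda})^{t/t_\lambda}\leq e^{-\lambda^2 t}$. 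The same iteration is carried out for $\alpha_{i,\lambda}'$. Thus the exponent $\lambda^2$ in (2) is \emph{produced} by the accumulated Lipschitz factors in the iteration, not by ``upgrading'' a rate $e^{-\lambda t}$ via error absorption as you describe; once the iterated bounds with rate $e^{-\lambda^2 t}$ and fixed additive errors $4C_{\lambda,t_\lambda}$, $4C_{\lambda,t_\lambda}'\kappa_i(x)$ are in hand, the hypothesis $\widetilde{\alpha}_{i,\lambda}(x)\geq\useconE{101}e^t$ is used only to absorb those fixed errors into a second copy of $e^{-\lambda^2 t}\widetilde{\alpha}_{i,\lambda}(x)$.
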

\begin{proof}
For any $m\in\bN$ we may cover $[-\frac{1}{2},\frac{1}{2}]$ by disjoint intervals $I_1,\ldots, I_{\lceil e^{2t_\lambda m}\rceil}$ of length $e^{-2t_{\lambda}m}$, where $s_k=-\frac{1}{2}+(k-\frac{1}{2})e^{-2t_{\lambda}m}$ and $I_k=[s_k-\tfrac{1}{2}e^{-2t_\lambda m},s_k+\frac{1}{2}e^{-2t_\lambda m}]$ for $1\leq k\leq \lceil e^{2t_\lambda m}\rceil$. 

Then by Proposition \ref{alpha} we have
\eqlabel{eq:inductivecont1}{\begin{aligned}
    \int_{-\frac{1}{2}}^{\frac{1}{2}}\alpha_{i,\lambda,t_\lambda}(a_{(m+1)t_\lambda\be_i}&u_i(s)x)ds \leq e^{-2t_\lambda m}\displaystyle\sum_{k=1}^{\lceil e^{2t_\lambda m}\rceil}\int_{-\frac{1}{2}}^{\frac{1}{2}}\alpha_{i,\lambda,t_\lambda}(a_{t_\lambda }u(s)a_{mt_\lambda\be_i}u_i(s_k)x)ds\\&\leq e^{-2t_\lambda m}\displaystyle\sum_{k=1}^{\lceil e^{2t_\lambda m}\rceil}\big(e^{-\lambda t_\lambda}\alpha_{i,\lambda,t_\lambda}(a_{mt_\lambda\be_i}u_i(s_k)x)+C_{\lambda,t_{\lambda}}\big)\\
    &\leq 2\useconC{1}e^{-\lambda t_\lambda} \int_{-\frac{1}{2}}^{\frac{1}{2}}\alpha_{i,\lambda,t_\lambda}(a_{mt_\lambda\be_i}u_i(s)x)ds+2C_{\lambda,t_\lambda}.
\end{aligned}}
Applying \eqref{eq:inductivecont1} repeatedly for $m=0,1,\ldots, \frac{t}{t_\lambda}-1$, we get
\eqlabel{eq:inductivecont1'}{\begin{aligned}\int_{-\frac{1}{2}}^{\frac{1}{2}}\alpha_{i,\lambda,t_\lambda}(a_{t\be_i}u_i(s)x)ds &\leq (2\useconC{1} e^{-\lambda t_\lambda})^{\frac{t}{t_\lambda}}\int_{-\frac{1}{2}}^{\frac{1}{2}}\alpha_{i,\lambda,t_\lambda}(u_i(s)x)ds+2C_{\lambda,t_\lambda}\sum_{m=0}^{\frac{t}{t_\lambda}-1}(2\useconC{1}e^{-\lambda t_{\lambda}})^{-m}\\&\leq \useconC{1}(2\useconC{1} e^{-\lambda t_\lambda})^{\frac{t}{t_\lambda}}\alpha_{i,\lambda,t_\lambda}(x)+\frac{2C_{\lambda,t_\lambda}}{1-2\useconC{1}e^{-\lambda t_\lambda}}\\&\leq e^{-\lambda^2 t}\alpha_{i,\lambda,t_\lambda}(x)+4C_{\lambda,t_\lambda}.\end{aligned}}
Similarly, by Proposition \ref{alpha'} we also have
\eqlabel{eq:inductivecont2}{\begin{aligned}
    \int_{-\frac{1}{2}}^{\frac{1}{2}}\alpha_{i,\lambda}'(a_{(m+1)t_\lambda\be_i}&u_i(s)x)ds \leq e^{-2t_\lambda m}\displaystyle\sum_{k=1}^{\lceil e^{2t_\lambda m}\rceil}\int_{-\frac{1}{2}}^{\frac{1}{2}}\alpha_{i,\lambda}'(a_{t_\lambda }u(s)a_{mt_\lambda\be_i}u_i(s_k)x)ds\\&\leq e^{-2t_\lambda m}\displaystyle\sum_{k=1}^{\lceil e^{2t_\lambda m}\rceil}\big(e^{-\lambda t_\lambda}\alpha_{i,\lambda}'(a_{mt_\lambda\be_i}u_i(s_k)x)+C_{\lambda,t_{\lambda}}'\kappa_i(x)\big)\\
    &\leq 2\useconC{4} e^{-\lambda t_\lambda} \int_{-\frac{1}{2}}^{\frac{1}{2}}\alpha_{i,\lambda}'(a_{mt_\lambda\be_i}u_i(s)x)ds+2C_{\lambda,t_\lambda}'\kappa_i(x).
\end{aligned}}
Applying \eqref{eq:inductivecont2} repeatedly for $m=0,1,\ldots, \frac{t}{t_\lambda}-1$, we get
\eqlabel{eq:inductivecont2'}{\begin{aligned}\int_{-\frac{1}{2}}^{\frac{1}{2}}\alpha_{i,\lambda}'(a_{t\be_i}u_i(s)x)ds &\leq \useconC{1}(2\useconC{1} e^{-\lambda t_\lambda})^{\frac{t}{t_\lambda}}\alpha_{i,\lambda}'(x)+\frac{2C_{\lambda,t_\lambda}'\kappa_i(x)}{1-2\useconC{1}e^{-\lambda t_\lambda}}\\&\leq e^{-\lambda^2 t}\alpha_{i,\lambda}'(x)+4C_{\lambda,t_\lambda}'\kappa_i(x).\end{aligned}}

(1) If $\widetilde{\alpha}_{i,\lambda,t_\lambda}(x)=\max\set{\alpha_{i,\lambda,t_\lambda}(x),1}$, then by \eqref{eq:inductivecont1'} we have
\eq{\int_{-\frac{1}{2}}^{\frac{1}{2}}\alpha_{i,\lambda,t_\lambda}(a_{t\be_i}u_i(s)x)ds\leq e^{-\lambda^2 t}\alpha_{i,\lambda,t_\lambda}(x)+4C_{\lambda,t_\lambda}\leq 5C_{\lambda,t_\lambda}\widetilde{\alpha}_{i,\lambda}(x),}
hence
\eq{\int_{-\frac{1}{2}}^{\frac{1}{2}}\widetilde{\alpha}_{i,\lambda}(a_{t\be_i}u_i(s)x)ds\leq \useconE{101}\widetilde{\alpha}_{i,\lambda}(x).}
Otherwise we have $\widetilde{\alpha}_{i,\lambda,t_\lambda}(x)=\alpha_{i,\lambda}'(x)$, then we use \eqref{eq:inductivecont2'} and get
\eq{\begin{aligned}
    \int_{-\frac{1}{2}}^{\frac{1}{2}}\widetilde{\alpha}_{i,\lambda}(a_{t\be_i}u_i(s)x)ds&\leq \int_{-\frac{1}{2}}^{\frac{1}{2}}\alpha_{i,\lambda}'(a_{t\be_i}u_i(s)x)ds\\&\leq e^{-\lambda^2 t}\alpha_{i,\lambda}'(x)+4C_{\lambda,t_\lambda}'\kappa_i(x)\\&\leq (e^{-\lambda^2 t}+4C_{\lambda,t_\lambda}')\alpha_{i,\lambda}'(x)\leq \useconE{101}\alpha_{i,\lambda}'(x).
\end{aligned}}

(2) In this case $\widetilde{\alpha}_{i,\lambda}(x)$ is either $\alpha_{i,\lambda,t_{\lambda}}(x)$ or $\alpha_{i,\lambda}'(x)$ by definition of $\widetilde{\alpha}_{i,\lambda}$. If $\alpha_{i,\lambda,t_{\lambda}}(x)=\widetilde{\alpha}_{i,\lambda}(x)\geq \useconE{101}e^t\geq4\widetilde{C}_{\lambda,t_{\lambda}}e^{t}$, then \eqref{eq:inductivecont1'} implies that
\eq{\begin{aligned}\int_{-\frac{1}{2}}^{\frac{1}{2}}\widetilde{\alpha}_{i,\lambda}&(a_{t\be_i}u_i(s)x)ds\leq \int_{-\frac{1}{2}}^{\frac{1}{2}}\alpha_{i,\lambda,t_{\lambda}}(a_{t\be_i}u_i(s)x)ds\\&\leq e^{-\lambda^2 t}\alpha_{i,\lambda,t_{\lambda}}(x)+4C_{\lambda,t_\lambda}<  2e^{-\lambda^2 t}\alpha_{i,\lambda,t_{\lambda}}(x).\end{aligned}}
Otherwise we may assume that 
$$\widetilde{\alpha}_{i,\lambda}(x)=\alpha_{i,\lambda}'(x)>\alpha_{i,\lambda,t_{\lambda}}(x)>4\widetilde{C}_{\lambda,t_{\lambda}}e^{t}\kappa_i(x).$$
In this case \eqref{eq:inductivecont2'} implies that
\eq{\begin{aligned}\int_{-\frac{1}{2}}^{\frac{1}{2}}\widetilde{\alpha}_{i,\lambda}&(a_{t\be_i}u_i(s)x)ds\leq \int_{-\frac{1}{2}}^{\frac{1}{2}}\alpha_{i,\lambda}'(a_{t\be_i}u_i(s)x)ds\\&\leq e^{-\lambda^2 t}\alpha_{i,\lambda}'(x)+4C_{\lambda,t_\lambda}'\kappa_i(x)<  2e^{-\lambda^2 t}\alpha_{i,\lambda}'(x).\end{aligned}}

(3) It directly follows from \eqref{Lipschitz} and $\eqref{Lipschitz'}$.

(4) By Lemma \ref{alpha'upperbound} we have
$$\eps_{\lambda,t_\lambda}^{d-1}\kappa_i(x)^{-2}\alpha_{i,\lambda}'(x)\leq \widetilde{\alpha}_{i,\lambda,t}(x)\leq \alpha_{i,\lambda}'(x)$$
for any $x\in X'$. Combining with (3) of Proposition \ref{alpha'} we obtain
$$\widetilde{\alpha}_{i,\lambda,t}(gx)\leq \alpha_{i,\lambda}'(gx)= \alpha_{i,\lambda}'(x)\leq (\eps_{\lambda,t_\lambda}^{d-1}\kappa_i(x)^{-2})^{-1}\widetilde{\alpha}_{i,\lambda,t}(x),$$
$$(\eps_{\lambda,t_\lambda}^{d-1}\kappa_i(x)^{-2})^{-1}\widetilde{\alpha}_{i,\lambda,t}(gx)\geq \alpha_{i,\lambda}'(gx)= \alpha_{i,\lambda}'(x)\geq\widetilde{\alpha}_{i,\lambda,t}(x) $$
for any $g\in U_i^{\perp}$ and $x\in X$.
\end{proof}

We also record the following property from the fact that $\widetilde{\alpha}_{i,\lambda}$'s are proper.

\begin{lem}\label{compactness}
    For each $0<\lambda<1$ and $\mathsf{h}\ge 1$ there exists $D_{\lambda}>\mathsf{h}$ such that the following holds. For any $1\leq i\leq d-1$, $g\in B^U(\operatorname{id},1)$, and $x\in X'$ with $\widetilde{\alpha}_{i,\lambda}(x)\leq\mathsf{h}$,
    $\widetilde{\alpha}_{i,\lambda}(gx)\leq D$ holds.
\end{lem}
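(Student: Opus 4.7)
The plan is to leverage the properness of $\widetilde{\alpha}_{i,\lambda}$ on $X'$ together with the continuity of the auxiliary height function $\alpha_{i,\lambda}'$ on $X'$, via a uniform pointwise domination $\widetilde{\alpha}_{i,\lambda}\leq\useconE{101}\,\alpha_{i,\lambda}'$. The only delicate point is that Proposition \ref{alphatilde} supplies only lower semi-continuity for $\widetilde{\alpha}_{i,\lambda}$, which by itself would not bound it from above on a compact set; the continuous majorant $\alpha_{i,\lambda}'$ is exactly what bypasses this issue.

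First I would verify the domination. If $\alpha_{i,\lambda,t_{\lambda}}(x)\leq\kappa_i(x)\useconE{101}$, then by \eqref{alphatildedefinition},
$$\widetilde{\alpha}_{i,\lambda}(x)=\max\{\alpha_{i,\lambda,t_{\lambda}}(x),1\}\leq\useconE{101}\kappa_i(x)\leq\useconE{101}\,\alpha_{i,\lambda}'(x),$$
the last inequality using $\operatorname{ht}_\lambda\geq 1$ so that $\kappa_i\leq\kappa_i\cdot(\operatorname{ht}_\lambda\circ\phi_i)=\alpha_{i,\lambda}'$; in the complementary branch the bound is immediate from the minimum defining $\widetilde{\alpha}_{i,\lambda}$.

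Next I would fix a compact neighbourhood. For each $1\leq i\leq d-1$, the sublevel set $L_{\mathsf{h},i}:=\{x\in X':\widetilde{\alpha}_{i,\lambda}(x)\leq\mathsf{h}\}$ is compact in $X'$ by properness, and since $A^+$ normalises $U$, left multiplication by $U$ preserves $X'=A^+U\Gamma$, so $\widetilde{L}_{\mathsf{h},i}:=\overline{B^U(\operatorname{id},1)}\cdot L_{\mathsf{h},i}$ is again compact in $X'$. The function $\alpha_{i,\lambda}'=\kappa_i\cdot(\operatorname{ht}_\lambda\circ\phi_i)$ is continuous on $X'$ (the parameters $(\btau,\bxi)$ vary continuously with $x\in X'$ modulo $\bZ$-shifts of $\xi_i$ that are absorbed into $\operatorname{SL}_2(\bZ)$, and $\operatorname{ht}_\lambda$ is continuous on $\operatorname{SL}_2(\bR)/\operatorname{SL}_2(\bZ)$), so it attains a finite maximum $M_i$ on $\widetilde{L}_{\mathsf{h},i}$. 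Setting $D:=\max\bigl(\mathsf{h}+1,\max_{1\leq i\leq d-1}\useconE{101}M_i\bigr)$ then yields $\widetilde{\alpha}_{i,\lambda}(gx)\leq\useconE{101}\,\alpha_{i,\lambda}'(gx)\leq D$ whenever $g\in B^U(\operatorname{id},1)$ and $\widetilde{\alpha}_{i,\lambda}(x)\leq\mathsf{h}$, which is exactly the claim.
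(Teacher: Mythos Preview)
Your proof is correct in spirit and supplies the details that the paper's one-line argument (``this is a consequence of properness'') leaves implicit: you correctly identify that properness alone, for a merely lower semi-continuous function, does not bound $\widetilde{\alpha}_{i,\lambda}$ from above on a compact set, and you bypass this via the continuous majorant $\useconE{101}\,\alpha_{i,\lambda}'$. This is exactly the missing ingredient.

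One technical caveat worth flagging: the sublevel set $L_{\mathsf{h},i}$ need not be compact \emph{in $X'$}, because $X'=A^+U\Gamma$ is open in $X$ and a sequence $a_{\btau_n}u(\bxi)\Gamma$ with $\btau_n$ approaching the boundary of $\bR^{d-1}_{>0}$ can keep $\widetilde{\alpha}_{i,\lambda}$ bounded while leaving every compact subset of $X'$. What properness (in the Mahler sense, as established via the Claim in the proof of Proposition~\ref{alpha'}) actually gives is relative compactness in $X$, equivalently boundedness of the parameter $\btau$. This is still enough for your argument: since $\kappa_i$ and $\phi_i$ are given by explicit formulas in $(\btau,\bxi)$ that extend continuously to $\btau\in\bR^{d-1}_{\ge 0}$, the function $\alpha_{i,\lambda}'=\kappa_i\cdot(\operatorname{ht}_\lambda\circ\phi_i)$ extends continuously to the closure and therefore attains a finite maximum on $\overline{B^U(\operatorname{id},1)}\cdot L_{\mathsf{h},i}$, exactly as you need. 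With this adjustment your proof goes through unchanged.
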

\begin{proof}
    This is a consequence of the fact that $\widetilde{\alpha}_{i,\lambda}$'s are proper.
\end{proof}

Using Lemma \ref{compactness}, for each $0<\lambda<1$ we may define a monotone increasing function $D_{\lambda}:[1,\infty)\to [1,\infty)$ such that $D_{\lambda}(\sh)>\sh$ satisfies the property of Lemma \ref{compactness}.

\subsection{Dynamical height functions for rank one actions}
 The aim of this subsection is to construct dynamical height functions describing the behavior of trajectories near the cusp under one-dimensional diagonal flows. 
 
 Let $0<\lambda<1$ be given, and let $\widetilde{\alpha}_{i,\lambda}$'s be as in \eqref{alphatildedefinition} for $1\leq i\leq d-1$. From now on the parameter $\lambda$ will be regarded as fixed until the end of the proof of Theorem \ref{dimupperbdd}, so we may drop the subscripts $\lambda$ from $\widetilde{\alpha}_{i,\lambda}$, $t_\lambda$, and $D_{\lambda}$, and instead write $\widetilde{\alpha}_i$, $t$, and $D$, respectively. For $x\in X$ and $\mathsf{h}\geq 1$ we denote by $J_{i,\mathsf{h}}(x)\subseteq\bN$ the set of $j\in\bN$ satisfying $\widetilde{\alpha}_i(a_{jt\be_i}x)\geq \mathsf{h}$. For $N\in\bN$, $0\leq \del\leq1$, $1\leq i\leq d-1$, and $\mathsf{h}\geq 1$ we define a proper lower semi-continuous function $\beta_{N,\del,i,\mathsf{h}}:X'\to[1,\infty)$ as follows:
\eq{\beta_{N,\del,i,\mathsf{h}}(x)=\begin{cases}\widetilde{\alpha}_i(a_{Nt\be_i}x) & \text{if $\#\big(J_{i,\mathsf{h}}(x)\cap\set{1,\ldots,N}\big)\ge(1-\del)N$,}\\0 & \text{otherwise}\end{cases}}
%for each $1\leq i\leq d-1$.

Loosely speaking, $x\in\Supp \beta_{N,\del,i,\sh}$ only if the trajectory $\set{a_{t\be_i}x,\cdots, a_{Nt\be_i}x}$ spends most of the time outside the compact set $K_{\sh}$. The following proposition can be viewed as the contraction hypothesis for the dynamical height functions. The idea of the proof is inspired by the approach used in \cite[Proof of Theorem 1.5]{KKLM17}.

\begin{prop}[Contraction for $\beta$]\label{betadecay}
For any $N\in\bN$, $0<\del<1$, $1\leq i\leq d-1$, $\mathsf{h}\ge \useconE{101}e^{t}$, and $x\in X'$,
\eqlabel{betacont}{\int_{-\frac{1}{2}}^{\frac{1}{2}}\beta_{N,\del,i,\mathsf{h}}(u_{i}(s)x)ds\leq (8\useconC{5})^{N}e^{-(\lambda^2-\useconC{6}\del) Nt}\widetilde{\alpha}_i(x)}
for some constant \newconC{6}$\useconC{6}>1$.
\end{prop}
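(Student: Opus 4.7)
The plan adapts the multi-scale strategy of \cite{KKLM17}: partition $[-\tfrac{1}{2},\tfrac{1}{2}]$ along a dyadic-style tree of intervals $I_v^j$ of length $e^{-2jt}$ (with $\approx e^{2t}$ children per parent, for $j=0,1,\ldots,N$), bound the integral of $\beta_{N,\del,i,\sh}(u_i(\cdot)x)$ by a weighted sum over the leaves at the natural scale $e^{-2Nt}$, and then iterate the two contraction inequalities \eqref{alphatildeupperbound} and \eqref{alphatildecont} from the root downward. At each internal node $v$ the strong contraction \eqref{alphatildecont} (giving a factor $\le 2\useconC{5}e^{-\lambda^2 t}$) can be applied when the height at $v$ is $\ge \useconE{101}e^t$, and otherwise one falls back to the weak bound \eqref{alphatildeupperbound} (giving a factor $\le \useconC{5}\useconE{101}$). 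The support constraint for $\beta_{N,\del,i,\sh}$ then forces the second, losing case to occur at most $\approx\del N$ times on each branch, which is the source of the exponential saving.

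Concretely, write $g_m(s):=\widetilde{\alpha}_i(a_{mt\be_i}u_i(s)x)$ and denote by $s_v^j$ the centres of the partition. Since $a_{mt\be_i}u_i(s-s_v^j)a_{-mt\be_i}=u_i(e^{2(m-j)t}(s-s_v^j))\in B^{U_i}(\operatorname{id},1)$ for $m\le j$ and $s\in I_v^j$, property \eqref{alphatildeLipschitz} gives $g_m(s)\asymp g_m(s_v^j)$ within a factor $\useconC{5}$. It follows that the support of $\beta_{N,\del,i,\sh}(u_i(\cdot)x)$ is contained in the union of the level-$N$ intervals $I_v^N$ whose centre satisfies $|\{m\in[1,N]:g_m(s_v^N)\ge\useconC{5}^{-1}\sh\}|\ge(1-\del)N$ (call this set $G$), and on each such interval $g_N(s)\le\useconC{5}g_N(s_v^N)$. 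This reduces the problem to bounding $\sum_{v\in G}e^{-2Nt}g_N(s_v^N)$. On the other hand, for any interval $I_v^j$, the substitution $s=s_v^j+e^{-2jt}s'$ together with $a_{(j+1)t\be_i}u_i(e^{-2jt}s')=a_{t\be_i}u_i(s')a_{jt\be_i}$ rewrites $\int_{I_v^j}g_{j+1}(s)\,ds=e^{-2jt}\int_{-1/2}^{1/2}\widetilde{\alpha}_i(a_{t\be_i}u_i(s')(a_{jt\be_i}u_i(s_v^j)x))\,ds'$, so applying \eqref{alphatildecont} when $g_j(s_v^j)\ge\useconE{101}e^t$ (call $v$ \emph{high}) and \eqref{alphatildeupperbound} otherwise, together with the log-Lipschitz lower bound on each child, yields
\[
\sum_{\ell\text{ child of }v}e^{-2(j+1)t}g_{j+1}(s_\ell^{j+1})\le f(v)\cdot e^{-2jt}g_j(s_v^j),\qquad f(v):=\begin{cases}2\useconC{5}e^{-\lambda^2 t}&v\text{ high},\\ \useconC{5}\useconE{101}&v\text{ low}.\end{cases}
\]

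Finally, for each $v\in G$ record the ancestral pattern $\gamma_v\in\{0,1\}^N$ with $(\gamma_v)_j=1$ iff the ancestor $v_j$ is high. Log-Lipschitz along the spine transfers the defining condition of $G$ to $g_m(s_{v_m}^m)\ge\useconC{5}^{-2}\sh$ for at least $(1-\del)N$ values of $m\in[1,N]$, and after absorbing the constant slack (either into an enlarged $\useconE{101}$ in the hypothesis $\sh\ge\useconE{101}e^t$, or into $\useconC{6}$) this forces $\gamma_v$ to contain at most $\del N+O(1)$ zeros. Iterating the local contraction down the tree and grouping leaves by $\gamma_v$ gives
\[
\sum_{v\in G}e^{-2Nt}g_N(s_v^N)\le\widetilde{\alpha}_i(x)\sum_{\gamma:\#\{\gamma_j=0\}\le\del N+O(1)}\prod_{j=0}^{N-1}f_{\gamma}(j),
\]
and each summand is at most $\useconC{5}^N(2e^{-\lambda^2 t})^{(1-\del)N}\useconE{101}^{\del N}$. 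Combining with the crude binomial bound $\binom{N}{\le\del N}\le 2^N$ and the initial $\useconC{5}$-slack from the discretization produces $(8\useconC{5})^Ne^{-(\lambda^2-\useconC{6}\del)Nt}\widetilde{\alpha}_i(x)$ upon choosing $\useconC{6}=\lambda^2+t^{-1}\log\useconE{101}+O(1)$. The main obstacle is the careful tracking of log-Lipschitz slack as one crosses between scales and compares the support-condition threshold $\sh$ to the threshold $\useconE{101}e^t$ triggering \eqref{alphatildecont}; fortunately this slack is only a constant per level, and is therefore comfortably absorbed by the generous prefactor $(8\useconC{5})^N$.
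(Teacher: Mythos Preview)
Your strategy is the same as the paper's (both adapt the KKLM argument): decompose according to the pattern of times $j\in\{1,\dots,N\}$ at which the trajectory exceeds the threshold, apply the strong contraction \eqref{alphatildecont} at those times and a crude bound at the remaining $\le\delta N$ times, and sum over the at most $2^N$ patterns. The paper's organization is slightly different---it fixes $J\subset\{1,\dots,N\}$ with $|J|\ge(1-\delta)N$, works with the nested sets $I_{J,n}(x)=\{s:\widetilde{\alpha}_i(a_{jt\be_i}u_i(s)x)\ge\sh\text{ for all }j\in J\cap[1,n]\}$, and at each stage takes a maximal $\tfrac12 e^{-2nt}$-separated set \emph{inside} $I_{J,n}$ as its centers---but the induction and the final combinatorics are identical.

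The one place your write-up is genuinely looser is the threshold transfer. With fixed grid centers you only obtain $g_m(s_{v_m}^m)\ge\useconC{5}^{-2}\sh$ along the spine; when $\sh$ lies in $[\useconE{101}e^t,\useconC{5}^2\useconE{101}e^t)$ this does \emph{not} force $v_m$ to be high, so your claim ``at most $\delta N+O(1)$ zeros'' fails in that range (it is not an $O(1)$ correction---the implication simply breaks). The paper sidesteps this by choosing its centers adaptively in $I_{J,n}$, so that $g_n(s_k)\ge\sh$ holds on the nose with no log-Lipschitz loss; your version is easily repaired by either doing the same, or by strengthening the hypothesis to $\sh\ge\useconC{5}^2\useconE{101}e^t$ (harmless for all later applications). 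A second, cosmetic difference: for the ``low'' step the paper uses the pointwise bound $g_{n+1}(s)\le\useconC{5}^t g_n(s)$ from \eqref{alphatildeLipschitz} rather than the averaged bound \eqref{alphatildeupperbound}, which yields $\useconC{6}=1+\log\useconC{5}$ depending only on $d$, whereas your $\useconC{6}\approx\lambda^2+t_\lambda^{-1}\log\useconE{101}$ depends on $\lambda$.
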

\begin{proof}
Let $\Theta_{N,\del}$ be the family of subsets of $\set{1,\ldots,N}$ containing at least $(1-\del)N$ elements. For each $J\in\Theta_{N,\del}$ we denote by $I_J(x)$ the set of $s\in[-\frac{1}{2},\frac{1}{2}]$ such that $J_{i,\mathsf{h}}(u_i(s)x)\cap\set{1,\ldots,N}=J$. Then
\eq{\int_{-\frac{1}{2}}^{\frac{1}{2}}\beta_{N,\del,i,\mathsf{h}}(u_{i}(s)x)ds=\sum_{J\in\Theta_{N,\del}}\int_{I_J(x)}\widetilde{\alpha}_i(a_{Nt\be_i}u_{i}(s)x)ds}
by definition of $\beta_{N,\del,i,\mathsf{h}}$. For $1\leq n\leq N$ we also define 
$$I_{J,n}(x)=\set{s\in[-\tfrac{1}{2},\tfrac{1}{2}]: \widetilde{\alpha}_i(a_{jt\be_i}u_i(s)x)\ge \mathsf{h} \textrm{ for all } j\in J\cap\set{1,\ldots,n}}$$
and set $I_{J,0}(x)=[-\frac{1}{2},\frac{1}{2}]$. Clearly we have $$I_J(x)=I_{J,N}(x)\subseteq I_{J,N-1}(x)\subseteq\cdots\subseteq I_{J,0}(x)=[-\tfrac{1}{2},\tfrac{1}{2}].$$

We choose a maximal $\frac{1}{2}e^{-2nt}$-separated subset $\set{s_1,\ldots,s_M}$ of $I_{J,n}$. Then $I_{J,n}$ is covered by the intervals $[s_k-\frac{1}{2}e^{-2nt},s_k+\frac{1}{2}e^{-2nt}]$ for $1\leq k\leq M$. Note that
\eq{\widetilde{\alpha}_i(a_{nt\be_i}u_i(s_k+s')x)=\widetilde{\alpha}_i(u_i(e^{2nt}s')a_{nt\be_i}u_i(s_k)x)\ge \useconC{5}^{-1}\widetilde{\alpha}_i(a_{nt\be_i}u_i(s_k)x)}
for any $1\leq k\leq M$ and  $s'\in [-\frac{1}{2}e^{-2nt},\frac{1}{2}e^{-2nt}]$. We also observe that the intervals $[s_k-\frac{1}{2}e^{-2nt},s_k+\frac{1}{2}e^{-2nt}]$ for $1\leq k\leq M$ can overlap at most twice, as ${s_1,\ldots,s_M}$ are $\tfrac{1}{2}e^{-2nt}$-separated. Thus, we have
\eqlabel{skcover}{\displaystyle\sum_{k=1}^{M}\widetilde{\alpha}_i(a_{nt\be_i}u_{i}(s_k)x)\leq 2\useconC{5}\int_{I_{J,n}(x)}\widetilde{\alpha}_i(a_{nt\be_i}u_i(s)x)ds.}

Note that $\widetilde{\alpha}_i(a_{nt\be_i}u_i(s)x)\ge\mathsf{h}\ge \useconE{101}e^{t}$ holds for any $n\in J$. It follows from \eqref{alphatildecont} that for any $n\in J$
\eqlabel{Jest}{\begin{aligned}
    \int_{I_{J,n+1}(x)}\widetilde{\alpha}_i(a_{(n+1)t\be_i}u_{i}(s)x)ds&\leq \int_{I_{J,n}(x)}\widetilde{\alpha}_i(a_{(n+1)t\be_i}u_{i}(s)x)ds \\&\leq \displaystyle\sum_{k=1}^{M}\int_{-\frac{1}{2}}^{\frac{1}{2}}\widetilde{\alpha}_i(a_{(n+1)t\be_i}u_{i}(s_k+e^{-2nt}s')x)ds'ds\\
    &=\displaystyle\sum_{k=1}^{M}\int_{-\frac{1}{2}}^{\frac{1}{2}}\widetilde{\alpha}_i(a_{t\be_i}u_i(s')a_{nt\be_i}u_{i}(s_k)x)ds'\\
    &\leq 2e^{-\lambda^2 t}\displaystyle\sum_{k=1}^{M}\widetilde{\alpha}_i(a_{nt\be_i}u_{i}(s_k)x).
\end{aligned}}
Combining \eqref{skcover} and \eqref{Jest}, we get
\eqlabel{Jest'}{\int_{I_{J,n+1}(x)}\widetilde{\alpha}_i(a_{(n+1)t\be_i}u_{i}(s)x)ds\leq 4\useconC{5}e^{-\lambda^2 t}\int_{I_{J,n}(x)}\widetilde{\alpha}_i(a_{nt\be_i}u_i(s)x)ds.}
On the other hand, for any $n\notin J$ we have $\widetilde{\alpha}_i(a_{(n+1)t\be_i}u_{i}(s)x)\leq \useconC{5}^t\widetilde{\alpha}_i(a_{nt\be_i}u_{i}(s)x)$ by \eqref{alphatildeLipschitz}, hence
\eqlabel{trivest}{\int_{I_{J,n+1}(x)}\widetilde{\alpha}_i(a_{(n+1)t\be_i}u_{i}(s)x)ds\leq e^{t\log \useconC{5}}\int_{I_{J,n}(x)}\widetilde{\alpha}_i(a_{nt\be_i}u_{i}(s)x)ds.}

Applying \eqref{Jest'} and \eqref{trivest} inductively, we obtain
\eqlabel{indest}{\begin{aligned}
\int_{I_J(x)}\widetilde{\alpha}_i(a_{Nt\be_i}u_i(s)x)ds&\leq (4\useconC{5}e^{-\lambda^2 t})^{|J|}e^{(N-|J|)t\log \useconC{5}}\widetilde{\alpha}_i(x)\\
&\leq (4\useconC{5})^Ne^{-\{\lambda^2-(1+\log \useconC{5})\del\} Nt}\widetilde{\alpha}_i(x).
\end{aligned}}
Since the cardinality of $\Theta_{N,\del}$ is at most $2^N$, we finally get
$$\int_{-\frac{1}{2}}^{\frac{1}{2}}\beta_{N,\del,i,\mathsf{h}}(u_{i}(s)x)ds\leq (8\useconC{5})^{N}e^{-\{\lambda^2-\useconC{6}\del\} Nt}\widetilde{\alpha}_i(x)$$
for $\useconC{6}:=1+\log \useconC{5}$.
\end{proof}

\section{Dynamical height functions for higher rank actions}
The purpose of this section is to construct dynamical height functions for higher-rank actions and to prove properties associated with them, such as the contraction hypothesis.
\subsection{Construction of higher-rank dynamical height functions}
As in the previous subsection, we fix $0<\lambda<1$ and set $t=t_\lambda$. For $N\in\bN$ let us define
\eqlabel{Ddef}{\cD_N:=\set{(\tau_1,\ldots,\tau_{d-1})\in(t\bN)^{d-1}: \max_{1\leq j\leq d-1}\tau_j+\sum_{j=1}^{d-1}\tau_j\leq 2dNt}.}
Then $\cD_N$ is the intersection of $(t\bN)^{d-1}$ and the convex hull of the set
$$\set{0, dNt\be_1,\ldots,dNt\be_{d-1}, 2Nt(\be_1+\cdots+\be_{d-1}))}.$$ 

\begin{figure}
\begin{tikzpicture}

\draw[->] (-0.5,0) -- (5,0) node[anchor=west] {\tiny{$\bR$}};
\draw[->] (0,-0.5) -- (0,5) node[anchor=south] {\tiny{$\bR$}};

\filldraw[draw=red, fill=blue!20] (0,0) -- (4.5,0) -- (3,3) -- (0,4.5) -- cycle;
\fill (0.5,0) circle[radius=2pt] node[anchor=north] {\tiny{$(t,0)$}};
\fill (1,0) circle[radius=2pt];
\fill (1.5,0) circle[radius=2pt];
\fill (3.5,0)  circle[radius=2pt];
\fill (4,0)  circle[radius=2pt];
\fill (4.5,0.5)  circle[radius=2pt];
\fill (4.5,1)  circle[radius=2pt];
\fill (4,0)  circle[radius=2pt];
\fill (4.5,0)  circle[radius=2pt] node[anchor=north] {\tiny{$(3Nt,0)$}};
\fill (0,0.5)  circle[radius=2pt] node[anchor=east] {\tiny{$(0,t)$}};
\fill (0,1)  circle[radius=2pt];
\fill (0,1.5)  circle[radius=2pt];
\fill (0,3.5)  circle[radius=2pt];
\fill (0,4)  circle[radius=2pt];
\fill (0.5,4.5)  circle[radius=2pt];
\fill (1,4.5)  circle[radius=2pt];
\fill (0,4.5)  circle[radius=2pt] node[anchor=east] {\tiny{$(0,3Nt)$}};
\fill (3.5,2.5)  circle[radius=2pt];
\fill (2.5,3.5)  circle[radius=2pt];
\fill (3,2.5)  circle[radius=2pt];
\fill (3.5,2)  circle[radius=2pt];
\fill (3,3.5)  circle[radius=2pt];
\fill[gray!50] (3.5,3)  circle[radius=2pt];
\fill (3.5,3.5)  circle[radius=2pt];
\fill[blue] (3,3)  circle[radius=2pt] node[anchor=west] {\tiny{$(2Nt,2Nt)$}};
\fill[blue] (2.5,2.5)  circle[radius=2pt];
\fill[blue] (2.5,3)  circle[radius=2pt];
\fill[blue] (2.0,3)  circle[radius=2pt];
\fill[blue] (2.0,3.5)  circle[radius=2pt];
\fill[blue] (3,2.5)  circle[radius=2pt];
\fill[blue] (3.5,2.0)  circle[radius=2pt];
\fill[blue] (3,2.0)  circle[radius=2pt];
\fill[blue] (0.5,0.5)  circle[radius=2pt];
\fill[blue] (1,0.5)  circle[radius=2pt];
\fill[blue] (0.5,1)  circle[radius=2pt];
\fill[blue] (4,0.5)  circle[radius=2pt];
\fill[blue] (3.5,0.5)  circle[radius=2pt];
\fill[blue] (0.5,4)  circle[radius=2pt];
\fill[blue] (0.5,3.5)  circle[radius=2pt];
\fill[blue] (4,1)  circle[radius=2pt];
\fill[blue] (3.5,1)  circle[radius=2pt];
\fill[blue] (1,4)  circle[radius=2pt];
\fill[blue] (1,3.5)  circle[radius=2pt];
\draw (2.5,0) node[anchor=north] {$\cdots$};
\draw (0,2.5) node[anchor=east] {$\vdots$};
\draw (2.5,1) node[anchor=north] {$\cdots$};
\draw (1,2.5) node[anchor=east] {$\vdots$};
\draw (2.0,5.0) node[anchor=north] {$\vdots$};
\draw (5.0,2.0) node[anchor=east] {$\cdots$};
\draw (1.7,2.2) node[anchor=north] {$\iddots$};
\draw (4.2,4.7) node[anchor=north] {$\iddots$};
\draw[very thin,dashed] (0,4.5) -- (3,3);
\draw[very thin,dashed] (4.5,0) -- (3,3);

\end{tikzpicture}
\caption{$\cD_N$ for $d=3$}
\label{atx}
\end{figure}

Note that $a_{\btau}u_i(s)a_{-\btau}=u_i\big(e^{\tau_i+\sum_{j=1}^{d-1}\tau_j}s\big)$ for any $1\leq i\leq d-1$, $s\in\bR$, and $\btau=(\tau_1,\ldots,\tau_{d-1})$. It follows that
\eqlabel{Dsize}{a_{\btau}B^U(\operatorname{id},r)a_{-\btau}\subseteq B^U(\operatorname{id},e^{2dNt}r)=g_{2Nt}B^U(\operatorname{id},r)g_{-2Nt}}
for any $\btau\in\cD_N$ and $r>0$.  

For $0<\del<1$ and $1\leq i\leq d-1$ we also define
\eqlabel{Edef}{\begin{aligned}
&\cE_{N,\del,i}:=\set{\btau\in\cD_N: \|\btau-dNt\be_i\|\leq \del Nt},\\
\widehat{\cE}_{N,\del,i}&:=\left\{\btau=(\tau_1,\ldots,\tau_{d-1})\in\cD_N:\left(\tfrac{d}{2}-\del\right)Nt< \tau_i \leq\left(d-\tfrac{\del}{2}\right)Nt, \|\btau-\tau_i\be_i\|\leq\tfrac{\del}{d} Nt\right\}.
\end{aligned}}
Clearly there exists \newconC{7}$\useconC{7}>1$ such that
\eqlabel{Ecount}{\begin{aligned}
\useconC{7}^{-1}\del^{d}\#\cD_N&<\#\cE_{N,\del,i}<\useconC{7}\del^{d}\#\cD_N,\\
\useconC{7}^{-1}\del^{d-1}\#\cD_N&<\#\widehat{\cE}_{N,\del,i}<\useconC{7}\del^{d-1}\#\cD_N
\end{aligned}}
holds for any $N,i$, and $\del$.

For $N\in\bN$, $\mathsf{h}>\max\set{\useconE{101}e^{t},1}$, and $x\in X$ let us define
\eqlabel{Omdef}{\Om(N,\sh,x):=\set{\btau\in\cD_N: \min_{1\leq i\leq d-1}\widetilde{\alpha}_i(a_{\btau}x)\ge \mathsf{h}}.}
We also define $\psi_{N,\del,\sh}:X\to[0,\infty]$ by
\eqlabel{psidef}{\psi_{N,\del,\sh}(x):=\begin{cases}
\displaystyle\prod_{i=1}^{d-1}\displaystyle\min_{\btau\in\cE_{N,\del,i}\cap\Om(N,\sh,x)}\widetilde{\alpha}_i(a_{\btau}x) & \text{if $\frac{\#\Om(N,\sh,x)}{\#\cD_N}\ge(1-\frac{\del^d}{4\useconC{7}})$,}\\
0 & \text{otherwise.}
\end{cases}}
Loosely speaking, $x\in\Supp \psi_{N,\del,\sh}$ only if the truncated orbit $\set{a_{\btau}x: \btau\in\cD_N}$ spends most of time outside the compact set $K_{\sh}$.

\begin{lem}\label{psibetabdd}
For each $x\in X$ with $\psi_{2N,\del,\sh}(x)>0$, there exist $\btau_1,\ldots,\btau_{d-1}$ such that 
\begin{enumerate}
    \item $\btau_i\in\cE_{N,2\del,i}\cap \Om(N,\sh,x)$ for $1\leq i\leq d-1$,
    \item $\psi_{2N,\del,\sh}(x)\leq \displaystyle\prod_{i=1}^{d-1}\beta_{dN,\del,i,\sh}(a_{\btau_i}x).$
\end{enumerate}
\end{lem}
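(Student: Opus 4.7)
The plan is to construct each $\btau_i$ by translating an approximate minimizer of $\psi_{2N,\del,\sh}(x)$ backwards by $dNt\be_i$, so that the length-$dN$ trajectory of $a_{\btau_i}x$ in the $\be_i$-direction ends inside $\cE_{2N,\del,i}$. The existence of such a $\btau_i$ is then forced by a pigeonhole/Fubini argument against the bad set of $\cD_{2N}\setminus\Om(2N,\sh,x)$.

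\textbf{Reformulation.} A direct check of the defining inequalities for $\cD_N, \cD_{2N}$ shows that $\btau \mapsto \btau + dNt\be_i$ restricts to a bijection $\cE_{N,2\del,i} \to \cE_{2N,\del,i}$ (the shift increases both $\max_j\tau_j$ and $\sum_j\tau_j$ by exactly $dNt$, doubling $2dNt$ to $4dNt$, and preserves the distance to $dNt\be_i$ versus $2dNt\be_i$). Combined with $\Om(N,\sh,x) = \Om(2N,\sh,x)\cap\cD_N$, it therefore suffices to pick $\btau_i \in \cE_{N,2\del,i}$ satisfying
\begin{enumerate}[label=(\roman*)]
\item $\btau_i \in \Om(2N,\sh,x)$,
\item $\btau_i + dNt\be_i \in \Om(2N,\sh,x)$,
\item $\#\{j\in\{1,\ldots,dN\} : \widetilde{\alpha}_i(a_{\btau_i+jt\be_i}x) \ge \sh\} \ge (1-\del)dN$.
\end{enumerate}
Indeed, (iii) gives $\beta_{dN,\del,i,\sh}(a_{\btau_i}x) = \widetilde{\alpha}_i(a_{\btau_i+dNt\be_i}x)$, and (ii) together with the bijection says $\btau_i+dNt\be_i \in \cE_{2N,\del,i}\cap\Om(2N,\sh,x)$, so this value is at least the $i$-th factor $m_i$ of $\psi_{2N,\del,\sh}(x)$.

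\textbf{Counting bad $\btau_i$.} Fix $i$ and let $A^{(1)}_i, A^{(2)}_i, A^{(3)}_i \subseteq \cE_{N,2\del,i}$ be the subsets where (i), (ii), (iii) respectively fail. The hypothesis $\psi_{2N,\del,\sh}(x)>0$ gives $\#(\cD_{2N}\setminus\Om(2N,\sh,x)) \le \frac{\del^d}{4\useconC{7}}\#\cD_{2N}$, which immediately bounds $|A^{(1)}_i|$ and (using the bijection) $|A^{(2)}_i|$ by $\frac{\del^d}{4\useconC{7}}\#\cD_{2N}$. For $A^{(3)}_i$, swap summation:
$$\sum_{\btau \in \cE_{N,2\del,i}} \#\{j\in\{1,\ldots,dN\} : \widetilde{\alpha}_i(a_{\btau+jt\be_i}x) < \sh\} \le (4\del N+1)\cdot\#(\cD_{2N}\setminus\Om(2N,\sh,x)),$$
since for each target $\btau'\in\cD_{2N}$ the set of $j$ with $\btau'-jt\be_i \in \cE_{N,2\del,i}$ is contained in an interval of length $4\del N$. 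Markov's inequality then yields $|A^{(3)}_i| \le \frac{4\del N+1}{\del dN} \cdot \frac{\del^d}{4\useconC{7}}\#\cD_{2N}$.

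\textbf{Conclusion and main obstacle.} By \eqref{Ecount} applied at scale $2N$ together with the bijection, $\#\cE_{N,2\del,i} = \#\cE_{2N,\del,i} \ge \useconC{7}^{-1}\del^d \#\cD_{2N}$. Adding the three bounds shows that the bad set has size at most $\big(\tfrac{1}{2} + \tfrac{5}{4d}\big)\frac{\del^d}{\useconC{7}}\#\cD_{2N}$, which for $d\ge 3$ is strictly less than $\#\cE_{N,2\del,i}$. Thus $\cE_{N,2\del,i}\setminus(A^{(1)}_i\cup A^{(2)}_i\cup A^{(3)}_i)$ is nonempty, and any element serves as $\btau_i$; taking the product over $i$ completes the proof. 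The main delicate point is this constant balancing: the specific threshold $\tfrac{\del^d}{4\useconC{7}}$ appearing in \eqref{psidef} is what ensures the bad fraction in $\cE_{N,2\del,i}$ is strictly below $1$, so the argument is somewhat tight and any weaker bound on $\#(\cD_{2N}\setminus\Om)$ would fail.
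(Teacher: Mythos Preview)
Your proof is correct and gets to the same conclusion as the paper, but the route is organized differently. The paper introduces the auxiliary ``tube'' $\widehat{\cE}_{2N,\del,i}$, finds by averaging a single good \emph{fiber} $\{\btau_i'+kt\be_i\}_k$ on which at most $\tfrac{\del}{4}N$ points lie outside $\Om(2N,\sh,x)$, and then pigeonholes among the first $\lfloor\del N\rfloor$ indices to pick $k_i$ with both $\btau_i'+k_it\be_i$ and $\btau_i'+(dN+k_i)t\be_i$ in $\Om$. You bypass $\widehat{\cE}$ entirely: you work directly in $\cE_{N,2\del,i}$, use the shift bijection $\cE_{N,2\del,i}\to\cE_{2N,\del,i}$, and bound the three bad sets $A^{(1)}_i,A^{(2)}_i,A^{(3)}_i$ separately by a Fubini/Markov count against $\cD_{2N}\setminus\Om(2N,\sh,x)$. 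This is a bit more economical, since it avoids the two-step ``good fiber, then good point'' reduction, and it makes the numerical dependence on the threshold $\tfrac{\del^d}{4\useconC{7}}$ completely explicit. The paper's approach in turn gives a little extra: once a good fiber is found, the whole trajectory $\{\btau_i+kt\be_i\}_{k=1}^{dN}$ automatically has few bad points, so condition (iii) falls out without a separate Markov step. Both arguments share the tacit assumption that $\del N$ is not too small (in the paper, $\lfloor\del N\rfloor\ge 1$; for you, the bound $4\del N+1\le 5\del N$), which is harmless since the lemma is only invoked inside the iteration where $N\to\infty$.
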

\begin{proof}
By \eqref{psidef}, $\psi_{2N,t,\del,\sh}(x)>0$ implies $\#\Om(2N,\sh,x)\ge (1-\frac{\del^d}{4\useconC{7}})\#\cD_{2N}$. It follows from \eqref{Ecount} that
$$\#(\widehat{\cE}_{2N,\del,i}\setminus\Om(2N,\sh,x))\leq \frac{\del^d}{4\useconC{7}}\#\cD_{2N}<\frac{\del}{4} \#\widehat{\cE}_{2N,\del,i}$$
for any $1\leq i\leq d-1$. Note that the set $\widehat{\cE}_{2N,\del,i}$ can be expressed by
\eqlabel{Ehatdec}{\begin{aligned}\widehat{\cE}_{2N,\del,i}&=\left\{\btau+kt\be_i: \btau=(\tau_1,\ldots,\tau_{d-1})\in\cD_N,\tau_i=\lfloor (d-2\del)N\rfloor t,\right.\\ &\qquad\qquad\qquad\qquad\left. \|\btau-\tau_i\be_i\|\leq \frac{2\del}{d} Nt, 1\leq k\leq \lfloor (d+\del)N\rfloor\right\}.\end{aligned}}
It follows that for each $i$ we can find $\btau_i'\in \cE_{N,2\del,i}$ such that the $i$-th coordinate of $\btau_i'$ is $\lfloor (d-2\del)N\rfloor t$ and
\eqlabel{tauicount}{\#(\set{\btau_i'+kt\be_i: 1\leq k\leq \lfloor (d+\del)N\rfloor}\setminus\Om(2N,\sh,x))<\tfrac{1}{4}\del N.} 

Let us denote
$$\cK_i:=\set{1\leq k\leq \lfloor\del N\rfloor: \btau_i'+kt\be_i\in \Om(2N,\sh,x)},$$
$$\cK_i':=\set{1\leq k\leq \lfloor\del N\rfloor: \btau_i'+dNt\be_i+kt\be_i\in \Om(2N,\sh,x)}.$$
Then \eqref{tauicount} implies that
$$\#(\set{1,\ldots,\lfloor\del N\rfloor}\setminus \cK_i)<\tfrac{1}{4}\del N,\quad \#(\set{1,\ldots,\lfloor\del N\rfloor}\setminus \cK_i')<\tfrac{1}{4}\del N,$$
hence for each $i$ there exists some $\btau_i\in\set{\btau_i',\btau_i'+t\be_i,\ldots,\btau_i'+\lfloor\del dN\rfloor t\be_i}$ such that $\btau_i\in \cK_i\cap\cK_i'$. Let $\btau_i=\btau_i'+k_it\be_i$, where $1\leq k_i\leq \lfloor\del dN\rfloor$.
We have $\btau_i\in \cE_{N,2\del,i}$ since the $i$-th coordinate of $\btau_i$ is between $(d-2\del)N$ and $(d-\del)N$, and $\btau_i+dNt\be_i\in \cE_{2N,\del,i}$ since the $i$-th coordinate of $\btau_i+dNt$ is between $(2d-2\del)N$ and $(2d-\del)N$. Thus, we obtain
\eqlabel{eq:property31}{\btau_i\in\cE_{N,2\del,i}\cap\Om(2N,\sh,x)=\cE_{N,2\del,i}\cap\Om(N,\sh,x),}
hence (1) is proved.

We now show (2). Note that \eqref{eq:property31} implies
$\btau_i+dNt\be_i\in\cE_{2N,\del,i}\cap\Om(2N,\sh,x)$, hence
\eqlabel{alpest1}{\min_{\btau\in\cE_{2N,\del,i}\cap\Om(2N,\sh,x)}\widetilde{\alpha}_i(a_{\btau}x)\leq\widetilde{\alpha}_i(a_{\btau_i+dNt\be_i}x).}
On the other hand, \eqref{tauicount} implies that
$$\#\big(\set{1\leq k\leq \lfloor (d+\del)N\rfloor: \widetilde{\alpha}_i(a_{kt\be_i}a_{\btau_i'}x)<\sh}\big)<\tfrac{1}{4}\del N.$$
It follows that
$$\#\big(\set{1\leq k\leq dN: \widetilde{\alpha}_i(a_{kt\be_i}a_{\btau_i}x)<\sh}\big)<\del N,$$
hence $\#(J_i(a_{\btau_i}x)\cap\set{1,\ldots,dN})\ge (d-\del)N$. Thus, we obtain
\eqlabel{alpest2}{\beta_{dN,\del,i,\sh}(a_{\btau_i}x)=\widetilde{\alpha}_i(a_{dNt\be_i}a_{\btau_i}x)=\widetilde{\alpha}_i(a_{\btau_i+dNt\be_i}x)}
from the definition of $\beta_{dN,\del,i,\sh}$. Combining \eqref{alpest1} and \eqref{alpest2}, we get
\eqlabel{alpest3}{\displaystyle\min_{\btau\in\cE_{2N,\del,i}\cap\Om(2N,\sh,x)}\widetilde{\alpha}_i(a_{\btau}x)\leq \beta_{dN,\del,i,\sh}(a_{\btau_i}x)}
for all $1\leq i\leq d-1$. Therefore,
$$\psi_{2N,\del,\sh}(x)=\displaystyle\prod_{i=1}^{d-1}\displaystyle\min_{\btau\in\cE_{2N,\del,i}\cap\Om(2N,\sh,x)}\widetilde{\alpha}_i(a_{\btau}x)\leq \prod_{i=1}^{d-1}\beta_{dN,\del,i,\sh}(a_{\btau_i}x).$$
\end{proof}

\subsection{Approximation of height functions}
In this subsection, we approximate the dynamical height functions on horospherical orbits by characteristic functions on the union of certain boxes. We begin with approximating $\psi_{N,\del,D(\sh)}$ on horospherical orbits.
\begin{lem}[Approximation of $\psi$]\label{psisupp}
For $N\in\bN$, $0<\del<1$, $\sh>\useconE{101}e^{t}$, and $x\in X'$, let us define $F:[-\frac{1}{2},\frac{1}{2}]^{d-1}\to\bR$ by $F(\bxi)=\psi_{N,\del,D(\sh)}(u(\bxi)x)$, where $D(\sh)$ is as in Lemma \ref{compactness}, and let $\set{\bxi_1,\ldots,\bxi_M}\subset[-\frac{1}{2},\frac{1}{2}]^{d-1}$ be a maximal $e^{-(2d+4\del)Nt}$-separated set of $\Supp F$. Then we have
\begin{enumerate}
    \item $0\leq F(\bxi)\leq \useconE{102}^{d-1}e^{d(d-1)\del Nt}\kappa(x)^2\displaystyle\sum_{k=1}^{M} F(\bxi_k)\mathds{1}_{B(\bxi_k,e^{-(2d+4\del)Nt})}(\bxi)$
    for any $\bxi\in[-\frac{1}{2},\frac{1}{2}]^{d-1}$,
    \item 
        $\displaystyle\sum_{k=1}^{M} F(\bxi_k)\leq 
    \useconE{102}^{d-1}e^{\{2d(d-1)+5d^2\del\} Nt}\kappa(x)^2\int_{[-\frac{1}{2},\frac{1}{2}]^{d-1}}\psi_{N,\del,\sh}(u(\bxi)x)d\bxi,$\\
    where \newconE{102}$\useconE{102}:=\useconC{5}\eps_{\lambda,t}^{-(d-1)}$.
\end{enumerate}
\end{lem}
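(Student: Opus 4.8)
The plan is to run a Vitali-type covering argument resting on two inputs. The first is that the truncated region $\cD_N$ never expands the horospherical subgroup $U$ by more than $e^{2dNt}$: by \eqref{Dsize}, if $\|\bxi-\bxi'\|\le e^{-(2d+4\del)Nt}$ then
$g_{\btau}:=a_{\btau}u(\bxi-\bxi')a_{-\btau}$ lies in $B^U(\operatorname{id},e^{-4\del Nt})\subseteq B^U(\operatorname{id},1)$ for every $\btau\in\cD_N$, since $\cD_N$ is abelian of exponential type $\le 2dNt$. The second input is the log-Lipschitz package for the $\widetilde{\alpha}_i$ — Proposition \ref{alphatilde}(3) along $A_i^+U_i$ at cost $\useconC{5}$ and Proposition \ref{alphatilde}(4) along $U_i^\perp$ at cost $\useconE{101}\kappa_i(\cdot)^2$ — together with Lemma \ref{compactness}, which controls how far the level sets $\Om(N,\cdot,\cdot)$ can move under a perturbation in $B^U(\operatorname{id},1)$, precisely via the ``inflated height'' $D(\cdot)$. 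The interplay of these at the two heights $\sh$ and $D(\sh)$ is what the statement is designed to record.

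\textbf{Part (1).} Take $\bxi$ with $F(\bxi)>0$; then $\bxi\in\Supp F$, so by maximality of the $e^{-(2d+4\del)Nt}$-separated set there is some $k$ with $\|\bxi-\bxi_k\|<e^{-(2d+4\del)Nt}$, hence $\mathds{1}_{B(\bxi_k,e^{-(2d+4\del)Nt})}(\bxi)=1$ and it suffices to prove $F(\bxi)\le\useconE{102}^{d-1}e^{d(d-1)\del Nt}\kappa(x)^2 F(\bxi_k)$. Writing $u(\bxi)=u(\bxi_k)u(\bxi-\bxi_k)$ and commuting $u(\bxi-\bxi_k)$ past $a_{\btau}$ gives $a_{\btau}u(\bxi)x=g_{\btau}\,(a_{\btau}u(\bxi_k)x)$ with $g_{\btau}$ as above. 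Decomposing $g_{\btau}=u_i(v)\,u^\perp(\bw)$ with $u^\perp(\bw)\in U_i^\perp$, $|v|\le 1$, $\|\bw\|\le 1$, Proposition \ref{alphatilde}(3)--(4) yield $\widetilde{\alpha}_i(a_{\btau}u(\bxi)x)\le \useconC{5}\useconE{101}\kappa_i(a_{\btau}u(\bxi_k)x)^2\,\widetilde{\alpha}_i(a_{\btau}u(\bxi_k)x)$ and the reverse inequality. For $\btau\in\cE_{N,\del,i}$ one has $\sum_{j\ne i}\tau_j=O(\del Nt)$, so by the $A_i^+$- and $U$-invariance of $\kappa_i$ and its explicit formula, $\kappa_i(a_{\btau}u(\bxi_k)x)\le e^{O(\del Nt)}\kappa_i(x)$ there; and Lemma \ref{compactness} applied to $g_{\btau}$ shows the index sets $\cE_{N,\del,i}\cap\Om(N,D(\sh),u(\bxi)x)$ and $\cE_{N,\del,i}\cap\Om(N,D(\sh),u(\bxi_k)x)$ over which the $i$-th minima defining $F(\bxi)$ and $F(\bxi_k)$ are taken are interchangeable (this is precisely why the cut-off height carried by $F$ is $D(\sh)$ rather than $\sh$, leaving room for the perturbation). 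Combining these, the $i$-th factor of $F(\bxi)$ is at most $\useconC{5}\useconE{101}e^{O(\del Nt)}\kappa_i(x)^2$ times that of $F(\bxi_k)$; multiplying over $1\le i\le d-1$ and absorbing $(\useconC{5}\useconE{101})^{d-1}$ and the $\eps_{\lambda,t}^{-(d-1)}$ coming from the passage between $\widetilde{\alpha}_{i}$ and $\alpha_i'$ into $\useconE{102}^{d-1}$, and the product of the $e^{O(\del Nt)}$'s into $e^{d(d-1)\del Nt}$ after enlarging the implied constant, gives the claim.

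\textbf{Part (2).} Since the $\bxi_k$ are $e^{-(2d+4\del)Nt}$-separated, the balls $B_k:=B(\bxi_k,\tfrac12 e^{-(2d+4\del)Nt})$ are pairwise disjoint, each of volume $\asymp e^{-(2d+4\del)(d-1)Nt}$; so by disjointness it suffices to show $\int_{B_k}\psi_{N,\del,\sh}(u(\bxi)x)\,d\bxi\gg e^{-(2d+4\del)(d-1)Nt}e^{-O(\del Nt)}\useconE{102}^{-(d-1)}\kappa(x)^{-2}F(\bxi_k)$ for each $k$, which after summing and rearranging is exactly the stated inequality with the exponent $2d(d-1)Nt$ coming from $(2d)(d-1)Nt$ and the $5d^2\del Nt$ absorbing $4(d-1)\del Nt$ plus the $O(\del Nt)$ losses. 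For this, $F(\bxi_k)>0$ forces $\#\Om(N,D(\sh),u(\bxi_k)x)\ge(1-\tfrac{\del^d}{4\useconC{7}})\#\cD_N$, and Lemma \ref{compactness} (in the form $\widetilde{\alpha}_j(a_{\btau}u(\bxi_k)x)\ge D(\sh)\Rightarrow\widetilde{\alpha}_j(a_{\btau}u(\bxi)x)\ge\sh$, valid since $g_{\btau}\in B^U(\operatorname{id},1)$) gives $\Om(N,D(\sh),u(\bxi_k)x)\subseteq\Om(N,\sh,u(\bxi)x)$ for all $\bxi\in B_k$; hence the escape-of-mass condition in the definition of $\psi_{N,\del,\sh}$ holds at $u(\bxi)x$ for all such $\bxi$, so $\psi_{N,\del,\sh}(u(\bxi)x)>0$ on $B_k$. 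One then lower-bounds the $i$-th minimum defining $\psi_{N,\del,\sh}(u(\bxi)x)$ by the corresponding minimum for $F(\bxi_k)$, up to the same $\useconC{5}\useconE{101}e^{O(\del Nt)}\kappa_i(x)^2$ factor as in Part (1), using the $\cE_{N,\del,i}$-bound for $\kappa_i$ and Lemma \ref{compactness} used in both directions together with the count \eqref{Ecount} (which guarantees $\cE_{N,\del,i}$ lies in $\Om(N,\sh,u(\bxi)x)$ except for a fraction $<\tfrac14$).

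The step I expect to be the main obstacle is this last value comparison in Part (2): one must argue that the minimum of $\widetilde{\alpha}_i(a_{\btau}u(\bxi)x)$ over $\cE_{N,\del,i}\cap\Om(N,\sh,u(\bxi)x)$ is not substantially smaller than the minimum of $\widetilde{\alpha}_i(a_{\btau}u(\bxi_k)x)$ over $\cE_{N,\del,i}\cap\Om(N,D(\sh),u(\bxi_k)x)$, \emph{even though} the former uses the lower cut-off $\sh$ and hence an a priori larger index set. Making this precise requires exploiting that on $\cE_{N,\del,i}$ the elements $a_{\btau}$ expand $U_i$ by $\approx e^{2dNt}$ while expanding each $U_j$, $j\ne i$, only by $\approx e^{dNt}$, so that the part of $\Om$ that the perturbation can ``add'' inside $\cE_{N,\del,i}$ is controlled by $D(\cdot)$ and does not spoil the minimum; once this is in place, everything else is routine bookkeeping of the exponential factors.
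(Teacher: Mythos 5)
Part (1) of your argument lines up with the paper's. The ingredients are identical: \eqref{Dsize} puts $g_{\btau}:=a_{\btau}u(\bxi-\bxi_k)a_{-\btau}$ into $B^U(\operatorname{id},1)$ for every $\btau\in\cD_N$; Lemma \ref{compactness} applied to $g_{\btau}$ and $g_{\btau}^{-1}$ produces the pair of inclusions \eqref{perturbation}; \eqref{alphatildeLipschitz}--\eqref{LipschitzOrthogonal} together with the bound $\kappa_i(a_{\btau}x)\le e^{d\del Nt}\kappa_i(x)$ on $\cE_{N,\del,i}$ (from $\|\btau-dNt\be_i\|\le\del Nt$, cf.\ \eqref{kappaestimate}) give the factor-by-factor comparison. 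Your constant $\useconC{5}\useconE{101}$ is a mild overcount relative to the paper's $\useconE{102}=\useconC{5}\eps_{\lambda,t}^{-(d-1)}$ but this is harmless.

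Part (2) is where your write-up substitutes a hint for the actual step. The covering arithmetic (half-radius disjoint balls vs.\ the paper's full-radius bounded-overlap balls) is a legitimate variant, the exponent bookkeeping is fine, and the escape-condition propagation via $\Om(N,D(\sh),u(\bxi_k)x)\subseteq\Om(N,\sh,u(\bxi)x)$ is correct. You then correctly isolate the pointwise bound $F(\bxi_k)\le\text{const}\cdot\psi_{N,\del,\sh}(u(\bxi)x)$ for $\bxi$ near $\bxi_k$ as the crux, and correctly note that the index set in $\psi_{N,\del,\sh}(u(\bxi)x)$ is a priori larger than the one in $F(\bxi_k)$. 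But your proposed resolution — that the anisotropic expansion of $a_{\btau}$ on $\cE_{N,\del,i}$ (rate $\approx e^{2dNt}$ along $U_i$ versus $\approx e^{dNt}$ along $U_j$, $j\ne i$) tames the extra piece of $\Om$ — is not what the paper does in this lemma, and on its own it says nothing about the value of a minimum over the enlarged index set. The anisotropic boxes $\sB_{N,i}$ appear only in the following lemma (Lemma \ref{betasupp}) and in Proposition \ref{multicont}; in Lemma \ref{psisupp} the paper works with the isotropic radius $e^{-(2d+4\del)Nt}$ coming from \eqref{Dsize} and derives the Part (2) inequality as a literal mirror of the Part (1) chain \eqref{Fupperbound1}, combining \eqref{perturbation} with \eqref{Lipschitz1'} after interchanging $\bxi$ and $\bxi_k$, followed by the same integrate-and-bound-overlap step. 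So the obstacle you flag is real, but the machinery you reach for to address it is not the machinery in play here: Lemma \ref{psisupp} is closed at the level of Lemma \ref{compactness} and the Lipschitz estimates alone.
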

\begin{proof}
For any $\bxi\in\Supp F$ there exists $1\leq k\leq M$ such that $\bxi\in B(\bxi_k,e^{-(2d+4\del)Nt})$. By \eqref{Dsize}, we have $a_{\btau}u(\bxi-\bxi_k)a_{-\btau}\in B^U(\operatorname{id},1)$ for any $1\leq i\leq d-1$ and $\btau\in\cD_N$. It follows from Lemma \ref{compactness} that
\eqlabel{perturbation}{\begin{aligned}&\Om(N,D(\sh),u(\bxi)x)\subseteq \Om(N,\sh,u(\bxi_k)x),\\&\Om(N,\sh,u(\bxi)x)\subseteq \Om(N,D(\sh),u(\bxi_k)x).\end{aligned}}
Moreover, using \eqref{alphatildeLipschitz} and \eqref{LipschitzOrthogonal} we get
\eqlabel{Lipschitz1}{\begin{aligned}\widetilde{\alpha}_i(a_{\btau}u(\bxi)x)&=\widetilde{\alpha}_i\big(\big(a_{\btau}u(\bxi-\bxi_k)a_{-\btau}\big)a_{\btau}u(\bxi_k)x\big)\\&\leq \useconC{5}\alpha_i\big(\big(a_{\btau}u(\pi_i^{\perp}(\bxi-\bxi_k))a_{-\btau}\big)a_{\btau}u(\bxi_k)x\big)\\&\leq \useconC{5}\eps_{\lambda,t}^{-(d-1)}\kappa_i(a_{\btau}u(\bxi_k)x)^{2}\widetilde{\alpha}_i(a_{\btau}u(\bxi_k)x)\\&=\useconE{102}\kappa_i(a_{\btau}x)^{2}\widetilde{\alpha}_i(a_{\btau}u(\bxi_k)x)\end{aligned}}
for all $1\leq i\leq d-1$. Similarly, we also have
\eqlabel{Lipschitz1'}{\widetilde{\alpha}_i(a_{\btau}u(\bxi_k)x)\geq \useconE{102}\kappa_i(x)^{2}\widetilde{\alpha}_i(a_{\btau}u(\bxi)x)}
for all $1\leq i\leq d-1$. Combining \eqref{perturbation} and \eqref{Lipschitz1},
\eq{\displaystyle\min_{\btau\in\cE_{N,\del,i}\cap\Om(N,D(\sh),u(\bxi)x)}\widetilde{\alpha}_i(a_{\btau}u(\bxi)x)\leq \useconE{102}\displaystyle\min_{\btau\in\cE_{N,\del,i}\cap\Om(N,\sh,u(\bxi_k)x)}\big(\kappa_i(a_{\btau} x)^{2}\widetilde{\alpha}_i(a_{\btau}u(\bxi_k)x)\big)}
holds for all $1\leq i\leq d-1$. Note that for any $\btau\in \cE_{N,\del,i}$
\eqlabel{kappaestimate}{\kappa_i(a_{\btau}x)\leq e^{d\del Nt}\kappa_i(a_{dNt\be_i}x)=e^{d\del Nt}\kappa_i(x)}
since $\|\btau-dNt\be_i\|\leq \del Nt$. It follows that
\eq{\displaystyle\min_{\btau\in\cE_{N,\del,i}\cap\Om(N,D(\sh),u(\bxi)x)}\widetilde{\alpha}_i(a_{\btau}u(\bxi)x)\leq \useconE{102}e^{2d\del Nt}\kappa_i(x)^{2}\displaystyle\min_{\btau\in\cE_{N,\del,i}\cap\Om(N,\sh,u(\bxi_k)x)}\widetilde{\alpha}_i(a_{\btau}u(\bxi_k)x).}
for all $1\leq i\leq d-1$. Thus, if $F(\bxi)>0$ then
\eqlabel{Fupperbound1}{\begin{aligned}F(\bxi)&=\psi_{N,\del,D(\sh)}\big(u(\bxi)x\big)=\displaystyle\prod_{i=1}^{d-1}\displaystyle\min_{\btau\in\cE_{N,\del,i}\cap\Om(N,D(\sh),u(\bxi)x)}\widetilde{\alpha}_i(a_{\btau}u(\bxi)x)\\&\leq \useconE{102}^{d-1}e^{d(d-1)\del Nt}\kappa(x)^2\displaystyle\prod_{i=1}^{d-1}\displaystyle\min_{\btau\in\cE_{N,\del,i}\cap\Om(N,\sh,u(\bxi_k)x)}\widetilde{\alpha}_i(a_{\btau}u(\bxi_k)x)\\&\leq \useconE{102}^{d-1}e^{d(d-1)\del Nt}\kappa(x)^2\psi_{N,\del,\sh}\big(u(\bxi_k)x\big),\end{aligned}}
so (1) is proved.

We now verify (2). As in \eqref{Fupperbound1}, we obtain from \eqref{perturbation} and \eqref{Lipschitz1'} that \eq{F(\bxi_k)=\psi_{N,\del,D(\sh)}(u(\bxi_k)x)\leq \useconE{102}^{d-1}e^{d(d-1)\del Nt}\kappa(x)^2\psi_{N,\del,\sh}(u(\bxi)x)} for any $\bxi\in B(\bxi_k,e^{-(2d+4\del)Nt})$. It follows that
\eq{\begin{aligned}
&(2e^{-(2d+4\del)Nt})^{d-1}\displaystyle\sum_{k=1}^{M} F(\bxi_k)=\displaystyle\sum_{k=1}^{M}\int_{B(\bxi_k,e^{-(2d+4\del)Nt})}F(\bxi_k)d\bxi\\&\leq \useconE{102}^{d-1}e^{d(d-1)\del Nt}\kappa(x)^2\displaystyle\sum_{k=1}^{M}\int_{B(\bxi_k,e^{-(2d+4\del)Nt})}\psi_{N,\del,\sh}(u(\bxi)x)d\bxi\\
&\leq 2^{d-1}\useconE{102}^{d-1}e^{d(d-1)\del Nt}\kappa(x)^2\int_{[-\frac{1}{2},\frac{1}{2}]^{d-1}}\psi_{N,\del,\sh}(u(\bxi)x)d\bxi.
\end{aligned}}
In the last line, we use a fact derived from the maximality of $\bxi_i$'s: the sets $B(\bxi_k,e^{-(2d+4\del)Nt})$ can overlap at most $2^{d-1}$ times. This completes the proof. 
\end{proof}

We now approximate $\beta_{dN,\del,i,\sh}$ on certain expanding translates of horospherical orbits. For $N\in\bN$ and $1\leq i\leq d-1$, we denote by $\sB_{N,i}\subset\bR^{d-1}$ the box centered at $0$ with side-length $2e^{-2dNt}$ along the direction of $\be_i$, and side-length $2$ along the other $d-2$ directions.

\begin{lem}[Approximations of $\beta$]\label{betasupp}
Let $N\in\bN$, $0<\del<1$, $\sh>\useconE{101}e^{t}$, and $x\in X'$. Suppose that $\btau_i\in \cE_{N,2\del,i}$ for all $1\leq i\leq d-1$. For each $1\leq i\leq d-1$ let us define $f_i:[-\frac{1}{2},\frac{1}{2}]^{d-1}\to\bR$ by $f_i(\bxi)=\beta_{dN,\del,i,D(\sh)}(a_{\btau_i}u(e^{-(2d+4\del)Nt}\bxi)x)$. Then there exist $\bxi_{i,1},\ldots,\bxi_{i,M_i}\in[-\frac{1}{2},\frac{1}{2}]^{d-1}$ such that
\begin{enumerate}
    \item $0\leq f_i(\bxi)\leq \useconE{102}e^{2d\del Nt}\kappa_i(x)^2\displaystyle\sum_{k=1}^{M_i} f_i(\bxi_{i,k})\mathds{1}_{\bxi_{i,k}+\sB_{N,i}}(\bxi)$ for any $\bxi\in[-\frac{1}{2},\frac{1}{2}]^{d-1}$,
    \item $\displaystyle\sum_{k=1}^{M_i} f_i(\bxi_{i,k})\leq \useconE{102}e^{2d(1+\del)Nt}\kappa_i(x)^2\int_{[-\frac{1}{2},\frac{1}{2}]^{d-1}}\beta_{dN,\del,i,\sh}(a_{\btau_i}u(e^{-(2d+4\del)Nt}\bxi)x)d\bxi.$
\end{enumerate}
\end{lem}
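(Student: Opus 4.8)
The plan is to follow the proof of Lemma~\ref{psisupp} almost verbatim, the one new feature being that the isotropic cube $B(\bxi_k,e^{-(2d+4\del)Nt})$ there is replaced by the anisotropic box $\bxi_{i,k}+\sB_{N,i}$. Recall that $\sB_{N,i}$ is thin (side $e^{-2dNt}$) precisely along $\be_i$, the direction expanded fastest by the elements $a_{jt\be_i}$ that enter $\beta_{dN,\del,i,\sh}$, and of full width along the remaining $d-2$ directions, along which $\widetilde{\alpha}_i$ is nearly invariant by~\eqref{LipschitzOrthogonal}. We fix $i$ throughout; the hypotheses on $\btau_j$ for $j\ne i$ play no role.

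The main step is a geometric estimate on expansion rates. Since $\btau_i\in\cE_{N,2\del,i}\subseteq\cD_N$ we have $\|\btau_i-dNt\be_i\|\le 2\del Nt$ and $\max_l(\btau_i)_l+\sum_l(\btau_i)_l\le 2dNt$; as $(\btau_i)_i\ge(d-2\del)Nt$ dominates the other entries this gives $\sum_l(\btau_i)_l\le(d+2\del)Nt$, and a direct computation of the adjoint action shows that for every $1\le j\le dN$ the element $a_{\btau_i+jt\be_i}$ expands $U_i$ with rate at most $e^{4dNt}$ and each $U_l$ ($l\ne i$) with rate at most $e^{(2d+4\del)Nt}$. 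Consequently, for $\bxi,\bxi'\in[-\tfrac12,\tfrac12]^{d-1}$ with $\bxi-\bxi'\in\sB_{N,i}$, splitting the displacement along $U_i$ and $U_i^\perp$ one factors $a_{\btau_i+jt\be_i}u(e^{-(2d+4\del)Nt}(\bxi-\bxi'))a_{-\btau_i-jt\be_i}=u_i(w_j)h_j$ with $|w_j|\le e^{-4\del Nt}$ and $h_j\in B^{U_i^\perp}(\operatorname{id},O(1))$, so that this element lies in $B^U(\operatorname{id},O(1))$ uniformly in $1\le j\le dN$. We also record, from the $A_i^+$- and $U$-invariance of $\kappa_i$, that $\kappa_i(a_{\btau_i+jt\be_i}u(\bv)x)=\kappa_i(a_{\btau_i+jt\be_i}x)\le e^{2d\del Nt}\kappa_i(x)$ for every $\bv$, as in~\eqref{kappaestimate}. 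I expect this bookkeeping of rates to be the only genuinely delicate point: the scale $e^{-(2d+4\del)Nt}$ and the thin $\be_i$-side $e^{-2dNt}$ of $\sB_{N,i}$ are calibrated exactly so that both components of the displacement remain bounded even after conjugation by $a_{\btau_i+jt\be_i}$ with $j$ as large as $dN$.

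Now choose $\{\bxi_{i,1},\dots,\bxi_{i,M_i}\}$ to be a maximal subset of $\Supp f_i$ with $\bxi_{i,k}-\bxi_{i,l}\notin\sB_{N,i}$ for $k\ne l$; maximality gives the covering $\Supp f_i\subseteq\bigcup_k(\bxi_{i,k}+\sB_{N,i})$, while disjointness of the translates $\bxi_{i,k}+\tfrac12\sB_{N,i}$ gives $\sum_k\mathds{1}_{\bxi_{i,k}+\sB_{N,i}}\le O(1)$. Fix $\bxi\in(\bxi_{i,k}+\sB_{N,i})\cap[-\tfrac12,\tfrac12]^{d-1}$ and apply Lemma~\ref{compactness} to the bounded displacement above (and to its inverse): membership of $j$ in $J_{i,D(\sh)}$ at one of $u(e^{-(2d+4\del)Nt}\bxi)x$, $u(e^{-(2d+4\del)Nt}\bxi_{i,k})x$ forces membership of $j$ in $J_{i,\sh}$ at the other. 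In particular $f_i(\bxi)>0$ forces $\bxi$ into a box with $f_i(\bxi_{i,k})>0$, and forces $\beta_{dN,\del,i,\sh}(a_{\btau_i}u(e^{-(2d+4\del)Nt}\bxi)x)>0$. For the heights themselves, apply~\eqref{alphatildeLipschitz} to the factor $u_i(w_j)\in B^{A_i^+U_i}(\operatorname{id},1)$ and, to the factor $h_j\in U_i^\perp$, the chain $\widetilde{\alpha}_i(h_jz)\le\alpha_{i,\lambda}'(h_jz)=\alpha_{i,\lambda}'(z)\le\eps_{\lambda,t}^{-(d-1)}\kappa_i(z)^2\widetilde{\alpha}_i(z)$ coming from Proposition~\ref{alpha'}(3) and Lemma~\ref{alpha'upperbound} (exactly as in~\eqref{Lipschitz1}), with $z:=a_{\btau_i+dNt\be_i}u(e^{-(2d+4\del)Nt}\bxi_{i,k})x$ and $\kappa_i(z)\le e^{2d\del Nt}\kappa_i(x)$. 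Writing also $z':=a_{\btau_i+dNt\be_i}u(e^{-(2d+4\del)Nt}\bxi)x$, this yields $(\useconE{102}e^{2d\del Nt}\kappa_i(x)^2)^{-1}\widetilde{\alpha}_i(z)\le\widetilde{\alpha}_i(z')\le\useconE{102}e^{2d\del Nt}\kappa_i(x)^2\,\widetilde{\alpha}_i(z)$.

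Assembling these observations proves the lemma. For (1): if $\bxi$ lies in the box of $\bxi_{i,k}$ then $f_i(\bxi)=\widetilde{\alpha}_i(z')\le\useconE{102}e^{2d\del Nt}\kappa_i(x)^2\,\widetilde{\alpha}_i(z)=\useconE{102}e^{2d\del Nt}\kappa_i(x)^2 f_i(\bxi_{i,k})$, which is bounded by the same constant times the full sum $\sum_k f_i(\bxi_{i,k})\mathds{1}_{\bxi_{i,k}+\sB_{N,i}}(\bxi)$; if $f_i(\bxi)=0$ the inequality is trivial. For (2): the same two-sided estimate gives $f_i(\bxi_{i,k})\le\useconE{102}e^{2d\del Nt}\kappa_i(x)^2\,\beta_{dN,\del,i,\sh}(a_{\btau_i}u(e^{-(2d+4\del)Nt}\bxi)x)$ for every $\bxi\in(\bxi_{i,k}+\sB_{N,i})\cap[-\tfrac12,\tfrac12]^{d-1}$; integrating over that set, whose $(d-1)$-volume is at least $e^{-2dNt}$, summing over $k$, and absorbing the bounded overlap of the boxes into the constant, produces the extra factor $e^{2dNt}$ and hence the claimed bound with $e^{2d(1+\del)Nt}$.
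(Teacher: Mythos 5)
Your proof follows the paper's own argument essentially step for step: the same maximal separated set in the anisotropic box $\sB_{N,i}$, the same computation of expansion rates showing that the conjugated displacement lies in a bounded subset of $U$ uniformly over $1\le j\le dN$, the same combination of Lemma~\ref{compactness} with the Lipschitz bounds \eqref{alphatildeLipschitz} and \eqref{LipschitzOrthogonal} applied to the $U_i$- and $U_i^\perp$-components of the displacement, and the same volume and bounded-overlap bookkeeping for part (2). Your phrasing of the maximal set via the separation condition $\bxi_{i,k}-\bxi_{i,l}\notin\sB_{N,i}$ (which yields the covering of $\Supp f_i$ cleanly) and your restriction of the integrals to $(\bxi_{i,k}+\sB_{N,i})\cap[-\tfrac12,\tfrac12]^{d-1}$ are in fact slightly more careful than the wording in the paper's proof, but the underlying argument is identical.
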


\begin{proof}
Let $\set{\bxi_{i,1},\ldots,\bxi_{i,M_i}}$ be a maximal $\sB_{N,i}$-set of $\Supp f_i$, i.e. the set $\set{\bxi_{i,k}}_{k=1}^{M_i}$ is a maximal set such that the boxes $\bxi_{i,k}+\sB_{N,i}$'s are disjoint. Then for any $\bxi\in\Supp f_i$, there exists $1\leq k\leq M_i$ such that $\bxi\in \bxi_{i,k}+\sB_{N,i}$. Then for any $1\leq j\leq dN$ $$a_{jt\be_i}a_{\btau_i}u\big(e^{-(2d+4\del)Nt}(\bxi-\bxi_{i,k})\big)a_{-\btau_i}a_{-jt\be_i}\in a_{\btau_i}a_{dNt\be_i}u(e^{-(2d+4\del)Nt}\sB_{N,i})a_{-dNt\be_i}a_{-\btau_i}.$$

Note that $u(e^{-(2d+4\del)Nt}\sB_{N,i})$ is the box centered at $\operatorname{id}\in U$ with side-length $e^{-(4d+4\del)Nt}$ along the direction of $u(\be_i)$ and side-length $e^{-(2d+4\del)\del Nt}$ along the other $d-2$ directions. It follows that the box $a_{dNt\be_i}u(e^{-(2d+4\del)Nt}\sB_{N,i})a_{-dNt\be_i}$ has side-length $e^{-(2d+4\del)Nt}$ along the direction of $u(\be_i)$ and side-length $e^{-(d+4\del)Nt}$ along the other directions. Hence, the box $a_{\btau_i}a_{dNt\be_i}u(e^{-(2d+4\del)Nt}\sB_{N,i})a_{-dNt\be_i}a_{-\btau_i}$ is contained in $B^U(\operatorname{id},1)$.

It follows from Lemma \ref{compactness} that
\eqlabel{perturbation'}{\begin{aligned}&J_{i,D(\sh)}(u(\bxi)x)\subseteq J_{i,\sh}(u(\bxi_k)x),\\&J_{i,D(\sh)}(u(\bxi_k)x)\subseteq J_{i,\sh}(u(\bxi)x).\end{aligned}}
Moreover, using \eqref{alphatildeLipschitz} and \eqref{LipschitzOrthogonal} we obtain
\eqlabel{Lipschitz2}{\widetilde{\alpha}_i(a_{dNt\be_i}a_{\btau_i}u(e^{-(2d+4\del)Nt}\bxi)x)\leq \useconE{102}\kappa_i(a_{\btau_i}x)^{2}\widetilde{\alpha}_i(a_{dNt\be_i}a_{\btau_i}u(e^{-(2d+4\del)Nt}\bxi_{i,k})x),}
\eqlabel{Lipschitz2'}{\widetilde{\alpha}_i(a_{dNt\be_i}a_{\btau_i}u(e^{-(2d+4\del)Nt}\bxi_{i,k})x)\leq \useconE{102}\kappa_i(a_{\btau_i}x)^{2}\widetilde{\alpha}_i(a_{dNt\be_i}a_{\btau_i}u(e^{-(2d+4\del)Nt}\bxi)x)}
for any $\bxi\in\bxi_{i,k}+\sB_{N,i}$. Recall \eqref{kappaestimate} that $\kappa_i(a_{\btau}x)\leq e^{d\del Nt}\kappa_i(x)$ for any $\btau\in \cE_{N,\del,i}$. Combining this with \eqref{perturbation'} and \eqref{Lipschitz2} we get
\eqlabel{fiupperbound}{\begin{aligned}
    f_i(\bxi)&=\beta_{dN,\del,i,D(\sh)}(a_{\btau_i}u(e^{-(2d+4\del)Nt}\bxi)x)\\&=\widetilde{\alpha}_i(a_{dNt\be_i}a_{\btau_i}u(e^{-(2d+4\del)Nt}\bxi)x)\\&=\useconE{102}\kappa_i(a_{\btau_i}x)^{2}\widetilde{\alpha}_i(a_{dNt\be_i}a_{\btau_i}u(e^{-(2d+4\del)Nt}\bxi_{i,k})x)\\&\leq \useconE{102}e^{2d\del Nt}\kappa_i(x)^2\beta_{dN,\del,i,\sh}(a_{\btau_i}u(e^{-(2d+4\del)Nt}\bxi_{i,k})x),
\end{aligned}}
so (1) is proved.

We now verify (2). As in \eqref{fiupperbound}, we obtain from \eqref{perturbation'} and \eqref{Lipschitz2'} that 
\eq{\begin{aligned}f_i(\bxi_{i,k})&=\beta_{dN,\del,i,\sh}(a_{\btau_i}u(e^{-(2d+4\del)Nt}\bxi_{i,k})x)\\&\leq \useconE{102}e^{2d\del Nt}\kappa_i(x)^2\beta_{dN,\del,i,\sh}(a_{\btau_i}u(e^{-(2d+4\del)Nt}\bxi)x)\end{aligned}}
for any $\bxi\in\bxi_{i,k}+\sB_{N,i}$. Note that the volume of $\sB_{N,i}$ is $2^{d-1}e^{-2dNt}$. It follows that
\eq{\begin{aligned}
2^{d-1}e^{-2dNt}&\displaystyle\sum_{k=1}^{M_i} f_i(\bxi_{i,k})=\displaystyle\sum_{k=1}^{M_i}\int_{\bxi_{i,k}+\sB_{N,i}}f_i(\bxi_{i,k})d\bxi\\&\leq \useconE{102}e^{2d\del Nt}\kappa_i(x)^2\displaystyle\sum_{k=1}^{M_i}\int_{\bxi_{i,k}+\sB_{N,i}}\beta_{dN,\del,i,\sh}(a_{\btau_i}u(e^{-(2d+4\del)Nt}\bxi)x)d\bxi\\
&\leq 2^{d-1}\useconE{102}e^{2d\del Nt}\kappa_i(x)^2\int_{[-\frac{1}{2},\frac{1}{2}]^{d-1}}\beta_{dN,\del,i,\sh}(a_{\btau_i}u(e^{-(2d+4\del)Nt}\bxi)x)d\bxi.\end{aligned}}
In the last line, we use a fact derived from the maximality of the set $\set{\bxi_{i,1},\ldots,\bxi_{i,M_i}}$: the sets $\set{\bxi_{i,k}+\sB_{N,i}}_{k=1}^{M_i}$ can overlap at most $2^{d-1}$ times. This completes the proof. 
\end{proof}

\subsection{Contraction for higher rank actions}
We are now ready to prove the following inductive contraction inequality for higher-rank dynamical height functions.
\begin{prop}[Inductive contraction for $\psi$]\label{multicont}
For any $N\in\bN$, $0<\del<2^{-(d-1)}$, $\sh>\useconE{101}e^{t}$, and $x\in X'$, we have
\eq{\begin{aligned}\int_{[-\frac{1}{2},\frac{1}{2}]^{d-1}}\psi_{2N,\del,D(\sh)}(u(\bxi)x)d\bxi\leq \useconE{104}&N^{d^2}(8\useconC{5})^{d(d-1)N}e^{-\{d(d-1)\lambda^2-\useconC{10}\del\}Nt}\\&\times\kappa(x)^8\int_{[-\frac{1}{2},\frac{1}{2}]^{d-1}}\psi_{N,2^{d-1}\del,\sh}(u(\bxi)x)d\bxi,\end{aligned}}
for some constants \newconE{104}\newconC{10}$\useconE{104},\useconC{10}>0$.
\end{prop}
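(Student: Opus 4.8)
The plan is to feed the one‑dimensional contraction hypothesis of Proposition \ref{betadecay} into the decomposition $\psi_{2N}\le\prod_i\beta_{dN,\cdot,i,\cdot}$ of Lemma \ref{psibetabdd}, using the box approximations of Lemmas \ref{psisupp} and \ref{betasupp} to convert the resulting product over $1\le i\le d-1$ into a product of one‑dimensional integrals. The full power $d-1$ in the exponent $d(d-1)\lambda^2$ is bought by the geometric fact that the boxes approximating $\beta_{dN,\del,i,D(\sh)}$ are short (side $\asymp e^{-2dNt}$) only along $\be_i$, so that the intersection of one box from each of the $d-1$ families sits inside a single cube of side $\asymp e^{-2dNt}$.

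Concretely I would argue as follows. \textbf{(i)} By Lemma \ref{psibetabdd} (with $D(\sh)$ in place of $\sh$), for each $\bxi$ with $\psi_{2N,\del,D(\sh)}(u(\bxi)x)>0$ choose $\btau_i(\bxi)\in\cE_{N,2\del,i}\cap\Om(N,D(\sh),u(\bxi)x)$ so that $\psi_{2N,\del,D(\sh)}(u(\bxi)x)\le\prod_{i}\beta_{dN,\del,i,D(\sh)}(a_{\btau_i(\bxi)}u(\bxi)x)$, and split $[-\tfrac12,\tfrac12]^{d-1}$ according to the value $\mathbf T=(\btau_1,\dots,\btau_{d-1})$ of this tuple; since there are at most $(\#\cD_N)^{d-1}\ll N^{d^2}$ of them, it suffices to bound $\int_{[-\frac12,\frac12]^{d-1}}\prod_i\beta_{dN,\del,i,D(\sh)}(a_{\btau_i}u(\bxi)x)\,d\bxi$ for each fixed $\mathbf T$. \textbf{(ii)} Cover $[-\tfrac12,\tfrac12]^{d-1}$ by $\asymp e^{(2d+4\del)(d-1)Nt}$ translates of $e^{-(2d+4\del)Nt}[-\tfrac12,\tfrac12]^{d-1}$, with centers $\bxi_k$; since $U$ is abelian and $\kappa_i$ is $U$‑invariant, on the $k$‑th small cube the integrand becomes $\prod_i\beta_{dN,\del,i,D(\sh)}\big(a_{\btau_i}u(e^{-(2d+4\del)Nt}\bmu)y_k\big)$ with $y_k:=u(\bxi_k)x\in X'$ and $\bmu\in[-\tfrac12,\tfrac12]^{d-1}$, that is, $\prod_i f_i$ in the notation of Lemma \ref{betasupp} (whose hypothesis $\btau_i\in\cE_{N,2\del,i}$ holds by (i)). \textbf{(iii)} Apply Lemma \ref{betasupp}(1) to each $f_i$, multiply over $i$, and integrate over $\bmu$: by \eqref{Dsize} and the side‑lengths of $\sB_{N,i}$, the product $\prod_i\mathds 1_{\bxi_{i,\ell_i}+\sB_{N,i}}$ is supported on a cube of side $\asymp e^{-2dNt}$, so the $\bmu$‑integral contributes $\ll e^{-2d(d-1)Nt}$ and the multi‑sum over $(\ell_1,\dots,\ell_{d-1})$ factors as $\prod_i\big(\sum_{\ell_i}f_i(\bxi_{i,\ell_i})\big)$ — this is exactly where the different expansion rates of $A_i$ on the various $U_j$ enter. \textbf{(iv)} Bound each factor by Lemma \ref{betasupp}(2) and then by the contraction hypothesis \eqref{betacont} of Proposition \ref{betadecay} (with the value there equal to $dN$), which supplies $(8\useconC{5})^{dN}e^{-(\lambda^2-\useconC{6}\del)dNt}$ per direction, hence $(8\useconC{5})^{d(d-1)N}e^{-d(d-1)\lambda^2 Nt+O(\del Nt)}$ after the product; to put $\int_{[-\frac12,\frac12]^{d-1}}\beta_{dN,\del,i,\sh}(a_{\btau_i}u(e^{-(2d+4\del)Nt}\bmu)y_k)\,d\bmu$ into the form required by \eqref{betacont} one conjugates the $u$‑variable past $a_{\btau_i}$, uses the log‑Lipschitz and $U_i^\perp$‑estimates \eqref{alphatildeLipschitz}, \eqref{LipschitzOrthogonal} and the $A_i^+$‑ and $U$‑invariance of $\kappa_i$, and controls the residual $\widetilde\alpha_i$‑integral over the $\be_i$‑interval by the averaging bound \eqref{alphatildeupperbound}. \textbf{(v)} Because $\btau_i\in\cE_{N,2\del,i}\cap\Om(N,\sh,y_k)\subseteq\cE_{N,2^{d-1}\del,i}\cap\Om(N,\sh,y_k)$ (here the perturbation through $D(\sh)$ and Lemma \ref{compactness} is used), the resulting net sums of $\widetilde\alpha_i$‑values, together with Lemma \ref{psisupp} used to pass from a separated net back to an integral, reassemble into $\int_{[-\frac12,\frac12]^{d-1}}\psi_{N,2^{d-1}\del,\sh}(u(\bxi)x)\,d\bxi$. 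Summing over the $\ll N^{d^2}$ choices of $\mathbf T$ produces the factor $\useconE{104}N^{d^2}$, the accumulated $\kappa_i$‑factors collapse to $\kappa(x)^8$, and the $\del$‑dependent exponential losses bundle into $e^{\useconC{10}\del Nt}$.

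The main obstacle is steps (iv)–(v). After the substitution $\bxi=\bxi_k+e^{-(2d+4\del)Nt}\bmu$ and conjugation of the $u$‑variable past $a_{\btau_i}$, the $\be_i$‑component of the new $u$‑variable runs over an interval slightly longer than $[-\tfrac12,\tfrac12]$ — by a factor $e^{O(\del Nt)}$, since $\btau_i$ only lies in $\cE_{N,2\del,i}$ and not exactly at $dNt\be_i$ — while its $U_i^\perp$‑components become exponentially short; one must therefore cover that $\be_i$‑interval by $e^{O(\del Nt)}$ unit intervals, apply \eqref{betacont} on each, and re‑sum the $\widetilde\alpha_i$‑values by folding them into the averaging inequality \eqref{alphatildeupperbound} of Proposition \ref{alphatilde} rather than by iterating the log‑Lipschitz bound \eqref{alphatildeLipschitz} (which would be fatal). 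At the same time one must check, at each perturbation step and using \eqref{Dsize} together with the precise side‑lengths of $\sB_{N,i}$, that the displacement in play, once conjugated by the relevant $A_i^+$‑element, falls inside $B^U(\operatorname{id},1)$, so that only the tame factors of \eqref{Dsize}, Lemma \ref{compactness} and \eqref{LipschitzOrthogonal} are paid and no $\kappa_i$‑factor is ever raised to a power growing with $N$; and one must keep the cut‑offs $D(\sh)$ and $\sh$ consistent so that the inclusions \eqref{perturbation}, \eqref{perturbation'} of the $\Om$‑ and $J$‑sets underlying Lemmas \ref{betasupp} and \ref{psisupp} stay valid. Tracking all of these losses and packaging them into the single factors $\useconE{104}N^{d^2}$, $(8\useconC{5})^{d(d-1)N}$, $e^{\useconC{10}\del Nt}$ and $\kappa(x)^8$ is the bulk of the work.
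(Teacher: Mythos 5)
Your proposal follows essentially the same route as the paper's proof: the outer covering from Lemma~\ref{psisupp} at scale $e^{-(2d+4\del)Nt}$, the decomposition $\psi_{2N}\le\prod_i\beta_{dN,\cdot,i,\cdot}$ from Lemma~\ref{psibetabdd} followed by a sum over the $\leq (dN)^{(d-1)^2}$ tuples in $\prod_i\cE_{N,2\del,i}$, the box approximation from Lemma~\ref{betasupp}, the key geometric observation that $\bigcap_i(\bxi_{k_i,i}+\sB_{N,i})$ lies in a cube of side $2e^{-2dNt}$ (which decouples the product into $\prod_i\sum_{\ell_i}f_i(\bxi_{i,\ell_i})$), and then Proposition~\ref{betadecay} applied once per direction. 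The bookkeeping of $\kappa(x)^8$, $(8\useconC{5})^{d(d-1)N}$, $N^{d^2}$ and $e^{\useconC{10}\del Nt}$ also matches.

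One small correction to your ``main obstacle'' paragraph: after conjugating $u_i(e^{-(2d+4\del)Nt}s)$ past $a_{\btau_i}$ with $\btau_i\in\cE_{N,2\del,i}$, the new $s$-interval is \emph{shorter} than $[-\tfrac12,\tfrac12]$, not longer, because the expansion rate $e^{\tau_{i,i}+|\btau_i|_1}\le e^{(2d+4\del)Nt}$ there. So no covering of a long $\be_i$-interval by unit intervals is needed, nor any invocation of \eqref{alphatildeupperbound}: one simply enlarges the integration domain to $[-\tfrac12,\tfrac12]$ and pays the Jacobian factor, which is exactly the $e^{8\del Nt}$ appearing in \eqref{ithicken}. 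This makes step (iv) cleaner than you anticipated; otherwise the structure and estimates you lay out are the ones the paper uses.
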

\begin{proof}
Since $\cD_N\setminus\Om(N,\sh,x)\subseteq\cD_{2N}\setminus\Om(2N,\sh,x)$, we have
\eq{\begin{aligned}1-\frac{\#\Om(N,\sh,x)}{\#\cD_N}&\leq \frac{\#\big(\cD_{2N}\setminus\Om(2N,\sh,x)\big)}{\#\cD_N}\\&=2^{d-1}\frac{\#\big(\cD_{2N}\setminus\Om(2N,\sh,x)\big)}{\#\cD_{2N}}=2^{d-1}\left(1-\frac{\#\Om(2N,\sh,x)}{\#\cD_{2N}}\right),\end{aligned}}
for any $x\in X'$, hence $\Supp \psi_{2N,\del,D(\sh)}\subseteq\Supp\psi_{N,2^{d-1}\del,D(\sh)}$. 

Let $F:[-\frac{1}{2},\frac{1}{2}]^{d-1}\to\bR$ be the function defined by $F(\bxi)=\psi_{N,2^{d-1}\del,D(\sh)}(u(\bxi)x)$, and $\set{\bxi_1,\ldots,\bxi_M}$ be a maximal $e^{-(2d+4\del)Nt}$-separated set of $\Supp F$ as in Lemma~\ref{psisupp}. Note that
\eqlabel{SuppFcov}{\Supp F\subseteq\bigcup_{k=1}^{M}B(\bxi_k,e^{-(2d+4\del)Nt}).}
By Lemma \ref{psibetabdd}, for each $\bxi\in\Supp F$ we can find some $$\big(\btau_1,\cdots,\btau_{d-1}\big)\in \cE_{N,2\del,1}\times\cdots\times\cE_{N,2\del,d-1}$$ such that $\btau_i\in \Om(N,\eps,\sh,u(\bxi)x)$ for all $i$ and $$\psi_{2N,\del,\sh}(u(\bxi)x)\leq \prod_{i=1}^{d-1}\beta_{dN,\del,i,\sh}(a_{\btau_i}u(\bxi)x).$$ Hence, we have
\eqlabel{psibetabdd1}{\begin{aligned} &\int_{[-\frac{1}{2},\frac{1}{2}]^{d-1}}\psi_{2N,\del,D(\sh)}(u(\bxi)x)d\bxi\leq\displaystyle\sum_{k=1}^{M}\int_{B(\bxi_k,e^{-(2d+4\del)Nt})}\psi_{2N,\del,D(\sh)}(u(\bxi)x)d\bxi\\&\leq\displaystyle\sum_{k=1}^{M}\displaystyle\sum_{\btau_1\in\cE_{N,2\del,1}}\cdots\displaystyle\sum_{\btau_{d-1}\in\cE_{N,2\del,d-1}}\int_{B(\bxi_k,e^{-(2d+4\del)Nt})}\prod_{i=1}^{d-1}\beta_{dN,\del,i,D(\sh)}(a_{\btau_i}u(\bxi)x)d\bxi\\
&=\displaystyle\sum_{k=1}^{M}\displaystyle\sum_{\btau_1\in\cE_{N,2\del,1}}\cdots\displaystyle\sum_{\btau_{d-1}\in\cE_{N,2\del,d-1}}e^{-(2d+4\del)(d-1)Nt}\\&\qquad\qquad\qquad\times\int_{[-\frac{1}{2},\frac{1}{2}]^{d-1}}\prod_{i=1}^{d-1}\beta_{dN,\del,i,D(\sh)}\left(a_{\btau_i}u(e^{-(2d+4\del)Nt}\bxi)u(\bxi_k)x\right)d\bxi.\end{aligned}}

We now estimate the integral of the last line for each fixed $1\leq k\leq M$ and $(\btau_1,\ldots,\btau_{d-1})\in \cE_{N,2\del,1}\times\cdots\times\cE_{N,2\del,d-1}$. For each $1\leq i\leq d-1$ let us define $f_i:[-\frac{1}{2},\frac{1}{2}]^{d-1}\to\bR$ by $f_i(\bxi)=\beta_{dN,\del,i,\sh}(a_{\btau_i}u(e^{-(2d+4\del)Nt}\bxi)u(\bxi_k)x)$ and let $\bxi_{i,1},\ldots,\bxi_{i,M_i}\in[-\frac{1}{2},\frac{1}{2}]^{d-1}$ be as in Lemma \ref{betasupp}. Recall that we have 
$$0\leq f_i\leq \useconE{102}e^{2d\del Nt}\kappa_i(x)^2\displaystyle\sum_{k=1}^{M_i} f_{i}(\bxi_{i,k})\mathds{1}_{\bxi_{k,i}+\sB_{N,i}}$$
by (1) of Lemma \ref{betasupp}. It follows that
\eqlabel{betaprod1}{\begin{aligned} &\int_{[-\frac{1}{2},\frac{1}{2}]^{d-1}}\prod_{i=1}^{d-1}\beta_{dN,\del,i,\sh}\left(a_{\btau_i}u(e^{-(2d+4\del)Nt}\bxi)u(\bxi_k)x\right)d\bxi=\int_{[-\frac{1}{2},\frac{1}{2}]^{d-1}}\prod_{i=1}^{d-1}f_i(\bxi)d\bxi\\&\leq \useconE{102}^{d-1}e^{2d(d-1)\del Nt}\kappa(x)^2\displaystyle\sum_{k_1=1}^{M_1}\cdots\displaystyle\sum_{k_{d-1}=1}^{M_{d-1}}\left(\prod_{i=1}^{d-1}f_{i}(\bxi_{i,k_i})\right)\int_{[-\frac{1}{2},\frac{1}{2}]^{d-1}}\prod_{i=1}^{d-1}\mathds{1}_{\bxi_{i,k_i}+\sB_{N,i}}(\bxi)d\bxi.\end{aligned}}
We observe that for any $k_1,\ldots,k_{d-1}$, $\displaystyle\bigcap_{i=1}^{d-1}\left(\bxi_{i,k_i}+\sB_{N,i}\right)$ is always contained in a box with side-length $2e^{-2dNt}$ since each $\sB_{N,i}$ has side-length $2e^{-2dNt}$ along the direction of $\be_i$. We also have 
$$\displaystyle\sum_{k_i=1}^{M_i} f_i(\bxi_{i,k_i})\leq \useconE{102}e^{2d(1+\del)Nt}\kappa_i(x)^{2}\int_{[-\frac{1}{2},\frac{1}{2}]^{d-1}}\beta_{dN,\del,i,\sh}\left(a_{\btau_i}u(e^{-(2d+4\del)Nt}\bxi)u(\bxi_k)x\right)d\bxi$$
for all $1\leq i\leq d-1$ by (2) of Lemma \ref{betasupp}.
It follows that \eqref{betaprod1} is bounded by
\eqlabel{betaprod2}{\begin{aligned} &\leq \useconE{102}^{d-1}e^{2d(d-1)\del Nt}\kappa(x)^2(2e^{-2dNt})^{d-1}\displaystyle\sum_{k_1=1}^{M_1}\cdots\displaystyle\sum_{k_{d-1}=1}^{M_{d-1}}\prod_{i=1}^{d-1}f_i(\bxi_{i,k_i})\\ 
&\leq 2^{d-1}\useconE{102}^{2(d-1)}e^{4d(d-1)\del Nt}\kappa(x)^4\prod_{i=1}^{d-1}\int_{[-\frac{1}{2},\frac{1}{2}]^{d-1}}\beta_{dN,\del,i,\sh}\left(a_{\btau_i}u(e^{-(2d+4\del)Nt}\bxi)u(\bxi_k)x\right)d\bxi. \end{aligned}}
%Note that
%\eq{a_ja_{\btau_i}u(e^{-(2d+4\del)Nt}\pi_i^{\perp}(\bxi))a_{-\btau_i}a_{-j}\in B^U(\operatorname{id},1)}
%for all $1\leq j\leq dN$ and $\bxi\in[-\frac{1}{2},\frac{1}{2}]^{d-1}$. It follows that for all $1\leq j\leq dN$ if
%$$\alpha_{i,4\eps}(a_ja_{\btau_i}u_i(e^{-(2d+4\del)Nt}\xi_i)u(\bxi_k)x)\ge 1,$$
%then
%\eq{\alpha_{i,4\eps}(a_ja_{\btau_i}u_i(e^{-(2d+4\del)Nt}\xi_i)u(\bxi_k)x)\leq \useconC{5}\alpha_{i,8\eps}(a_ja_{\btau_i}u(e^{-(2d+4\del)Nt}\bxi)u(\bxi_k)x),}
 %hence
%\eqlabel{betaapprox}{\begin{aligned}\beta_{dN,\del,i,4\eps,\useconC{5}^{-1}\sh}&\left(a_{\btau_i}u(e^{-(2d+4\del)Nt}\bxi)u(\bxi_k)x\right)\\&\leq \useconC{5}\beta_{dN,\del,i,8\eps,\useconC{5}^{-2}\sh}\left(a_{\btau_i}u_i(e^{-(2d+4\del)Nt}\xi_i)u(\bxi_k)x\right).\end{aligned}}

Recall that by Proposition \ref{betadecay} we have
\eq{\int_{-\frac{1}{2}}^{\frac{1}{2}}\beta_{dN,\del,i,\mathsf{h}}(u_{i}(s)x)ds\leq (8\useconC{5})^{dN}e^{-(\lambda^2-\useconC{6}\del) dNt}\alpha_i(x)}
for any $1\leq i\leq d-1$ and $x\in X'$. It follows that
\eq{\begin{aligned}\int_{[-\frac{1}{2},\frac{1}{2}]^{d-1}}&\beta_{dN,\del,i,\sh}\left(a_{\btau_i}u(e^{-(2d+4\del)Nt}\bxi)u(\bxi_k)x\right)d\bxi\\&=\int_{[-\frac{1}{2},\frac{1}{2}]^{d-1}}\int_{-\frac{1}{2}}^{\frac{1}{2}}\beta_{dN,\del,i,\sh}\left(a_{\btau_i}u_i(e^{-(2d+4\del)Nt}s)u(e^{-(2d+4\del)Nt}\pi_i^{\perp}(\bxi))u(\bxi_k)x\right)dsd\bxi \\&\leq e^{8\del Nt}\int_{[-\frac{1}{2},\frac{1}{2}]^{d-1}}\int_{-\frac{1}{2}}^{\frac{1}{2}}\beta_{dN,\del,i,\sh}\left(u_i(s)a_{\btau_i}u(e^{-(2d+4\del)Nt}\pi_i^{\perp}(\bxi))u(\bxi_k)x\right)dsd\bxi\\ &\leq (8\useconC{5})^{dN}e^{-\{d\lambda^2-(d\useconC{6}+8)\del\} Nt}\int_{[-\frac{1}{2},\frac{1}{2}]^{d-1}}\widetilde{\alpha}_i(a_{\btau_i}u(e^{-(2d+4\del)Nt}\pi_i^{\perp}(\bxi))u(\bxi_k)x)d\bxi.\end{aligned}}
for any $1\leq i\leq d-1$. By \eqref{LipschitzOrthogonal} one has
\eq{\widetilde{\alpha}_i(a_{\btau_i}u(e^{-(2d+4\del)Nt}\pi_i^{\perp}(\bxi))u(\bxi_k)x)\leq \eps_{\lambda,t}^{-(d-1)}\kappa_i(x)^{2}\widetilde{\alpha}_i(a_{\btau_i}u(\bxi_k)x),}
hence for any $1\leq i\leq d-1$
\eqlabel{ithicken}{\begin{aligned}\int_{[-\frac{1}{2},\frac{1}{2}]^{d-1}}&\beta_{dN,\del,i,\sh}\left(a_{\btau_i}u(e^{-(2d+4\del)Nt}\bxi)u(\bxi_k)x\right)d\bxi\\ &\qquad\leq \eps_{\lambda,t}^{-(d-1)}(8\useconC{5})^{dN}e^{-(d\lambda^2-9d\useconC{6}\del) Nt}\kappa_i(x)^{2}\widetilde{\alpha}_i(a_{\btau_i}u(\bxi_k)x).\end{aligned}}
Combining \eqref{betaprod1}, \eqref{betaprod2}, and \eqref{ithicken} together, we get
\eqlabel{betaprod3}{\begin{aligned}\int_{[-\frac{1}{2},\frac{1}{2}]^{d-1}}&\prod_{i=1}^{d-1}\beta_{dN,\del,i,\sh}\left(a_{\btau_i}u(e^{-(2d+4\del)Nt}\bxi)u(\bxi_k)x\right)d\bxi\\&\leq \useconE{103}^{-(d-1)^2}(8\useconC{5})^{d(d-1)N}e^{-d(d-1)(\lambda^2-13\useconC{6}\del) Nt}\kappa(x)^6\prod_{i=1}^{d-1}\widetilde{\alpha}_i(a_{\btau_i}u(\bxi_k)x),\end{aligned}}
where \newconE{103}$\useconE{103}:=2^{d-1}\useconE{102}^{2(d-1)}\eps_{\lambda,t}^{-(d-1)^2}$.

Since $\bxi_k\in \Supp F$, we have $$\psi_{N,2^{d-1}\del,D(\sh)}(u(\bxi_k)x)\ge\psi_{N,2^{d-1}\del,D(\sh)}(u(\bxi_k)x)>0.$$ By definition of $\psi_{N,2^{d-1}\del,D(\sh)}$ it implies that there exist $\btau_1(\bxi_k),\cdots,\btau_{d-1}(\bxi_k)$ such that $$\btau_i(\bxi_k)\in\cE_{N,2^{d-1}\del, i}\cap \Om(N,D(\sh),u(\bxi_k)x)$$ for all $1\leq i\leq d-1$ and
$$\psi_{N,2^{d-1}\del,D(\sh)}(u(\bxi_k)x)=\prod_{i=1}^{d-1}\widetilde{\alpha}_i\left(a_{\btau_i(\bxi_k)}u(\bxi_k)x\right).$$
Since $\|\btau_i(\bxi_k)-\btau_i\|\leq 2^d\del Nt$ for all $i$, it follows that
\eqlabel{tauiposi}{\begin{aligned}\prod_{i=1}^{d-1}\widetilde{\alpha}_i\left(a_{\btau_i}u(\bxi_k)x\right)&\leq \useconC{5}^{d2^d\del Nt}\prod_{i=1}^{d-1}\widetilde{\alpha}_i\left(a_{\btau_i(\bxi_k)}u(\bxi_k)x\right)\\&=e^{\useconC{8} \del Nt}\psi_{N,2^{d-1}\del,D(\sh)}(u(\bxi_k)x),\end{aligned}}
where \newconC{8}$\useconC{8}:=d2^d\log \useconC{5}$. Combining \eqref{psibetabdd1}, \eqref{betaprod3}, and \eqref{tauiposi}, we obtain
\eq{\begin{aligned}\int_{[-\frac{1}{2},\frac{1}{2}]^{d-1}} \psi_{2N,\del,D(\sh)}(u(\bxi)x)d\bxi&\leq\displaystyle\sum_{k=1}^{M}\displaystyle\sum_{\btau_1\in\cE_{N,2\del,1}}\cdots\displaystyle\sum_{\btau_{d-1}\in\cE_{N,2\del,d-1}}e^{-(2d+4\del)(d-1)Nt} \\& \qquad\times \useconE{103}(8\useconC{5})^{d(d-1)N}e^{-\{d(d-1)\lambda^2-\useconC{9}\del\} Nt}\kappa(x)^6\psi_{N,2^{d-1}\del,D(\sh)}(u(\bxi_k)x),\end{aligned}}
where \newconC{9}$\useconC{9}:=13d(d-1)\useconC{6}+\useconC{8}$. Since $\#\cE_{N,2\del,i}\leq \#\cD_{dN}\leq (dN)^{d-1}$ for all $i$, we have
\eqlabel{Eprodcount}{\#\left(\cE_{N,2\del,1}\times\cdots\times\cE_{N,2\del,d-1}\right)\leq (dN)^{(d-1)^2}.}
It follows that
\eqlabel{psibdd}{\begin{aligned}\int_{[-\frac{1}{2},\frac{1}{2}]^{d-1}} \psi_{2N,\del,\sh}(u(\bxi)x)d\bxi\leq \useconE{103}\eps_{\lambda,t}^{-(d-1)^2}&(dN)^{(d-1)^2}(8\useconC{5})^{d(d-1)N}e^{-\{d(d-1)\lambda^2-\useconC{9}\del\}Nt}\\&\times e^{-(2d+4\del)(d-1)Nt}\kappa(x)^6\displaystyle\sum_{k=1}^{M}F(\bxi_k).\end{aligned}}
On the other hand, by (2) of Lemma \ref{psisupp} we have
\eqlabel{Fbdd}{\displaystyle\sum_{k=1}^{M} F(\bxi_k)\leq 
    \useconE{102}^{d-1}e^{\{2d(d-1)+5d^2\del\} Nt}\kappa(x)^2\int_{[-\frac{1}{2},\frac{1}{2}]^{d-1}}\psi_{N,2^{d-1}\del,\sh}(u(\bxi)x)d\bxi.}
Therefore, combining \eqref{psibdd} and \eqref{Fbdd}, we obtain the desired inequality for 
$$\useconE{104}:=\useconE{102}^{d-1}d^{(d-1)^2}\useconE{103},\qquad \useconC{10}:=\useconC{9}+5d^2-4(d-1).$$
\end{proof}

\section{Hausdorff dimension of divergent on average trajectories}
The goal of this section is to prove Theorem \ref{dimupperbdd} and Theorem \ref{dimlowbdd}.
\subsection{Upper bound}
We shall prove Theorem \ref{dimupperbdd} by employing the higher rank height functions developed in the previous sections. For $x\in X'$ we denote $$\mathsf{M}_i(x):=\max\set{\sup_{\bxi\in[-\frac{1}{2},\frac{1}{2}]^{d-1}}\widetilde{\alpha}_i(u(\bxi)x),\useconE{101}e^{t}}$$
for $1\leq i\leq d-1$ and $\mathsf{M}(x):=\prod_{i=1}^{d-1}\mathsf{M}_i(x)$. The upper bound for the Hausdorff dimension follows from the following contraction estimate which will be proved by repeatedly applying Proposition \ref{multicont}.

\begin{prop}[Contraction for $\psi$]\label{nontrivbdd}
For any $l, N\in\bN$, $0<\del<2^{-(d-1)l}$, $\sh>D^{(l)}(\useconE{101}e^{t})$ and $x\in X'$, we have
\eq{\begin{aligned}\int_{[-\frac{1}{2},\frac{1}{2}]^{d-1}}\psi_{2^l N,\del,\sh}(u(\bxi)x)d\bxi&\leq \useconE{101}(2\useconE{104})^{d^2l^2}N^{d^2l}(8\useconC{5})^{d(d-1)2^{l}N}\\&\quad\times e^{-\{d(d-1)(2^l-1)\lambda^2-(\useconC{10}2^{dl}\del+\useconC{11})\}Nt}\mathsf{M}(x).\end{aligned}}
\end{prop}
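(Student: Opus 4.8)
The plan is to prove Proposition~\ref{nontrivbdd} by induction on $l$, peeling off one application of Proposition~\ref{multicont} at each step; the constant $\useconC{11}$ is produced in the base case $l=0$. It is convenient to assume (replacing $D$ by a larger function with the same property) that the function $D$ of Lemma~\ref{compactness} is a continuous strictly increasing bijection of $[1,\infty)$ onto $[D(1),\infty)$, so that the hypothesis $\sh>D^{(l)}(\useconE{101}e^{t})$ means precisely $\sh=D(\sh')$ for a unique $\sh'>D^{(l-1)}(\useconE{101}e^{t})$, which is exactly the height hypothesis needed to invoke the statement for $l-1$.

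\textbf{Base case.} For $l=0$ one must show
\begin{equation*}
\int_{[-\frac12,\frac12]^{d-1}}\psi_{N,\del,\sh}(u(\bxi)x)\,d\bxi\le \useconE{101}(8\useconC{5})^{d(d-1)N}e^{(\useconC{10}\del+\useconC{11})Nt}\,\mathsf{M}(x),
\end{equation*}
and the generous factor $(8\useconC{5})^{d(d-1)N}$ makes a crude estimate sufficient. From \eqref{psidef} one has $\psi_{N,\del,\sh}(u(\bxi)x)\le\prod_{i=1}^{d-1}\widetilde\alpha_i(a_{\btau_i}u(\bxi)x)$ for suitable $\btau_i\in\cE_{N,\del,i}$. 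The key reduction is to split $u(\bxi)=u_i(\xi_i)u(\pi_i^\perp(\bxi))$, conjugate the $U_i^\perp$-factor across $a_{\btau_i}$ (which normalises $U_i^\perp$) and use \eqref{LipschitzOrthogonal} together with $\kappa_i(a_{\btau_i}x)\le e^{d\del Nt}\kappa_i(x)$ (cf.\ \eqref{kappaestimate}) to replace $u(\bxi)$ by $u_i(\xi_i)$ up to a factor $\useconE{101}e^{2d\del Nt}\kappa_i(x)^2$; then $\prod_i\widetilde\alpha_i(a_{\btau_i}u_i(\xi_i)x)$ has $i$-th factor depending on $\bxi$ only through $\xi_i$, so its integral over the cube factors into $\prod_i\int_{-1/2}^{1/2}\widetilde\alpha_i(a_{\btau_i}u_i(\xi_i)x)\,d\xi_i$. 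Each one-dimensional integral is then estimated by iterating the contraction \eqref{alphatildeupperbound} along $A_i$ together with the Lipschitz properties \eqref{alphatildeLipschitz}--\eqref{LipschitzOrthogonal}, giving a bound of the form $e^{O(Nt)}\mathsf{M}_i(x)$; collecting over $i$ and absorbing the constants and the $\kappa_i(x)$-factors into $e^{\useconC{11}Nt}\mathsf{M}(x)$ fixes $\useconC{11}$.

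\textbf{Inductive step.} Assuming the statement for $l-1$, apply Proposition~\ref{multicont} with $N$ replaced by $2^{l-1}N$, with the present $\del$ and with $\sh=D(\sh')$, bounding $\int\psi_{2^lN,\del,\sh}(u(\bxi)x)\,d\bxi$ by
\begin{equation*}
\useconE{104}(2^{l-1}N)^{d^2}(8\useconC{5})^{d(d-1)2^{l-1}N}e^{-\{d(d-1)\lambda^2-\useconC{10}\del\}2^{l-1}Nt}\,\kappa(x)^8\!\!\int\psi_{2^{l-1}N,\,2^{d-1}\del,\,\sh'}(u(\bxi)x)\,d\bxi ,
\end{equation*}
and then apply the inductive hypothesis to the last integral --- legitimate since $2^{d-1}\del<2^{-(d-1)(l-1)}$ and $\sh'>D^{(l-1)}(\useconE{101}e^{t})$. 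The rest is bookkeeping along the telescoped chain $2^lN\to\cdots\to N$: the exponential decays telescope via $\sum_{j=1}^{l}2^{l-j}=2^{l}-1$ to $e^{-d(d-1)(2^l-1)\lambda^2Nt}$; the $\del$-correction at step $j$ carries weight $2^{(d-1)(j-1)}\del\cdot 2^{l-j}$, and $\sum_{j=1}^{l}2^{(d-1)(j-1)+l-j}\le l\,2^{(d-1)(l-1)}\le 2^{dl}$ yields $e^{\useconC{10}2^{dl}\del Nt}$; the products $\prod_{j=1}^{l}\useconE{104}(2^{l-j}N)^{d^2}=\useconE{104}^{l}2^{d^2l(l-1)/2}N^{d^2l}$ are dominated by $(2\useconE{104})^{d^2l^2}N^{d^2l}$; and $\prod_{j=1}^{l}(8\useconC{5})^{d(d-1)2^{l-j}N}=(8\useconC{5})^{d(d-1)(2^l-1)N}$ combines with the base-case factor to give exactly $(8\useconC{5})^{d(d-1)2^lN}$.

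\textbf{Main obstacle.} The delicate part is the base case together with the tracking of the $x$-dependent quantities: the height functions $\widetilde\alpha_i$ are log-Lipschitz only along $A_i^+U_i$ and only quasi-invariant, with a $\kappa_i(x)^2$ loss, along $U_i^\perp$, so the base-case estimate must manoeuvre $a_{\btau_i}$ using only these two kinds of translates (this is precisely why factoring the $\bxi$-integral into one-dimensional integrals is indispensable), and the $\kappa(x)$-powers appearing there, together with the $\kappa(x)^8$ from each of the $l$ applications of Proposition~\ref{multicont}, must be reconciled with the single factor $\mathsf{M}(x)$ on the right. The remaining points --- keeping the $\del$-bookkeeping sharp enough that the net correction is $O(2^{dl}\del)$, and checking the stated constants dominate the telescoped products --- are routine.
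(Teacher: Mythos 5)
Your overall strategy — iterate Proposition~\ref{multicont} $l$ times and close with a trivial bound at the bottom — is exactly the paper's proof; the paper writes the telescoping product out explicitly (defining $\mathsf{Q}_j=\int\psi_{2^{l-j}N,\,2^{(d-1)j}\del,\,D^{(l-j)}(\sh)}$ and chaining $\mathsf{Q}_j\le(\cdots)\mathsf{Q}_{j+1}$), while you phrase it as an induction on $l$, which is the same thing. Your bookkeeping of the exponents, the $\del$-corrections, and the polynomial prefactors along the telescope is correct.

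The difference is the base case, and there your sketch is both heavier than necessary and not quite right as written. The paper's base step is Lemma~\ref{trivlem}, which is a purely \emph{pointwise} estimate: for $\btau\in\cD_N$ one has $a_\btau\in B^A(\operatorname{id},2dNt)$, so $\alpha'_{i}$ is controlled via the $B^{A^+U}$-log-Lipschitz property \eqref{Lipschitz'} (not just the $A_i^+U_i$ one), $\kappa_i(a_\btau u(\bxi)x)\le e^{2dNt}$, and the crude bound $\widetilde\alpha_i\le\max\{\useconE{101}\kappa_i,\alpha_i'\}$ gives $\psi_{N,\del,\sh}(u(\bxi)x)\le e^{O(Nt)}\mathsf M(x)$ uniformly in $\bxi$, whence $\mathsf Q_l\le\useconE{101}e^{\useconC{11}Nt}\mathsf M(x)$. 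Your plan of factoring the $\bxi$-integral into $d-1$ one-dimensional integrals and iterating \eqref{alphatildeupperbound} is unnecessary here, and has two soft spots: the minimizing $\btau_i\in\cE_{N,\del,i}$ in \eqref{psidef} depends on $\bxi$, so the product does not factor without first summing over the $\btau_i$; and once you try to ``iterate the contraction along $A_i$'' you have to absorb $a_{\btau_i-\tau_{i,i}\be_i}$, a diagonal element \emph{perpendicular} to $A_i$, which is covered by neither \eqref{alphatildeLipschitz} nor \eqref{LipschitzOrthogonal} — you would have to pass to $\alpha_i'$ and its $B^{A^+U}$-Lipschitz property, which is exactly what Lemma~\ref{trivlem} does directly. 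So I would replace your base case by that lemma.

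One accurate observation you make, and one the paper glosses over: each application of Proposition~\ref{multicont} produces a factor $\kappa(x)^8$, which the chain $\mathsf{Q}_j\le(\cdots)\mathsf{Q}_{j+1}$ accumulates into $\kappa(x)^{8l}$. The paper's display \eqref{jindiv} silently omits these factors. This does not endanger Theorem~\ref{dimupperbdd} — there one fixes $l$ (and $x$), takes $\limsup_{N\to\infty}$, and only then lets $l\to\infty$, so any $N$-independent, $l$-dependent constant like $\kappa(x)^{8l}$ washes out — but you are right that, as literally stated, the conclusion of Proposition~\ref{nontrivbdd} should carry an extra $\kappa(x)^{8l}$ (or the statement should record the dependence). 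You flag this but do not resolve it; the honest fix is to add the factor to the statement and note it is harmless downstream.
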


To show Proposition \ref{nontrivbdd} we will also need the following estimate.
\begin{lem}[Trivial bound for $\psi$]\label{trivlem}
For any $N\in\bN$, $0<\del<1$, $0<\eps<\eps_{t,\lambda}$, $\sh\geq 1$, and $x\in X'$ we have
\eqlabel{trivbdd}{\int_{[-\frac{1}{2},\frac{1}{2}]^{d-1}}\psi_{N,\del,\sh}(u(\bxi)x)d\bxi\leq \useconE{101}e^{\useconC{11} Nt}\mathsf{M}(x)}
for some constant \newconC{11}$\useconC{11}>0$.
\end{lem}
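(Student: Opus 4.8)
The plan is to bound $\psi_{N,\del,\sh}$ pointwise on the horosphere $Ux$ by a product of the one‑dimensional height functions $\widetilde{\alpha}_i$, and then to integrate that product coordinate by coordinate after peeling away the directions that each $\widetilde{\alpha}_i$ does not feel. \emph{Step 1: reduction to one‑dimensional functions.} If $\psi_{N,\del,\sh}(u(\bxi)x)>0$ then, by \eqref{Ecount}, for each $i$ the set $\cE_{N,\del,i}\cap\Om(N,\sh,u(\bxi)x)$ is nonempty (its complement in $\cE_{N,\del,i}$ has size $<\tfrac14\#\cE_{N,\del,i}$), so
\[
\psi_{N,\del,\sh}(u(\bxi)x)=\prod_{i=1}^{d-1}\ \min_{\btau\in\cE_{N,\del,i}\cap\Om(N,\sh,u(\bxi)x)}\widetilde{\alpha}_i(a_{\btau}u(\bxi)x)\ \le\ \prod_{i=1}^{d-1}\ \sum_{\btau\in\cE_{N,\del,i}}\widetilde{\alpha}_i(a_{\btau}u(\bxi)x),
\]
and the extreme inequality holds trivially when the left side vanishes. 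Expanding the product and using $\#\cE_{N,\del,i}\le\#\cD_{dN}\ll N^{d-1}$, it suffices to bound
\[
\cI(\btau_1,\dots,\btau_{d-1}):=\int_{[-\frac12,\frac12]^{d-1}}\prod_{i=1}^{d-1}\widetilde{\alpha}_i\big(a_{\btau_i}u(\bxi)x\big)\,d\bxi\ \ll\ e^{\useconC{11}Nt}\mathsf{M}(x)
\]
uniformly over $(\btau_1,\dots,\btau_{d-1})\in\prod_i\cE_{N,\del,i}$; the polynomial factor $N^{(d-1)^2}$ counting the tuples is absorbed into $e^{\useconC{11}Nt}$ since $t=t_\lambda\ge1$.

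\emph{Step 2: Fubini after stripping $U_i^{\perp}$.} Fix such a tuple; write $\btau_i=(\btau_i)_i\be_i+\bw_i$ with $\bw_i\in(t\bN)^{d-1}$ supported off the $i$‑th coordinate and $\|\bw_i\|\le\del Nt$, and split $u(\bxi)=u_i(\xi_i)u(\pi_i^{\perp}\bxi)$. Since $U$ is abelian and $a_{\btau_i}$ normalises $U_i$ and $U_i^{\perp}$, a direct computation gives $a_{\btau_i}u(\bxi)x=g\,a_{\btau_i}u_i(\xi_i)x$ with $g\in U_i^{\perp}$; hence \eqref{LipschitzOrthogonal} together with \eqref{kappaestimate} gives
\[
\widetilde{\alpha}_i\big(a_{\btau_i}u(\bxi)x\big)\ \le\ \useconE{101}\,\kappa_i(a_{\btau_i}x)^2\,\widetilde{\alpha}_i\big(a_{\btau_i}u_i(\xi_i)x\big)\ \le\ \useconE{101}\,e^{2dNt}\,\kappa_i(x)^2\,\widetilde{\alpha}_i\big(a_{\btau_i}u_i(\xi_i)x\big),
\]
and the right side depends on $\bxi$ only through $\xi_i$. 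Integrating and applying Fubini,
\[
\cI(\btau_1,\dots,\btau_{d-1})\ \le\ \useconE{101}^{\,d-1}e^{2d(d-1)Nt}\,\kappa(x)^2\prod_{i=1}^{d-1}\int_{-\frac12}^{\frac12}\widetilde{\alpha}_i\big(a_{\btau_i}u_i(s)x\big)\,ds.
\]

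\emph{Step 3: the one‑dimensional integrals.} For each $i$ I would pass to the auxiliary function $\alpha_{i,\lambda}'$: by \eqref{alphatildedefinition} and Lemma \ref{alpha'upperbound} one has $\widetilde{\alpha}_i\le\useconE{101}\alpha_{i,\lambda}'$, and $\alpha_{i,\lambda}'=\kappa_i\cdot(\operatorname{ht}_\lambda\circ\phi_i)$ transforms explicitly under the diagonal, namely $\phi_i(a_{\bw_i}z)=\widebar{a}_{\sum_{j\ne i}(w_i)_j}\phi_i(z)$ and $\kappa_i(a_{\bw_i}z)=e^{\sum_{j\ne i}(w_i)_j}\kappa_i(z)$; combined with $\operatorname{ht}_\lambda(\widebar{a}_c z)\le e^{\lambda|c|}\operatorname{ht}_\lambda(z)$ and $\|\bw_i\|\le\del Nt$ this yields $\widetilde{\alpha}_i(a_{\btau_i}u_i(s)x)\le\useconE{101}e^{O(\del Nt)}\alpha_{i,\lambda}'(a_{(\btau_i)_i\be_i}u_i(s)x)$. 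Iterating the contraction estimate \eqref{alpha'cont} exactly as in \eqref{eq:inductivecont2'} (legitimate because $(\btau_i)_i\in t_\lambda\bN$) then gives
\[
\int_{-\frac12}^{\frac12}\alpha_{i,\lambda}'\big(a_{(\btau_i)_i\be_i}u_i(s)x\big)\,ds\ \le\ e^{-\lambda^2(\btau_i)_i}\alpha_{i,\lambda}'(x)+4C_{\lambda,t_\lambda}'\kappa_i(x)\ \ll\ \alpha_{i,\lambda}'(x).
\]
Feeding this into Steps 1--2 produces a bound of the form $C\,e^{O(Nt)}\prod_i\alpha_{i,\lambda}'(x)$, and the remaining task is to replace $\alpha_{i,\lambda}'(x)$ by $\mathsf{M}_i(x)\ge\sup_{\bxi}\widetilde{\alpha}_i(u(\bxi)x)$: using Lemma \ref{alpha'upperbound} and \eqref{alphatildedefinition} again, and the fact that $\bxi\mapsto\phi_i(u(\bxi)x)$ sweeps out a horocycle segment of length $\asymp\kappa_i(x)$ through $\phi_i(x)$ so that \eqref{htcont} applies, one bounds $\sup_{\bxi}\widetilde{\alpha}_i(u(\bxi)x)$ from below by a constant multiple of $\alpha_{i,\lambda}'(x)$ (up to powers of $\kappa_i(x)$, which on the $U$‑orbit through $x$ equal $\kappa_i(x)$ since $\kappa_i$ is $U$‑invariant), finishing the estimate after adjusting $\useconC{11}$ and absorbing constants into $\useconE{101}$.

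The hard part is exactly this last comparison together with the bookkeeping in Steps 2--3: the translate $a_{\btau_i}$ carries a large component $\approx dN t$ along $\be_i$ — tamed only by the averaged contraction of $\widetilde{\alpha}_i$, equivalently of $\alpha_{i,\lambda}'$ — plus components of size up to $\del Nt$ in the remaining directions, which lie outside the subgroup $H_i$ for which $\widetilde{\alpha}_i$ is log‑Lipschitz and so must be routed through $\alpha_{i,\lambda}'$; keeping the accumulated powers of $\kappa(x)$ and $\mathsf{M}(x)$ from degrading the exponential rate $e^{\useconC{11}Nt}$ is the only genuinely delicate point, and everything else is the same covering/averaging bookkeeping already used in the proofs of Proposition \ref{betadecay} and Lemma \ref{betasupp}.
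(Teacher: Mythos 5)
Your proof takes a genuinely different and much more elaborate route than the paper's. The paper's argument is a direct pointwise bound that uses no contraction hypothesis and no Fubini: since $a_{\btau}\in B^A(\operatorname{id},2dNt)$ for every $\btau\in\cD_N$, iterating the log-Lipschitz property \eqref{Lipschitz'} of $\alpha_{i,\lambda}'$ along $A^+U$ gives $\alpha_{i,\lambda}'(a_{\btau}u(\bxi)x)\le\useconC{4}^{2dNt}\alpha_{i,\lambda}'(u(\bxi)x)$, and together with the definition \eqref{alphatildedefinition} of $\widetilde{\alpha}_{i,\lambda}$ and a bound on $\kappa_i(a_{\btau}u(\bxi)x)$ this yields the pointwise inequality $\widetilde{\alpha}_i(a_{\btau}u(\bxi)x)\le C^{Nt}\mathsf{M}_i(x)$; multiplying over $i$ and integrating over $\bxi$ is then immediate. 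Your Steps 1--2 (bounding the minimum by a sum over $\cE_{N,\del,i}$-tuples, stripping $U_i^{\perp}$ via \eqref{LipschitzOrthogonal}, Fubini) are correct but superfluous for a \emph{trivial} bound, and invoking the averaged contraction \eqref{alpha'cont} in Step 3 is overkill, since the decaying factor $e^{-\lambda^2(\btau_i)_i}$ it produces is immediately discarded.

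More seriously, the last step of your argument has a genuine gap. After Steps 1--3 you hold a bound of the shape $C^{Nt}\kappa(x)^2\prod_i\alpha_{i,\lambda}'(x)$, and the only comparison available from Lemma~\ref{alpha'upperbound} and \eqref{alphatildedefinition} is $\alpha_{i,\lambda}'(x)\le\eps_{\lambda,t_\lambda}^{-(d-1)}\kappa_i(x)^2\widetilde{\alpha}_i(x)\le\eps_{\lambda,t_\lambda}^{-(d-1)}\kappa_i(x)^2\mathsf{M}_i(x)$, which leaves an extra factor $\kappa(x)^4$. You propose to dispose of the remaining powers of $\kappa_i(x)$ by ``adjusting $\useconC{11}$ and absorbing constants into $\useconE{101}$,'' but those constants depend only on $d$ and $\lambda$, whereas $\kappa_i(x)$ is unbounded over $X'$ and does not depend on $N$; and the inequality $\sup_{\bxi}\widetilde{\alpha}_i(u(\bxi)x)\gtrsim\alpha_{i,\lambda}'(x)$ without a $\kappa_i(x)$-loss that you invoke to finish is not established by anything in the paper (the $U$-invariance of $\kappa_i$ that you cite does nothing to remove the loss). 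As written, the argument does not close.
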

\begin{proof}
For any $\btau\in\cD_N$ and $\bxi\in[-\frac{1}{2},\frac{1}{2}]^{d-1}$, we have $a_{\btau} \in B^A(\operatorname{id}, 2dNt)$. It follows from \eqref{Lipschitz'} that 
\eq{\alpha_i'(a_{\btau}u(\bxi)x)\leq \useconC{4}^{2dNt}\alpha_i'(u(\bxi)x)}
for any $1\leq i\leq d-1$. Since $\kappa_i(a_{\btau}u(\bxi)x)\leq e^{2dNt}$, we have
\eq{\begin{aligned}\widetilde{\alpha}_i(a_{\btau}u(\bxi)x)&\leq \max\set{\useconE{101}e^{t} \kappa_i(a_{\btau}u(\bxi)x), \alpha_i'(a_{\btau}u(\bxi)x)}\\&\leq \max\set{\useconE{101}e^{(2dN+1)t}, \useconC{4}^{2dNt}\mathsf{M}_i(x)}\leq \useconE{101}\big(\max\set{e^2,\useconC{4}}\big)^{2dNt}\mathsf{M}_i(x)\end{aligned}}
for any $1\leq i\leq d-1$. By definition of $\psi_{N,\del,\sh}$ we clearly have a trivial estimate
\eq{\psi_{N,\del,\sh}(u(\bxi)x)\leq \big(\max\set{e,\useconC{4}}\big)^{2d(d-1)Nt}\mathsf{M}(x)}
for any $\bxi\in[-\frac{1}{2},\frac{1}{2}]^{d-1}$, hence

\eq{\int_{[-\frac{1}{2},\frac{1}{2}]^{d-1}}\psi_{N,\del,\sh}(u(\bxi)x)d\bxi\leq \useconE{101}e^{\useconC{11} Nt}\mathsf{M}(x),}
where $\useconC{11}:=2d(d-1)\max\set{2,\log \useconC{4}}$.
\end{proof}

\begin{proof}[Proof of Proposition \ref{nontrivbdd}]
For the sake of simplicity, let us write
\eq{\mathsf{Q}_j:=\int_{[-\frac{1}{2},\frac{1}{2}]^{d-1}}\psi_{2^{l-j}N,2^{(d-1)j}\del,D^{(l-j)}(\sh)}(u(\bxi)x)d\bxi,}
where $0\leq j\leq l$. We apply the inductive contraction inequality for $\psi$ in Proposition \ref{multicont} with $2^{l-j-1}N\in\bN$, $2^{(d-1)j}\del$, and $D^{(l-j-1)}(\sh)$, where $0\leq j\leq l-1$. Then we have
\eqlabel{jindiv}{\mathsf{Q}_j\leq \useconE{104}(2^{l-j-1}N)^{d^2}(8\useconC{5})^{d(d-1)2^{l-j-1}N}e^{-\{d(d-1)\lambda^2-\useconC{10} 2^{(d-1)j}\del\}2^{l-j-1}Nt}\mathsf{Q}_{j+1}}
for any $0\leq j\leq l-1$. By Lemma \ref{trivlem} we also have
\eqlabel{trivbdd'}{\mathsf{Q}_{l}\leq \useconE{101}e^{\useconC{11} Nt}\mathsf{M}(x).}
Multiplying all the inequalities \eqref{jindiv} for $0\leq j\leq l-1$ and \eqref{trivbdd'}, we get
\eq{\mathsf{Q}_0\leq \useconE{101}(2\useconE{104})^{d^2l^2}N^{d^2l}(8\useconC{5})^{d(d-1)2^{l}N} e^{-\{d(d-1)(2^l-1)\lambda^2-(\useconC{10}2^{dl}\del+\useconC{11})\}Nt}\mathsf{M}(x).}
\end{proof}

\begin{proof}[Proof of Theorem \ref{dimupperbdd}]
For $x\in X'$ we denote by $Z_x$ the set of all $\bxi\in[-\frac{1}{2},\frac{1}{2}]^{d-1}$ such that $u(\bxi)x$ is $A^+$-divergent on average. We first consider $0<\lambda<1$, $t=t_\lambda$, and $l\in\bN$ as fixed. For $N\in\bN$, $0<\del<2^{-(d-1)l}$, $0<\eps<2^{-4l}\eps_{\lambda,t}$ and a compact set $K\subset X$ we denote by $Z_x(N,\del,K)$ the set
\eq{\set{\bxi\in[-\tfrac{1}{2},\tfrac{1}{2}]^{d-1}: \frac{1}{\#\cD_N}|\{\btau\in\cD_N: a_{\btau}u(\bxi)x\notin K\}|\ge 1-\frac{\del^d}{4\useconC{7}}}.}

We claim that
\eqlabel{Zxcont}{Z_x\subset \bigcup_{N_0\ge 1}\bigcap_{N\ge N_0}Z_x(N,\del,K)}
for any $\del$ and $K$. To see this, suppose that $\bxi\notin \displaystyle\bigcup_{N_0\ge 1}\displaystyle\bigcap_{N\ge N_0}Z_x(N,\del,K)$. Then we have
\eqlabel{escape1}{\frac{1}{\#\cD_N}|\{\btau\in\cD_N: a_{\btau}u(\bxi)x\notin K\}|< 1-\frac{\del^d}{4\useconC{7}}}
for infinitely many $N$. It follows that 
\eqlabel{escape2}{\frac{1}{(dN)^{d-1}}|\{\btau\in\set{t,2t,\ldots,dNt}^{d-1}: a_{\btau}u(\bxi)x\notin K\}|< 1-\frac{\del^d}{4d^{d-1}\useconC{7}}}
for infinitely many $N$ since $\set{t,2t,\ldots,Nt}^{d-1}\subseteq\cD_N\subseteq\set{t,2t,\ldots,dNt}^{d-1}$. We can find a compact set $\widetilde{K}\subset X$ such that $a_{\btau}y\subset \widetilde{K}$ for any $\btau\in[-t,t]^{d-1}$ and $y\in K$. Then \eqref{escape2} implies that
\eqlabel{escape3}{\frac{1}{(dNt)^{d-1}}|\{\btau\in\set{1,2,\ldots,dNt}^{d-1}: a_{\btau}u(\bxi)x\notin \widetilde{K}\}|< 1-\frac{\del^d}{4d^{d-1}\useconC{7}}}
for infinitely many $N$, hence $u(\bxi)x$ is not $A^+$-divergent on average. This proves the claim.

For $\sh>D^{(l)}(\useconE{101}e^{t})$, let $$K_{\sh}=\set{x\in X: \displaystyle\max_{1\leq i\leq d-1}\widetilde{\alpha}_i(x)\leq\sh}.$$ Since $\widetilde{\alpha}_i$ is proper for any $1\leq i\leq d-1$, $K_{\sh}$ is compact. Let $N'=2^lN$. By Lemma \ref{compactness}, $B(\bxi,e^{-2dN't})\subset K_{\sh}$ holds for any $\bxi\in K_{D(\sh)}$. Thus, if $\bxi\in Z_x(N',\del,K_{D(\sh)})$ then for any $\bxi'\in B(\bxi,e^{-2dN't})$ we have $\bxi'\in Z_x(N',\del,K_{\sh})$, hence 
\eq{\frac{\#\Om(N',\sh,u(\bxi')x)}{\#\cD_{N'}}=\frac{|\{\btau\in\cD_{N'}: a_{\btau}u(\bxi')x\notin K_{\sh}\}|}{\#\cD_{N'}}\ge 1-\frac{\del^d}{4\useconC{7}}.}
It follows that \eqlabel{psilowbdd}{\psi_{N',\del,\sh}(u(\bxi')x)=\displaystyle\prod_{i=1}^{d-1}\displaystyle\min_{\btau\in\cE_{N',\del,i}\cap\Om(N',\sh,x)}\widetilde{\alpha}_i(a_{\btau}u(\bxi')x)\ge\sh^{d-1}.}

Let $\set{\bxi_1,\cdots,\bxi_M(N)}$ be a maximal $e^{-2dN't}$-separated subset of $Z_x(N',\del,K_{D(\sh)})$. Then $Z_x(N',\del,K_{D(\sh)})$ is covered with $M=M(N',\del,K_{D(\sh)})$ balls of length $e^{-2dN't}$. By \eqref{psilowbdd} we have $\psi_{N',\del,\eps,\sh}(u(\bxi')x)\ge\sh^{d-1}$ for any $1\leq k\leq M$ and $\bxi'\in B(\bxi_k,\frac{1}{2}e^{-2dN't})$. Note that the balls $B(\bxi_k,\frac{1}{2}e^{-2dN't})$ are disjoint since $\bxi_k$'s are $e^{-2dN't}$-separated. It follows that
\eq{\begin{aligned}\int_{[-\frac{1}{2},\frac{1}{2}]^{d-1}}\psi_{N',\del,\sh}(u(\bxi)x)d\bxi&\ge \displaystyle\sum_{k=1}^{M}\int_{B(\bxi_k,\frac{1}{2}e^{-2dN't})}\psi_{N',\del,\sh}(u(\bxi)x)d\bxi\\&\ge M\left(\tfrac{1}{2}\sh\right)^{d-1}e^{-2d(d-1)N't}.\end{aligned}}
Hence, using Proposition \ref{nontrivbdd} we obtain
\eq{\begin{aligned}M(N',\del,K_{D(\sh)})\leq& (2\useconE{104})^{d^2l^2}\mathsf{M}(x)N^{d^2l}(8\useconC{5})^{d(d-1)2^{l}N}\\&\qquad\times e^{\{d(d-1)(2^{l+1}-2^l\lambda^2+\lambda^2)+(\useconC{10}2^{dl}\del+\useconC{11})\}Nt}.\end{aligned}}
Note that the constants $\useconC{5},\useconE{104}$ and $\mathsf{M}(\lambda)$ are independent of $N,l$, and $\del$ while they might depend on $t$ and $\lambda$. Therefore, for any $N_0\in\bN$, $l\in\bN$, and $0<\del<2^{-(d-1)l}$,
\eq{\begin{aligned}\dim_H&\left(\bigcap_{N\ge N_0}Z_x(2^lN,\del,K_{D(\sh)})\right)\leq \overline{\dim}_{\operatorname{box}}\left(\bigcap_{N\ge N_0}Z_x(2^lN,\del,K_{D(\sh)})\right)\\&\qquad\qquad\qquad\qquad\qquad\quad\;\;\leq\displaystyle\limsup_{N\to\infty}\frac{\log M(N',\del,K_{D(\sh)})}{-\log (e^{-2dN't})}\\&\leq \displaystyle\limsup_{N\to\infty}\left(\frac{d^2l\log N+d(d-1)2^lN\log (8\useconC{5})}{2^{l+1}dNt}\right.\\&\qquad\qquad\qquad+\left.\frac{\{d(d-1)(2^{l+1}-2^l\lambda^2+\lambda^2)+(\useconC{10}2^{dl}\del+\useconC{11})\}Nt}{2^{l+1}dNt}\right)\\&=\frac{(d-1)\log (8\useconC{5})}{2t}+\frac{(d-1)(2^{l+1}-2^l\lambda^2+\lambda^2)}{2^{l+1}}+\frac{\useconC{10}2^{dl}\del+\useconC{11}}{2^{l+1}d}.\end{aligned}}
Using \eqref{Zxcont} and taking $\del\to 0$, $l\to\infty$ we have
\eq{\dim_H Z_x\leq \frac{(d-1)\log (8\useconC{5})}{2t}+\frac{(d-1)(2-\lambda^2)}{2}.}
Since this estimate holds for any $0<\lambda<1$ and $t=t_\lambda \to\infty$ as $\lambda\to 1$, we finally obtain
\eq{\dim_HZ_x\leq\frac{d-1}{2}.}

\end{proof}

\subsection{Lower bound}
In this subsection we prove Theorem \ref{dimlowbdd}, which directly implies that for any $d\geq 3$
\eq{\dim_H\set{x\in U\Gamma: x \textrm{ is }A^+\textrm{-divergent on average}}\geq 1.}
Recall that $u(\bxi)\Gamma$ is $g_t$-divergent (or $g_t$-divergent on average) if and only if $\bxi$ is singular (or singular on average). We can extend this notion to define analogous concepts of singular (or singular on average) vectors for higher-rank actions, and use them to prove Theorem \ref{dimlowbdd}.
\begin{defi}
Let $\bxi \in\bR^{d-1}$.
\begin{enumerate}
    \item We say that $\bxi$ is \textit{multiplicatively singular} if for any $\eps>0$ there exists $T(\eps)>0$ such that for any $n_1,\ldots,n_{d-1}\in\bN$ with $n_1+\cdots+n_{d-1}>T(\eps)$ there exists $q\in\bN$ such that
\eqlabel{simulsing}{\|q\xi_i\|_\bZ\leq \eps e^{-n_i} \textrm{ for all }1\leq i\leq d-1  \textrm{ and }0<q<\eps e^{\sum n_i}.}
    \item We say that $\bxi$ is \textit{multiplicatively singular on average} if
    \eq{\lim_{N\to\infty}\frac{1}{N^{d-1}}\left|\set{(n_1,\ldots,n_{d-1})\in\set{1,\ldots,N}^{d-1}: \textrm{ there exists } q\in\bZ^{d-1} \textrm{ s.t. }\eqref{simulsing} \textrm{ holds}}\right|=1}
    for any $\eps>0$.
\end{enumerate}
\end{defi}

The following lemma is a version of Dani's correspondence between divergent on average trajectories for higher rank actions and multiplicatively singular on average vectors.

\begin{lem}\label{Danicor}
For any $\bxi=(\xi_1,\ldots,\xi_{d-1})\in\bR^{d-1}$, $u(\bxi)\Gamma$ is $A^+$-divergent on average if and only if $\bxi$ is multiplicatively singular on average.
\end{lem}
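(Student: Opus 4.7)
My plan rests on Mahler's compactness criterion combined with an explicit description of the lattice $a_{\btau}u(\bxi)\bZ^d$. For $\eps>0$ set
$$K_{\eps}:=\set{\Lambda\in X: \|v\|\geq\eps \text{ for every nonzero } v\in\Lambda}.$$
Mahler's criterion asserts that each $K_{\eps}$ is compact and every compact $K\subset X$ is contained in some $K_{\eps}$. Consequently, $u(\bxi)\Gamma$ is $A^+$-divergent on average if and only if, for every $\eps>0$,
$$\lim_{N\to\infty}\frac{1}{N^{d-1}}\#\set{\btau\in\set{1,\cdots,N}^{d-1}:a_{\btau}u(\bxi)\Gamma\notin K_{\eps}}=1.$$
So the lemma reduces to translating the event $\set{a_{\btau}u(\bxi)\Gamma\notin K_{\eps}}$ into the solvability of \eqref{simulsing} at $\bn=\btau$, uniformly in $\btau$.

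The nonzero vectors of $a_{\btau}u(\bxi)\bZ^d$ are exactly
$$v_{(m_1,\cdots,m_d)}=\bigl(e^{\tau_1}(m_1+m_d\xi_1),\ldots,e^{\tau_{d-1}}(m_{d-1}+m_d\xi_{d-1}),\,e^{-(\tau_1+\cdots+\tau_{d-1})}m_d\bigr),$$
as already used in the proof of Proposition~\ref{alpha'}. Suppose $\btau\in\bN^{d-1}$ (so $\tau_i\geq 1$) and $\|v_{(m_1,\cdots,m_d)}\|<\eps<1$. Then $m_d\neq 0$, because otherwise a nonzero $m_i$ would force $\|v\|\geq e^{\tau_i}|m_i|\geq 1$; setting $q:=|m_d|$ and flipping signs of the $m_i$'s appropriately yields $\|q\xi_i\|_{\bZ}\leq\eps e^{-\tau_i}$ for all $i$ and $0<q<\eps e^{\tau_1+\cdots+\tau_{d-1}}$, which is \eqref{simulsing} with parameter $\eps$ and $n_i=\tau_i$. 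Conversely, if \eqref{simulsing} admits a solution $q\in\bN$ at $\bn=\btau$ with parameter $\eps$, then choosing $m_i\in\bZ$ nearest to $-q\xi_i$ produces a nonzero vector in $a_{\btau}u(\bxi)\bZ^d$ of norm at most $C_d\,\eps$ for a dimensional constant $C_d>0$.

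Combining both directions, the event $\set{a_{\btau}u(\bxi)\Gamma\notin K_{\eps}}$ is sandwiched between the solvability of \eqref{simulsing} at $\bn=\btau$ with parameter $\eps/C_d$ and with parameter $C_d\eps$. Taking densities on $\set{1,\cdots,N}^{d-1}$ and letting $\eps$ range over all positive reals, the density of $\btau$ with $a_{\btau}u(\bxi)\Gamma\notin K_{\eps}$ tends to $1$ for every $\eps>0$ if and only if the density of $\bn$ for which \eqref{simulsing} has a solution at parameter $\eps$ tends to $1$ for every $\eps>0$. By the reformulation of the first paragraph and the definition of multiplicative singularity on average, this is exactly the claimed equivalence. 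I do not anticipate any serious obstacle: the main small items are excluding $m_d=0$ (handled by $\tau_i\geq 1$ and $\eps<1$) and tracking the single dimensional constant $C_d$ in the two-sided Mahler/Diophantine comparison.
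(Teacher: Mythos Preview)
Your proposal is correct and follows essentially the same route as the paper's proof: Mahler's criterion plus the explicit description of the lattice $a_{\btau}u(\bxi)\bZ^d$ to translate $a_{\btau}u(\bxi)\Gamma\notin K_\eps$ into solvability of \eqref{simulsing}. You are simply more explicit than the paper about excluding $m_d=0$, about the converse direction (which the paper dismisses as ``similar''), and about the dimensional constant $C_d$ in the two-sided comparison.
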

\begin{proof}
Suppose that $u(\bxi)\Gamma$ is $A^+$-divergent on average. For $\epsilon>0$ we denote
$$\cK_{\epsilon}:=\set{g\Gamma\in X: \min_{v\in\mathbb{Z}^d\setminus\set{0}}\|gv\|\geq\epsilon}.$$
Then for any $\eps>0$ the set $\cK_{\epsilon}$ is compact by the classical Mahler's criterion, thus we have
$$\lim_{N\to\infty}\frac{1}{N^{d-1}}\#\set{\btau\in\set{1,\ldots,N}^{d-1}: a_{\btau}u(\bxi)\Gamma\notin \cK_\eps}=1.$$
For $\btau=(n_1,\ldots,n_{d-1})$ the condition $a_{\btau}u(\bxi)\Gamma\notin \cK_\eps$ implies that there exists a vector $(m_1,\ldots,m_{d-1},q)\in\bZ^{d}\setminus\set{0}$ such that
$$v_{(m_1,\cdots,m_{d-1},q)}=\left(\begin{matrix}
        e^{n_1}(m_1+q\xi_1)\\ \vdots \\ e^{n_{d-1}}(m_{d-1}+q\xi_{d-1}) \\  e^{-(n_1+\cdots+n_{d-1})}q
    \end{matrix}\right)$$
    satisfies $\|v_{(m_1,\ldots,m_{d-1},q)}\|< \eps$, hence also implies the condition \eqref{simulsing} for $\btau=(n_1,\ldots,n_{d-1})$. Thus $\bxi$ is multiplicatively singular on average. The proof of the converse direction is similar.
\end{proof}

\begin{proof}[Proof of Theorem \ref{dimlowbdd}]
If $\dim \operatorname{span}_\bQ(1,\xi_1,\ldots,\xi_{d-1})\leq 2$, then there exists $\xi\in\bR$ such that for any $1\leq i\leq d-1$
there exist $a_i,b_i\in\bQ$ satisfying $\xi_i=a_i\xi+b_i$. Let $D$ be the l.c.m of $a_1,\ldots,a_{d-1},b_1,\ldots,b_{d-1}$ and write $a_i=\frac{a_i'}{D}$ and $b_i=\frac{b_i'}{D}$ for all $1\leq i\leq d-1$. 

We claim that for any $\eps>0$ if $(n_1,\ldots,n_{d-1})$ satisfies \eqlabel{edgeest}{\max_{1\leq i\leq d-1}n_i\leq \sum_{1\leq i\leq d-1}n_i-\log \left(\frac{1}{2\eps^2}\displaystyle\max_{1\leq i\leq d-1}Da_i'\right),}
then $(n_1,\ldots,n_{d-1})$ also satisfies \eqref{simulsing}. To see this, let $T=\sum_i n_i$ and $Q=\lfloor\frac{\eps e^T}{D}\rfloor$. By Dirichlet's theorem, there exists $1\leq q\leq Q$ such that $\|q\xi\|_\bZ\leq\frac{1}{Q}$. It follows that for any $1\leq i\leq d-1$ we have $\|(Dq)\xi_i\|_\bZ\leq \frac{a_i'}{Q}$. Assuming \eqref{edgeest} for $(n_1,\ldots,n_{d-1})$, we have
\eq{\|(Dq)\xi_i\|_\bZ\leq \frac{a_i'}{Q}\leq \frac{\displaystyle\max_{1\leq i\leq d-1}Da_i'}{\eps e^{T+1}}\leq \eps e^{-n_i}}
for any $i$. Then \eqref{simulsing} holds for $(n_1,\ldots,n_{d-1})$ since $Dq\leq DQ\leq \eps e^T$. Hence the claim is proved.

For any $\eps>0$, the density of the set of $(n_1,\ldots,n_{d-1})\in\bN^{d-1}$ satisfying \eqref{edgeest} is $1$. By Lemma \ref{Danicor} and the above claim, $u(\bxi)\Gamma$ is $A^+$-divergent on average.
\end{proof}

\section{Dynamical height functions for a reference trajectory}
\subsection{Injectivity radius}
For $x\in X$ we denote by $\operatorname{inj}(x)$ the maximal injectivity of radius of $x\in X$, which is the supremum of $r>0$ such that the map $g\mapsto gx$ is injective in the ball $B^G(\operatorname{id},r)$. According to \cite[Proposition 3.5]{KM12} there exists a constant \newconC{12}$\useconC{12}>1$ such that the following holds: for any $x\in X$ there exists a nonzero $x$-integral vector $v\in\bR^d$ such that $\|v\| < \useconC{12}\operatorname{inj}(x)^{\frac{1}{d}}$.

 Let $0<\lambda<1$ be given, let $\widetilde{\alpha}_i:=\widetilde{\alpha}_{i,\lambda,t_\lambda}$ for $1\leq i\leq d-1$ be as in \eqref{alphatildedefinition}.
 
\begin{lem}
    For any $1\leq i\leq d-1$ and $x\in X'$ with $\operatorname{inj}(x)\leq \useconC{12}^{-d}\eps_{\lambda,t_\lambda}^{d(d-1)}$, 
    $$\widetilde{\alpha}_{i}(x)> \useconC{12}^{-1}\eps_{\lambda,t_\lambda}^{d-1}\operatorname{inj}(x)^{-\frac{\lambda}{d}}.$$
\end{lem}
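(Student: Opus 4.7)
The plan is to produce a short primitive vector in the lattice $x$ and track how it forces the two ingredients $\alpha_{i,\lambda,t_\lambda}$ and $\alpha'_{i,\lambda}$ of $\widetilde{\alpha}_i$ (cf.\ \eqref{alphatildedefinition}) to be large.

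\textbf{Step 1 (short vector).} By the cited \cite[Proposition 3.5]{KM12}, there is a nonzero $x$-integral vector $v\in\bR^d$ (i.e.\ a primitive vector of the lattice $x$) with $\|v\|<\useconC{12}\operatorname{inj}(x)^{1/d}$. The hypothesis $\operatorname{inj}(x)\leq\useconC{12}^{-d}\eps_{\lambda,t_\lambda}^{d(d-1)}$ then gives $\|v\|<\eps_{\lambda,t_\lambda}^{d-1}$.

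\textbf{Step 2 (lower bound for $\alpha_{i,\lambda,t_\lambda}$).} Take $k=1$ in the definition of $\varphi_{i,k,\eps}$. Since $\|\pi_{i,1,0}(v)\|\leq\|v\|<\eps_{\lambda,t_\lambda}^{d-1}$, the cutoff condition in the definition of $\varphi_{i,1,\eps_{\lambda,t_\lambda}}$ is satisfied, so
\eq{\alpha_{i,\lambda,t_\lambda}(x)\geq \varphi_{i,1,\eps_{\lambda,t_\lambda}}(v)=\eps_{\lambda,t_\lambda}^{d-1}\|\pi_{i,1,1}(v)\|^{-\lambda}\geq\eps_{\lambda,t_\lambda}^{d-1}\|v\|^{-\lambda}>\eps_{\lambda,t_\lambda}^{d-1}\useconC{12}^{-\lambda}\operatorname{inj}(x)^{-\lambda/d}.}

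\textbf{Step 3 (lower bound for $\alpha'_{i,\lambda}$).} This is precisely the contrapositive of the Claim inside the proof of Proposition \ref{alpha'}: if $\alpha'_{i,\lambda}(x)\leq \mathsf{h}$ then every nonzero $w\in a_{\btau}u(\bxi)\bZ^d$ satisfies $\|w\|\geq \mathsf{h}^{-1/\lambda}$. Applying this to our short vector $v$ with $\mathsf{h}=\|v\|^{-\lambda}$ yields
\eq{\alpha'_{i,\lambda}(x)>\|v\|^{-\lambda}>\useconC{12}^{-\lambda}\operatorname{inj}(x)^{-\lambda/d}.}

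\textbf{Step 4 (combining via the definition of $\widetilde{\alpha}_i$).} According to \eqref{alphatildedefinition}, $\widetilde{\alpha}_i(x)$ is either $\max\{\alpha_{i,\lambda,t_\lambda}(x),1\}$ or $\min\{\alpha_{i,\lambda,t_\lambda}(x),\alpha'_{i,\lambda}(x)\}$. In the first case Step 2 alone yields the conclusion. In the second case I combine Steps 2 and 3; since $\eps_{\lambda,t_\lambda}<1$, the smaller of the two lower bounds is $\eps_{\lambda,t_\lambda}^{d-1}\useconC{12}^{-\lambda}\operatorname{inj}(x)^{-\lambda/d}$. Finally $\useconC{12}^{-\lambda}>\useconC{12}^{-1}$ (as $\useconC{12}>1$ and $\lambda<1$) upgrades this to the claimed bound $\widetilde{\alpha}_i(x)>\useconC{12}^{-1}\eps_{\lambda,t_\lambda}^{d-1}\operatorname{inj}(x)^{-\lambda/d}$.

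There is no real obstacle; the only subtlety is remembering to handle the auxiliary height $\alpha'_{i,\lambda}$ via the Claim in Proposition \ref{alpha'}, because Lemma \ref{alpha'upperbound} goes the wrong way to supply a direct lower bound. Everything else is bookkeeping on the definition of $\widetilde{\alpha}_i$ and on the exponent $\eps_{\lambda,t_\lambda}^{k(d-k)}$ evaluated at $k=1$.
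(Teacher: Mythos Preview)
Your proof is correct and follows essentially the same route as the paper: produce a short primitive vector, use it to bound $\alpha_{i,\lambda,t_\lambda}$ via $\varphi_{i,1,\eps_{\lambda,t_\lambda}}$, bound $\alpha'_{i,\lambda}$ via the Claim in Proposition~\ref{alpha'}, and combine through the definition of $\widetilde{\alpha}_i$. One cosmetic slip: in Step~3 the Claim only yields $\alpha'_{i,\lambda}(x)\geq\|v\|^{-\lambda}$ (not strict), but since $\|v\|<\useconC{12}\operatorname{inj}(x)^{1/d}$ is strict your final inequality is unaffected.
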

\begin{proof}
    If $\operatorname{inj}(x)\leq \useconC{12}^{-d}\eps_{\lambda,t_\lambda}^{d(d-1)}$, there exists an $x$-integral vector $v$ such that $\|v\| < \useconC{12}\operatorname{inj}(x)^{\frac{1}{d}}\leq\eps_{\lambda,t_\lambda}^{d-1}$, hence $\|\pi_{i,1,0}(v)\|\leq \|v\|<\eps_{\lambda,t_\lambda}^{d-1}$. It follows that
    $$\varphi_{i,1,\eps_{\lambda,t_\lambda}}(v)=\eps_{\lambda,t_\lambda}^{d-1}\|\pi_{i,1,1}(v)\|^{-\lambda}\geq \eps_{\lambda,t_\lambda}^{d-1}\|v\|^{-\lambda}>\useconC{12}^{-1}\eps_{\lambda,t_\lambda}^{d-1}\operatorname{inj}(x)^{-\frac{\lambda}{d}}.$$
    By definition of $\alpha_{i,\lambda,t_0}=\alpha_{i,\eps_{\lambda,t_\lambda}}$ we have
    $$\alpha_{i,\lambda,t_0}(x)\geq \varphi_{i,1,\eps_{\lambda,t_\lambda}}(v)>\useconC{12}^{-1}\eps_{\lambda,t_\lambda}^{d-1}\operatorname{inj}(x)^{-\frac{\lambda}{d}}.$$

    On the other hand, the \textbf{Claim} in the proof of Proposition \ref{alpha'} implies
    $$\alpha_{i,\lambda}'(x)\geq\|v\|^{-\lambda}>\useconC{12}^{-1}\operatorname{inj}(x)^{-\frac{\lambda}{d}}.$$
    Since $\widetilde{\alpha}_i(x)\geq \min\set{\alpha_{i,\lambda,t_0}(x),\alpha_{i,\lambda}'(x)}$, we obtain $\widetilde{\alpha}_i(x)>\useconC{12}^{-1}\eps_{\lambda,t_\lambda}^{d-1}\operatorname{inj}(x)^{-\frac{\lambda}{d}}$.
\end{proof}

\subsection{Dynamical height function for a reference trajectory under one-dimensional action}
This subsection aims to construct dynamical height functions for a given reference trajectory under one-dimensional diagonal actions and to establish the contraction hypotheses for these height functions. 

Given $0<\lambda<1$ let us choose and fix $l_\lambda\in\bN$ such that $1-2^{-l_\lambda+1}>\lambda$. For $t\in t_\lambda\bN$ we write 
$$\sh_\lambda(t):=D^{(l_\lambda)}\big(\useconE{101}(3\useconC{5})^t\big).$$
Denote by $Q$ the parabolic subgroup
$$\set{\left(\begin{matrix}
    g' & 0 \\ v & \tau
\end{matrix}\right)\in G: g'\in\operatorname{GL}_{d-1}(\bR), v\in\bR^{d-2}, \tau\in\bR^{\times}}$$ of $G$, and for $t\in t_\lambda\bN$ and $0<\eta\leq 1$ define boxes
\eqlabel{boxDefinition}{\sB_{t,\eta}:=B^U(\operatorname{id}, \eta e^{-2t})B^{Q}(\operatorname{id}, \eta e^{-\frac{t}{2}}), \quad \widetilde{\sB}_{t,\eta}:=B^U(\operatorname{id}, \eta e^{-\frac{t}{2}})B^{Q}(\operatorname{id}, \eta e^{-\frac{t}{2}}).} 
Observe that the boxes $\sB_{t,\eta}$ and $\widetilde{\sB}_{t,\eta}$ are open sets in $G$ and satisfy the following property: for any $1\leq i\leq d-1$ and $g_1,g_2\in G$, the set $\set{s\in\bR: u_i(s)g_1\in \sB_{t,\eta}g_2}$ is either the empty set or an interval of length $\eta e^{-2t}$, and similarly, the set $\set{s\in\bR: u_i(s)g_1\in \widetilde{\sB}_{t,\eta}g_2}$ is either the empty set or an interval of length $\eta e^{-\frac{t}{2}}$. Note that it also implies the analogous property for $x_1,x_2\in X$ with $\operatorname{inj}(x_1),\operatorname{inj}(x_2)>\eta e^{-\frac{t}{2}}$ in the place of $g_1,g_2\in G$.
 
For $1\leq i\leq d-1$, $t\in t_\lambda\bN$, and $y\in X$ let us consider the following three sets:
$$ J_{i,t,\operatorname{low}}(y):=\set{j\in\bN: \widetilde{\alpha}_i(a_{jt\be_i}y)\leq e^{\frac{t}{3d}}},$$
$$ J_{i,t,\operatorname{mid}}(y):=\set{j\in\bN: e^{\frac{t}{3d}}< \widetilde{\alpha}_i(a_{jt\be_i}y)\leq \sh_\lambda(t)},$$
$$ J_{i,t,\operatorname{high}}(y):=\set{j\in\bN: \widetilde{\alpha}_i(a_{jt\be_i}y)> \sh_\lambda(t)}.$$
For $N\in\bN$, $0<\eta\leq1$, and $0\leq l\leq l_\lambda$ we denote by $\Upsilon_{N,i,t,\eta,l}(y)\subseteq X$ the set of $x\in X$ such that 
$$a_{jt\be_i}x\in \sB_{t,\eta}a_{jt\be_i}y \textrm{ for all } j\in J_{i,t,\operatorname{low}}(y)\cap\set{1,\ldots,N},$$ $$\widetilde{\alpha}_i(a_{jt\be_i}x)>D^{(l)}\big(\useconE{101}(3\useconC{5})^t\big) \textrm{ for all } j\in J_{i,t,\operatorname{high}}(y)\cap \set{1,\ldots, N}.$$ We now define a proper lower semi-continuous function $\beta_{N,i,t,l}'(\;\cdot\;;y):X'\to[1,\infty)$ by
\eq{\beta_{N,i,t,l}'(x;y)=\begin{cases}\widetilde{\alpha}_i(a_{Nt\be_i}x) & \text{if $x\in\Upsilon_{N,i,t,1,l}(y)$,}\\ 0 & \text{otherwise}.\end{cases}}
We note that $0\leq \beta_{N,i,t,l}'\leq \beta_{N,i,t,l'}'$ for any $0\leq l'\leq l\leq l_{\lambda}$, as the sets $\Upsilon_{N,i,t,1,l}(y)$ are decreasing in $l$.

We state the contraction hypothesis for $\beta'_{N,i,t,l}$.

\begin{prop}[Contraction for $\beta'$]\label{beta'tdecay}
For any $N\in\bN$, $1\leq i\leq d-1$, $t\in t_\lambda\bN$, $0\leq l\leq l_\lambda$, $y\in X$ and $x\in X'$,
\eqlabel{beta'tcont}{\int_{-\frac{1}{2}}^{\frac{1}{2}}\beta_{N,i,t,l}'(u_{i}(s)x;y)ds\leq 2\useconC{5}\useconE{101} e^t(3C_{5}\useconE{101})^{|\set{1,\ldots,N}\cap J_{i,t,\operatorname{mid}}(y)|t}e^{-\lambda^2 Nt}\widetilde{\alpha}_i(x).}
\end{prop}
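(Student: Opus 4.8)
The plan is to mimic the proof of Proposition \ref{betadecay}, but with a more careful bookkeeping of the three regimes $J_{i,t,\operatorname{low}}$, $J_{i,t,\operatorname{mid}}$, $J_{i,t,\operatorname{high}}$. Fix $x \in X'$ and $y \in X$, and for $0 \le n \le N$ let $I_n(x) := \set{s \in [-\tfrac12,\tfrac12] : u_i(s)x \in \Upsilon_{n,i,t,1,l}(y)}$, so that $I_N(x) \subseteq I_{N-1}(x) \subseteq \cdots \subseteq I_0(x) = [-\tfrac12,\tfrac12]$ and
\eq{\int_{-\frac12}^{\frac12}\beta'_{N,i,t,l}(u_i(s)x;y)\,ds = \int_{I_N(x)}\widetilde\alpha_i(a_{Nt\be_i}u_i(s)x)\,ds.}
As in Proposition \ref{betadecay}, for each $n$ cover $I_n(x)$ by a maximal $\tfrac12 e^{-2nt}$-separated set $\set{s_1,\dots,s_M}$, use the $U$-log-Lipschitz property \eqref{alphatildeLipschitz} under the conjugation $a_{nt\be_i}u_i(s_k+s')x = u(e^{2nt}s')a_{nt\be_i}u_i(s_k)x$ to pass between the integral over $I_n$ and the sum $\sum_k \widetilde\alpha_i(a_{nt\be_i}u_i(s_k)x)$, and then push forward one time step with \eqref{alphatildeupperbound} or \eqref{alphatildecont}. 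The one new feature is that the constraint defining $\Upsilon$ uses the fixed trajectory $a_{jt\be_i}y$ rather than a cut-off set $K_{\sh}$, but for the covering argument this is immaterial: the sets $I_n(x)$ still nest, and the $s_k$-perturbation still lands inside $B^U(\operatorname{id},1)$-translates, which is all that \eqref{alphatildeLipschitz} and \eqref{alphatildecont} require.

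**The main step** is to track, at each time $n \in \set{0,\dots,N-1}$, which inequality we may apply when passing from $I_n$ to $I_{n+1}$. For $n \in J_{i,t,\operatorname{high}}(y)$: on $I_{n}(x)$ we have $\widetilde\alpha_i(a_{nt\be_i}u_i(s)x) > D^{(l)}(\useconE{101}(3\useconC{5})^t) \ge \useconE{101}e^t$ (by monotonicity of $D$ and since $l \le l_\lambda$), so \eqref{alphatildecont} gives a gain of $2e^{-\lambda^2 t}$ per step. For $n \in J_{i,t,\operatorname{low}}(y)$: the constraint $a_{nt\be_i}u_i(s)x \in \sB_{t,1}a_{nt\be_i}y$ forces $\widetilde\alpha_i(a_{nt\be_i}u_i(s)x) \le \useconC{5}\,\widetilde\alpha_i(a_{nt\be_i}y) \le \useconC{5}e^{\frac{t}{3d}}$ via the log-Lipschitz property applied to the small displacement inside $\sB_{t,1}$; combined with the one-step bound \eqref{alphatildeupperbound}, which costs a factor $\useconE{101}$, we obtain that on the low steps the height stays bounded by a constant times $e^{\frac{t}{3d}}$ — so over all $|J_{i,t,\operatorname{low}}(y) \cap \set{1,\dots,N}|$ low steps the net multiplicative effect is at most $(\text{const})^{N}e^{\frac{Nt}{3d}} \le e^{-\lambda^2 t}$-competitive once one checks the arithmetic (this is where the cut-off $e^{\frac{t}{3d}}$ and the exponent $\lambda^2$ are matched; the point is $\frac{1}{3d} + \text{tiny} < 1-\lambda^2$ when $t$ is large, but more carefully one should just bound the low contribution by $e^{-\lambda^2 t}\cdot(\text{absorbable})$ and fold everything into the stated constant $2\useconC{5}\useconE{101}e^t$). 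For $n \in J_{i,t,\operatorname{mid}}(y)$ we have no useful decay and no useful bound on $\widetilde\alpha_i(a_{nt\be_i}u_i(s)x)$ itself, so we pay the trivial log-Lipschitz bound \eqref{alphatildeLipschitz}: a factor $\useconC{5}^t$ per mid step. Multiplying the contributions of the $|\set{1,\dots,N}\cap J_{i,t,\operatorname{mid}}(y)|$ mid steps, the low steps, and the high steps gives $(3\useconC{5}\useconE{101})^{|\set{1,\dots,N}\cap J_{i,t,\operatorname{mid}}(y)|t}\,e^{-\lambda^2 Nt}\,\widetilde\alpha_i(x)$ up to a bounded overall constant and the initial $M\to$ integral conversions (each costing a factor $2\useconC{5}$), which yields the prefactor $2\useconC{5}\useconE{101}e^t$.

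**The delicate point** I expect is the low regime: I must ensure that the constraint $a_{jt\be_i}x \in \sB_{t,1}a_{jt\be_i}y$ survives the $\tfrac12 e^{-2nt}$-perturbation used in the covering step, i.e. that $u(e^{2nt}s')$ composed with an element of $\sB_{t,1}$ (rescaled appropriately) still lies in a controlled box. Here the specific anisotropic shape of $\sB_{t,\eta} = B^U(\operatorname{id},\eta e^{-2t})B^Q(\operatorname{id},\eta e^{-t/2})$ matters: the $U$-direction has radius $e^{-2t}$ precisely so that after conjugating the constraint at time $n$ forward to time $n+1$ by $a_{t\be_i}$ (which expands $U_i$ by $e^{2t}$) one lands back in $\sB_{t,1}$ translates — this is the role of the $e^{-2t}$ scale and is the analogue of the box-size choices in \S3. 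So in the induction I should carry the constraint "$u_i(s)x \in \Upsilon_{n,\dots}(y)$" in the conjugated-to-time-$n$ frame, verify directly that a $\tfrac12 e^{-2nt}$-displacement in $s$ is absorbed into the $U$-part of $\sB_{t,1}$ after conjugation by $a_{nt\be_i}$, and then apply the one-step inequalities at the basepoints $s_k$. I would also double-check that $e^{t}$ in the prefactor (rather than a $t$-power) genuinely suffices to absorb the $\useconE{101}$-per-low-step and the constant-per-step log-Lipschitz losses on low steps; if not, the clean fix is to weaken $e^t$ to $e^{C_\lambda t}$, but the stated form should hold after noting that on low steps the height is actually $\le \useconC{5}e^{t/(3d)}$, so the product over low steps telescopes to something bounded by $\useconC{5}e^{t/(3d)}$ rather than accumulating, and the $\useonE{101}$ factors from \eqref{alphatildeupperbound} on low steps can be charged against the $e^{-\lambda^2 t}$ surplus coming from high steps when $N$ is large, or simply absorbed since there is at least the factor $\widetilde\alpha_i(x) \ge 1$ and we are free to use $2\useconC{5}\useconE{101}e^t \ge (\text{whatever constant is needed})$ for $t \ge t_\lambda$ large.
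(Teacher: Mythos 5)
Your plan correctly identifies the three regimes and handles the mid and high steps in essentially the paper's way, but there is a genuine gap in the low regime, and you have in fact flagged it yourself without resolving it. The problem is this: in the low regime you propose to bound $\widetilde{\alpha}_i(a_{nt\be_i}u_i(s)x)\le \useconC{5}e^{t/(3d)}$ and then apply the trivial one-step bound \eqref{alphatildeupperbound}, hoping to ``charge'' the resulting $\useconE{101}$-per-step losses and the absence of per-step decay against high steps or against the prefactor. This cannot work: if $y$ has an orbit segment that is low for every $n\le N$, there are no high steps to charge against, your multiplicative bookkeeping on the height alone yields \emph{growth} of order $\useconE{101}^N e^{t/(3d)}$, and the stated bound $2\useconC{5}\useconE{101}e^t e^{-\lambda^2 Nt}\widetilde{\alpha}_i(x)$ is certainly not obtained. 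The mechanism that actually produces the $e^{-\lambda^2 t}$ decay per low step is not a height bound but a \emph{measure} bound on the set of admissible $s$: the constraint $a_{nt\be_i}u_i(s)x\in\sB_{t,1}a_{nt\be_i}y$ confines $s$ (after conjugation by $a_{nt\be_i}$) to intervals of length $e^{-2t}\cdot e^{-2nt}$, and the paper compares this to the slightly fattened box $\widetilde{\sB}_{t,1}$, whose $U$-side has length $e^{-t/2}$. The ratio $e^{-2t}/e^{-t/2}=e^{-3t/2}$, multiplied by the height bound $e^{t/(3d)}$, gives a per-low-step factor $\useconC{5}^{O(1)}e^{-(3/2-1/(3d))t}$ which beats $e^{-\lambda^2 t}$ for $t$ large. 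Your write-up never mentions $\widetilde{\sB}_{t,\eta}$, so this comparison is unavailable to you.

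A second, related defect is in the covering scheme. You propose using a uniform $\tfrac12 e^{-2nt}$-separated net at every step and to ``verify directly that a $\tfrac12 e^{-2nt}$-displacement in $s$ is absorbed into the $U$-part of $\sB_{t,1}$ after conjugation by $a_{nt\be_i}$''. This verification fails: a displacement of size $\tfrac12 e^{-2nt}$ in $s$ becomes, after conjugation by $a_{nt\be_i}$, a displacement of size $\tfrac12$ in $U_i$, which is \emph{not} absorbed by the $U$-part of $\sB_{t,1}$ (radius $e^{-2t}$). To make the covering compatible with the low-step constraint one must use intervals of length $e^{-2(n+\varpi(n))t}$, with $\varpi(n)=1$ when $n\in J_{i,t,\operatorname{low}}(y)$ and $\varpi(n)=0$ otherwise; the paper's inductive claim carries precisely this extra bit of state. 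With the finer intervals in the low regime, the $\sB_{t,1}$-constraint genuinely localizes each covering interval to a $\cT$-piece of relative measure $e^{-3t/2}$ inside the surrounding $\widetilde{\sB}_{t,1}$-piece, and this is where the decay is recorded. Without the $\varpi$-dependent covering, the sum $\sum_k\widetilde{\alpha}_i(a_{nt\be_i}u_i(s_k)x)\,|I_{n,k}|$ overcounts the integral over the true (much smaller) support and the spatial gain is lost. So the gap is not merely arithmetic bookkeeping to be absorbed into the prefactor $2\useconC{5}\useconE{101}e^t$; it is the absence of the key idea (the $\sB_{t,1}$-versus-$\widetilde{\sB}_{t,1}$ comparison and the $\varpi$-dependent interval lengths) that makes the low regime contribute decay at all.
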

\begin{proof}
We may assume $l=0$, since $\beta_{N,i,t,l}'\leq \beta_{N,i,t,0}'$ holds for any $0\leq l\leq l_{\lambda}$. We denote $\varpi(N)=0$ if $N\in J_{i,t,\operatorname{mid}}(y)\cup J_{i,t,\operatorname{high}}(y)$ and $\varpi(N)=1$ if $N\in J_{i,t,\operatorname{low}}(y)$. The desired inequality \eqref{beta'tcont} immediately follows the following claim:

\textbf{Claim.} The support of the map $s\mapsto \beta_{N,i,t,l}'(u_{i}(s)x;y)$ is covered by $M_N$ intervals $I_{N,1},\ldots, I_{N,M_N}$, where $$I_{N,k}=\left[s_{N,k}-\tfrac{1}{2}e^{-2(N+\varpi(N))t}, s_{N,k}+\tfrac{1}{2}e^{-2(N+\varpi(N))t}\right]$$ with  $\frac{1}{2}e^{-2(N+\varpi(N))t}$-separated $s_{N,k}\in I$ for $1\leq k\leq M_N$, such that
$$\sum_{k=1}^{M_N} \widetilde{\alpha}_i(a_{Nt\be_i}u_{i}(s_{N,k})x) e^{-2(N+\varpi(N))t}\leq 2\useconE{101} e^t(3C_{5}\useconE{101})^{|\set{1,\ldots,N}\cap J_{i,t,\operatorname{mid}}(y)|t}e^{-\lambda^2 Nt}\widetilde{\alpha}_i(x).$$

Indeed, \textbf{Claim} directly implies that
\eq{\begin{aligned}
    \int_{-\frac{1}{2}}^{\frac{1}{2}}\beta_{N,i,t,l}'(u_{i}(s)x;y)ds&\leq  \sum_{k=1}^{M_N}\int_{I_{N,k}}\widetilde{\alpha}_i(a_{Nt\be_i}u_i(s)x)ds\\&\leq \useconC{5}\sum_{k=1}^{M_N}\widetilde{\alpha}_i(a_{Nt\be_i}u_{i}(s_{N,k})x) e^{-2(N+\varpi(N))t}\\&\leq 2\useconC{5}\useconE{101} e^t(3C_{5}\useconE{101})^{|\set{1,\ldots,N}\cap J_{i,t,\operatorname{mid}}(y)|t}e^{-\lambda^2 Nt}\widetilde{\alpha}_i(x).
\end{aligned}}

From now on, we shall prove \textbf{Claim} by induction on $N$.

We first verify \textbf{Claim} for $N=1$. We may choose intervals $I_{1,1},\cdots,I_{1,M_1}$ of length $e^{-2(1+\varpi(1))t}$ so that the centers of $I_{1,k}$'s are $\frac{1}{2}e^{-2(1+\varpi(1))t}$-separated. Then at most $2$ intervals can overlap, hence by \eqref{alphatildeupperbound} we have
\eq{\sum_{k=1}^{M_1}\int_{I_{1,k}}\widetilde{\alpha}_i(a_{t\be_i}u_i(s)x)ds\leq 2\int_{-\frac{1}{2}}^{\frac{1}{2}}\widetilde{\alpha}_i(a_{t\be_i}u_i(s)x)ds\leq 2\useconE{101} \widetilde{\alpha}_i(x).}

We now suppose that \textbf{Claim} holds for $N$, and prove \textbf{Claim} for $N+1$. The support of the map $s\mapsto \beta_{N+1,i,t}'(u_{i}(s)x;y)$ is contained in $\bigcup_{k=1}^{M_N}I_{N,k}$. For each $1\leq k\leq M_N$, by \eqref{alphatildeupperbound} we have \eqlabel{eq:onesteplow}{\int_{I_{N,k}}\widetilde{\alpha}_i(a_{(N+1)t\be_i}u_i(s)x)ds\leq \useconE{101} \widetilde{\alpha}_i(a_{Nt\be_i}u_i(s_{N,k})x).}
    Moreover, if $\widetilde{\alpha}_i(a_{Nt\be_i}u_i(s_{N,k})x)\geq \useconE{101}e^{t}$, then by \eqref{alphatildecont} \eqlabel{eq:onestephigh}{\int_{I_{N,k}}\widetilde{\alpha}_i(a_{(N+1)t\be_i}u_i(s)x)ds\leq 2e^{-\lambda^2t}\widetilde{\alpha}_i(a_{Nt\be_i}u_i(s_{N,k})x).}

    \textbf{Case 1.} $N+1\in J_{i,t,\operatorname{low}}(y)$.

In this case $N$ is either in $J_{i,t,\operatorname{low}}(y)$ or $J_{i,t,\operatorname{mid}}(y)$. Indeed, $N+1\in J_{i,t,\operatorname{low}}(y)$ implies that
\eq{\widetilde{\alpha}_i(a_{Nt\be_i}y)\leq \useconC{5}^t\widetilde{\alpha}_i(a_{(N+1)t\be_i}y)\leq \useconC{5}^te^{\frac{t}{3d}}\leq \useconE{101}(3\useconC{5})^t,}
hence $N\in J_{i,t,\operatorname{low}}(y)\cup J_{i,t,\operatorname{mid}}(y)$. Let us denote by
$$\cT:=\set{s\in [-\tfrac{1}{2},\tfrac{1}{2}]: a_{(N+1)t\be_i}u_i(s)x\in \sB_ta_{(N+1)t\be_i}y},$$
$$\widetilde{\cT}:=\set{s\in [-\tfrac{1}{2},\tfrac{1}{2}]: a_{(N+1)t\be_i}u_i(s)x\in \widetilde{\sB}_{t,\eps}a_{(N+1)t\be_i}y}.$$
Note that $\cT$ is a union of disjoint intervals of length $e^{-2t}\cdot e^{-2(N+1)t}$, and $\widetilde{\cT}$ is a union of disjoint intervals of length $e^{-\frac{t}{2}}\cdot e^{-2(N+1)t}$. Moreover, each subinterval of $\cT$ is contained in exactly one subinterval of $\widetilde{\cT}$. It follows from \eqref{alphatildeLipschitz} that for any $1\leq k\leq M_N$
\eqlabel{eq:Case1estimate}{\begin{aligned}
    \int_{I_{N,k}\cap \cT}\widetilde{\alpha}_i(a_{(N+1)t\be_i}u_i(s)x)ds&\leq \useconC{5}e^{-\frac{3t}{2}}\int_{I_{N,k}\cap \cT'}\widetilde{\alpha}_i(a_{(N+1)t\be_i}u_i(s)x)ds\\&\leq \useconC{5}e^{-\frac{3t}{2}}\int_{I_{N,k}}\widetilde{\alpha}_i(a_{(N+1)t\be_i}u_i(s)x)ds.
\end{aligned}}

\textbf{Subcase 1-1.} $N\in J_{i,t,\operatorname{low}}(y)$.\\
In this subcase $I_{k,N}$ is an interval of length $e^{-2(N+\varpi(N))t}=e^{-2(N+1)t}$. We first show that
\eqlabel{eq:subcase1}{\int_{I_{N,k}\cap \cT}\widetilde{\alpha}_i(a_{(N+1)t\be_i}u_i(s)x)ds\leq \useconC{5}^3 e^{-(\frac{3}{2}-\frac{1}{3d})t}|I_{N,k}|}
for any $1\leq k\leq M_N$. We may assume that $I_{N,k}\cap\cT\neq\emptyset$. Then there exists $s_{N,k}'\in I_{N,k}$ such that 
$$a_{(N+1)t\be_i}u_i(s_{N,k}')x\in \sB_t a_{(N+1)t\be_i}y,$$ hence
$$\widetilde{\alpha}_i(a_{(N+1)t\be_i}u_i(s)x)\leq \useconC{5}\widetilde{\alpha}_i(a_{(N+1)t\be_i}y)\leq \useconC{5}e^{\frac{t}{3d}}.$$
It follows that
\eq{\int_{I_{N,k}}\widetilde{\alpha}_i(a_{(N+1)t\be_i}u_i(s)x)ds\leq \useconC{5}\widetilde{\alpha}_i(a_{(N+1)t\be_i}u_i(s_{N,k}')x)|I_{N,k}|\leq \useconC{5}^2e^{\frac{t}{3d}}|I_{N,k}|.}
Thus, combining with \eqref{eq:Case1estimate} we get \eqref{eq:subcase1}.

Note that each connected component of $I_{N,k}\cap \cT$ is an interval of length at most $e^{-2(N+2)t}=e^{-2(N+1+\varpi(N+1))t}$. Let us define $I_{N+1,1},\ldots,I_{N+1,M_{N+1}}$ so that each $I_{N+1,l}$ for $1\leq l\leq M_{N+1}$ is of length $e^{-2(N+2)t}$ and contains exactly one connected component of $\bigcup_{k=1}^{M_k}I_{N,k}\cap \cT$. Then using \eqref{eq:subcase1} and the inductive hypothesis,
\eq{\begin{aligned}
    \sum_{l=1}^{M_{N+1}} \widetilde{\alpha}_i(a_{(N+1)t\be_i}u_{i}(s_{N+1,l})x) e^{-2(N+2)t}&\leq \useconC{5}\sum_{l=1}^{M_{N+1}}\int_{I_{N+1,l}}\widetilde{\alpha}_i(a_{(N+1)t\be_i}u_{i}(s)x)ds\\
    &\leq \useconC{5}\sum_{k=1}^{M_N}\int_{I_{N,k}\cap \cT}\widetilde{\alpha}_i(a_{(N+1)t\be_i}u_i(s)x)ds\\
    &\leq \useconC{5}^4 e^{-(\frac{3}{2}-\frac{1}{3d})t}\sum_{k=1}^{M_N}\widetilde{\alpha}_i(a_{Nt\be_i}u_i(s_k)x)e^{-2(N+1)t}\\&\leq  e^{-t}(3C_{5}\useconE{101})^{|\set{1,\ldots,N}\cap J_{i,t,\operatorname{mid}}(y)|t}e^{-\lambda^2 Nt}\widetilde{\alpha}_i(x)\\&\leq (3C_{5}\useconE{101})^{|\set{1,\ldots,N+1}\cap J_{i,t,\operatorname{mid}}(y)|t}e^{-\lambda^2 (N+1)t}\widetilde{\alpha}_i(x).
\end{aligned}}

\textbf{Subcase 1-2.} $N\in J_{i,t,\operatorname{mid}}(y)$.\\
In this subcase $I_{k,N}$ is an interval of length $e^{-2(N+\varpi(N))t}=e^{-2Nt}$.
We have
\eq{\begin{aligned}
    \int_{I_{N,k}}\widetilde{\alpha}_i(a_{(N+1)t\be_i}u_i(s)x)ds&=|I_{N,k}|\int_{-\frac{1}{2}}^{\frac{1}{2}}\widetilde{\alpha}_i(a_{t\be_i}u_i(s)a_{Nt\be_i}u_i(s_k)x)ds\\&\leq \useconE{101} \widetilde{\alpha}_i(a_{Nt\be_i}u_i(s_k)x)|I_{N,k}|.
\end{aligned}}
by \eqref{alphatildeupperbound}. Combining with \eqref{eq:Case1estimate} we get
\eqlabel{eq:subcase2}{\int_{I_{N,k}\cap \cT}\widetilde{\alpha}_i(a_{(N+1)t\be_i}u_i(s)x)ds\leq \useconC{5}\useconE{101} e^{-\frac{3}{2}t}\widetilde{\alpha}_i(a_{Nt\be_i}u_i(s_k)x)|I_{N,k}|.}

The rest of the argument is similar to \textbf{Subcase 1-1}. We define $I_{N+1,1},\cdots,I_{N+1,M_{N+1}}$ so that each $I_{N+1,l}$ for $1\leq l\leq M_{N+1}$ is of length $e^{-2(N+2)t}$ and contains exactly one connected component of $\bigcup_{k=1}^{M_k}I_{N,k}\cap \cT$. Then using \eqref{eq:subcase2} and the inductive hypothesis,
\eq{\begin{aligned}
    \sum_{l=1}^{M_{N+1}} \widetilde{\alpha}_i&(a_{(N+1)t\be_i}u_{i}(s_{N+1,l})x) e^{-2(N+2)t}\leq \useconC{5}\sum_{l=1}^{M_{N+1}}\int_{I_{N+1,l}}\widetilde{\alpha}_i(a_{(N+1)t\be_i}u_{i}(s)x)ds\\
    &\leq \useconC{5}\sum_{k=1}^{M_N}\int_{I_{N,k}\cap \cT}\widetilde{\alpha}_i(a_{(N+1)t\be_i}u_i(s)x)ds\\
    &\leq \useconC{5}^2\useconE{101} e^{-\frac{3}{2}t}\sum_{k=1}^{M_N}\widetilde{\alpha}_i(a_{Nt\be_i}u_i(s_k)x)e^{-2Nt}\\&\leq  e^{-t}C_{5}^{|\set{1,\ldots,N}\cap J_{i,t,\operatorname{mid}}(y)|t}e^{-\lambda^2 Nt}\widetilde{\alpha}_i(x)\leq C_{5}^{|\set{1,\ldots,N+1}\cap J_{i,t,\operatorname{mid}}(y)|t}e^{-\lambda^2 (N+1)t}\widetilde{\alpha}_i(x).
\end{aligned}}

\textbf{Case 2.} $N+1\in J_{i,t,\operatorname{mid}}(y)$.\\

\textbf{Subcase 2-1.} $N\in J_{i,t,\operatorname{low}}(y)$.\\
In this subcase $I_{k,N}$ is an interval of length $e^{-2(N+\varpi(N))t}=e^{-2(N+1)t}=e^{-2(N+1+\varpi(N+1))t}$ for any $1\leq k\leq M_N$. We set $M_{N+1}=M_N$ and define $I_{N+1,k}=I_{N,k}$ for $1\leq k\leq M_N$. It follows from \eqref{alphatildeLipschitz} and the inductive hypothesis that
\eq{\begin{aligned}
    \sum_{k=1}^{M_{N+1}} \widetilde{\alpha}_i(a_{(N+1)t\be_i}u_{i}(s_{N+1,k})x) e^{-2(N+1)t}&=
    \sum_{k=1}^{M_N} \widetilde{\alpha}_i(a_{(N+1)t\be_i}u_{i}(s_{N,k})x) e^{-2(N+\varpi(N))t}\\&\leq \sum_{k=1}^{M_N} \useconC{5}^{t}\widetilde{\alpha}_i(a_{Nt\be_i}u_{i}(s_{N,k})x) e^{-2(N+\varpi(N))t} \\&\leq \useconC{5}^{t}(3C_{5}\useconE{101})^{|\set{1,\ldots,N}\cap J_{i,t,\operatorname{mid}}(y)|t}e^{-\lambda^2 Nt}\widetilde{\alpha}_i(x)\\&\leq (3C_{5})^{|\set{1,\ldots,N+1}\cap J_{i,t,\operatorname{mid}}(y)|t}e^{-\lambda^2 (N+1)t}\widetilde{\alpha}_i(x).
\end{aligned}}

\textbf{Subcase 2-2.} $N\in J_{i,t,\operatorname{mid}}(y)\cup J_{i,t,\operatorname{high}}(y)$.\\
In this subcase $I_{N,k}$ is an interval of length $e^{-2(N+\varpi(N))t}=e^{-2Nt}$. We cover each $I_{N,k}$ by $\lceil e^{2t}\rceil$ disjoint intervals of length $e^{-2(N+1)t}$. We collect such intervals of length $e^{-2(N+1)t}$ over $1\leq k\leq M_N$ and define $I_{N+1,1},\ldots I_{N+1,M_{N+1}}$ by reordering them. Note that any point in $[-\frac{1}{2},\frac{1}{2}]$ can be contained in at most $2$ intervals among $I_{N,1},\ldots,I_{N,M_{N}}$, hence among $I_{N+1,1},\ldots,I_{N+1,M_{N+1}}$. It follows from \eqref{alphatildeupperbound} and the inductive hypothesis that
\eq{\begin{aligned}
    \sum_{k=1}^{M_{N+1}} \widetilde{\alpha}_i(a_{(N+1)t\be_i}u_{i}(s_{N+1,k})x)& e^{-2(N+1)t}\leq 2\useconC{5}\sum_{k=1}^{M_N} \int_{I_{N,k}}\widetilde{\alpha}_i(a_{(N+1)t\be_i}u_{i}(s)x) ds\\&\leq 2\useconC{5}e^{-2Nt}\sum_{k=1}^{M_N} \int_{-\frac{1}{2}}^{\frac{1}{2}}\widetilde{\alpha}_i(a_{t\be_i}u_{i}(s)a_{Nt\be_i}u_{i}(s_{N,k})x) ds \\&\leq 2\useconC{5}\useconE{101} e^{-2Nt}\sum_{k=1}^{M_N}\widetilde{\alpha}_i(a_{Nt\be_i}u_{i}(s_{N,k})x) \\&\leq 2\useconC{5}\useconE{101} (3C_{5}\useconE{101})^{|\set{1,\ldots,N}\cap J_{i,t,\operatorname{mid}}(y)|t}e^{-\lambda^2 Nt}\widetilde{\alpha}_i(x)\\& \leq (3C_{5}\useconE{101})^{|\set{1,\ldots,N+1}\cap J_{i,t,\operatorname{mid}}(y)|t}e^{-\lambda^2 (N+1)t}\widetilde{\alpha}_i(x).
\end{aligned}}

\textbf{Case 3.} $N+1\in J_{i,t,\operatorname{high}}(y)$.\\
In this case we first observe that $\widetilde{\alpha}_i(a_{Nt\be_i}y)\geq \useconC{5}^{-t}\widetilde{\alpha}_i(a_{(N+1)t\be_i}y)\geq \useconC{5}^2\useconE{101}e^{t}$ holds by \eqref{alphatildeLipschitz}. Hence $N\in J_{i,t,\operatorname{mid}}(y)\cup J_{i,t,\operatorname{high}}(y)$ and the length of $I_{k,N}$ is $e^{-2Nt}$ for all $1\leq k\leq M_N$.

As in \textbf{Subcase 2-2}, we cover each $I_{N,k}$ by $\lceil e^{2t}\rceil$ disjoint intervals of length $e^{-2(N+1)t}$. We collect such intervals of length $e^{-2(N+1)t}$ over $1\leq k\leq M_N$ and define $I_{N+1,1},\ldots I_{N+1,M_{N+1}}$ by reordering them.

We may assume that for each $1\leq k\leq M_N$ there exists $s\in I_{N,k}$ such that 
$$\widetilde{\alpha}_i(a_{Nt\be_i}u_i(s)x)\geq \useconC{5}^{-1}\widetilde{\alpha}_i(a_{Nt\be_i}u_i(s)y)\geq \useconC{5} \useconE{101}e^{t},$$
whence $\widetilde{\alpha}_i(a_{Nt\be_i}u_i(s_{N,k})x)\geq \useconE{101}e^{t}$ for all $1\leq k\leq M_N$. It follows from \eqref{alphatildecont} that for all $1\leq k\leq M_N$
\eqlabel{eq:case3}{\int_{-\frac{1}{2}}^{\frac{1}{2}}\widetilde{\alpha}_i(a_{t\be_i}u_{i}(s)a_{Nt\be_i}u_{i}(s_{N,k})x) ds\leq 2e^{-\lambda^2 t}\widetilde{\alpha}_i(a_{Nt\be_i}u_{i}(s_{N,k})x).} We then deduce from \eqref{eq:case3} and the inductive hypothesis that
\eq{\begin{aligned}
    \sum_{k=1}^{M_{N+1}} \widetilde{\alpha}_i(a_{(N+1)t\be_i}u_{i}(s_{N+1,k})x)& e^{-2(N+1)t}\leq 2\useconC{5}\sum_{k=1}^{M_N} \int_{I_{N,k}}\widetilde{\alpha}_i(a_{(N+1)t\be_i}u_{i}(s)x) ds\\&\leq 2\useconC{5}e^{-2Nt}\sum_{k=1}^{M_N} \int_{-\frac{1}{2}}^{\frac{1}{2}}\widetilde{\alpha}_i(a_{t\be_i}u_{i}(s)a_{Nt\be_i}u_{i}(s_{N,k})x) ds \\&\leq 2\useconC{5}e^{-\lambda^2 t} e^{-2Nt}\sum_{k=1}^{M_N}\widetilde{\alpha}_i(a_{Nt\be_i}u_{i}(s_{N,k})x) \\&\leq 2\useconC{5}e^{-\lambda^2 t} (3C_{5}\useconE{101})^{|\set{1,\cdots,N}\cap J_{i,t,\operatorname{mid}}(y)|t}e^{-\lambda^2 Nt}\widetilde{\alpha}_i(x)\\& \leq (3C_{5}\useconE{101})^{|\set{1,\cdots,N+1}\cap J_{i,t,\operatorname{mid}}(y)|t}e^{-\lambda^2 (N+1)t}\widetilde{\alpha}_i(x).
\end{aligned}}
\end{proof}

The estimate in Proposition \ref{beta'tdecay} is not yet sufficient for our purposes due to the additional factor $(3C_{5}\useconE{101})^{|\set{1,\cdots,N}\cap J_{i,t,\operatorname{mid}}(y)|t}$. Although controlling this additional factor for each individual $t$ is challenging, we can address this issue by considering a sufficiently large number of distinct time scales.

Let $m(\lambda):=\lceil\frac{100\log (3C_{5}\useconE{101})}{\lambda^2-\lambda^3}\rceil$. We can find a sequence $t_1,\cdots,t_{m(\lambda)}\in t_\lambda\bN$ such that $t_1=t_\lambda$, $t_m|t_{m+1}$, and $e^{\frac{t_{m+1}}{10d}}>\useconC{5}^{t_m}\useconE{101}(t_m)$ for all $1\leq m\leq m(\lambda)-1$.

Let $N\in\bN$, $1\leq i\leq d-1$, $0<\eta\leq 1$, $0\leq l\leq l_\lambda$, and $y\in X$. We now define a proper lower semi-continuous function $\widetilde{\beta}_{N,i,\eta,l}(\;\cdot\;;y):X'\to[1,\infty)$ as follows:
\eq{\widetilde{\beta}_{N,i,\eta,l}(x;y)=\begin{cases}\widetilde{\alpha}_i(a_{Nt_{m(\lambda)}\be_i}x) & \text{if $x\in\displaystyle\bigcap_{m=1}^{m(\lambda)}\Upsilon_{N\frac{t_{m(\lambda)}}{t_m},i,t_m,\eta,l}(y)$,}\\ 0 & \text{otherwise}.\end{cases}}
Note that we clearly have
$$ \widetilde{\beta}_{N,i,\eta,l}(x;y)\leq \beta_{N\frac{t_{m(\lambda)}}{t_m},i,t_{l},l}'(x;y)$$
for any $1\leq m\leq m(\lambda)$ and $x\in X'$. The following proposition establishes the contraction hypothesis for $\widetilde{\beta}_{N,i,\eta,l}$.

\begin{prop}[Contraction for $\widetilde{\beta}$]\label{betatildedecay}
For any $N\in\bN$, $1\leq i\leq d-1$, $0<\eta\leq1$, $0\leq l\leq l_\lambda$, $y\in X$ and $x\in X'$,
\eqlabel{beta'cont}{\int_{-\frac{1}{2}}^{\frac{1}{2}}\widetilde{\beta}_{N,i,\eta,l}(u_{i}(s)x;y)ds\leq \widetilde{\sh}_\lambda e^{-\lambda^3 Nt_{m(\lambda)}}\widetilde{\alpha}_i(x).}
\end{prop}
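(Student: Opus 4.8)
The plan is to pit the time scales $t_1\mid t_2\mid\cdots\mid t_{m(\lambda)}$ against one another. For every $m$ we have, by the pointwise domination noted just before the proposition, $\widetilde{\beta}_{N,i,\eta,l}(\,\cdot\,;y)\le\beta'_{N t_{m(\lambda)}/t_m,\,i,\,t_m,\,l}(\,\cdot\,;y)$, so applying Proposition \ref{beta'tdecay} at scale $t_m$ — with the ``$N$'' there equal to $N_m:=N t_{m(\lambda)}/t_m\in\bN$, so that $N_m t_m=N t_{m(\lambda)}$ — gives, writing $P_m:=\{1,\dots,N_m\}\cap J_{i,t_m,\operatorname{mid}}(y)$,
\begin{equation*}
\int_{-\frac12}^{\frac12}\widetilde{\beta}_{N,i,\eta,l}(u_i(s)x;y)\,ds\le 2\useconC{5}\useconE{101}\,e^{t_m}\,(3\useconC{5}\useconE{101})^{|P_m|\,t_m}\,e^{-\lambda^2 N t_{m(\lambda)}}\,\widetilde{\alpha}_i(x).
\end{equation*}
(As in the proof of Proposition \ref{beta'tdecay} we may take $l=0$, and since enlarging $\eta$ only enlarges the sets $\Upsilon$, the case $\eta=1$ is the extreme one; the right-hand side above involves neither parameter.) Taking the minimum over $1\le m\le m(\lambda)$ and absorbing the prefactor $2\useconC{5}\useconE{101}e^{t_m}\le\widetilde{\sh}_\lambda:=2\useconC{5}\useconE{101}e^{t_{m(\lambda)}}$, the proposition reduces to the estimate
\begin{equation*}
\min_{1\le m\le m(\lambda)}|P_m|\,t_m\ \le\ \frac{N t_{m(\lambda)}}{m(\lambda)},
\end{equation*}
because with $m(\lambda)=\lceil 100\log(3\useconC{5}\useconE{101})/(\lambda^2-\lambda^3)\rceil$ this forces $(3\useconC{5}\useconE{101})^{|P_m|t_m}\le e^{\frac{\lambda^2-\lambda^3}{100}N t_{m(\lambda)}}$, and $-\lambda^2+\frac{\lambda^2-\lambda^3}{100}\le-\lambda^3$.

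By the pigeonhole principle it suffices to prove $\sum_{m=1}^{m(\lambda)}|P_m|\,t_m\le N t_{m(\lambda)}$. To each $m$ I would attach its \emph{real-time footprint}
\begin{equation*}
G_m:=\bigcup_{j\in P_m}\big((j-1)t_m,\,jt_m\,\big]\ \subseteq\ \big(0,\,N t_{m(\lambda)}\,\big],
\end{equation*}
which has Lebesgue measure exactly $|P_m|\,t_m$, and show that the $G_m$ are pairwise disjoint. If $\tau\in G_m$ then $|\tau-jt_m|<t_m$ for some $j$ with $\widetilde{\alpha}_i(a_{jt_m\be_i}y)\in(e^{t_m/(3d)},\sh_\lambda(t_m)]$, and iterating the log-Lipschitz bound \eqref{alphatildeLipschitz} along $A_i$ (recall $\bd_G(\operatorname{id},a_{\be_i})\le1$) over a distance $<t_m$ gives
\begin{equation*}
\widetilde{\alpha}_i(a_{\tau\be_i}y)\ \in\ \widehat{I}_m:=\big(\useconC{5}^{-t_m}e^{t_m/(3d)},\ \useconC{5}^{t_m}\sh_\lambda(t_m)\,\big].
\end{equation*}
Hence $\tau\in G_m\cap G_{m'}$ would put $\widetilde{\alpha}_i(a_{\tau\be_i}y)$ into $\widehat{I}_m\cap\widehat{I}_{m'}$, so it is enough to check that the fattened windows $\widehat{I}_1,\dots,\widehat{I}_{m(\lambda)}$ are pairwise disjoint. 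This is where the rapid growth of the scales enters: $\sh_\lambda$ is increasing, and the scales were chosen so that $e^{t_{m+1}/(10d)}>\useconC{5}^{t_m}\sh_\lambda(t_m)$, while the low/mid threshold exponent $1/(3d)$ exceeds $1/(10d)$ by a definite amount; a direct computation then yields $\sup\widehat{I}_m=\useconC{5}^{t_m}\sh_\lambda(t_m)<\useconC{5}^{-t_{m+1}}e^{t_{m+1}/(3d)}=\inf\widehat{I}_{m+1}$, so that $\widehat{I}_m$ lies entirely below $\widehat{I}_{m+1}$ for every $m$. Thus the $\widehat{I}_m$, and with them the $G_m$, are pairwise disjoint, giving $\sum_m|P_m|t_m=\sum_m|G_m|\le N t_{m(\lambda)}$ and finishing the proof.

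The step I expect to be the main obstacle is the last one. The mid-windows $(e^{t_m/(3d)},\sh_\lambda(t_m)]$ are disjoint essentially by construction, but one has to make sure disjointness survives the fattening by the $A_i$-log-Lipschitz factor $\useconC{5}^{t_m}$ accumulated over a cell of length $t_m$; this is precisely what dictates the gap condition $e^{t_{m+1}/(10d)}>\useconC{5}^{t_m}\sh_\lambda(t_m)$ and the separation between the exponents $\tfrac{1}{3d}$ and $\tfrac{1}{10d}$ in the definitions of $J_{i,t,\operatorname{low}}$ and $J_{i,t,\operatorname{mid}}$. Everything else — the single-scale contraction estimate, the choice of $m(\lambda)$, and the pigeonhole step — is routine bookkeeping.
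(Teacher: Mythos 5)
Your overall strategy matches the paper's: dominate $\widetilde{\beta}_{N,i,\eta,l}$ pointwise by $\beta'_{N_m,i,t_m,l}$ at each of the $m(\lambda)$ scales, apply Proposition \ref{beta'tdecay} at every scale, and then pigeonhole to find a scale $t_{m_0}$ at which the mid contribution is at most $Nt_{m(\lambda)}/m(\lambda)$. Your real-time footprint sets $G_m$ are a clean way of organizing the pigeonhole; the paper instead works with discrete index sets $\widetilde{J}_m$ at the base scale $t_\lambda$ and a lossy $\tfrac{1}{100}$-fraction comparison, which is morally the same computation.

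The gap is exactly where you flagged it. The two-sidedly fattened windows $\widehat{I}_m=\big(\useconC{5}^{-t_m}e^{t_m/(3d)},\,\useconC{5}^{t_m}\sh_\lambda(t_m)\big]$ are in general not disjoint. The asserted inequality $\sup\widehat{I}_m<\inf\widehat{I}_{m+1}$ amounts to $\useconC{5}^{t_m+t_{m+1}}\sh_\lambda(t_m)<e^{t_{m+1}/(3d)}$, while the scale-separation condition only gives $\useconC{5}^{t_m}\sh_\lambda(t_m)<e^{t_{m+1}/(10d)}$; so you would additionally need $\log\useconC{5}\le\tfrac{1}{3d}-\tfrac{1}{10d}=\tfrac{7}{30d}$, which is not available: $\useconC{5}$ is a fixed log-Lipschitz constant of the height function with no reason to be that small. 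In fact if $\log\useconC{5}>\tfrac{1}{3d}$ then $\inf\widehat{I}_{m+1}<1$, and since $\widetilde{\alpha}_i\ge1$ the windows become nested rather than disjoint, so the footprint sets $G_m$ can coincide. The paper's sets $\widetilde{J}_m$ avoid this by fattening only upward: their lower cut-off stays at $e^{t_m/(10d)}$, so the defining intervals $(e^{t_m/(10d)},\useconC{5}^{t_m}\sh_\lambda(t_m)]$ are disjoint immediately from the scale-separation condition, and the downward log-Lipschitz drift across a $t_m$-block is instead absorbed into the $\tfrac{1}{100}$-fraction inequality and into the factor $100$ in the definition of $m(\lambda)$. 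Your footprint picture can be repaired by the same asymmetric device: keep the lower endpoint of $\widehat{I}_m$ at $e^{t_m/(10d)}$, and place into $G_m$ only the subinterval of $((j-1)t_m,jt_m]$ on which $\widetilde{\alpha}_i(a_{\tau\be_i}y)$ has not yet drifted below $e^{t_m/(10d)}$ --- a definite fraction of the block rather than all of it --- after which you are essentially running the paper's proof.
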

\begin{proof}
    For $t\in t_\lambda\bN$ and $1\leq m\leq m(\lambda)$ let us first define
    $$ \widetilde{J}_l(y):=\set{j\in\bN: e^{\frac{t_m}{10d}}< \widetilde{\alpha}_i(a_{jt_\lambda \be_i}y)\leq \useconC{5}^{t_m}\sh_\lambda(t)}.$$
    Note that by the choice of $t_m$'s, the sets $\widetilde{J}_1(y),\cdots,\widetilde{J}_{m(\lambda)}(y)$ are disjoint and satisfy that for all $1\leq m\leq m(\lambda)$ and sufficiently large $N\in\bN$ 
    $$|\set{1,\ldots,N}\cap\widetilde{J}_l(y)|\geq \frac{t_{\lambda}}{100 t_m}\left|\set{1,\ldots,\tfrac{t_m}{t_\lambda}N }\cap J_{i,t_m,\operatorname{mid}}(y)\right|.$$ 
    Since $\widetilde{J}_1(y),\cdots,\widetilde{J}_{m(\lambda)}(y)$ are disjoint, there exists $m_0$ such that for sufficiently large $N\in\bN$
    $$ |\set{1,\cdots,N}\cap\widetilde{J}_{m_0}(y)|\leq\frac{N}{m(\lambda)},$$
    whence $|\set{1,\cdots,N}\cap J_{i,t_{m_0},\operatorname{mid}}(y)|<\frac{100N}{m(\lambda)}<\frac{\lambda^2-\lambda^3}{\log (3C_{5}\useconE{101})}N$. Applying Proposition \ref{beta'tcont} with $t=t_{m_0}$, we have
    \eq{\begin{aligned}
        \int_{-\frac{1}{2}}^{\frac{1}{2}}\beta_{N,i,t_{m_0},l}'(u_{i}(s)x;y)ds&\leq 2\useconC{5}\useconE{101} e^{t_{m_0}}(3C_{5}\useconE{101})^{|\set{1,\cdots,N}\cap J_{i,t_{m_0},\operatorname{mid}}(y)|t_{m_0}}e^{-\lambda^2 Nt_{m_0}}\widetilde{\alpha}_i(x)\\&\leq \widetilde{\sh}_\lambda e^{(\lambda^2-\lambda^3)Nt_{m_0}}e^{-\lambda^2 Nt_{m_0}}\widetilde{\alpha}_i(x)\\&\leq \widetilde{\sh}_\lambda e^{-\lambda^3Nt_{m_0}}\widetilde{\alpha}_i(x).
    \end{aligned}}
    Hence, we obtain
    $$\int_{-\frac{1}{2}}^{\frac{1}{2}}\widetilde{\beta}_{N,i,\eta,l}(u_{i}(s)x;y)ds\leq \int_{-\frac{1}{2}}^{\frac{1}{2}}\beta_{N\frac{t_{m(\lambda)}}{t_{m_0}},i,t_{m_0},l}'(u_{i}(s)x;y)ds\leq \widetilde{\sh}_\lambda e^{-\lambda^3Nt_{m(\lambda)}}\widetilde{\alpha}_i(x). $$
\end{proof}

As in Lemma \ref{betasupp} for $\beta_{dN,\del,i,\sh}$, we approximate $\widetilde{\beta}_{N,i,\eta,l}$ on certain expanding translates of horospherical orbits using characteristic functions over unions of specific boxes. For $N\in\bN$, $0<\eta\leq 1$ and $1\leq i\leq d-1$, let $\sB_{N,i,\eta}\subset\bR^{d-1}$ denote the box centered at $0$ with side length $2\eta e^{-2(N+1)t}$ along the direction of $\be_i$, and side lengths $2\eta e^{-2t}$ along the other $d-2$ directions. We also write $t=t_{m(\lambda)}$ for simplicity.

\begin{lem}[Approximations of $\widetilde{\beta}$]\label{betatildesupp:traj}
Let $N\in\bN$, $0<\eta\leq1$, $1\leq l\leq l_\lambda-1$, $y\in X$, and $x\in X'$. For each $1\leq i\leq d-1$ let us define $f_{i}:[-\frac{1}{2},\frac{1}{2}]^{d-1}\to\bR$ by $f_i(\bxi)=\widetilde{\beta}_{N,i,\eta,l}(a_{Nt\be_i}u(e^{-2Nt}\bxi)x;y)$. Then there exist $\bxi_{i,1},\ldots,\bxi_{i,M_i}\in[-\frac{1}{2},\frac{1}{2}]^{d-1}$ such that the followings holds.
\begin{enumerate}
    \item $0\leq f_i(\bxi)\leq \useconE{102}\kappa_i(x)^2\displaystyle\sum_{k=1}^{M_i} f_i(\bxi_{i,k})\mathds{1}_{\bxi_{i,k}+\sB_{N,i,\eta}}(\bxi)$ for any $\bxi\in[-\frac{1}{2},\frac{1}{2}]^{d-1},$
    \item $\displaystyle\sum_{k=1}^{M_i} f_i(\bxi_{i,k})\leq \useconE{102}\eta^{-(d-1)}e^{2(N+1)t}\kappa_i(x)^2\int_{[-\frac{1}{2},\frac{1}{2}]^{d-1}}\widetilde{\beta}_{N,i,2\eta,l-1}(a_{Nt\be_i}u(e^{-2Nt}\bxi)x;y)d\bxi.$
\end{enumerate}
\end{lem}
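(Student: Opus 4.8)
The plan is to follow the proof of Lemma \ref{betasupp} closely. The only genuinely new feature is that $\widetilde\beta_{N,i,\eta,l}$ now bundles together the sets $\Upsilon_{N t_{m(\lambda)}/t_m,i,t_m,\eta,l}(y)$ at the $m(\lambda)$ time scales $t_1,\dots,t_{m(\lambda)}$, and that the defining conditions of each $\Upsilon_{\cdot,i,t_m,\eta,l}(y)$ come in two flavours: a ``closeness'' condition $a_{jt_m\be_i}x\in\sB_{t_m,\eta}\,a_{jt_m\be_i}y$ for $j\in J_{i,t_m,\operatorname{low}}(y)$, and a ``largeness'' condition $\widetilde\alpha_i(a_{jt_m\be_i}x)>D^{(l)}(\useconE{101}(3\useconC{5})^{t_m})$ for $j\in J_{i,t_m,\operatorname{high}}(y)$. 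Throughout write $t=t_{m(\lambda)}$.

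First I would choose, exactly as in Lemma \ref{betasupp}, a maximal family $\bxi_{i,1},\dots,\bxi_{i,M_i}\in\Supp f_i$ for which the translated boxes $\bxi_{i,k}+\sB_{N,i,\eta}$ are pairwise disjoint; these then cover $\Supp f_i$ with multiplicity bounded in terms of $d$. Next I would record the key geometric fact. Write $\bxi=\bxi_{i,k}+\bxi'$ with $\bxi'\in\sB_{N,i,\eta}$, and set $z_k:=a_{Nt\be_i}u(e^{-2Nt}\bxi_{i,k})x$ and $z:=a_{Nt\be_i}u(e^{-2Nt}\bxi)x$. Since $U$ is abelian and normalised by $A$, for every $1\le m\le m(\lambda)$ and every $1\le j\le Nt/t_m$ one has $a_{jt_m\be_i}z=u(\bw_{m,j})\,a_{jt_m\be_i}z_k$, where the $\be_i$-coordinate of $\bw_{m,j}$ equals $e^{2jt_m}\xi'_i$ and the $\be_l$-coordinate ($l\ne i$) equals $e^{jt_m-Nt}\xi'_l$. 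Because $jt_m\le Nt$, and because $\sB_{N,i,\eta}$ is narrow --- of size $\asymp\eta e^{-2(N+1)t}$ --- exactly along $\be_i$ and wide --- of size $\asymp\eta e^{-2t}$ --- in the other directions, every coordinate of $\bw_{m,j}$ has absolute value at most $\eta e^{-2t}$; since $t\ge t_m$ this gives $u(\bw_{m,j})\in B^U(\operatorname{id},\eta e^{-2t_m})\subseteq B^U(\operatorname{id},1)$ for all $m$, and the same for the element conjugating by $a_{2Nt\be_i}$ used to evaluate $\widetilde\alpha_i$ itself.

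With this in hand the two halves follow as in Lemma \ref{betasupp}. For persistence of the $\Upsilon$-conditions: if $j\in J_{i,t_m,\operatorname{low}}(y)$ and $a_{jt_m\be_i}z_k\in\sB_{t_m,\eta}\,a_{jt_m\be_i}y$, then since $u(\bw_{m,j})B^U(\operatorname{id},\eta e^{-2t_m})\subseteq B^U(\operatorname{id},2\eta e^{-2t_m})$ and the parabolic factor $B^Q(\operatorname{id},\eta e^{-t_m/2})$ of $\sB_{t_m,\eta}$ is left untouched, $a_{jt_m\be_i}z\in\sB_{t_m,2\eta}\,a_{jt_m\be_i}y$; if $j\in J_{i,t_m,\operatorname{high}}(y)$ and $\widetilde\alpha_i(a_{jt_m\be_i}z_k)>D^{(l)}(\useconE{101}(3\useconC{5})^{t_m})=D\big(D^{(l-1)}(\useconE{101}(3\useconC{5})^{t_m})\big)$, then Lemma \ref{compactness} applied to $u(\bw_{m,j})^{-1}\in B^U(\operatorname{id},1)$ gives $\widetilde\alpha_i(a_{jt_m\be_i}z)>D^{(l-1)}(\useconE{101}(3\useconC{5})^{t_m})$. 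Ranging over $m$, if $z_k\in\bigcap_m\Upsilon_{Nt/t_m,i,t_m,\eta,l}(y)$ then $z\in\bigcap_m\Upsilon_{Nt/t_m,i,t_m,2\eta,l-1}(y)$, and symmetrically with the roles of $z,z_k$ reversed (using $u(\bw_{m,j})^{-1}$). For the comparison of heights, $\kappa_i$ is invariant under $A_i^+$ and $U$, so $\kappa_i(a_{2Nt\be_i}u(e^{-2Nt}\bxi_{i,k})x)=\kappa_i(x)$, and splitting $u(\bw)=u_i(w_i)\,u(\pi_i^\perp(\bw))$ and invoking \eqref{alphatildeLipschitz} and \eqref{LipschitzOrthogonal} shows --- just as in \eqref{Lipschitz2}, \eqref{Lipschitz2'} --- that $\widetilde\alpha_i(a_{2Nt\be_i}u(e^{-2Nt}\bxi)x)$ and $\widetilde\alpha_i(a_{2Nt\be_i}u(e^{-2Nt}\bxi_{i,k})x)$ differ by at most the factor $\useconE{102}\kappa_i(x)^2$. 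Item (1) then follows by bounding $f_i(\bxi)$ by $\useconE{102}\kappa_i(x)^2 f_i(\bxi_{i,k})$ on each box (using $\bxi_{i,k}\in\Supp f_i$, so $f_i(\bxi_{i,k})>0$), and item (2) follows by passing from $f_i(\bxi_{i,k})$ to $\widetilde\beta_{N,i,2\eta,l-1}(a_{Nt\be_i}u(e^{-2Nt}\bxi)x;y)$ via the persistence implication, integrating over each $\bxi_{i,k}+\sB_{N,i,\eta}$, summing over the disjoint boxes, and inserting the factor $\operatorname{vol}(\sB_{N,i,\eta})^{-1}$, which is $\eta^{-(d-1)}e^{2(N+1)t}$ up to a constant depending only on $d$ and $\lambda$.

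The step I expect to demand the most care is the uniformity in the second paragraph: a single box $\sB_{N,i,\eta}$ must control the conjugated perturbations $u(\bw_{m,j})$ simultaneously at \emph{all} $m(\lambda)$ scales. It does because the total diagonal time traversed along $\be_i$ is $Nt$ at every scale (the $j$-range being $1\le j\le Nt/t_m$), so $e^{2jt_m}|\xi'_i|\le e^{2Nt}\cdot\eta e^{-2(N+1)t}=\eta e^{-2t}$ regardless of $m$; this also dictates the asymmetric shape of $\sB_{N,i,\eta}$: narrow (size $\eta e^{-2(N+1)t}$) along the fast direction $\be_i$, which is stretched at rate $e^{2jt_m}$, and wide elsewhere, where $U_i^\perp$-perturbations are in any case absorbed into the $\kappa_i$-factor by \eqref{LipschitzOrthogonal}.
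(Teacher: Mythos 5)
Your proof is correct and follows essentially the same route as the paper's: a maximal disjoint packing by translates of $\sB_{N,i,\eta}$, the observation that conjugating a $\sB_{N,i,\eta}$-perturbation by $a_{(N+j t_m/t)t\be_i}$ (for $jt_m\leq Nt$) stays in $B^U(\operatorname{id},\eta e^{-2t})\subseteq B^U(\operatorname{id},\eta e^{-2t_m})$ simultaneously at all scales $m$, persistence of the $\Upsilon$-conditions at cost $(\eta,l)\rightsquigarrow(2\eta,l-1)$ via Lemma \ref{compactness}, the $\useconE{102}\kappa_i(x)^2$ comparison from \eqref{alphatildeLipschitz} and \eqref{LipschitzOrthogonal}, and finally pointwise domination for (1) and integration against the box volume for (2). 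Your parenthetical note that $\operatorname{vol}(\sB_{N,i,\eta})^{-1}=(2\eta)^{-(d-1)}e^{2(N+d-1)t}$ agrees with the stated $\eta^{-(d-1)}e^{2(N+1)t}$ only up to a $t$- and $d$-dependent constant is in fact correct (the paper's volume formula has a typo), and this discrepancy is harmless since it is absorbed into the constants in Proposition \ref{multicont:traj}.
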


\begin{proof}
Let $\set{\bxi_{i,1},\ldots,\bxi_{i,M_i}}$ be a maximal $\sB_{N,i,\eta}$-set of $\Supp f_i$, i.e. $\set{\bxi_{i,k}}_{k=1}^{M_i}$ is a maximal set such that the boxes $\bxi_{i,k}+\sB_{N,i,\eta}$'s are disjoint. Then for any $\bxi\in\Supp f_i$, there exist $1\leq k\leq M_i$ such that $\bxi\in \bxi_{i,k}+\sB_{N,i,\eta}$. Then for any $1\leq m\leq m(\lambda)$ and $1\leq j\leq \frac{t}{t_m}N$, \eqlabel{eq:WeightedBoxExpansion}{a_{jt_m\be_i}a_{Nt\be_i}u\big(e^{-2dNt}(\bxi-\bxi_{i,k})\big)a_{-Nt\be_i}a_{-jt_m \be_i}\in a_{2Nt\be_i}u\big(e^{-2Nt}\sB_{N,i,\eta}\big)a_{-2Nt \be_i}\subseteq B^U(\operatorname{id},\eta e^{-2t}).}
We have $\bxi_{i,k}\in \Supp f_i$ for any $1\leq i\leq d-1$ and $1\leq k\leq M_i$. It follows that 
$$a_{Nt\be_i}u(e^{-2Nt}\bxi_{i,k})x\in\Upsilon_{\frac{t}{t_m}N,i,t_m,\eta,l}(y)$$ for all $1\leq m\leq m(\lambda)$, i.e. 
$$a_{jt_m\be_i}a_{Nt\be_i}u(e^{-2Nt}\bxi_{i,k})x\in \sB_{t_m,\eta}a_{jt_m\be_i}y \textrm{ for all } j\in J_{i,t_m,\operatorname{low}}(y)\cap\set{1,\ldots,\tfrac{t}{t_m}N},$$ $$\widetilde{\alpha}_i(a_{jt_m\be_i}a_{Nt\be_i}u(e^{-2Nt}\bxi_{i,k})x)>D^{(l)}\big((3\useconC{5})^t\useconE{101}\big) \textrm{ for all } j\in J_{i,t_m,\operatorname{high}}(y)\cap \set{1,\ldots, \tfrac{t}{t_m}N}.$$ 
Combining this with \eqref{eq:WeightedBoxExpansion} and Lemma \ref{compactness} we have
$$a_{jt_m\be_i}a_{Nt\be_i}u(e^{-2Nt}\bxi)x\in \sB_{t_m,2\eta}a_{jt_m\be_i}y\textrm{ for all } j\in J_{i,t_m,\operatorname{low}}(y)\cap\set{1,\ldots,\tfrac{t}{t_m}N},$$ $$\widetilde{\alpha}_i(a_{jt_m\be_i}a_{Nt\be_i}u(e^{-2Nt}\bxi)x)>D^{(l-1)}\big((3\useconC{5})^t\useconE{101}\big) \textrm{ for all } j\in J_{i,t_m,\operatorname{high}}(y)\cap \set{1,\ldots,\tfrac{t}{t_m}N}$$
for all $1\leq m\leq m(\lambda)$. Here, we are using $B^U(\operatorname{id},\eta e^{-2t})\sB_{t_m,\eta}\subseteq \sB_{t_m,2\eta}$ for $j\in J_{i,t_m,\operatorname{low}}(y)$. It follows that
$$u(\bxi)x\in\bigcap_{l=1}^{m(\lambda)}\Upsilon_{N\frac{t}{t_m},i,t_m,2\eta,l-1}(y),$$
hence $f_i(\bxi)=\widetilde{\beta}_{N,i,2\eta,l-1}(a_{Nt\be_i}u(e^{-2Nt}\bxi)x;y)>0$ for any $\bxi\in\bxi_{i,k}+\sB_{N,i,\eta}$. Moreover, using \eqref{alphatildeLipschitz} and \eqref{LipschitzOrthogonal} we find
\eqlabel{Lipschitz2:traj}{\widetilde{\alpha}_i(a_{Nt\be_i}u(e^{-2Nt}\bxi)x)\leq \useconE{102}\kappa_i(x)^{2}\widetilde{\alpha}_i(a_{Nt\be_i}u(e^{-2Nt}\bxi_{i,k})x),}
\eqlabel{Lipschitz2':traj}{\widetilde{\alpha}_i(a_{Nt\be_i}u(e^{-2Nt}\bxi_{i,k})x)\leq \useconE{102}\kappa_i(x)^{2}\widetilde{\alpha}_i(a_{Nt\be_i}u(e^{-2Nt}\bxi)x)}
for any $\bxi\in\bxi_{i,k}+\sB_{N,i,\eta}$. Hence we get
\eqlabel{fiupperbound:traj}{\begin{aligned}
    f_i(\bxi)&=\widetilde{\beta}_{N,i,\eta,l}(a_{Nt\be_i}u(e^{-2Nt}\bxi)x;y)\\&=\widetilde{\alpha}_i(a_{Nt\be_i}u(e^{-2Nt}\bxi)x)\\&=\useconE{102}\kappa_i(x)^{2}\widetilde{\alpha}_i(a_{Nt\be_i}u(e^{-2Nt}\bxi_{i,k})x)\\&\leq \useconE{102}\kappa_i(x)^2f_i(\bxi_{i,k})
\end{aligned}}
for each $\bxi\in\bxi_{i,k}+\sB_{N,i,\eta}$, so (1) is proved.

We now verify (2). As in \eqref{fiupperbound:traj}, from \eqref{Lipschitz2':traj} we have
\eq{f_i(\bxi_{i,k})=\widetilde{\beta}_{N,i,\eta,l}(a_{Nt\be_i}u(e^{-2Nt}\bxi_{i,k})x)\leq \useconE{102}\kappa_i(x)^2\widetilde{\beta}_{N,i,2\eta,l-1}(a_{Nt\be_i}u(e^{-2Nt}\bxi)x)}
for any $\bxi\in\bxi_{i,k}+\sB_{N,i,\eta}$. Note that the volume of $\sB_{N,i,\eta}$ is $(2\eta)^{d-1}e^{-2(N+1)t}$. It follows that
\eq{\begin{aligned}
(2\eta)^{d-1}e^{-2(N+1)t}\displaystyle\sum_{k=1}^{M_i} f_i(\bxi_{i,k})&=\displaystyle\sum_{k=1}^{M_i}\int_{\bxi_{i,k}+\sB_{N,i,\eta}}f_i(\bxi_{i,k})d\bxi\\&\leq \useconE{102}\kappa_i(x)^2\displaystyle\sum_{k=1}^{M_i}\int_{\bxi_{i,k}+\sB_{N,i,\eta}}\widetilde{\beta}_{N,i,2\eta,l-1}(a_{Nt\be_i}u(e^{-2Nt}\bxi)x)d\bxi\\
&\leq 2^{d-1}\useconE{102}\kappa_i(x)^2\int_{[-\frac{1}{2},\frac{1}{2}]^{d-1}}\widetilde{\beta}_{N,i,2\eta,l-1}(a_{Nt\be_i}u(e^{-2Nt}\bxi)x)d\bxi.\end{aligned}}
In the last line, we use a fact derived from the maximality: the sets $\set{\bxi_{i,k}+\sB_{N,i,\eta}}_{k=1}^{M_i}$ can overlap at most $2^{d-1}$ times. This completes the proof. 
\end{proof}

\subsection{Higher-rank dynamical height function for a reference trajectory}
In this subsection, we construct dynamical height functions for a given reference trajectory under higher-rank actions and establish the contraction hypothesis, which is analogous to Proposition \ref{nontrivbdd}. We define $\widetilde{\psi}_{N,\eta,l}(\;\cdot\;;y ):X\to [1,\infty]$ by
\eqlabel{psidef:traj}{\widetilde{\psi}_{N,\eta,l}(x;y):=\prod_{i=1}^{d-1}\widetilde{\beta}_{N,i,\eta,l}(x;y)}
for $N\in\bN$, $0<\eta\leq 1$, $0\leq l\leq l_\lambda$ and $y\in X$.

In the remainder of this section, we show the following contraction hypothesis for $\widetilde{\psi}_{N,\eta,l}$.

\begin{prop}[Contraction for $\widetilde{\psi}$]\label{nontrivbdd:traj}
For any $N\in\bN$, $0\leq l\leq l_\lambda$, $0<\eta\leq 2^{-l}$, $y\in X$, and $x\in X'$ we have
\eq{\int_{[-\frac{1}{2},\frac{1}{2}]^{d-1}}\widetilde{\psi}_{2^l N,\eta,l}(u(\bxi)x;y)d\bxi\leq \useconE{102}^{d-1}\big(\useconE{106}\eta^{-d}\kappa(x)^{8}\big)^le^{-\lambda^3(d-1)(2^l-1)Nt_{m(\lambda)}}\prod_{i=1}^{d-1}\widetilde{\alpha}_i(x).}
\end{prop}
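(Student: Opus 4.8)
The plan is to run an inductive argument on $l$ that mirrors the proof of Proposition~\ref{nontrivbdd}, replacing the single-time-scale dynamical height functions $\psi_{N,\del,\sh}$ and $\beta_{dN,\del,i,\sh}$ by the multi-time-scale ``given trajectory'' versions $\widetilde{\psi}_{N,\eta,l}$ and $\widetilde{\beta}_{N,i,\eta,l}$. The base case $l=0$ is essentially the trivial bound: by definition $\widetilde{\psi}_{N,\eta,0}(u(\bxi)x;y)=\prod_i\widetilde{\beta}_{N,i,\eta,0}(u(\bxi)x;y)\le\prod_i\widetilde{\alpha}_i(a_{Nt_{m(\lambda)}\be_i}u(\bxi)x)$, and one controls this by the Log-Lipschitz property \eqref{alphatildeLipschitz} and \eqref{LipschitzOrthogonal} together with $\kappa_i(a_{\btau}u(\bxi)x)\le e^{2dNt}$, exactly as in Lemma~\ref{trivlem}; this gives a clean bound of the shape $\useconE{102}^{d-1}\kappa(x)^{8}\prod_i\widetilde{\alpha}_i(x)$ up to a harmless exponential in $Nt$ that the stated estimate absorbs into the $l$-dependent constant. (One must be slightly careful that the exponent $d(d-1)(2^0-1)=0$ forces us to accept whatever polynomial/exponential factor appears here; I would define the constant $\useconE{106}$ precisely so as to swallow it.)

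For the inductive step $l\to l+1$, the strategy is the one from Proposition~\ref{multicont}. Write $N'=2^l N$ and cover $\Supp\big(\bxi\mapsto\widetilde{\psi}_{2^{l+1}N,\eta,l+1}(u(\bxi)x;y)\big)$ by $e^{-2dN't}$-boxes centered at an $e^{-2dN't}$-separated set $\{\bxi_k\}$. On each box, the crucial algebraic fact \eqref{Dsize} (or rather the version $a_{Nt\be_i}B^U(\operatorname{id},e^{-2Nt})a_{-Nt\be_i}\subseteq B^U(\operatorname{id},1)$) lets us compare $\widetilde{\psi}$ at $u(\bxi)x$ with $\prod_i\widetilde{\beta}_{N',i,\eta,l}(a_{N't\be_i}u(e^{-2N't}(\bxi-\bxi_k))u(\bxi_k)x;y)$ up to $\useconE{102}$-factors and powers of $\kappa$. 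Then I apply Lemma~\ref{betatildesupp:traj}: part (1) lets me bound each $f_i(\bxi)=\widetilde{\beta}_{N',i,\eta,l}(a_{N't\be_i}u(e^{-2N't}\bxi)u(\bxi_k)x;y)$ by a sum of indicators of boxes $\bxi_{i,j}+\sB_{N',i,\eta}$, whose intersection over $i$ is contained in a cube of side $\asymp e^{-2(N'+1)t}$ (because $\sB_{N',i,\eta}$ is short only in the $\be_i$-direction) — this is precisely the ``different expanding rates'' mechanism that produces the sharp exponent; part (2) converts $\sum_j f_i(\bxi_{i,j})$ back into an integral of $\widetilde{\beta}_{N',i,2\eta,l-1}$. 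The product over $i=1,\dots,d-1$ of the per-$i$ integrals then decouples exactly as in \eqref{betaprod2}, and applying the one-dimensional contraction hypothesis Proposition~\ref{betatildedecay} to each factor supplies the gain $e^{-\lambda^3 N' t_{m(\lambda)}}$ per direction, i.e. $e^{-\lambda^3(d-1)N't_{m(\lambda)}}$ in total. Bookkeeping of the geometric series $\sum_{l}2^l=2^{l+1}-1$ in the exponent, and of the accumulated $\useconE{102}$, $\eta^{-d}$ and $\kappa(x)^8$ factors (one batch per induction step), yields the asserted inequality with $\eta$ halved at each stage — which is exactly why the hypothesis $\eta\le 2^{-l}$ is needed so that $2^l\eta\le 1$ stays in the legal range for the boxes $\sB_{t,\eta}$, $\widetilde{\sB}_{t,\eta}$.

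The main obstacle, as in \S3, is organizing the box-intersection volume computation so that the short sides really do multiply to a single $e^{-2(d-1)(N'+1)t}$ and not something larger: one has to check that after conjugating the box $\sB_{N',i,\eta}$ through $a_{N't\be_i}$ the direction $\be_i$ contracts at rate $e^{-2N't}$ while the orthogonal directions contract at rate $e^{-N't}$, so that the intersection $\bigcap_i(\bxi_{i,k_i}+\sB_{N',i,\eta})$ — having the short $\be_i$-side supplied by the $i$-th factor for each $i$ — sits in a genuine $(d-1)$-dimensional cube of the small scale. A second, more technical point is the interplay between the cut-off levels $D^{(l)}$ and the ``mid'' index sets $J_{i,t_m,\mathrm{mid}}(y)$: in Lemma~\ref{betatildesupp:traj} the perturbation by an element of $B^U(\operatorname{id},\eta e^{-2t})$ only guarantees $\widetilde{\alpha}_i>D^{(l-1)}$ rather than $>D^{(l)}$, which is the reason the induction must decrement $l$ and why it terminates at $l=l_\lambda$; I would make sure the constant $D(\cdot)$ from Lemma~\ref{compactness} is applied with the right multiplicity so that the chain of inclusions in the proof of Lemma~\ref{betatildesupp:traj} is valid at every step. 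Everything else — the Log-Lipschitz estimates, the $\kappa_i\le e^{d\del N t}$-type bounds on $\cE_{N,\del,i}$, the doubling of overlaps — is routine and identical in spirit to \S3.
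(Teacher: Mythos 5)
Your proposal takes the same route as the paper: iterate the single-step contraction hypothesis Proposition \ref{multicont:traj} (whose proof you essentially sketch inline via the box-covering and Lemma \ref{betatildesupp:traj}) a total of $l$ times and close the telescoping chain with the trivial bound Lemma \ref{trivlem:traj}, the exponents summing the geometric series $\sum_{j=0}^{l-1}2^j = 2^l-1$. Two small slips in bookkeeping but not in substance: the base of the chain is supplied by Lemma \ref{trivlem:traj}, not by Lemma \ref{trivlem} (the latter carries an extra exponential factor $e^{\useconC{11}Nt}$ which is not present in the trajectory version and could not be absorbed when $l=0$), and the $\eta$-parameter \emph{doubles} rather than halves at each step of the iteration --- which is precisely why the hypothesis $\eta\le 2^{-l}$ is needed, so that every intermediate $2^{j}\eta$ stays $\le 1$.
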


The proof of Proposition \ref{nontrivbdd:traj} follows an analogous approach to the procedure outlined in $\S 3$ and $\S4.1$ for the proof of Proposition \ref{nontrivbdd}. However, we include the full proof here for completeness. We begin with the following approximation lemma, similar to Lemma~\ref{betasupp}.
\begin{lem}[Approximation of $\widetilde{\psi}$]\label{psisupp:traj}
For $N\in\bN$, $0<\eta\leq 1$, $1\leq l\leq l_\lambda$, $y\in X$, and $x\in X'$, let us define $F:[-\frac{1}{2},\frac{1}{2}]^{d-1}\to\bR$ by $F(\bxi)=\widetilde{\psi}_{N,\eta,l}(u(\bxi)x;y)$. Then there exist $\bxi_1,\ldots,\bxi_M\in[-\frac{1}{2},\frac{1}{2}]^{d-1}$ such that
\begin{enumerate}
    \item $0\leq F(\bxi)\leq \useconE{102}^{d-1}\kappa(x)^2\displaystyle\sum_{k=1}^{M} F(\bxi_k)\mathds{1}_{B(\bxi_k,e^{-2Nt})}(\bxi)$
    for any $\bxi\in[-\frac{1}{2},\frac{1}{2}]^{d-1}$,
    \item 
        $\displaystyle\sum_{k=1}^{M} F(\bxi_k)\leq 
    \useconE{102}^{d-1}\eta^{-2(d-1)}\kappa(x)^2e^{2(d-1)Nt}\int_{[-\frac{1}{2},\frac{1}{2}]^{d-1}}\widetilde{\psi}_{N,2\eta,l-1}(u(\bxi)x;y)d\bxi.$
\end{enumerate}
\end{lem}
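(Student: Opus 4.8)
The plan is to follow the proof of Lemma~\ref{psisupp}, with the cut-off dichotomy between $\sh$ and $D(\sh)$ used there replaced here by the parameter dichotomy between $(\eta,l)$ and $(2\eta,l-1)$; throughout write $t=t_{m(\lambda)}$. The one structural simplification is that $\widetilde{\psi}_{N,\eta,l}(\,\cdot\,;y)=\prod_{i=1}^{d-1}\widetilde{\beta}_{N,i,\eta,l}(\,\cdot\,;y)$ is \emph{by definition} a product, and each factor equals $\widetilde{\alpha}_i(a_{Nt\be_i}\,\cdot\,)$ on its support, so that $F(\bxi)=\prod_{i=1}^{d-1}\widetilde{\alpha}_i(a_{Nt\be_i}u(\bxi)x)$ for every $\bxi\in\Supp F$. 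First I would let $\set{\bxi_1,\dots,\bxi_M}$ be a maximal $e^{-2Nt}$-separated subset of $\Supp F$, so that $\Supp F\subseteq\bigcup_k B(\bxi_k,e^{-2Nt})$ and every $\bxi_k$ lies in $\Supp F$ (in particular $F(\bxi_k)>0$).

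Part (1) is then a routine transcription of the corresponding step of Lemma~\ref{psisupp}. Given $\bxi$ with $F(\bxi)>0$, pick $\bxi_k$ with $\|\bxi-\bxi_k\|<e^{-2Nt}$ and put $\bv=\bxi-\bxi_k$. Since $U$ is abelian, conjugating $u(\bv)$ by $a_{Nt\be_i}$ dilates its $U_i$-component by $e^{2Nt}$ (hence to size $<1$) and its $U_i^{\perp}$-component by $e^{Nt}$ (hence to size $<e^{-Nt}$); so \eqref{alphatildeLipschitz} and \eqref{LipschitzOrthogonal}, together with $\kappa_i(a_{Nt\be_i}u(\bxi_k)x)=\kappa_i(x)$ (as $\kappa_i$ is $A_i^{+}$- and $U$-invariant), give $\widetilde{\alpha}_i(a_{Nt\be_i}u(\bxi)x)\le\useconE{102}\kappa_i(x)^2\widetilde{\alpha}_i(a_{Nt\be_i}u(\bxi_k)x)$ for each $i$, and multiplying over $i$ yields (1).

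For part (2) --- the genuinely new ingredient --- I would attach to each $\bxi_k$ the cube $R_k$ centred at $\bxi_k$ of side-length comparable to $\eta e^{-(2N+2)t}$. The claim is that $\bxi\in R_k$ forces $u(\bxi)x\in\bigcap_{m=1}^{m(\lambda)}\Upsilon_{N\frac{t}{t_m},i,t_m,2\eta,l-1}(y)$ for every $i$, whence $\widetilde{\psi}_{N,2\eta,l-1}(u(\bxi)x;y)>0$. Indeed, putting $w=\bxi_k-\bxi$, for every $i$, every $1\le m\le m(\lambda)$ and every $1\le j\le N\frac{t}{t_m}$ one has $jt_m\le Nt$, so conjugation by $a_{jt_m\be_i}$ dilates each coordinate of $w$ by at most $e^{2Nt}$ and hence $a_{jt_m\be_i}u(w)a_{-jt_m\be_i}\in B^U(\operatorname{id},\eta e^{-2t_m})\subseteq B^U(\operatorname{id},1)$; the inclusion $B^U(\operatorname{id},\eta e^{-2t_m})\sB_{t_m,\eta}\subseteq\sB_{t_m,2\eta}$ then opens each $J_{i,t_m,\operatorname{low}}$-tube condition from level $\eta$ to $2\eta$, while Lemma~\ref{compactness} (using $D^{(l)}=D\circ D^{(l-1)}$) lowers each $J_{i,t_m,\operatorname{high}}$-threshold from $D^{(l)}$ to $D^{(l-1)}$. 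The reverse of the estimate in (1) moreover gives $\widetilde{\psi}_{N,2\eta,l-1}(u(\bxi)x;y)\ge\useconE{102}^{-(d-1)}\kappa(x)^{-2}F(\bxi_k)$ on $R_k$. Since the $\bxi_k$ are $e^{-2Nt}$-separated and $\eta e^{-(2N+2)t}<e^{-2Nt}$, the cubes $R_k$ may be taken pairwise disjoint (shrinking by a dimensional constant if needed); integrating the last bound over $\bigsqcup_k R_k$ and dividing by $|R_k|\asymp(\eta e^{-(2N+2)t})^{d-1}$ then gives $\sum_k F(\bxi_k)\le C\,\eta^{-(d-1)}e^{2(d-1)Nt}\kappa(x)^2\int_{[-\frac{1}{2},\frac{1}{2}]^{d-1}}\widetilde{\psi}_{N,2\eta,l-1}(u(\bxi)x;y)\,d\bxi$ with $C$ depending only on $d$ and $\lambda$. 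Since $\eta\le1$ forces $\eta^{-(d-1)}\le\eta^{-2(d-1)}$, this yields (2) (the leading constant $\useconE{102}^{d-1}$ in the statement being, as throughout, a placeholder for a suitable $d,\lambda$-dependent constant).

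The hard part will be part (2), and in particular the correct perturbation scale: $\eta e^{-(2N+2)t}$ is dictated by the worst among all the tube conditions $a_{jt_m\be_i}u(\bxi)x\in\sB_{t_m,\eta}a_{jt_m\be_i}y$, over all directions $i$ and all time-scales $t_m$ --- the direction $\be_i$ being expanded by $a_{jt_m\be_i}$ at the faster rate $e^{2jt_m}$, with $jt_m$ running up to $Nt$ --- and one must verify that a single translation of $\bxi_k$ inside this cube relaxes every tube condition and every height threshold simultaneously. Conceptually this is exactly the bookkeeping already done, one direction at a time, in Lemma~\ref{betatildesupp:traj}; beyond tracking the product over $i$ and the union over $m$, no new difficulty should arise.
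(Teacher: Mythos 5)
Your proposal is correct and follows essentially the same approach as the paper: cover $\Supp F$ by a maximal separated set, use the log-Lipschitz estimates \eqref{alphatildeLipschitz} and \eqref{LipschitzOrthogonal} (with $\kappa_i$ fixed by $A_i^+U$-invariance) to transfer the value at each $\bxi_k$, relax the tube and height-threshold conditions from $(\eta,l)$ to $(2\eta,l-1)$ via the doubling of $\sB_{t_m,\eta}$ and an application of Lemma~\ref{compactness}, and integrate over disjoint boxes. The only cosmetic difference is that the paper works throughout with a single maximal $\eta e^{-2(N+1)t}$-separated set (so the covering balls in part (1) are automatically contained in $B(\bxi_k,e^{-2Nt})$), whereas you take the coarser $e^{-2Nt}$-separated set and then place smaller disjoint cubes of side $\asymp\eta e^{-(2N+2)t}$ around each $\bxi_k$ for the integration in part (2) --- both choices put the conjugated perturbation inside $B^U(\operatorname{id},\eta e^{-2t})\subset B^U(\operatorname{id},\eta e^{-2t_m})$, so the rest of the argument is identical up to a $d,\lambda$-dependent multiplicative constant, exactly as you note.
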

\begin{proof}
Let $\set{\bxi_{1},\ldots,\bxi_{M}}\subset[-\frac{1}{2},\frac{1}{2}]^{d-1}$ be a maximal $\eta e^{-2(N+1)t}$-separated set of $\Supp F$. For any $\bxi\in\Supp F$ there exists $1\leq k\leq M_\eta$ such that $\bxi\in B(\bxi_k,\eta e^{-2(N+1)t})$. Then for any $1\leq i\leq d-1$, $1\leq m\leq m(\lambda)$, and $1\leq j\leq \frac{t}{t_m}N$, \eqlabel{eq:UnweightedBoxExpansion}{a_{jt_m\be_i}u(\bxi-\bxi_{i,k})a_{-jt_m \be_i}\in a_{Nt\be_i}u\big(B(\bxi_k,\eta e^{-2(N+1)t})\big)a_{-Nt \be_i}\subseteq B^U(\operatorname{id},\eta e^{-2t}).}
We have $\bxi_{k}\in \Supp F$ for any $1\leq k\leq M_i$. It follows that 
$$u(\bxi_{k})x\in\Upsilon_{\frac{t}{t_m}N,i,t_m,\eta,l}(y)$$ for all $1\leq m\leq m(\lambda)$, i.e. 
$$a_{jt_m\be_i}u(\bxi_{k})\in \sB_{t_m,\eta}a_{jt_m\be_i}y \textrm{ for all } j\in J_{i,t_m,\operatorname{low}}(y)\cap\set{1,\ldots,\tfrac{t}{t_m}N},$$ $$\widetilde{\alpha}_i(a_{jt_m\be_i}u(\bxi_{k})x)>D^{(l)}\big((3\useconC{5})^t\useconE{101}\big) \textrm{ for all } j\in J_{i,t_m,\operatorname{high}}(y)\cap \set{1,\ldots, \tfrac{t}{t_m}N}.$$ 
Combining this with \eqref{eq:UnweightedBoxExpansion} and Lemma \ref{compactness} we have
$$a_{jt_m\be_i}u(\bxi)x\in \sB_{t_m,2\eta}a_{jt_m\be_i}y\textrm{ for all } j\in J_{i,t_m,\operatorname{low}}(y)\cap\set{1,\ldots,\tfrac{t}{t_m}N},$$ $$\widetilde{\alpha}_i(a_{jt_m\be_i}u(\bxi)x)>D^{(l-1)}\big((3\useconC{5})^t\useconE{101}\big) \textrm{ for all } j\in J_{i,t_m,\operatorname{high}}(y)\cap \set{1,\ldots,\tfrac{t}{t_m}N}$$
for all $1\leq i\leq d-1$ and $1\leq m\leq m(\lambda)$. It follows that
$u(\bxi)x\in\Upsilon_{N\frac{t}{t_m},i,t_m,2\eta,l-1}(y)$ for all $1\leq i\leq d-1$ and $1\leq m\leq m(\lambda)$, hence 
$$F(\bxi)=\prod_{i=1}^{d-1}\widetilde{\beta}_{N,i,2\eta,l-1}(u(\bxi)x;y)>0$$ for any $\bxi\in B(\bxi_k,\eta e^{-2(N+1)t})$. Moreover, Using \eqref{alphatildeLipschitz} and \eqref{LipschitzOrthogonal} we have
\eqlabel{Lipschitz1:traj}{\begin{aligned}\widetilde{\alpha}_i(a_{Nt\be_i}u(\bxi)x)&=\widetilde{\alpha}_i\big((a_{Nt\be_i}u(\bxi-\bxi_k)a_{-Nt\be_i})a_{Nt\be_i}u(\bxi_k)x\big)\\&\leq \useconC{5}\alpha_i\big(\big(a_{Nt\be_i}u(\pi_i^{\perp}(\bxi-\bxi_k))a_{-Nt\be_i}\big)a_{Nt\be_i}u(\bxi_k)x\big)\\&\leq \useconC{5}\eta_{\lambda,t}^{-(d-1)}\kappa_i(a_{Nt\be_i}u(\bxi_k)x)^{2}\widetilde{\alpha}_i(a_{Nt\be_i}u(\bxi_k)x)\\&=\useconE{102}\kappa_i(x)^{2}\widetilde{\alpha}_i(a_{Nt\be_i}u(\bxi_k)x),\end{aligned}}
 and similarly, we also have
\eqlabel{Lipschitz1':traj}{\widetilde{\alpha}_i(a_{Nt\be_i}u(\bxi_k)x)\geq \useconE{102}\kappa_i(x)^{2}\widetilde{\alpha}_i(a_{Nt\be_i}u(\bxi)x).}
for all $1\leq i\leq d-1$. Thus, for any $\bxi\in B(\bxi_k,\eta e^{-2(N+1)t})$ we get
\eqlabel{Fupperbound1:traj}{\begin{aligned}F(\bxi)&=\widetilde{\psi}_{N,\eta,l}\big(u(\bxi)x\big)=\displaystyle\prod_{i=1}^{d-1}\widetilde{\alpha}_i(a_{Nt\be_i}u(\bxi)x)\\&\leq \useconE{102}^{d-1}\kappa(x)^2\displaystyle\prod_{i=1}^{d-1}\widetilde{\alpha}_i(a_{Nt\be_i}u(\bxi_k)x)\\&\leq \useconE{102}^{d-1}\kappa(x)^2\widetilde{\psi}_{N,\eta,l}\big(u(\bxi_k)x;y\big),\end{aligned}}
so (1) is proved.

We now verify (2). As in \eqref{Fupperbound1:traj}, from \eqref{Lipschitz1':traj} we have \eq{F(\bxi_k)=\widetilde{\psi}_{N,2\eta,l-1}(u(\bxi_k)x;y)\leq \useconE{102}^{d-1}\kappa(x)^2\widetilde{\psi}_{N,2\eta,l-1}(u(\bxi)x;y)} for any $\bxi\in B(\bxi_k,\eta e^{-2Nt})$. It follows that
\eq{\begin{aligned}
&(2\eta e^{-2Nt})^{d-1}\displaystyle\sum_{k=1}^{M} F(\bxi_k)=\displaystyle\sum_{k=1}^{M}\int_{B(\bxi_k,\eta e^{-2Nt})}F(\bxi_k)d\bxi\\&\leq \useconE{102}^{d-1}\kappa(x)^2\displaystyle\sum_{k=1}^{M}\int_{B(\bxi_k,\eta e^{-2Nt})}\widetilde{\psi}_{N,2\eta,l-1}(u(\bxi)x;y)d\bxi\\
&\leq 2^{d-1}\useconE{102}^{d-1}\kappa(x)^2\int_{[-\frac{1}{2},\frac{1}{2}]^{d-1}}\widetilde{\psi}_{N,2\eta,l-1}(u(\bxi)x;y)d\bxi.
\end{aligned}}
In the last line, we use a fact derived from the maximality of $\set{\bxi_1,\ldots,\bxi_M}$: the sets $\set{B(\bxi_k,\eta e^{-2(N+1)t})}_{k=1}^{M}$ can overlap at most $2^{d-1}$ times. This completes the proof. 
\end{proof}

The following proposition is parallel to Proposition \ref{multicont}.
\begin{prop}[Inductive contraction for $\widetilde{\psi}$]\label{multicont:traj}
For any $N\in\bN$, $0<\eta\leq \frac{1}{2}$, $1\leq l\leq l_\lambda$, $y\in X$, and $x\in X'$, we have
\eq{\int_{[-\frac{1}{2},\frac{1}{2}]^{d-1}}\widetilde{\psi}_{2N,\eta,l}(u(\bxi)x;y)d\bxi\leq \useconE{106}\eta^{-(d-1)}e^{-\lambda^3(d-1)Nt}\kappa(x)^8\int_{[-\frac{1}{2},\frac{1}{2}]^{d-1}}\widetilde{\psi}_{N,2\eta,l-1}(u(\bxi)x;y)d\bxi}
for some constant \newconE{106}$\useconE{106}>0$.
\end{prop}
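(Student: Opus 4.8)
The plan is to follow the proof of Proposition~\ref{multicont} line by line. The only structural difference is that the decomposition of $\psi_{2N}$ into rank-one pieces --- which in \S3 required Lemma~\ref{psibetabdd} and a sum over the sets $\cE_{N,2\del,i}$ --- is here replaced by an \emph{exact} splitting identity involving the single shift $a_{Nt\be_i}$, where $t=t_{m(\lambda)}$. First I would record that identity. For each scale $t_m$ a trajectory of length $2Nt$ is the concatenation of two trajectories of length $Nt$: for $j$ in the second half one has $a_{jt_m\be_i}x=a_{(j-Nt/t_m)t_m\be_i}(a_{Nt\be_i}x)$, and the reference data are time-shift equivariant, namely $j\in J_{i,t_m,\bullet}(y)\Longleftrightarrow j-Nt/t_m\in J_{i,t_m,\bullet}(a_{Nt\be_i}y)$ for $\bullet\in\{\operatorname{low},\operatorname{mid},\operatorname{high}\}$, $\sB_{t_m,\eta}\,a_{jt_m\be_i}y=\sB_{t_m,\eta}\,a_{(j-Nt/t_m)t_m\be_i}(a_{Nt\be_i}y)$, the cut-off heights $D^{(l)}(\useconE{101}(3\useconC{5})^{t})$ do not depend on $j$ or $m$, and $\widetilde{\alpha}_i(a_{2Nt\be_i}x)=\widetilde{\alpha}_i(a_{Nt\be_i}(a_{Nt\be_i}x))$. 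Unwinding the definitions of $\widetilde{\beta}_{2N,i,\eta,l}$ and of $\widetilde{\psi}_{2N,\eta,l}$, this yields, for all $x\in X'$ and $y\in X$,
\begin{equation*}
\widetilde{\psi}_{2N,\eta,l}(x;y)=\mathds{1}_{\Supp\widetilde{\psi}_{N,\eta,l}(\,\cdot\,;y)}(x)\,\prod_{i=1}^{d-1}\widetilde{\beta}_{N,i,\eta,l}(a_{Nt\be_i}x\,;\,a_{Nt\be_i}y),
\end{equation*}
so in particular $\Supp\widetilde{\psi}_{2N,\eta,l}(\,\cdot\,;y)\subseteq\Supp\widetilde{\psi}_{N,\eta,l}(\,\cdot\,;y)$. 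The reference trajectory of the $i$-th factor has become the shifted point $a_{Nt\be_i}y$; this causes no difficulty, since Proposition~\ref{betatildedecay} and Lemma~\ref{betatildesupp:traj} are stated for an arbitrary reference point in $X$.

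I would then localise on the horosphere, exactly as in \S3. Fix $x\in X'$ and set $F(\bxi)=\widetilde{\psi}_{N,\eta,l}(u(\bxi)x;y)$; by Lemma~\ref{psisupp:traj}(1) there is an $e^{-2Nt}$-separated set $\set{\bxi_1,\dots,\bxi_M}$ with $\Supp\widetilde{\psi}_{2N,\eta,l}(\,\cdot\,;y)\subseteq\Supp F\subseteq\bigcup_{k}B(\bxi_k,e^{-2Nt})$. Restricting $\int_{[-\frac{1}{2},\frac{1}{2}]^{d-1}}\widetilde{\psi}_{2N,\eta,l}(u(\bxi)x;y)\,d\bxi$ to the balls $B(\bxi_k,e^{-2Nt})$, rescaling $\bxi=\bxi_k+e^{-2Nt}\bxi'$, and inserting the splitting identity, the $k$-th contribution is at most a constant multiple of $e^{-2(d-1)Nt}\int_{[-\frac{1}{2},\frac{1}{2}]^{d-1}}\prod_{i=1}^{d-1}f_i(\bxi')\,d\bxi'$, where $f_i(\bxi')=\widetilde{\beta}_{N,i,\eta,l}(a_{Nt\be_i}u(e^{-2Nt}\bxi')u(\bxi_k)x\,;\,a_{Nt\be_i}y)$ is precisely the function analysed in Lemma~\ref{betatildesupp:traj}, with base point $u(\bxi_k)x$ and reference $a_{Nt\be_i}y$ (note $\kappa_i(u(\bxi_k)x)=\kappa_i(x)$).

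The heart of the matter is the product estimate for $\int\prod_i f_i$, which repeats the argument for Proposition~\ref{multicont}. By Lemma~\ref{betatildesupp:traj}(1) each $f_i$ is dominated by $\useconE{102}\kappa_i(x)^2$ times a sum of terms $f_i(\bxi_{i,j})\mathds{1}_{\bxi_{i,j}+\sB_{N,i,\eta}}$, so $\int\prod_i f_i$ is controlled by $\kappa(x)^2$ times the product of the $\sum_{j}f_i(\bxi_{i,j})$'s weighted by $\operatorname{vol}(\bigcap_i(\bxi_{i,j_i}+\sB_{N,i,\eta}))$. The crucial ``different expanding rates'' observation is that $\sB_{N,i,\eta}$ has its \emph{short} side, of length $2\eta e^{-2(N+1)t}$, along the $\be_i$-direction --- because $a_{Nt\be_i}$, hence each element $a_{jt_m\be_i}a_{Nt\be_i}$ occurring along the trajectory, expands $U_i$ at strictly the largest rate --- so that $\bigcap_{i}(\bxi_{i,j_i}+\sB_{N,i,\eta})$ lies in a single cube of side $\asymp\eta e^{-2Nt}$; this is exactly the geometric input used for Proposition~\ref{multicont}. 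After this volume bound, Lemma~\ref{betatildesupp:traj}(2) reduces everything to bounding, for each $i$, the integral of $\widetilde{\beta}_{N,i,2\eta,l-1}(a_{Nt\be_i}u(e^{-2Nt}\bxi)u(\bxi_k)x;a_{Nt\be_i}y)$: there I would split $u(e^{-2Nt}\bxi)$ into its $U_i$- and $U_i^{\perp}$-components, conjugate the $U_i$-component through $a_{Nt\be_i}$ to turn it into $u_i(s)$ with $s\in[-\frac{1}{2},\frac{1}{2}]$, apply Proposition~\ref{betatildedecay} in the variable $s$ (with reference $a_{Nt\be_i}y$) to gain the factor $e^{-\lambda^3 Nt}$, and absorb the leftover $U_i^{\perp}$-perturbation --- which integrates to a constant --- via \eqref{LipschitzOrthogonal}. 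The net outcome is that $\int\prod_i f_i$ is at most a $\lambda$-dependent constant times a power of $\eta^{-1}$ times $\kappa(x)^{6}e^{-\lambda^3(d-1)Nt}\prod_{i=1}^{d-1}\widetilde{\alpha}_i(a_{Nt\be_i}u(\bxi_k)x)$, and the last product equals $F(\bxi_k)$ since $\bxi_k\in\Supp F$.

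It remains to sum over $k$ and invoke Lemma~\ref{psisupp:traj}(2) to pass from $\sum_k F(\bxi_k)$ to $\int_{[-\frac{1}{2},\frac{1}{2}]^{d-1}}\widetilde{\psi}_{N,2\eta,l-1}(u(\bxi)x;y)\,d\bxi$; the rescaling factor $e^{-2(d-1)Nt}$ of the second paragraph and the factor $e^{2(d-1)Nt}$ produced by Lemma~\ref{psisupp:traj}(2) cancel, leaving the exponential rate $e^{-\lambda^3(d-1)Nt}$, while the universal constants ($\useconE{101}$, $\useconE{102}$, the constant of Proposition~\ref{betatildedecay}) together with the $\eta$- and $\kappa(x)$-powers coming from the various packing estimates assemble into a bound of the required shape $\useconE{106}\eta^{-(d-1)}\kappa(x)^{8}$. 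The step I expect to be the main obstacle is the splitting identity of the first paragraph: one must verify that \emph{all} of the data defining $\widetilde{\beta}_{2N,i,\eta,l}$ --- the three sets $J_{i,t_m,\operatorname{low}}(y),J_{i,t_m,\operatorname{mid}}(y),J_{i,t_m,\operatorname{high}}(y)$, the box-membership conditions $a_{jt_m\be_i}x\in\sB_{t_m,\eta}a_{jt_m\be_i}y$ at every scale $1\le m\le m(\lambda)$, and the cut-off heights --- are simultaneously compatible with the time-shift $j\mapsto j+Nt/t_m$; once this is granted, everything else is a transcription of \S3 with $\cE_{N,2\del,i}$ collapsed to the single point $Nt\be_i$.
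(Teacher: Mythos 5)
Your proposal is correct and follows essentially the same route as the paper: split the length-$2N$ trajectory via the shift $a_{Nt\be_i}$, localise on $\eta e^{-2Nt}$-balls around a separated set of $\Supp\widetilde\psi_N$, approximate each rank-one factor by indicators of the boxes $\sB_{N,i,\eta}$ whose short side points along $\be_i$, bound the volume of the $(d-1)$-fold intersection by a small cube, apply Proposition~\ref{betatildedecay} in the $u_i$-direction with an \eqref{LipschitzOrthogonal}-absorption of the $U_i^\perp$-perturbation, and close with Lemma~\ref{psisupp:traj}(2). The one cosmetic difference is that you record the time-shift decomposition as the exact identity $\widetilde{\psi}_{2N,\eta,l}(x;y)=\mathds{1}_{\Supp\widetilde{\psi}_{N,\eta,l}(\cdot;y)}(x)\prod_i\widetilde{\beta}_{N,i,\eta,l}(a_{Nt\be_i}x;a_{Nt\be_i}y)$ rather than the one-sided inequality $\widetilde{\psi}_{2N,\eta,l}(x;y)\le\prod_i\widetilde{\beta}_{N,i,\eta,l}(a_{Nt\be_i}x;a_{Nt\be_i}y)$ used in the text; this is a slight sharpening but yields the same estimate.
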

\begin{proof}
 Let $F:[-\frac{1}{2},\frac{1}{2}]^{d-1}\to\bR$ be the function defined by $F(\bxi)=\widetilde{\psi}_{2N,\eta,l}(u(\bxi)x;y)$, and $\set{\bxi_1,\cdots,\bxi_M}$ be a maximal $\eta e^{-2Nt}$-separated set of $\Supp F$ as in Lemma \ref{psisupp:traj}. Note that
\eqlabel{SuppFcov:traj}{\Supp F\subseteq\bigcup_{k=1}^{M}B(\bxi_k,\eta e^{-2Nt}).}
We also observe that
$$ \widetilde{\psi}_{2N,\eta,l}(u(\bxi)x;y)=\prod_{i=1}^{d-1}\widetilde{\beta}_{2N,i,\eta,l}(u(\bxi)x;y) \leq \prod_{i=1}^{d-1}\widetilde{\beta}_{N,i,\eta,l}(a_{Nt\be_i}u(\bxi)x;a_{Nt\be_i}y).$$ 
Hence, we have
\eqlabel{psibetabdd1:traj}{\begin{aligned} &\int_{[-\frac{1}{2},\frac{1}{2}]^{d-1}}\widetilde{\psi}_{2N,\eta,l}(u(\bxi)x;y)d\bxi\leq\displaystyle\sum_{k=1}^{M}\int_{B(\bxi_k,\eta e^{-2Nt})}\widetilde{\psi}_{2N,\eta,l}(u(\bxi)x;y)d\bxi\\&\leq\displaystyle\sum_{k=1}^{M}\int_{B(\bxi_k,\eta e^{-2Nt})}\prod_{i=1}^{d-1}\widetilde{\beta}_{N,i,\eta,l}(a_{Nt\be_i}u(\bxi)x;a_{aNt\be_i}y)d\bxi\\
&=\displaystyle\sum_{k=1}^{M}\int_{[-\eta,\eta]^{d-1}}\prod_{i=1}^{d-1}\widetilde{\beta}_{N,i,\eta,l}\left(a_{Nt\be_i}u(e^{-2Nt}\bxi)u(\bxi_k)x; a_{Nt\be_i}y\right)d\bxi.\end{aligned}}

We now estimate the integral of the last line in \eqref{psibetabdd1:traj} for each fixed $1\leq k\leq M$. For each $1\leq i\leq d-1$ let us define $f_i:[-\frac{1}{2},\frac{1}{2}]^{d-1}\to\bR$ by 
$$f_i(\bxi)=\widetilde{\beta}_{N,i,\eta,l}(a_{Nt\be_i}u(e^{-2Nt}\bxi)u(\bxi_k)x;a_{Nt\be_i}y)$$ and let $\bxi_{i,1},\cdots,\bxi_{i,M_i}\in[-\frac{1}{2},\frac{1}{2}]^{d-1}$ be as in Lemma \ref{betatildesupp:traj}. Recall that we have 
$$0\leq f_{i}(\bxi)\leq \useconE{102}\kappa_i(x)^2\displaystyle\sum_{k=1}^{M_{i}} f_{i}(\bxi_{i,k})\mathds{1}_{\bxi_{i,k}+\sB_{N,i}}(\bxi)$$
by (1) of Lemma \ref{betatildesupp:traj}. It follows that
\eqlabel{betaprod1:traj}{\begin{aligned} &\int_{[-\frac{1}{2},\frac{1}{2}]^{d-1}}\prod_{i=1}^{d-1}\widetilde{\beta}_{N,i,\eta,l}\left(a_{Nt\be_i}u(e^{-2Nt}\bxi)u(\bxi_k)x; a_{Nt\be_i}y\right)d\bxi\\&=\int_{[-\frac{1}{2},\frac{1}{2}]^{d-1}}\prod_{i=1}^{d-1}f_i(\bxi)d\bxi\\&\leq \useconE{102}^{d-1}\kappa(x)^2\sum_{k_1=1}^{M_1}\cdots\sum_{k_{d-1}=1}^{M_{d-1}}\left(\prod_{i=1}^{d-1}f_{i}(\bxi_{i,k_i})\right)\int_{[-\frac{1}{2},\frac{1}{2}]^{d-1}}\prod_{i=1}^{d-1}\mathds{1}_{\bxi_{k_i,i}+\sB_{N,i}}(\bxi)d\bxi.\end{aligned}}
We observe that for any $k_1,\ldots,k_{d-1}$, $\displaystyle\bigcap_{i=1}^{d-1}\left(\bxi_{k_i,i}+\sB_{N,i}\right)$ is always contained in a box with sidelength $2e^{-2(N+1)t}$ since each $\sB_{N,i}$ has sidelength $2e^{-2(N+1)t}$ along the direction of $\be_i$. We also have 
$$\displaystyle\sum_{k_i=1}^{M_i} f_i(\bxi_{i,k_i})\leq \useconE{102}\kappa_i(x)^2\int_{[-\frac{1}{2},\frac{1}{2}]^{d-1}}\widetilde{\beta}_{N,i,2\eta,l-1}\left(a_{Nt\be_i}u(e^{-2Nt}\bxi)u(\bxi_k)x; a_{Nt\be_i}y\right)d\bxi$$
for all $1\leq i\leq d-1$ by (2) of Lemma \ref{betatildesupp:traj}.
It follows that \eqref{betaprod1:traj} is bounded by
\eqlabel{betaprod2:traj}{\begin{aligned} &\leq \useconE{102}^{d-1}\kappa(x)^{2(d-1)}(2e^{-2Nt})^{d-1}\displaystyle\sum_{k_1=1}^{M_1}\cdots\displaystyle\sum_{k_{d-1}=1}^{M_{d-1}}\prod_{i=1}^{d-1}f_{i}(\bxi_{i,k_i})\\ 
&\leq 2^{d-1}\useconE{102}^{2(d-1)}\kappa(x)^4\prod_{i=1}^{d-1}\int_{[-\frac{1}{2},\frac{1}{2}]^{d-1}}\widetilde{\beta}_{N,i,2\eta,l-1}\left(a_{Nt\be_i}u(e^{-2Nt}\bxi)u(\bxi_k)x; a_{Nt\be_i}y\right)d\bxi. \end{aligned}}

Recall that by Proposition \ref{betatildedecay} we have
\eq{\int_{-\frac{1}{2}}^{\frac{1}{2}}\widetilde{\beta}_{N,i,2\eta,l-1}(u_{i}(s)x; y)ds\leq \widetilde{\sh}_\lambda e^{-\lambda^3Nt}\alpha_i(x)}
for any $1\leq i\leq d-1$, $0<\eta\leq \frac{1}{2}$, $\sh>\widetilde{\sh}_\lambda$, $y\in X$ and $x\in X'$. It follows that
\eq{\begin{aligned}&\int_{[-\frac{1}{2},\frac{1}{2}]^{d-1}}\widetilde{\beta}_{N,i,2\eta,l-1}\left(a_{Nt\be_i}u(e^{-2Nt}\bxi)u(\bxi_k)x; a_{Nt\be_i}y\right)d\bxi\\&=\int_{[-\frac{1}{2},\frac{1}{2}]^{d-1}}\int_{-\frac{1}{2}}^{\frac{1}{2}}\widetilde{\beta}_{N,i,2\eta,l-1}\left(a_{Nt\be_i}u_i(e^{-2Nt}s)u(e^{-2Nt}\pi_i^{\perp}(\bxi))u(\bxi_k)x; a_{Nt\be_i}y\right)dsd\bxi \\&\leq \int_{[-\frac{1}{2},\frac{1}{2}]^{d-1}}\int_{-\frac{1}{2}}^{\frac{1}{2}}\widetilde{\beta}_{N,i,2\eta,l-1}\left(u_i(s)a_{Nt\be_i}u(e^{-2Nt}\pi_i^{\perp}(\bxi))u(\bxi_k)x; a_{Nt\be_i}y\right)dsd\bxi\\ &\leq \widetilde{\sh}_\lambda e^{-\lambda^3Nt}\int_{[-\frac{1}{2},\frac{1}{2}]^{d-1}}\widetilde{\alpha}_i(a_{Nt\be_i}u(e^{-2Nt}\pi_i^{\perp}(\bxi))u(\bxi_k)x; a_{Nt\be_i}y)d\bxi.\end{aligned}}
for any $1\leq i\leq d-1$. By \eqref{LipschitzOrthogonal} one has
\eq{\widetilde{\alpha}_i(a_{Nt\be_i}u(e^{-2Nt}\pi_i^{\perp}(\bxi))u(\bxi_k)x)\leq \eta_{\lambda,t}^{-(d-1)}\kappa_i(x)^{2}\widetilde{\alpha}_i(a_{Nt\be_i}u(\bxi_k)x),}
hence for any $1\leq i\leq d-1$
\eqlabel{ithicken:traj}{\begin{aligned}\int_{[-\frac{1}{2},\frac{1}{2}]^{d-1}}\widetilde{\beta}_{N,i,2\eta,l-1}&\left(a_{Nt\be_i}u(e^{-2Nt}\bxi)u(\bxi_k)x; a_{Nt\be_i}y\right)d\bxi\\ &\leq \eps_{\lambda,t}^{-(d-1)}\widetilde{\sh}_\lambda e^{-\lambda^3d Nt}\kappa_i(x)^{2}\widetilde{\alpha}_i(a_{Nt\be_i}u(\bxi_k)x).\end{aligned}}
Combining \eqref{betaprod1:traj}, \eqref{betaprod2:traj}, and \eqref{ithicken:traj} together, we get
\eqlabel{betaprod3:traj}{\begin{aligned}\int_{[-\frac{1}{2},\frac{1}{2}]^{d-1}}\prod_{i=1}^{d-1}\widetilde{\beta}_{N,i,\eta,l}&\left(a_{Nt\be_i}u(e^{-2Nt}\bxi)u(\bxi_k)x; a_{Nt\be_i}y\right)d\bxi\\&\leq \useconE{105} e^{-\lambda^3(d-1)Nt}\kappa(x)^6\prod_{i=1}^{d-1}\widetilde{\alpha}_i(a_{Nt\be_i}u(\bxi_k)x),\end{aligned}}
where \newconE{105}$\useconE{105}:=2^{d-1}\useconE{102}^{3(d-1)}\eps_{\lambda,t}^{-(d-1)}\widetilde{\sh}_\lambda$.

Since $\bxi_k\in \Supp F$ we have $\widetilde{\psi}_{N,\eta,l}(u(\bxi_k)x;y)>0$, hence
\eqlabel{tauiposi:traj}{\widetilde{\psi}_{N,\eta,l}(u(\bxi_k)x;y)=\prod_{i=1}^{d-1}\widetilde{\alpha}_i\left(a_{Nt\be_i}u(\bxi_k)x\right).}
Combining \eqref{psibetabdd1:traj}, \eqref{betaprod3:traj}, and \eqref{tauiposi:traj}, 
\eq{\int_{[-\frac{1}{2},\frac{1}{2}]^{d-1}} \widetilde{\psi}_{2N,\eta,l}(u(\bxi)x;y)d\bxi\leq\displaystyle\sum_{k=1}^{M}e^{-2(d-1)Nt} \useconE{105} e^{-\lambda^3(d-1)Nt}\kappa(x)^6\widetilde{\psi}_{N,\eta,l}(u(\bxi_k)x)} 
holds. It follows that
\eqlabel{psibdd:traj}{\int_{[-\frac{1}{2},\frac{1}{2}]^{d-1}} \widetilde{\psi}_{2N,\eta,l}(u(\bxi)x;y)d\bxi\leq \useconE{105}e^{-\lambda^3(d-1)Nt}e^{-2(d-1)Nt}\kappa(x)^6\displaystyle\sum_{k=1}^{M}F(\bxi_k).}
On the other hand, by (2) of Lemma \ref{psisupp:traj} we have
\eqlabel{Fbdd:traj}{\eta^{d-1}e^{-2(d-1)Nt}\displaystyle\sum_{k=1}^{M} F(\bxi_k)\leq 
    \useconE{102}^{d-1}\kappa(x)^2\int_{[-\frac{1}{2},\frac{1}{2}]^{d-1}}\widetilde{\psi}_{N,2\eta,l-1}(u(\bxi)x;y)d\bxi.}
Therefore, combining \eqref{psibdd:traj} and \eqref{Fbdd:traj}, we obtain the desired inequality for $\useconE{106}:=\useconE{102}^{d-1}\useconE{105}$.
\end{proof}

In addition to Proposition \ref{multicont:traj} we also need the following trivial estimate.
\begin{lem}[Trivial bound for $\widetilde{\psi}$]\label{trivlem:traj}
For any $N\in\bN$, $0<\eta\leq 1$, $0\leq l\leq l_\lambda$, $y\in X$, and $x\in X'$ we have
\eqlabel{trivbdd:traj}{\int_{[-\frac{1}{2},\frac{1}{2}]^{d-1}}\widetilde{\psi}_{N,\eta,l}(u(\bxi)x;y)d\bxi\leq \useconE{102}^{d-1}\kappa(x)^2 \prod_{i=1}^{d-1}\widetilde{\alpha}_i(x).}
\end{lem}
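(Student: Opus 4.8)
The plan is to discard every support condition built into $\widetilde\psi_{N,\eta,l}(\,\cdot\,;y)$ and retain only the crude inequality $\widetilde\beta_{N,i,\eta,l}(x;y)\le\widetilde\alpha_i(a_{Nt_{m(\lambda)}\be_i}x)$, which holds for all $x\in X'$ because $\widetilde\beta_{N,i,\eta,l}(x;y)$ equals either $\widetilde\alpha_i(a_{Nt_{m(\lambda)}\be_i}x)$ or $0$ and $\widetilde\alpha_i\ge1$. Writing $t=t_{m(\lambda)}$, this already gives, for every $\bxi\in[-\tfrac12,\tfrac12]^{d-1}$,
\[\widetilde\psi_{N,\eta,l}(u(\bxi)x;y)\le\prod_{i=1}^{d-1}\widetilde\alpha_i(a_{Nt\be_i}u(\bxi)x).\]
The next step is to decouple the $i$-th factor from every coordinate of $\bxi$ except $\xi_i$: split $\bxi=\xi_i\be_i+\pi_i^{\perp}(\bxi)$, and use that $U$ is abelian together with $a_{Nt\be_i}u(\pi_i^{\perp}(\bxi))a_{-Nt\be_i}=u(e^{Nt}\pi_i^{\perp}(\bxi))\in U_i^{\perp}$ to write $a_{Nt\be_i}u(\bxi)x=u(e^{Nt}\pi_i^{\perp}(\bxi))\,a_{Nt\be_i}u_i(\xi_i)x$; then \eqref{LipschitzOrthogonal} and the $A_i^{+}$- and $U$-invariance of $\kappa_i$ yield
\[\widetilde\alpha_i(a_{Nt\be_i}u(\bxi)x)\le\useconE{101}\,\kappa_i(x)^2\,\widetilde\alpha_i(a_{Nt\be_i}u_i(\xi_i)x).\]

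With this in hand the integral factorizes. The right-hand side above depends on $\bxi$ only through $\xi_i$, so by Tonelli's theorem
\[\int_{[-\frac12,\frac12]^{d-1}}\widetilde\psi_{N,\eta,l}(u(\bxi)x;y)\,d\bxi\le\useconE{101}^{d-1}\kappa(x)^2\prod_{i=1}^{d-1}\int_{-\frac12}^{\frac12}\widetilde\alpha_i(a_{Nt\be_i}u_i(\xi_i)x)\,d\xi_i.\]
Since $t_\lambda\mid t_{m(\lambda)}$ we have $Nt\in t_\lambda\bN$, so each one-dimensional integral is at most $\useconE{101}\widetilde\alpha_i(x)$ by \eqref{alphatildeupperbound} (Proposition~\ref{alphatilde}(1)). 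Multiplying over $i$ bounds the left-hand side by $\useconE{101}^{2(d-1)}\kappa(x)^2\prod_{i=1}^{d-1}\widetilde\alpha_i(x)$, which is of the asserted shape once $\useconE{102}$ is enlarged so that $\useconE{102}^{d-1}\ge\useconE{101}^{2(d-1)}$ — enlarging this $\lambda$-dependent constant only weakens the finitely many estimates in which it occurs. The argument uses nothing about $y$, $\eta\in(0,1]$, or $0\le l\le l_\lambda$, which matches the generality of the statement.

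Since this is a genuinely trivial bound there is no real obstacle; the only point requiring care is that $\widetilde\alpha_i(a_{Nt\be_i}u(\bxi)x)$ must \emph{not} be controlled by its supremum over $\bxi$ (which can be infinite, and would in any case introduce $N$-dependence), but rather reduced — via the quasi-invariance \eqref{LipschitzOrthogonal} of $\widetilde\alpha_i$ under $U_i^{\perp}$ — to a function of $\xi_i$ alone times the $N$-independent factor $\useconE{101}\kappa_i(x)^2$. That reduction is precisely what keeps the final estimate free of any exponential-in-$N$ loss and makes the $(d-1)$-dimensional integral split into a product of one-dimensional ones.
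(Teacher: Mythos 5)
Your proof is correct, and it takes a genuinely different route than the paper's — one that, as far as I can tell, is actually needed. The paper's own proof asserts the pointwise bound
\begin{equation*}
\widetilde{\psi}_{N,\eta,l}(u(\bxi)x;y)\leq \prod_{i=1}^{d-1}\widetilde{\alpha}_i(u(\bxi)x),
\end{equation*}
but the definition of $\widetilde{\beta}_{N,i,\eta,l}$ gives only
\begin{equation*}
\widetilde{\psi}_{N,\eta,l}(u(\bxi)x;y)\leq \prod_{i=1}^{d-1}\widetilde{\alpha}_i(a_{Nt_{m(\lambda)}\be_i}u(\bxi)x);
\end{equation*}
the paper silently drops the $a_{Nt_{m(\lambda)}\be_i}$-translation, which cannot be ignored since $\widetilde{\alpha}_i(a_{Nt\be_i}z)$ admits no $N$-independent pointwise bound. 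Your argument supplies what is missing: you conjugate $u(\pi_i^{\perp}(\bxi))$ past $a_{Nt\be_i}$ into $U_i^{\perp}$ so that \eqref{LipschitzOrthogonal} peels off the $N$-independent factor $\useconE{101}\kappa_i(x)^2$ (using the $A_i^+$- and $U$-invariance of $\kappa_i$), then split the remaining $(d-1)$-fold integral by Tonelli and close each one-dimensional factor with \eqref{alphatildeupperbound}, noting that $Nt_{m(\lambda)}\in t_\lambda\bN$ so that the latter applies. This is precisely the mechanism that makes the estimate uniform in $N$, which the paper's stated argument does not establish. The only residue is the constant: you obtain $\useconE{101}^{2(d-1)}\kappa(x)^2$ (or $(\useconE{101}\eps_{\lambda,t_\lambda}^{-(d-1)})^{d-1}\kappa(x)^2$ if one uses the sharp form from the proof of Proposition~\ref{alphatilde}(4)) rather than the lemma's $\useconE{102}^{d-1}\kappa(x)^2$, and there is no reason the former is $\leq$ the latter. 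You correctly flag this as immaterial: $\useconE{102}$ enters the downstream estimates only as an $N$-independent, $\lambda$-dependent upper-bound constant, and enlarging it is harmless in Proposition~\ref{nontrivbdd:traj} and its uses.
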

\begin{proof}
For any $\bxi\in[-\frac{1}{2},\frac{1}{2}]^{d-1}$ and $1\leq i\leq d-1$ we have 
\eq{\widetilde{\alpha}_i(u(\bxi)x)\leq \useconC{5}\eps_{\lambda,t_\lambda}^{-(d-1)}\kappa_i(x)^2\widetilde{\alpha}_i(x)=\useconE{102}\kappa_i(x)^2\widetilde{\alpha}_i(x)}
by \eqref{alphatildeLipschitz} and \eqref{LipschitzOrthogonal}. It follows that for any $\bxi\in[-\frac{1}{2},\frac{1}{2}]^{d-1}$
\eq{\begin{aligned}
    \widetilde{\psi}_{N,\eta,l}(u(\bxi)x;y)&\leq \prod_{i=1}^{d-1}\widetilde{\alpha}_i(u(\bxi)x)\leq \useconE{102}^{d-1}\kappa(x)^2\prod_{i=1}^{d-1}\widetilde{\alpha}_i(x),
\end{aligned}}
hence we obtain \eqref{trivbdd:traj}.
\end{proof}

\begin{proof}[Proof of Proposition \ref{nontrivbdd:traj}]
For the sake of simplicity let us write
\eq{\mathsf{Q}_j:=\int_{[-\frac{1}{2},\frac{1}{2}]^{d-1}}\widetilde{\psi}_{2^{l-j}N,2^{j}\eta,l-j}(u(\bxi)x;y)d\bxi,}
where $0\leq j\leq l$. We apply Proposition \ref{multicont:traj} with $2^{l-j-1}N\in\bN$ and $2^j\eta$, where $0\leq j\leq l-1$. Then we have
\eqlabel{jindiv:traj}{\mathsf{Q}_j\leq \useconE{106}(2^j\eta)^{-(d-1)}e^{-\lambda^3(d-1)2^{l-j-1}Nt}\kappa(x)^8\mathsf{Q}_{j+1}}
for any $0\leq j\leq l-1$. By Lemma \ref{trivlem:traj} we also have
\eqlabel{trivbdd':traj}{\mathsf{Q}_{l}\leq \useconE{102}^{d-1}\kappa(x)^2 \prod_{i=1}^{d-1}\widetilde{\alpha}_i(x).}
Multiplying all the inequalities \eqref{jindiv:traj} for $0\leq j\leq l-1$ and \eqref{trivbdd':traj}, we conclude
\eq{\mathsf{Q}_0\leq \useconE{102}^{d-1}\big(\useconE{106}\eta^{-d}\kappa(x)^{8}\big)^le^{-\lambda^3(d-1)(2^l-1)Nt}\prod_{i=1}^{d-1}\widetilde{\alpha}_i(x).}
\end{proof}
 
\section{The set of exceptions to the inhomogeneous Littlewood conjecture}

\subsection{The space of grids and Dani correspondence}
In this subsection, we introduce the space of grids and give a dynamical reformulation of the inhomogeneous Diophantine approximation problem.

Let us denote by $\widehat{G}=\operatorname{SL}_d(\bR)\ltimes\bR^d$ the group of volume-preserving affine transformations and denote by $\widehat{\Gamma}=\operatorname{SL}_d(\bZ)\ltimes\bZ^d=\operatorname{Stab}_{\widehat{G}}(\bZ^d)$ the stabilizer of the standard lattice $\bZ^d$. We view $\widehat{G}$ as a subgroup of $\operatorname{SL}_{d+1}(\bR)$ by $\widehat{G}=\set{\left(\begin{matrix}
    g & v \\ 0 & 1
\end{matrix}\right): g\in G, v\in\bR^d}$. We also view $G$ as a subgroup of $\widehat{G}$ and take a lift of the element $g\in G$ to $\widehat{G}\subset \operatorname{SL}_{d+1}(\bR)$ by $g\mapsto \left(\begin{matrix}
    g & 0 \\ 0 & 1
\end{matrix}\right)$, denoted again by $g$. 

Let $\widehat{X}=\widehat{G}/\widehat{\Gamma}$ and denote by $\pi:\widehat{X}\to X$ the canonical projection from $\widehat{X}$ to $X$. One can view the homogeneous space $\widehat{X}$ as the space of unimodular \textit{grids} in $\bR^d$, i.e. unimodular lattices translated by a vector in $\bR^d$. More explicitly, for $\widehat{x}=\left(\begin{matrix}
    g & v \\ 0 & 1
\end{matrix}\right)\widehat{\Gamma}\in \widehat{X}$ with $g\in G$ and $v\in\bR^d$, we identify $\widehat{x}$ to the corresponding unimodular grid $\Lambda_{\widehat{x}}:=g\bZ^d+v$ in $\bR^d$.

For $\bxi,\btheta\in\bR^{d-1}$ we denote $$x_{\bxi}:= \left(\begin{matrix}
    \operatorname{Id}_{d-1} & \bxi \\ & 1\end{matrix}\right)\Gamma\in X,\qquad\widehat{x}_{\bxi,\btheta}:=\left(\begin{matrix}
    \operatorname{Id}_{d-1} & \bxi & -\btheta \\ & 1 & 0 \\ & & 1
\end{matrix}\right)\widehat{\Gamma}\in \widehat{X}.$$ Note that $\Lambda_{\widehat{x}_{\bxi,\btheta}}=u(\bxi)\bZ^d-(\btheta,0)$. As in the classical Dani correspondence for homogeneous Diophantine approximation, inhomogeneous Diophantine properties of $(\bxi,\btheta)$ are characterized by dynamical properties of the orbits of $\widehat{x}_{\bxi,\btheta}$ in the space $\widehat{X}$. 

We say that a pair $(\bxi,\btheta)\in \bR^{d-1}\times\bR^{d-1}$ is \textit{rational} if there exist some $\bp\in\bZ^{d-1}$ and $q\in\bZ$ such that $q\bxi-\btheta+\bp=0$, and \textit{irrational} otherwise. Let us define
$A^+_{\ge T}:=\set{a_{(\tau_1,\ldots,\tau_{d-1})}\in A: \tau_1,\ldots,\tau_{d-1}\ge T}$ for $T>0$, and
$\cL_\eps:=\set{\widehat{x}\in\widehat{X}: \Lambda_{\widehat{x}}\cap B^{\bR^d}(0,\eps)= \emptyset}$ for $\eps>0$.

\begin{lem}\label{InhomDani}
Suppose that $\bxi\in\bR^{d-1}$ satisfies Littlewood's conjecture, i.e. \eqref{eq:Littlewood} holds. Let $\btheta\in\bR^{d-1}$. If $\displaystyle\liminf_{q\to\infty}q\prod_{i=1}^{d-1}\|q\xi_i-\theta_i\|_\bZ>0$, then there exists $T>0$ such that $A^+_{\ge T}\widehat{x}_{\bxi,\btheta}\subseteq \cL_\eps$.
\end{lem}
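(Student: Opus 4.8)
The plan is to make the Dani-type correspondence explicit and then argue by contradiction, in the ``easy'' direction. Writing $v_{\bm,q}:=u(\bxi)(\bm,q)-(\btheta,0)$ for $(\bm,q)\in\bZ^{d-1}\times\bZ$, the grid $\Lambda_{\widehat{x}_{\bxi,\btheta}}$ is exactly $\{v_{\bm,q}\}$, and for $\btau=(\tau_1,\dots,\tau_{d-1})$ the $i$-th coordinate of $a_{\btau}v_{\bm,q}$ is $e^{\tau_i}(m_i+q\xi_i-\theta_i)$ for $1\le i\le d-1$, while the last coordinate is $e^{-(\tau_1+\dots+\tau_{d-1})}q$. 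Thus $A^+_{\ge T}\widehat{x}_{\bxi,\btheta}\subseteq\cL_\eps$ is equivalent to the assertion $\|a_{\btau}v_{\bm,q}\|\ge\eps$ for all $(\bm,q)\in\bZ^d$ whenever $\min_i\tau_i\ge T$, and I must produce such $T$ and $\eps>0$. Suppose no such pair exists; applying this failure with $T=\eps=1/k$ produces $\btau^{(k)}$ with $\min_i\tau_i^{(k)}\ge k$ and $(\bm^{(k)},q^{(k)})\in\bZ^d$ with $\|a_{\btau^{(k)}}v_{\bm^{(k)},q^{(k)}}\|<1/k$, so in particular $\min_i\tau_i^{(k)}\to\infty$ and every coordinate of $a_{\btau^{(k)}}v_{\bm^{(k)},q^{(k)}}$ tends to $0$.

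A preliminary observation does most of the work: $(\bxi,\btheta)$ is irrational, i.e.\ $q_0\bxi-\btheta+\bp\neq0$ for all $q_0\in\bZ,\bp\in\bZ^{d-1}$. Indeed, if $\theta_i=q_0\xi_i+p_i$ with $p_i\in\bZ$ for all $i$, then $\|q\xi_i-\theta_i\|_\bZ=\|(q-q_0)\xi_i\|_\bZ$, and since $\bxi$ satisfies Littlewood's conjecture the $\liminf$ of $q\prod_i\|q\xi_i-\theta_i\|_\bZ$ would be $0$, contradicting the hypothesis. Taking $q_0=0$ we get $\btheta\notin\bZ^{d-1}$. This kills the degenerate contributions to the sequence above: if $q^{(k)}=0$ along a subsequence, then for an index $i_0$ with $\theta_{i_0}\notin\bZ$ the $i_0$-th coordinate of $a_{\btau^{(k)}}v_{\bm^{(k)},0}$ has modulus $\ge\|\theta_{i_0}\|_\bZ>0$, a contradiction; and if $q^{(k)}\equiv q_0\neq0$ along a subsequence, then $e^{\tau_i^{(k)}}|m_i^{(k)}+q_0\xi_i-\theta_i|\to0$ while $e^{\tau_i^{(k)}}\to\infty$ forces $q_0\xi_i-\theta_i\in\bZ$ for every $i$, again contradicting irrationality. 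So after passing to a subsequence we may assume $q^{(k)}\neq0$ and $|q^{(k)}|\to\infty$.

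Now combine the coordinates. For $1\le i\le d-1$ one has $|m_i^{(k)}+q^{(k)}\xi_i-\theta_i|\ge\|q^{(k)}\xi_i-\theta_i\|_\bZ$, so convergence of the $i$-th coordinate gives $e^{\tau_i^{(k)}}\|q^{(k)}\xi_i-\theta_i\|_\bZ\to0$, and convergence of the last coordinate gives $e^{-(\tau_1^{(k)}+\dots+\tau_{d-1}^{(k)})}|q^{(k)}|\to0$. Multiplying these $d$ nonnegative quantities the exponentials cancel and
\begin{equation*}
|q^{(k)}|\,\prod_{i=1}^{d-1}\|q^{(k)}\xi_i-\theta_i\|_\bZ\;\longrightarrow\;0 ,
\end{equation*}
which, since $|q^{(k)}|\to\infty$, contradicts the hypothesis $\liminf_{q\to\infty}q\prod_i\|q\xi_i-\theta_i\|_\bZ>0$ (the $\liminf$ read over $q\in\bZ\setminus\{0\}$). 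This contradiction establishes the lemma.

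There is no serious obstacle here---this is a routine half of a Dani correspondence---but the one non-formal ingredient is the irrationality of $(\bxi,\btheta)$, which is precisely where the hypothesis that $\bxi$ satisfies Littlewood's conjecture enters: it is what pins the grid $\Lambda_{\widehat{x}_{\bxi,\btheta}}$ a fixed distance away from the origin and so eliminates the contributions with $q=0$ or with $q$ bounded, which have no analogue in the classical (homogeneous) picture. Note also that the non-compactness of $\cL_\eps$ is irrelevant for this lemma: we only need the orbit to \emph{enter} $\cL_\eps$ eventually, not to remain in a fixed compact set, so no quantitative recurrence input is required here (that subtlety is confined to the later sections, where one must rule out escape of mass).
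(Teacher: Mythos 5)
Your proof is correct and is essentially the same argument as the paper's: the paper first splits into $(\bxi,\btheta)$ rational or irrational (the rational case is dispatched via the Littlewood hypothesis exactly as in your irrationality sub-claim), and in the irrational case shows directly that a small grid vector forces $q\prod_i\|q\xi_i-\theta_i\|_\bZ\le\eps^d$ with $|q|$ unbounded by irrationality, so the $\liminf$ is $0$ — which you recast as a single contradiction argument along a sequence. One typo to fix: you want $T=k$ and $\eps=1/k$ (not $T=\eps=1/k$) so that $\min_i\tau_i^{(k)}\ge k$.
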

\begin{proof}
We first consider the case of $(\bxi,\btheta)$ is rational. Then there exist $\bp_0\in\bZ^{d-1}$ and $q_0\in\bZ$ such that $q_0\bxi-\btheta+\bp=0$. It follows that
$$\liminf_{q\to\infty}q\prod_{i=1}^{d-1}\|q\xi_i-\theta_i\|_\bZ=\liminf_{q\to\infty}q\prod_{i=1}^{d-1}\|(q-q_0)\xi_i\|_\bZ=\liminf_{q\to\infty}q\prod_{i=1}^{d-1}\|q\xi_i\|_\bZ=0,$$
as we are assuming that $\bxi\in\bR^{d-1}$ satisfies Littlewood's conjecture.

We now suppose that $(\bxi,\btheta)$ is irrational, and for any $\eps>0$ and $T>0$ there exists $\btau\in A_{\ge T}^{+}$ such that $a_{\btau}\widehat{x}_{\bxi,\btheta}\notin \cL_\eps$. By a matrix calculation, it implies that there exists a vector $v\in\bR^{d}$ with $\|v\|_\infty\leq\eps$ of the form 
$$v=\big(e^{\tau_1}(q\xi_1-\theta_1-p_1),\ldots,e^{\tau_{d-1}}(q\xi_{d-1}-\theta_{d-1}-p_{d-1}), e^{-(\tau_{1}+\cdots+\tau_{d-1})}q)\big),$$
where $(p_1,\ldots,p_{d-1})\in\bZ^{d-1}$ and $q\in\bZ$. It follows that
$\|q\bxi_i-\theta_i\|\leq \eps e^{-\tau_i}$ for $1\leq i\leq d-1$ and $|q|\leq \eps e^{\tau_1+\cdots+\tau_{d-1}}$, hence $q\prod_{i=1}^{d-1}\|q\xi_i-\theta_i\|_\bZ\leq \eps^d$. Since $(\bxi,\btheta)$ is irrational, the $q$ satisfying $\|q\bxi-\btheta\|_\bZ\leq \eps e^{-T}$ is unbounded as $T\to\infty$. Thus we get $\liminf_{q\to\infty}q\prod_{i=1}^{d-1}\|q\xi_i-\theta_i\|_\bZ\leq\eps^d$, and conclude that $\liminf_{q\to\infty}q\prod_{i=1}^{d-1}\|q\xi_i-\theta_i\|_\bZ=0$ as $\eps$ is arbitrary.

\end{proof}

\subsection{Measure classification for $A$-invariant measures}
The following measure classification theorem is a special case of \cite[Theorem 1.3]{EL18}.
\begin{thm}\cite[Theorem 1.3]{EL18}\label{measureclassification}
Let $\mu$ be an $A$-invariant and ergodic measure on $\widehat{X}$ and $\overline{\mu}=\pi_*\mu$. If $h_{\overline{\mu}}(a)>0$ for some $a\in A$, then $\mu$ is homogeneous.\end{thm}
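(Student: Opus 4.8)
The plan is to obtain Theorem \ref{measureclassification} as a direct specialization of the measure classification theorem of Einsiedler--Lindenstrauss \cite[Theorem 1.3]{EL18}, so that the only thing requiring genuine verification is that our hypotheses fit their framework. First I would recall the setup of \cite{EL18}: they classify probability measures on an arithmetic quotient $\Gamma\backslash \mathbf{G}(\bR)$ --- allowing $\mathbf{G}$ to have a nontrivial unipotent radical --- that are invariant and ergodic under a multiparameter $\bR$-diagonalizable subgroup, under a positive-entropy assumption. In our situation $\widehat{X}=\ASL_d(\bR)/\ASL_d(\bZ)$, viewed inside $\operatorname{SL}_{d+1}(\bR)/\operatorname{SL}_{d+1}(\bZ)$ as in \S6.1, and $A\cong\bR^{d-1}$ is the image of the full diagonal torus of $\SL_d(\bR)$; since $d\ge 3$ this torus has rank $\ge 2$, which is exactly the higher-rank regime the theorem addresses. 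The one point that is not purely formal is that the entropy hypothesis in our statement is imposed on the projection $\widebar{\mu}=\pi_*\mu$ to $X=\SL_d(\bR)/\SL_d(\bZ)$ rather than on $\mu$ itself; but $\pi$ is $A$-equivariant, so $(X,\widebar{\mu},A)$ is a measure-theoretic factor of $(\widehat{X},\mu,A)$ and hence $h_{\widebar{\mu}}(a)\le h_\mu(a)$ for every $a\in A$. Therefore $h_{\widebar{\mu}}(a)>0$ forces $h_\mu(a)>0$, which is the hypothesis of \cite[Theorem 1.3]{EL18}, and its conclusion --- that $\mu$ is the homogeneous probability measure on a closed orbit $Lx_0$ of some closed subgroup $L\supseteq A$ --- is precisely the assertion we want.

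For completeness I would also sketch why a positive-entropy hypothesis suffices in \cite{EL18}, since this explains the role of the rank-$\ge 2$ torus and of the affine (non-reductive) structure. The argument combines three ingredients: (i) the high-entropy method of Einsiedler--Katok, which converts positivity of $h_\mu(a)$ into invariance of $\mu$ under a nontrivial unipotent subgroup inside a coarse Lyapunov subgroup, using commutation relations among root subgroups that become available only because $A$ has rank $\ge 2$; (ii) the low-entropy method, controlling the coarse Lyapunov directions along which entropy could a priori vanish --- in particular the ``translation'' directions coming from the $\bR^d$-factor, which are central and carry no root structure --- via the study of leafwise (conditional) measures and an $H$-property argument in the style of Ratner; and (iii) once enough unipotent invariance has been accumulated, Ratner's measure classification theorem to upgrade $\mu$ to a homogeneous measure.

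I expect the genuine difficulty to lie entirely inside \cite{EL18}, specifically in step (ii): in the space of grids the unipotent directions invisible to the high-entropy method are exactly the central translation directions, and pinning down the leafwise measures there --- ruling out the pathologies that would otherwise block homogeneity --- is the technical heart of that paper. From the present paper's viewpoint, however, there is no real obstacle: checking that $\widehat{X}$, $A$, and the entropy condition satisfy the hypotheses of \cite[Theorem 1.3]{EL18} is routine (the only substantive remark being the entropy inequality $h_{\widebar{\mu}}(a)\le h_\mu(a)$ under the factor map $\pi$), and Theorem \ref{measureclassification} then follows immediately.
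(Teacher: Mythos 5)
The paper states this result with no proof of its own, simply as a citation of \cite[Theorem 1.3]{EL18}; your proposal is essentially a verification that the cited theorem's hypotheses are met together with a gloss on its internal argument, and at that level it is fine. The one substantive point worth flagging is your reading of the entropy hypothesis in \cite{EL18}: you treat it as $h_\mu(a)>0$ on the total space $\widehat{X}$, and you then use the factor entropy inequality $h_{\overline{\mu}}(a)\leq h_\mu(a)$ to pass from the paper's assumption $h_{\overline{\mu}}(a)>0$ to it. The inequality is correct and the implication runs in the right direction, so nothing false is derived; but the reduction is superfluous, because the hypothesis in \cite[Theorem 1.3]{EL18} is already phrased in terms of the projection of $\mu$ to the semisimple quotient. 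This is exactly what the paper's wording of Theorem \ref{measureclassification} reproduces, and the author's later reference to the ``positive base entropy case'' of \cite[\S 5]{EL18} in the proof of Proposition \ref{Lesupp} confirms this reading. So the paper's hypothesis is the EL18 hypothesis verbatim, and no conversion is needed; presenting one suggests a misidentification of the form of the cited statement. Apart from that, your observation that the $A$-equivariance of $\pi$ makes $(X,\overline{\mu},A)$ a factor is correct, your identification of the higher-rank setting ($\dim A = d-1\geq 2$) is the relevant one, and your sketch of the high-entropy/low-entropy/Ratner structure of the EL18 argument is a reasonable account of what lies behind the citation.
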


For any $d\ge 2$ and $k|d$, we denote
$$L_{d,k}:=\left(\prod_{i=1}^{k}\operatorname{GL}_{d/k}(\bR)\right)\cap \operatorname{SL}_d(\bR).$$

\begin{lem}\cite[\S 6]{LW01}\label{SLdescription}
Let $\overline{\mu}$ be an $A$-invariant homogeneous measure on $X$ with $h_{\overline{\mu}}(a)>0$ for some $a\in A$. Then $\overline{\mu}$ is supported on a single orbit $L_{d,k}x$ for some $k|d$ and $k<d$. In particular, $Lx$ is non-compact.\end{lem}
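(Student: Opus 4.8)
The plan is to deduce the lemma from a structural classification of the closed connected subgroups $L\le G=\operatorname{SL}_d(\bR)$ that can carry such a measure. Since $\widebar\mu$ is homogeneous and $A$-invariant, the first step is the standard reduction: after possibly enlarging the stabilizer, $\widebar\mu$ is the $L$-invariant (Haar) measure on a closed orbit $Lx$ for a closed connected subgroup $L$ with $A\subseteq L$. Because $A\subseteq L$, the Lie algebra $\mathfrak l:=\operatorname{Lie}(L)$ is $\operatorname{Ad}(A)$-stable, so $\mathfrak l=\mathfrak a\oplus\bigoplus_{\alpha\in\Phi_L}\mathfrak g_\alpha$ for a subset $\Phi_L$ of the root system $\Phi=\{e_i-e_j:i\ne j\}$ of $(\mathfrak g,\mathfrak a)$, and $\Phi_L$ is closed under root addition within $\Phi$ since $\mathfrak l$ is a subalgebra. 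Such closed subsets are purely combinatorial objects: they are in bijection with transitive relations on $\{1,\dots,d\}$, equivalently with a partition of $\{1,\dots,d\}$ into blocks together with a partial order on the blocks.

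The heart of the argument — and the step I expect to be the main obstacle — is to show that this partial order is trivial, i.e.\ that $\Phi_L=-\Phi_L$ and $L$ is reductive. Suppose not, and put $\Psi:=\{\alpha\in\Phi_L:-\alpha\notin\Phi_L\}$, so that $V:=\exp\big(\bigoplus_{\alpha\in\Psi}\mathfrak g_\alpha\big)$ is a non-trivial unipotent normal subgroup of $L$, namely its unipotent radical. Since $\Psi$ is a closed subset of a type-$A$ root system with $\Psi\cap(-\Psi)=\emptyset$, it is contained in an open half-space, so there is $a_0\in A\subseteq L$ with $\alpha(a_0)>1$ for all $\alpha\in\Psi$; thus $a_0$ conjugation-expands the normal unipotent subgroup $V\triangleleft L$. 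The plan is then to argue that a group carrying such a configuration cannot have a closed homogeneous orbit supporting the Haar measure: the presence of such an expanded normal unipotent forces a non-trivial character of $L$ that is bounded on $L\cap\Gamma$ but unbounded on $L$ (equivalently, $L\cap\Gamma$ fails to project to a lattice in $L/V$ compatibly), obstructing closedness of $Lx$. This is exactly the structural input of \cite[\S6]{LW01}, in the spirit of the Borel--Harish-Chandra criterion for arithmetic quotients. Granting reductivity, $\Phi_L$ is a closed symmetric subset of $\Phi$, hence an equivalence relation, so $\Phi_L=\{e_i-e_j:i\sim j\}$ for a partition into blocks; consequently $L$ is, up to conjugation by a permutation matrix (which is absorbed into $x$), a standard block-diagonal subgroup $S(\operatorname{GL}_{d_1}(\bR)\times\cdots\times\operatorname{GL}_{d_k}(\bR))$. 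The hypothesis $h_{\widebar\mu}(a)>0$ excludes $L=A$ and forces at least one block to have size $\ge2$, and matching the orbit with the arithmetic structure identifies $L$ with $L_{d,k}$ for the corresponding $k\mid d$, $k<d$.

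Finally, the non-compactness of $Lx$ follows by producing an unbounded function on the orbit. For $L=L_{d,k}$ with $k<d$ the center of $L$ contains the positive-dimensional $\bQ$-split torus $\{\operatorname{diag}(\lambda_1 I_{d/k},\dots,\lambda_k I_{d/k}):\prod_i\lambda_i^{d/k}=1\}$, and the character $g\mapsto|\det g_1|$ of $L$ is identically $1$ on $L\cap\Gamma$ while unbounded on $L$, so $L\cap\Gamma$ is not cocompact in $L$ and $Lx$ is non-compact; alternatively, Mahler's criterion shows the orbit leaves every compact subset of $X$. Everything outside the reductivity step is either a formal combinatorial identification or a standard fact about homogeneous measures and arithmetic quotients, so that is where essentially all the work lies.
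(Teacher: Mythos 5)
The paper does not prove this lemma; it cites \cite[\S 6]{LW01}, so I am evaluating your argument on its own. Your outline through ``reductivity gives $L=S(\operatorname{GL}_{d_1}(\bR)\times\cdots\times\operatorname{GL}_{d_k}(\bR))$'' is sound in spirit, with one slip worth fixing: a nontrivial unipotent radical does not obstruct closedness of $Lx$, it obstructs the existence of a \emph{finite} $L$-invariant measure. Indeed $\Delta_L|_A=\prod_{\alpha\in\Psi}\alpha$, which is nontrivial because $\Psi$ lies in an open half-space, so $L$ is non-unimodular and hence admits no lattice; that, not ``obstructing closedness,'' is the contradiction.

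The genuine gap is the step you dismiss as a ``formal combinatorial identification'': proving that all $d_i$ are equal to $d/k$. This is exactly where the arithmetic content of \cite[\S 6]{LW01} lives and cannot be waved away --- a priori $L=S(\operatorname{GL}_1\times\operatorname{GL}_2)\subset\operatorname{SL}_3(\bR)$ is reductive, contains $A$, and is block-diagonal, but it is \emph{not} one of the $L_{d,k}$. Ruling it out needs a number-theoretic argument, roughly: since $\Gamma':=L\cap g\Gamma g^{-1}$ is a lattice, $g^{-1}Lg$ is (up to components) the real points of a reductive $\bQ$-group, which by Borel--Harish-Chandra has no nontrivial $\bQ$-rational character; $\operatorname{Gal}(\bar{\bQ}/\bQ)$ permutes the block determinants $\det_i$ preserving $d_i$, and the product of $\det_i$ over each Galois orbit is a $\bQ$-character; so Galois must be transitive on the $k$ blocks, which forces all $d_i=d/k$. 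Your non-compactness argument fails for the same reason: in the relevant (twisted) $\bQ$-structure, $|\det g_1|$ is \emph{not} identically $1$ on $L\cap g\Gamma g^{-1}$ --- if it were, it would be a nontrivial $\bQ$-character killed by a lattice, contradicting Borel--Harish-Chandra. The correct argument is that the faithful $d$-dimensional $\bQ$-rational module forces the $\bQ$-form to be $\operatorname{Res}_{F/\bQ}\operatorname{GL}_{d/k}\cap\operatorname{SL}_d$ for a degree-$k$ field $F$ (no division-algebra twist), which has $\bQ$-unipotents when $d/k\geq 2$, i.e.\ $k<d$; Godement's criterion then gives non-cocompactness.
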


\begin{prop}\label{Lesupp}
Let $\mu$ be an $A$-invariant homogeneous measure on $\widehat{X}$ with $h_{\overline{\mu}}(a)>0$ for some $a\in A$. Then $\Supp\mu$ is not contained in $\cL_\eps$ for any $\eps>0$.\end{prop}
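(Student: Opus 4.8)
The plan is to reduce Proposition \ref{Lesupp} to an explicit analysis of the homogeneous orbits allowed by Lemma \ref{SLdescription}. By that lemma, if $\mu$ is $A$-invariant homogeneous with $h_{\widebar\mu}(a)>0$, then the pushforward $\widebar\mu=\pi_*\mu$ is supported on a single orbit $L_{d,k}x$ with $k\mid d$, $k<d$. First I would lift this structure to $\widehat{X}$: an $A$-invariant homogeneous measure $\mu$ projecting onto $L_{d,k}x$ must be supported on an orbit $\widehat{L}\widehat{x}$ where $\widehat{L}\supseteq L_{d,k}$ (up to conjugation we may arrange $L_{d,k}$ to be the standard block-diagonal subgroup) and $\widehat{L}\subseteq\widehat{G}$ is a closed subgroup normalised by $A$; writing $\widehat{L}=L_{d,k}\ltimes W$ for some $A$-invariant subgroup $W\subseteq\bR^d$, the possibilities for $W$ are controlled by the weight decomposition of $\bR^d$ under $A$ and under $L_{d,k}$. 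The point is that, because $k<d$, one of the diagonal blocks of $L_{d,k}$ is a copy of $\operatorname{SL}_{d/k}(\bR)$ acting on a coordinate subspace $V\subseteq\bR^d$ of dimension $d/k\ge 1$; actually the decisive case is when this block acts irreducibly and the translation part either misses $V$ or is forced by $\widehat\Gamma$-rationality to lie in a lattice.

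The core of the argument is then the following: I claim that for \emph{any} grid $\Lambda=g\bZ^d+v$ lying in such an orbit $\widehat{L}\widehat x$, and any $\eps>0$, we can find $a_{\btau}\in A$ with all $\tau_i$ large so that $a_{\btau}\Lambda$ contains a nonzero vector of norm $<\eps$ — equivalently, $\widehat{L}\widehat{x}\not\subseteq\cL_\eps$. Here I would exploit that $L_{d,k}x$ is non-compact (the last assertion of Lemma \ref{SLdescription}), so within the $\operatorname{SL}_{d/k}(\bR)$ block there is a direction along which the corresponding sublattice of $g\bZ^d$ develops arbitrarily short vectors; combined with the freedom to move by $A$ (which rescales coordinates while preserving volume), one drives a nonzero lattice vector $w\in g\bZ^d$ to be arbitrarily short. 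The subtlety is the inhomogeneous shift $v$: a short lattice vector $w$ does not immediately give a short \emph{grid} vector, since the grid is $g\bZ^d+v$, not $g\bZ^d$. To handle this I would use that, inside the non-compact $\operatorname{SL}_{d/k}$-block, one can simultaneously contract a whole rank-$(d/k)$ coordinate sublattice so that the projection of $v$ onto that coordinate subspace can be matched, modulo the (now very fine) lattice $g\bZ^d$ restricted there, to within $\eps$; in the complementary coordinates one instead contracts using the $A$-action so that whatever fixed offset $v$ has becomes irrelevant. This is where the geometry of numbers / Minkowski-type argument enters: a grid in $\bR^m$ whose underlying lattice has covolume $1$ but is very "flat" (short in some directions, long in others) always comes within $\eps$ of the origin once it is flat enough, regardless of the translate.

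Concretely, the steps in order would be: (1) Using Lemma \ref{SLdescription}, reduce to $\widebar\mu$ supported on $L_{d,k}x$, $k<d$, non-compact; (2) classify the $A$-normalised closed subgroups $\widehat L\subseteq\widehat G$ with $\pi(\widehat L)=L_{d,k}$, showing $\widehat L=L_{d,k}\ltimes W$ with $W$ an $A$- and $L_{d,k}$-invariant subspace of $\bR^d$, and in each case identifying the grid $\Lambda_{\widehat x}$; (3) in the block where $L_{d,k}$ acts as $\operatorname{SL}_{d/k}(\bR)$, use non-compactness of $L_{d,k}x$ to produce, via the $A$-action (which within this block contains a full Cartan of the ambient $\operatorname{SL}$, suitably scaled), lattice vectors of norm $\to 0$; (4) upgrade "short lattice vector" to "short grid vector" by a pigeonhole/Minkowski argument: once the underlying lattice restricted to a coordinate subspace is contracted so that its successive minima there are all $<\eps$, every coset of it (in particular the one containing the relevant component of $v$) meets $B(0,\eps)$; (5) conclude $a_{\btau}\Lambda_{\widehat x}\cap B^{\bR^d}(0,\eps)\neq\emptyset$ for suitable $\btau$, so $\widehat x\notin\cL_\eps$, and since $\Supp\mu$ is the closure of $\widehat L\widehat x$ and $\cL_\eps$ is closed, $\Supp\mu\not\subseteq\cL_\eps$.

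The main obstacle I anticipate is step (4) together with the bookkeeping in step (2): making sure that the translate $v$ — which is genuinely present because $\widehat X$ is the space of \emph{grids}, and whose presence is flagged in the paper's discussion as "the principal reason why we need more careful treatment on the escape of mass" — cannot conspire to keep the grid uniformly away from $0$. The resolution should be that the $A$-action gives enough directions to contract in that, whatever the fixed shift $v$ is, we can always find a nonzero integer combination of contracted lattice generators lying within $\eps$ of $-v$ in the relevant coordinates; the non-compactness of $L_{d,k}x$ is exactly what guarantees we have a genuinely unbounded contraction to work with rather than a bounded orbit where $v$ could sit at a fixed positive distance from the lattice. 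Once this geometric lemma is in place, the rest is routine.
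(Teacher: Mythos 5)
Your outline starts in the right place — using Lemma \ref{SLdescription} to put $\pi_*\mu$ on a non-compact orbit $L_{d,k}x$ and trying to turn non-compactness into short grid vectors — but step (4) as you describe it is not correct, and the hole it papers over is exactly where the real argument lives.

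The Minkowski-type principle you invoke, that ``a grid in $\bR^m$ whose underlying lattice has covolume $1$ but is very flat always comes within $\eps$ of the origin once it is flat enough, regardless of the translate,'' is false. Take the lattice $T^{-1}\bZ\oplus T\bZ\subset\bR^2$ and translate by $(0,T/2)$: the resulting grid stays at distance $T/2$ from the origin, which is as large as you like. In general, any coordinate you expand (and the $A$-action forces you to expand some coordinates the moment you contract others, since $\det=1$) can carry a translate component that pushes the entire grid away from $0$. Your proposed fix, ``in the complementary coordinates one instead contracts,'' is impossible for the same reason — you cannot contract a coordinate subspace and its complement simultaneously with a determinant-one diagonal action. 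So for a genuinely arbitrary $v$ your contraction scheme cannot close. This is precisely why the paper's introduction flags that $\cL_\eps$ is not compact and that the escape of mass in the inhomogeneous setting ``needs more careful treatment.''

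What the paper's proof uses, and what your plan is missing, is that $v$ cannot be arbitrary: it is \emph{rational} of a controlled form, and — crucially — you get to \emph{choose} it. The proof cites \cite[\S 5]{EL18} for a sharp dichotomy on the positive-entropy homogeneous measures over $L_{d,k}x$: either $L=L_{d,k}\ltimes\bR^d$ (so $\Supp\mu$ contains the full fiber, in particular the zero section, and the conclusion is trivial because every lattice contains $0$), or $\pi$ is injective on $L$. In the injective case the fiber $\Supp\mu\cap\pi^{-1}(g\Gamma)\subset\bT^d$ is a finite $gL_{d,k}g^{-1}\cap\Gamma$-invariant set, hence consists of rational points, and the paper identifies it as $\tfrac{1}{q}\bZ_{\operatorname{prim}}^d$ for a fixed $q$. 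So $v$ is not a fixed adversarial offset to be approximated; it is a rational point that may be chosen \emph{aligned} with the short lattice vector. Concretely: non-compactness of $Lx$ gives $g_0\Gamma\in Lx$ and a primitive $\bm\in\bZ^d$ with $\|g_0\bm\|<\eps/2$; taking $v=\bm/q$ places the grid $g_0\bZ^d+g_0\bm/q$ in $\Supp\mu$, and this grid contains $(1+\tfrac{1}{q})g_0\bm$, of norm $<\eps$. No pigeonhole or Minkowski step is needed, and no attempt is made to approximate an adversarial $v$. Without establishing this rationality (and without the cited dichotomy, which also rules out the intermediate invariant subspaces $W$ you allow in your step (2)), step (4) cannot be completed, and the counterexample above shows the obstruction is real.
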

\begin{proof}By Theorem \ref{measureclassification} and Lemma \ref{SLdescription}, $\mu$ is supported on a single orbit $Lx$ such that $\pi(L)=L_{d,k}$ for some $k|d$ and $k<d$. Furthermore, \cite[\S 5]{EL18} classified which homogeneous measures may occur in the positive base entropy case. Either $L=L_{d,k}\ltimes \bR^d$ or $\pi$ is injective on $L$.

If $L=L_{d,k}\ltimes \bR^d$, then $\mu$ is supported on $\pi^{-1}(L_{d,k}x)$. In this case, $\Supp\mu$ is not contained in $\cL_\eps$ for any $\eps>0$, since any points in $L_{d,k}x$ are contained in $\pi^{-1}(L_{d,k}x)$ but not in $\cL_\eps$. 

If $\pi$ is injective on $L$ then we claim that
$$\Supp\mu=\set{\left(\begin{matrix}
    g & gv \\ 0 & 1
\end{matrix}\right)\widehat{\Gamma}\in \widehat{X}: g\Gamma\in Lx, v\in \frac{1}{q}\bZ_{\operatorname{prim}}^d}$$
for some $q\in\bN$. Indeed, for any $g\Gamma\in \Supp \overline{\mu}$ the fiber $\pi^{-1}(g\Gamma)$ is identified with the torus $\bT^d$, so we may consider $\Supp \mu \cap \pi^{-1}(g\Gamma)$ as a subset of the torus. Then $\Supp \mu \cap \pi^{-1}(g\Gamma)$ is a $gL_{d,k}g^{-1}\cap \Gamma$-invariant subset of the torus, hence it is the whole torus or a finite set of rational points. Since $\pi$ is injective, the set $\Supp \mu \cap \pi^{-1}(g\Gamma)$ is a finite set of rational points. Thus the claim is proved. 

Since $Lx$ is non-compact in $X$, for any $\eps>0$ there exists $g_0\Gamma \in Lx$ such that $g_0\bZ^d$ contains non-zero vector of length less than $\frac{\eps}{2}$. It follows that there exists $\bm \in\bZ^d_{\operatorname{prim}}$ with $\|g_0\bm\|<\frac{\eps}{2}$. Observe that we have
$$ \left(\begin{matrix}
    g_0 & \frac{1}{q}g_0\bm \\ 0 & 1
\end{matrix}\right)\widehat{\Gamma}\in \Supp \mu$$
but the corresponding unimodular grid $g_0(\bZ^d+\frac{\bm}{q})$ contains a vector $g_0(\bm+\frac{\bm}{q})$ of length less than $(1+\frac{1}{q})\|g_0\bm\|<\eps$. We thus conclude that $\Supp\mu$ is not contained in $\cL_\eps$ for any $\eps>0$.
\end{proof}

\subsection{Partitions of $X$}
In this subsection we construct a finite partition $\cQ$ of $X$, which will be useful in the later entropy computation. Let $0<\lambda<1$ be given. Recall that $l_\lambda\in\bN$ is chosen so that $1-2^{-l_\lambda+1}>\lambda$. We set $l=l_\lambda$, $\eta=2^{-l_\lambda}$. For each $1\leq i \leq d-1$, $t\in t_\lambda\bN$ we can find a finite partition 
$$\cP_{i,t}=\set{P_{i,t,1},\ldots,P_{i,t,M_i(t)},\widetilde{\alpha}_i^{-1}(e^{\frac{t}{3d}},\sh_\lambda(t)], \widetilde{\alpha}_i^{-1}(\sh_\lambda(t),\infty)}$$ of $X$ such that for $1\leq k\leq M_i(t)$ each $P_{i,t,k}$ has diameter less than $\eta e^{-2t}=2^{-l_\lambda}e^{-2t}$.

Let us define a finite partition $\cQ$ of $X$ by
$$\cQ:=\bigvee_{i=1}^{d-1}\bigvee_{m=1}^{m(\lambda)}\bigvee_{j=1}^{\frac{t_{m(\lambda)}}{t_m}}a_{-jt_m\be_i}\cP_{i,t_m}.$$
For $N\in\bN$ and $1\leq i\leq d-1$ we denote
$$\cQ^{(N,i)}:=\bigvee_{j=1}^{N}a_{-jt_{m(\lambda)}\be_i}\cQ, \qquad \cQ^{(N)}:=\bigvee_{i=1}^{d-1}\cQ^{(N,i)}.$$

For any partition $\cP$ of $X$ and $y\in X$ we denote by $[y]_{\cP}$ the atom of $\cP$ containing $y$. The following lemma relates the atoms of the partition $\cQ^{(N)}$ to the supports of the dynamical height functions for higher rank actions constructed in Section $5$.
\begin{lem}\label{eq:relationpsi}
    Let $N\in\bN$, $x\in X'$, $y\in X$, and $\bxi\in [-\frac{1}{2},\frac{1}{2}]^{d-1}$. If $u(\bxi)x\in [y]_{\cQ^{(N)}}$ then $\widetilde{\psi}_{N,2^{-l_\lambda},l_\lambda}(u(\bxi)x,y)\geq 1$.
\end{lem}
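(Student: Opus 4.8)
The plan is to reduce the stated inequality to a membership in the sets $\Upsilon$ and then read that membership directly off the partition $\cQ$. Write $z:=u(\bxi)x\in X'$. Since each $\widetilde{\alpha}_i$ takes values in $[1,\infty)$ and $\widetilde{\psi}_{N,2^{-l_\lambda},l_\lambda}(z;y)=\prod_{i=1}^{d-1}\widetilde{\beta}_{N,i,2^{-l_\lambda},l_\lambda}(z;y)$, it suffices to prove that for every $1\le i\le d-1$ one has $z\in\bigcap_{m=1}^{m(\lambda)}\Upsilon_{N t_{m(\lambda)}/t_m,\,i,\,t_m,\,2^{-l_\lambda},\,l_\lambda}(y)$: indeed, in that case $\widetilde{\beta}_{N,i,2^{-l_\lambda},l_\lambda}(z;y)=\widetilde{\alpha}_i(a_{Nt_{m(\lambda)}\be_i}z)\ge 1$ for each $i$, so the product of the $d-1$ factors is $\ge 1$.

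Next I would unpack the hypothesis $z\in[y]_{\cQ^{(N)}}$. Since $\cQ^{(N)}=\bigvee_{i=1}^{d-1}\cQ^{(N,i)}$ with $\cQ^{(N,i)}=\bigvee_{j=1}^{N}a_{-jt_{m(\lambda)}\be_i}\cQ$, and since $\cQ$ refines $\bigvee_{j'=1}^{t_{m(\lambda)}/t_m}a_{-j't_m\be_i}\cP_{i,t_m}$ for each $1\le m\le m(\lambda)$, commutativity of $A$ together with the divisibility $t_m\mid t_{m(\lambda)}$ translates $z\in[y]_{\cQ^{(N,i)}}$ into the statement: for every $1\le m\le m(\lambda)$ and every integer $j$ in the range $1\le j\le N t_{m(\lambda)}/t_m$ relevant to the definition of $\Upsilon$, the points $a_{jt_m\be_i}z$ and $a_{jt_m\be_i}y$ lie in the same atom of $\cP_{i,t_m}$.

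Then I would run a case analysis according to which atom of $\cP_{i,t_m}=\{P_{i,t_m,1},\dots,P_{i,t_m,M_i(t_m)},\widetilde{\alpha}_i^{-1}(e^{t_m/(3d)},\sh_\lambda(t_m)],\widetilde{\alpha}_i^{-1}(\sh_\lambda(t_m),\infty)\}$ contains $a_{jt_m\be_i}y$, comparing directly with the two defining conditions of $\Upsilon_{\cdot,i,t_m,2^{-l_\lambda},l_\lambda}(y)$. If $a_{jt_m\be_i}y\in\widetilde{\alpha}_i^{-1}(\sh_\lambda(t_m),\infty)$, i.e. $j\in J_{i,t_m,\operatorname{high}}(y)$, then $a_{jt_m\be_i}z$ lies in the same atom, so $\widetilde{\alpha}_i(a_{jt_m\be_i}z)>\sh_\lambda(t_m)=D^{(l_\lambda)}\!\big(\useconE{101}(3\useconC{5})^{t_m}\big)$, which is exactly the high-index requirement of $\Upsilon$ with $l=l_\lambda$. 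If $a_{jt_m\be_i}y$ lies in some small piece $P_{i,t_m,k}$, i.e. $\widetilde{\alpha}_i(a_{jt_m\be_i}y)\le e^{t_m/(3d)}$ so $j\in J_{i,t_m,\operatorname{low}}(y)$, then $a_{jt_m\be_i}z$ lies in the same $P_{i,t_m,k}$, whose $\bd_G$-diameter is less than $2^{-l_\lambda}e^{-2t_m}$; comparing this with the box $\sB_{t_m,2^{-l_\lambda}}=B^U(\operatorname{id},2^{-l_\lambda}e^{-2t_m})B^Q(\operatorname{id},2^{-l_\lambda}e^{-t_m/2})$ gives $a_{jt_m\be_i}z\in\sB_{t_m,2^{-l_\lambda}}a_{jt_m\be_i}y$, the low-index requirement of $\Upsilon$. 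If $a_{jt_m\be_i}y\in\widetilde{\alpha}_i^{-1}(e^{t_m/(3d)},\sh_\lambda(t_m)]$, i.e. $j\in J_{i,t_m,\operatorname{mid}}(y)$, then $\Upsilon$ imposes no constraint on $z$ at that index. Collecting these conclusions over all $j$ and $m$ yields $z\in\bigcap_{m=1}^{m(\lambda)}\Upsilon_{N t_{m(\lambda)}/t_m,\,i,\,t_m,\,2^{-l_\lambda},\,l_\lambda}(y)$ for every $i$, as required.

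I expect the main obstacle to be the routine but delicate comparison in the ``low'' case: passing from the partition-diameter bound on $P_{i,t_m,k}$ (stated for the metric on $X$ induced by $\bd_G$) to membership in the box $\sB_{t_m,2^{-l_\lambda}}$. The point to check is that $\sB_{t_m,2^{-l_\lambda}}$ is binding only in the $U$-direction — its $Q$-width $2^{-l_\lambda}e^{-t_m/2}$ dwarfs its $U$-width $2^{-l_\lambda}e^{-2t_m}$, and the latter is precisely the direction matched to the partition diameter — and that the scale $2^{-l_\lambda}e^{-2t_m}$ lies below the injectivity radius of $X$ on the compact set $\{\widetilde{\alpha}_i\le e^{t_m/(3d)}\}$, which follows from the lemma of \S5.1 since that injectivity radius is bounded below by a quantity of order $e^{-t_m/(3\lambda)}\gg e^{-2t_m}$. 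The remaining index bookkeeping in the second paragraph — that every $j\in\{1,\dots,Nt_{m(\lambda)}/t_m\}$ appearing in $\Upsilon$ is indeed captured by an atom of $\cQ^{(N,i)}$ via the factors $a_{-j't_m\be_i}\cP_{i,t_m}$ of $\cQ$ — is then straightforward.
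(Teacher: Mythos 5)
Your argument is correct and is essentially the proof given in the paper: reduce to membership in the $\Upsilon$ sets, observe that the partition hypothesis forces $a_{jt_m\be_i}u(\bxi)x$ and $a_{jt_m\be_i}y$ into the same $\cP_{i,t_m}$-atom for all relevant $i$, $m$, $j$, and run the same three-case analysis on atoms (high, low, mid) to verify the two defining conditions of $\Upsilon_{\cdot,i,t_m,2^{-l_\lambda},l_\lambda}(y)$. The one place you add something beyond the paper's proof is in flagging that the ``low'' case implicitly needs the injectivity radius at $a_{jt_m\be_i}y$ to dominate the atom diameter $2^{-l_\lambda}e^{-2t_m}$ — the paper just asserts the box inclusion — and your appeal to the injectivity-radius lemma of \S5.1 together with the observation that the $Q$-width of $\sB_{t_m,2^{-l_\lambda}}$ is much larger than its $U$-width closes that small implicit step cleanly.
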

\begin{proof}
    If $u(\bxi)x\in [y]_{\cQ^{(N)}}$ then for any $1\leq i \leq d-1$ and $1\leq j\leq N$ two points $a_{jt_{m(\lambda)}\be_i}u(\bxi)x$ and $a_{jt_{m(\lambda)}be_i}y$ are in the same atom of $\cQ$. It follows that for any $1\leq i \leq d-1$, $1\leq m\leq m(\lambda)$, and $1\leq j\leq \frac{t_{m(\lambda)}}{t_m}N$ two points $a_{jt_m\be_i}u(\bxi)x$ and $a_{jt_m\be_i}y$ are in the same atom of $\cP_{i,t_m}$.

    We now show that $u(\bxi)x\in \Upsilon_{\frac{t_{m(\lambda)}}{t_m}N,i,t_m,2^{-l_\lambda},l_\lambda}(y)$ for any $1\leq m\leq m(\lambda)$. 
    
    If $j\in J_{i,t_m,\operatorname{high}}(y)\cap\set{1,\ldots,\frac{t_{m(\lambda)}}{t_m}N}$ then $a_{jt_m\be_i}y\in \widetilde{\alpha}_i^{-1}(\sh_\lambda(t),\infty)$. Since $a_{jt_m\be_i}u(\bxi)x$ and $a_{jt_m\be_i}y$ are in the same atom of $\cP_{i,t_m}$, we have $\widetilde{\alpha}_i(a_{jt_m\be_i}u(\bxi)x)>\sh_\lambda(t)>D^{(l_\lambda)}\big((3\useconC{5})^t\useconE{101}\big)$. 
    
    If $j\in J_{i,t_m,\operatorname{low}}(y)\cap\set{1,\ldots,\frac{t_{m(\lambda)}}{t_m}N}$ then $a_{jt_m\be_i}y\in P_{i,t_m,k}$ for some $1\leq k\leq M_i(t)$. Then $a_{jt_m\be_i}u(\bxi)x$ is also in $P_{i,t_m,k}$, and the diameter of $P_{i,t_m,k}$ is less than $2^{-l_\lambda}e^{-2t}$. It implies that $a_{jt_m\be_i}u(\bxi)x\in \sB_{t,2^{-l_\lambda}}a_{jt_m\be_i}y$. 
    
    Hence, we verified that $u(\bxi)x\in \Upsilon_{\frac{t_{m(\lambda)}}{t_m}N,i,t_m,2^{-l_\lambda},l_\lambda}(y)$ for any $1\leq m\leq m(\lambda)$. It follows from the definition of $\widetilde{\beta}_{N,i,2^{-l_\lambda},l_\lambda}$ that $\widetilde{\beta}_{N,i,2^{-l_\lambda},l_\lambda}(u(\bxi)x,y)\geq 1$ for all $1\leq i\leq d-1$. We thus conclude that
    $$\widetilde{\psi}_{N,2^{-l_\lambda},l_\lambda}(u(\bxi)x,y)\geq \prod_{i=1}^{d-1}\widetilde{\beta}_{N,i,2^{-l_\lambda},l_\lambda}(u(\bxi)x,y)\geq1.$$
\end{proof}

\subsection{Construction of $A$-invariant measure}

In this subsection, we prove Theorem \ref{InhomLittlewood}. We argue by contradiction, so we suppose that $\dim_H \Xi>\frac{d-1}{2}$ throughout this subsection.

Let us denote
$$\cY:=\set{\bxi\in[-\tfrac{1}{2},\tfrac{1}{2}]^{d-1}: \liminf_{q\to\infty}q\prod_{i=1}^{d-1}\|q\xi_i\|_\bZ=0},$$
$$\cW:=\set{\bxi\in[-\tfrac{1}{2},\tfrac{1}{2}]^{d-1}: u(\bxi)\Gamma\textrm{ is }A^+\textrm{-divergent on average}}.$$
Recall that we have $\dim_H\cY=0$ by \cite[Theorem 1.5]{EKL06} and $\dim_H\cW\leq\frac{d-1}{2}$ by Theorem \ref{dimupperbdd}. Let us choose and fix $0<\lambda<1$ sufficiently close to $1$ so that
$$\dim_H\left(\Xi\setminus(\cY\cup\cW)\right)=\dim_H \Xi >(2-\lambda^6)\frac{d-1}{2},$$ 
and set $t=2^{l_\lambda-1}t_{m(\lambda)}$, where $l_\lambda, m(\lambda)\in\bN$ are as defined in Section 5.

For any compact set $K\subset X$, $\del>0$, and $m\in\bN$ let us define
\eq{\cZ_{K,\del}:=\set{\bxi\in[-\tfrac{1}{2},\tfrac{1}{2}]^{d-1}: \displaystyle\limsup_{N\to\infty}\frac{1}{N^{d-1}}\#\set{\btau\in\{t,\ldots,Nt\}^{d-1}:a_{\btau}u(\bxi)\Gamma\in K}\ge \del},}
\eq{\cZ_{K,\del,m}:=\set{\bxi\in[-\tfrac{1}{2},\tfrac{1}{2}]^{d-1}: \frac{1}{m^{d-1}}\#\set{\btau\in\{t,\ldots,mt\}^{d-1}:a_{\btau}u(\bxi)\Gamma\in K}\ge \del}.}
Then we may write
\eqlabel{eq:DivAvgPartition}{[-\tfrac{1}{2},\tfrac{1}{2}]^{d-1} \setminus\cW=\bigcup_{K\subset X}\bigcup_{\del>0} \cZ_{K,\del}.}
For any $\eps>0$ and $T\in\bN$ we also define 
\eq{\Xi_\eps:=\set{\bxi\in[-\tfrac{1}{2},\tfrac{1}{2}]^{d-1}: \textrm{ there exists } \btheta\in\mathbb{R}^{d-1} \textrm{ such that } q\prod_{i=1}^{d-1}\|q\xi_i-\theta_i\|_\bZ\ge \eps},}
\eq{\Xi_{\eps,T}:=\set{\bxi\in[-\tfrac{1}{2},\tfrac{1}{2}]^{d-1}: \textrm{ there exists } \btheta\in\bR^{d-1} \textrm{ such that }a\widehat{x}_{\bxi,\btheta}\in \cL_\eps \textrm{ for all }a\in A^+_{\ge T}}.}
Then Lemma \ref{InhomDani} implies that \eqlabel{eq:XiPartition}{\Xi\setminus\cY=\bigcup_{\eps>0}(\Xi_\eps\setminus \cY)\subseteq\bigcup_{\eps>0}\bigcup_{T\in\bN}\Xi_{\eps,T}.}
Combining \eqref{eq:DivAvgPartition} and \eqref{eq:XiPartition} we have
\eq{\Xi\setminus(\cY\cup\cW)=\left([-\tfrac{1}{2},\tfrac{1}{2}]^{d-1} \setminus\cW\right)\cap(\Xi\setminus\cY)\subseteq\bigcup_{K\subset X}\bigcup_{\del>0}\bigcup_{\eps>0}\bigcup_{T\in\bN} \left(\Xi_{\eps,T}\cap\cZ_{K,\del}\right).}
It follows that there exist $\del,\eps>0$, $T\in\bN$, and a compact set $K\subset X$ such that \eq{\dim_H(\Xi_{\eps,T}\cap \cZ_{K,\del})>(2-\lambda^6)\frac{d-1}{2}.} Note that $\cZ_{K,\del}=\displaystyle\bigcap_{M_0\in\bN}\displaystyle\bigcup_{m\ge M_0}\cZ_{K,\del,m}$. Thus, we can also find infinitely many $m$'s such that \eqlabel{eq:Hdimlowerbound}{\dim_{H}(\Xi_{\eps,T}\cap\cZ_{K,\del,m})>(2-\lambda^6)\frac{d-1}{2}.}

Let $\set{m_j}_{j=1}^{\infty}$ be a sequence of $m$'s satisfying \eqref{eq:Hdimlowerbound}. For any $j\in\bN$, let $S_j$ be a maximal $e^{-2m_j}$-separated set of $\Xi_{\eps,T}\cap\cZ_{K,\del,m_j}$. Then \eqref{eq:Hdimlowerbound} implies that $\# S_j\gg e^{(d-1)(2-\lambda^6) m_j}$. By definition of $\Xi_{\eps,T}$, for each $\bxi\in S_j$ there exists $\btheta(\bxi)\in\bR^{d-1}$ such that $a\widehat{x}_{\bxi,\btheta(\bxi)}\in\cL_\eps$ for any $a\in A^+_{\ge T}$. We define
$Y_j:=\set{\widehat{x}_{\bxi,\btheta(\bxi)}\in\widehat{X}:\bxi\in S_j}$ and for $j\in\bN$ let \eq{\nu_j:=\frac{1}{\# S_j}\displaystyle\sum_{\widehat{x}\in Y_j}\del_{\widehat{x}}=\frac{1}{\# S_j}\displaystyle\sum_{\bxi\in S_j}\del_{\widehat{x}_{\bxi,\btheta(\bxi)}}}
be the normalized counting measure on the set $Y_j\subset \widehat{X}$. We also denote
$$\overline{\nu_j}:=\frac{1}{\# S_j}\displaystyle\sum_{x\in \pi_*Y_j}\del_{x}=\frac{1}{\# S_j}\displaystyle\sum_{\bxi\in S_j}\del_{x_{\bxi}}$$
so that $\overline{\nu_j}:=\pi_*\nu_j$ for $j\in\bN$. By extracting a subsequence if necessary, there exists a probability measure $\mu$ on the one-point compactification $\widehat{X}\cup\set{\infty}$ such that
\eq{\mu_j:=\frac{1}{m_j^{d-1}}\displaystyle\sum_{\btau\in\set{t,\cdots,m_jt}^{d-1}}(a_{\btau})_*\nu_j\wstar\mu,}
and also define $\overline{\mu_j}:=\pi_*\mu_j$ for $j\in\bN$ and $\overline{\mu}:=\pi_*\mu$. Then clearly $\overline{\mu_j}$ weakly converges to $\overline{\mu}$.

We will prove the following properties of the measure $\mu$ we have constructed:
\begin{prop}\label{measureproperties}
Let $\mu$ be the probability measure on $\widehat{X}\cup\set{\infty}$ constructed as above, under the assumption $\dim_H \Xi>\frac{d-1}{2}$. The measure $\mu$ satisfies the following properties:
\begin{enumerate}
    \item $\mu$ is $a_{\btau}$-invariant for any $\btau\in (t\bZ)^{d-1}$,
    \item $\mu(\widehat{X})\ge \del$,
    \item $\Supp \mu\subseteq \cL_\eps\cup\set{\infty}$,
    \item $\displaystyle\sum_{i=1}^{d-1}h_{\overline{\mu}}(a_{t\be_i})>0$.
\end{enumerate}
\end{prop}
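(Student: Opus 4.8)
The plan is to obtain (1)--(3) as soft consequences of weak${}^*$ convergence on the one-point compactification $\widehat{X}\cup\{\infty\}$, and to derive (4) from the contraction estimate of Proposition \ref{nontrivbdd:traj} combined with Lemma \ref{eq:relationpsi}. For (1), fix $\bsig\in(t\bZ)^{d-1}$; since $a_{\bsig}$ is a homeomorphism of $\widehat{X}$ it extends to $\widehat{X}\cup\{\infty\}$ fixing $\infty$, so it is enough to check $\int f\,d\mu=\int f\circ a_{\bsig}\,d\mu$ for $f\in C_c(\widehat{X})$. As $\mu_j$ is the average of $(a_{\btau})_*\nu_j$ over $\btau$ in the box $\{t,\dots,m_jt\}^{d-1}$, the measure $(a_{\bsig})_*\mu_j$ is the same average over the $\bsig$-shifted box, and the symmetric difference of the two boxes contains $O(\|\bsig\|\,m_j^{d-2})$ lattice points; hence $|\int f\,d\mu_j-\int f\circ a_{\bsig}\,d\mu_j|=O_{\bsig}(\|f\|_\infty/m_j)\to0$, giving $a_{\bsig}$-invariance (invariance under the generators $a_{t\be_i}$ already yields all of (1)). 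For (2), the defining inequality of $\cZ_{K,\del,m_j}$ says that for every $\bxi\in S_j$ at least a $\del$-fraction of $\btau\in\{t,\dots,m_jt\}^{d-1}$ satisfy $a_{\btau}u(\bxi)\Gamma\in K$, so $\overline{\mu_j}(K)\ge\del$ for all $j$; as $K$ is compact, hence closed in $X\cup\{\infty\}$ and disjoint from $\infty$, the portmanteau inequality gives $\overline{\mu}(K)\ge\limsup_j\overline{\mu_j}(K)\ge\del$, so $\mu(\widehat{X})=\overline{\mu}(X)\ge\del$. For (3), note that $\cL_\eps$ is closed in $\widehat{X}$ (a limit of grids each avoiding the open ball $B^{\bR^d}(0,\eps)$ still avoids it), hence $\cL_\eps\cup\{\infty\}$ is closed in $\widehat{X}\cup\{\infty\}$; for $\bxi\in S_j$ we have $a\widehat{x}_{\bxi,\btheta(\bxi)}\in\cL_\eps$ for every $a\in A^+_{\ge T}$, and the proportion of $\btau$ with $\min_i\tau_i<T$ is $O(T/(t\,m_j))\to0$, so $\mu_j(\widehat{X}\setminus\cL_\eps)\to0$; since $\widehat{X}\setminus\cL_\eps$ is open, $\mu(\widehat{X}\setminus\cL_\eps)\le\liminf_j\mu_j(\widehat{X}\setminus\cL_\eps)=0$, i.e. $\Supp\mu\subseteq\cL_\eps\cup\{\infty\}$.

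The heart of the matter is (4). Here I would use the partition $\cQ$, which by Lemma \ref{eq:relationpsi} has the feature that, for $x\in X'$, $y\in X$ and $N\in\bN$,
\[
\on{Leb}\big\{\bxi\in[-\tfrac12,\tfrac12]^{d-1}: u(\bxi)x\in[y]_{\cQ^{(2^{l_\lambda}N)}}\big\}
\le\int_{[-\frac12,\frac12]^{d-1}}\widetilde{\psi}_{2^{l_\lambda}N,\,2^{-l_\lambda},\,l_\lambda}(u(\bxi)x;y)\,d\bxi
\le C\,\kappa(x)^{8l_\lambda}\Big(\prod_{i=1}^{d-1}\widetilde{\alpha}_i(x)\Big)e^{-c_0N},
\]
the last bound being Proposition \ref{nontrivbdd:traj} with $l=l_\lambda$, $\eta=2^{-l_\lambda}$, and $c_0=\lambda^3(d-1)(2^{l_\lambda}-1)t_{m(\lambda)}>0$. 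Thus every atom of $\cQ^{(2^{l_\lambda}N)}$ is exponentially thin along the horospherical fibres $u(\cdot)x$, uniformly in the base point $x=a_{\btau}\Gamma$ (which lies in $X'$ for $\btau\in\{t,\dots,m_jt\}^{d-1}$). This is exactly the input required by the now-standard mechanism (as in \cite{EKL06,KKLM17,LSS19}) for producing positive entropy of a limit measure: since $\overline{\nu_j}$ is the normalized counting measure on the image under $u(\cdot)\Gamma$ of an $e^{-2m_j}$-separated set $S_j$ with $\#S_j\gg e^{(d-1)(2-\lambda^6)m_j}$ (from $\dim_H(\Xi_{\eps,T}\cap\cZ_{K,\del,m_j})>(2-\lambda^6)\tfrac{d-1}{2}$), pushing the thinness bound through the $a_{\btau}$-averaging defining $\overline{\mu_j}$ yields $H_{\overline{\mu_j}}(\cQ^{(2^{l_\lambda}N)})\ge c_1N-o_j(1)$ for some $c_1>0$. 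Choosing the pieces of $\cQ$ so that $\overline{\mu}$ charges no boundary of $\cQ^{(M)}$ (which one may arrange once the measure is fixed), weak${}^*$-continuity of entropy of finite partitions upgrades this to $H_{\overline{\mu}}(\cQ^{(M)})\ge c_1 M/2^{l_\lambda}$ for $M\in2^{l_\lambda}\bN$. Finally $H_{\overline{\mu}}(\cQ^{(M)})\le\sum_{i=1}^{d-1}H_{\overline{\mu}}\big(\bigvee_{j=1}^M a_{-jt_{m(\lambda)}\be_i}\cQ\big)$, and grouping these refinements into blocks of length $2^{l_\lambda-1}$, using $a_{t\be_i}=(a_{t_{m(\lambda)}\be_i})^{2^{l_\lambda-1}}$ and $h(a^k)=k\,h(a)$, bounds the exponential growth rate of the $i$-th term by $2^{-(l_\lambda-1)}h_{\overline{\mu}}(a_{t\be_i})$; combining gives $\sum_{i=1}^{d-1}h_{\overline{\mu}}(a_{t\be_i})\ge c_1/2>0$.

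The step I expect to be genuinely delicate is the one inside (4): turning the Lebesgue-thinness of $\cQ^{(M)}$-atoms along $U$ into a lower bound on $H_{\overline{\mu_j}}(\cQ^{(M)})$ that persists in the weak${}^*$ limit. This needs a quantitative handle on the separated sets $S_j$ --- their cardinality $\gg e^{(d-1)(2-\lambda^6)m_j}$ is what must outweigh the exponential loss $e^{-c_0N}$ together with the volume distortion of the $a_{\btau}$'s, which is precisely why the slack $\lambda^6$ is built into the choice of $\lambda$ --- as well as the correct averaging over $\btau$ (rather than estimating the mass of a single atom) and the harmless but necessary adjustment of $\cQ$ so that $\overline{\mu}$ gives its boundaries measure zero. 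By comparison, parts (1)--(3) are routine.
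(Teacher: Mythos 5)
For parts (1), (2), (3) your argument is correct and is essentially the paper's: soft weak$^*$-compactness arguments on the one-point compactification, with the portmanteau inequalities applied to the compact set $K$, the closed set $\cL_\eps$, and the tail of the $\btau$-averaging.

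For part (4) the high-level mechanism you describe (thinness of $\cQ$-atoms along $U$ from Proposition \ref{nontrivbdd:traj} via Lemma \ref{eq:relationpsi}, a counting argument on the separated sets $S_j$, then subadditivity and concavity to pass to the limit and to dynamical entropy) is the right one, but as written there is a real gap that your ``now-standard mechanism'' does not resolve and that your sketch, in fact, points in the wrong direction on. The contraction estimate of Proposition \ref{nontrivbdd:traj} carries a factor $\kappa(x)^{8l_\lambda}\prod_i\widetilde{\alpha}_i(x)$, and you propose to apply it ``uniformly in the base point $x=a_{\btau}\Gamma$ for $\btau\in\{t,\dots,m_jt\}^{d-1}$.'' But $\kappa(a_\btau\Gamma)=e^{(d-2)(\tau_1+\cdots+\tau_{d-1})}$ and $\widetilde\alpha_i(a_\btau\Gamma)$ can both be as large as $e^{O(m_j t)}$ on the far corner of the cube, so the pre-factor $\kappa(a_\btau\Gamma)^{8l_\lambda}\prod_i\widetilde\alpha_i(a_\btau\Gamma)$ completely overwhelms the decay $e^{-c_0N}$ with $N\asymp m_j$; the $\lambda^6$-slack you invoke cannot repair this, since it is a small fixed gap while the blow-up is exponential with a large fixed rate. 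The device the paper uses (and which is really the content of the argument) is to restrict the base $\btau$ to the small triangle $\cD_j^*=\{t,\dots,\lfloor\sigma m_j\rfloor t\}^{d-1}$, where Lemma \ref{eq:kappaalphabound} gives $\kappa_i(a_\btau\Gamma)\leq e^{(d-1)\sigma m_jt}$ and $\widetilde\alpha_i(a_\btau\Gamma)\leq \useconE{101}e^{2(d-1)\sigma m_jt}$, to introduce the auxiliary measures $\mu_{j,\btau,i}$ averaging along a $(1-\sigma)m_j$-long segment starting at $\btau\in\cD_j^*$, and to use the domination $\sigma^{d-1}\mu_j\le\frac{1}{|\cD_j^*|}\sum_{\btau\in\cD_j^*}\mu_{j,\btau,i}$ together with concavity of entropy to transfer the bound to $\overline{\mu_j}$. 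With $\sigma$ chosen so that $(2-\lambda^4+20dl_\lambda\sigma)(1-\sigma)>2-\lambda^5$, the $\sigma$-dependent loss is precisely what the $\lambda^5$-versus-$\lambda^6$ slack absorbs; the slack is the budget and $\sigma$ is the device, not the other way around. Once this is in place, the remaining steps you sketch (the counting argument converting Lebesgue-thinness into a bound on $((a_\btau)_*\overline{\nu_j})([y]_{\cQ^{(m_j')}})$ using $\#S_j\gg e^{(d-1)(2-\lambda^6)m_j}$, the Euclidean-division subadditivity trick to descend from scale $m_j'$ to a fixed scale $q$, and the passage $j\to\infty$ followed by $q\to\infty$) do go through exactly as in Lemma \ref{partitioncounting} and the proof of the proposition.
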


Indeed, it is straightforward to deduce Theorem \ref{InhomLittlewood} from Proposition \ref{measureproperties}:
\begin{proof}[Proof of Theorem \ref{InhomLittlewood} assuming Proposition \ref{measureproperties}]
Suppose $\dim_H \Xi>\frac{d-1}{2}$ for the sake of contradiction. Then the probability measure $\mu$ on $\widehat{X}\cup\set{\infty}$ satisfies the properties (1)-(4) of Proposition \ref{measureproperties}. Since $\mu(\widehat{X})\ge \del$, we may write $\mu=\eta \mu'+(1-\eta)\del_{\infty}$ for some $\eta\geq\del$ and a probability measure $\mu'$ on $\widehat{X}$, where $\del_\infty$ denotes the Dirac delta measure on $\set{\infty}$. Then $\mu'$ is $a_{\btau}$-invariant for any $\btau\in (t\bZ)^{d-1}$, $\Supp \mu'\subseteq\cL_\eps$, and $\sum_{i=1}^{d-1}h_{\pi_*\mu'}(a_{t\be_i})>0$. Let us define a probability measure $\mu^0$ on $\widehat{X}$ by
$$\mu^0:=\frac{1}{t^{d-1}}\int_{[0,t]^{d-1}}(a_{\btau})_*\mu'd\btau. $$
Then $\mu^0$ is $A$-invariant, $\Supp\mu^0\subseteq \cL_{e^{-t}\eps}$, and $\sum_{i=1}^{d-1}h_{\pi_*\mu^0}(a_{t\be_i})>0$.
Choosing an ergodic component of $\mu^0$, we may assume that $\mu^0$ is ergodic. Then $\mu^0$ is homogeneous by Theorem \ref{measureclassification}, however it contradicts Lemma \ref{Lesupp} since $\Supp\mu^0\subseteq \cL_{e^{-t}\eps}$. Therefore, we obtain
$\dim_H \Xi\leq\frac{d-1}{2}$.
\end{proof}

The rest of the section will be devoted to proving Proposition \ref{measureproperties}. Let us choose a small positive constant $\sig>0$ such that
$$(2-\lambda^4+20dl_\lambda \sig)(1-\sig)>2-\lambda^5,$$
and denote $\cD_j^*:=\set{t,\ldots, \lfloor \sig m_j \rfloor t}^{d-1}$. For each $j\in\bN$ we denote $\textbf{$m_j':=\lfloor(1-\sig) m_j\rfloor$}$.

\begin{lem}\label{eq:kappaalphabound}
    For any $j\in\bN$, $\btau\in\cD_j^*$, $\bxi\in\bR^{d-1}$, and $1\leq i\leq d-1$ we have
    $$\kappa_i(a_{\btau}u(\bxi)\Gamma)\leq e^{(d-1)\sig m_jt}, \qquad \alpha_i(a_{\btau}u(\bxi)\Gamma)\leq \useconE{101} e^{2(d-1)\sig m_jt}.$$
\end{lem}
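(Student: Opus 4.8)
The plan is to verify \textbf{Lemma \ref{eq:kappaalphabound}} by direct computation, using only the explicit formulas for $\kappa_i$ and $\alpha_{i,\lambda}'$ together with the crude a priori bound from Lemma \ref{trivlem} (or its ingredients). First I would recall that for $\btau=(\tau_1,\dots,\tau_{d-1})\in\cD_j^*$ every coordinate satisfies $0<\tau_i\le \lfloor\sigma m_j\rfloor t\le \sigma m_j t$, so $\sum_{j}\tau_j-\tau_i\le (d-2)\sigma m_j t\le (d-1)\sigma m_j t$. Since $\kappa_i(a_{\btau}u(\bxi)\Gamma)=e^{(\tau_1+\cdots+\tau_{d-1})-\tau_i}$ directly by its definition, and this quantity depends only on $\btau$ (it is invariant under $U$), the first inequality $\kappa_i(a_{\btau}u(\bxi)\Gamma)\le e^{(d-1)\sigma m_j t}$ is immediate.

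For the second inequality I would argue as in the proof of Lemma \ref{trivlem}. Writing $x_{\bxi}=u(\bxi)\Gamma$, note $a_{\btau}\in B^A(\operatorname{id}, 2(d-1)\sigma m_j t)\subseteq B^A(\operatorname{id}, 2(d-1)\sigma m_j t)$; more precisely since each $|\tau_i|\le \sigma m_j t$ and $|{-}\sum\tau_i|\le (d-1)\sigma m_j t$ the element $a_{\btau}$ lies within $A$-distance $O(\sigma m_j t)$ of the identity, which for the purpose of the log-Lipschitz estimate \eqref{Lipschitz'} gives $\alpha_{i,\lambda}'(a_{\btau}u(\bxi)x)\le \useconC{4}^{C\sigma m_j t}\alpha_{i,\lambda}'(u(\bxi)x)$ for an absolute $C$, and then $\alpha_{i,\lambda,t_\lambda}(a_{\btau}u(\bxi)x)$ is controlled by combining Lemma \ref{alpha'upperbound}, the bound on $\kappa_i$ above, and the fact that $\widetilde{\alpha}_i\le\alpha_{i,\lambda}'$ on the relevant region. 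Here I should be careful: the statement uses $\alpha_i$, which I read as $\widetilde{\alpha}_i$ (consistently with its use later in the section in $\cL_\eps$-escape-of-mass arguments); since $\widetilde{\alpha}_i(x)\le\alpha_{i,\lambda}'(x)$ whenever $\alpha_{i,\lambda,t_\lambda}(x)>\kappa_i(x)\useconE{101}$, and $\widetilde{\alpha}_i(x)=\max\{\alpha_{i,\lambda,t_\lambda}(x),1\}\le\kappa_i(x)\useconE{101}$ otherwise, in either case $\widetilde{\alpha}_i(a_{\btau}u(\bxi)x)\le\useconE{101}\max\{\kappa_i(a_{\btau}u(\bxi)x),\ \useconC{4}^{C\sigma m_j t}\widetilde{\alpha}_i(u(\bxi)x)\}$. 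Using $\widetilde{\alpha}_i(u(\bxi)x)\le \useconE{102}\kappa_i(x)^2\widetilde{\alpha}_i(x)=\useconE{102}\widetilde{\alpha}_i(\Gamma)$ for $x=\Gamma$ (so $\widetilde{\alpha}_i(\Gamma)=O(1)$), together with $\kappa_i\le e^{(d-1)\sigma m_j t}$, yields $\widetilde{\alpha}_i(a_{\btau}u(\bxi)\Gamma)\le \useconE{101}e^{2(d-1)\sigma m_j t}$ after absorbing the fixed constants into the exponent — the factor $2(d-1)$ in the exponent is exactly what comes out of the $U_i^\perp$-distortion term $\kappa_i(x)^2$ in \eqref{LipschitzOrthogonal} combined with the $\kappa_i$-bound, and of the $A$-distortion constant $\useconC{4}$ being absorbed after possibly enlarging $\useconE{101}$ or shrinking $\sigma$.

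The only delicate point, and hence the main obstacle, is bookkeeping the constants so that the bound is clean (a single $\useconE{101}$ prefactor and the clean exponent $2(d-1)\sigma m_j t$), rather than carrying around $\useconC{4}^{O(\sigma m_j t)}$. This is handled by noting $\sigma$ can be taken as small as we like (it was only constrained by $(2-\lambda^4+20dl_\lambda\sigma)(1-\sigma)>2-\lambda^5$, which leaves room), and $m_j\to\infty$, so any fixed-base exponential $\useconC{4}^{C\sigma m_j t}$ is dominated by $e^{2(d-1)\sigma m_j t}$ once $\sigma$ is chosen appropriately — alternatively one absorbs the discrepancy by redefining the relevant constant. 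The rest is the routine matrix computation already carried out in Lemma \ref{trivlem}, so I would simply cite that lemma's proof for the distortion estimate and only spell out the two substitutions (the explicit $\kappa_i$ formula and the $\widetilde{\alpha}_i\le\alpha_{i,\lambda}'$ comparison).
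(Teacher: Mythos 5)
The bound on $\kappa_i$ is correct and follows the same route as the paper: from the explicit formula $\kappa_i(a_{\btau}u(\bxi)\Gamma)=e^{(\tau_1+\cdots+\tau_{d-1})-\tau_i}$ and $\tau_i\le\sigma m_j t$ for all $i$, the inequality is immediate.

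The second bound is where your argument has a genuine gap. You propose to move $a_{\btau}$ out of $\alpha_{i,\lambda}'$ via the log-Lipschitz estimate \eqref{Lipschitz'}, which costs a factor of $\useconC{4}^{O(\sigma m_j t)}$, and then claim this can be absorbed into $e^{2(d-1)\sigma m_j t}$ by taking $\sigma$ small. That reasoning fails: both exponentials scale linearly in $\sigma m_j t$, so whether $\useconC{4}^{C\sigma m_j t}\leq e^{2(d-1)\sigma m_j t}$ holds is governed by the fixed inequality $C\log\useconC{4}\leq 2(d-1)$, independent of $\sigma$ and of $m_j$, and nothing guarantees it. Moreover $\sigma$ cannot in fact be shrunk at will here: the constraint $(2-\lambda^4+20dl_\lambda\sigma)(1-\sigma)>2-\lambda^5$ fails at $\sigma=0$ (since $\lambda^5<\lambda^4$ for $\lambda\in(0,1)$), so admissible $\sigma$ lie in a window bounded away from zero.

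The paper's proof avoids $\useconC{4}$ entirely by working directly with the defining formula for $\alpha_{i,\lambda}'$. Writing $T=\tau_1+\cdots+\tau_{d-1}\leq (d-1)\sigma m_j t$, one has
$$\alpha_{i,\lambda}'(a_{\btau}u(\bxi)\Gamma)=\kappa_i(a_{\btau}u(\bxi)\Gamma)\cdot\operatorname{ht}_\lambda\big(\bar{a}_{T}\bar{u}(\xi_i)\operatorname{SL}_2(\bZ)\big),$$
and every nonzero vector of $\bar{a}_{T}\bar{u}(\xi_i)\bZ^2$ has norm at least $e^{-T}$ (its last coordinate is $e^{-T}n$, and if $n=0$ the vector has norm $\ge e^T$), so $\operatorname{ht}_\lambda\le e^{\lambda T}\le e^{T}$ and $\alpha_{i,\lambda}'(a_{\btau}u(\bxi)\Gamma)\le e^{2T}\le e^{2(d-1)\sigma m_j t}$, with the clean coefficient $2$ appearing for free. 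Combined with the observation you already made, that $\widetilde{\alpha}_{i,\lambda}\le\max\{\useconE{101}\kappa_i,\alpha_{i,\lambda}'\}$, this gives the stated bound. Your route could in principle be salvaged by stating the lemma with an exponent $C'(\lambda)(d-1)\sigma m_j t$ for a $\lambda$-dependent $C'$ and re-deriving the admissible window for $\sigma$ downstream, but that is extra bookkeeping the direct computation sidesteps, and as written the "absorb by shrinking $\sigma$" step is false.
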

\begin{proof}
    By definition of $\kappa_i$ we have $\kappa_i(a_{\btau}u(\bxi)\Gamma)\leq e^{\tau_1+\cdots+\tau_{d-1}}\leq e^{(d-1)\sig m_jt}$ for all $1\leq i\leq d-1$. It follows that $$\alpha_{i,\lambda}'(a_{\btau}u(\bxi)\Gamma)=\kappa_i(a_{\btau}u(\bxi)\Gamma)\operatorname{ht}_\lambda\big(\bar{a}_{\tau_1+\cdots+\tau_{d-1}}\bar{u}(\bxi)\Gamma\big)\leq e^{2(\tau_1+\cdots+\tau_{d-1})}\leq e^{2(d-1)\sig m_jt}$$
    for all $1\leq i\leq d-1$. The definition \eqref{alphatildedefinition} of $\widetilde{\alpha}_{i,\lambda}$ implies that
    $\widetilde{\alpha}_{i,\lambda}(x)\leq \max\set{\kappa_i(x)\useconE{101}, \alpha_{i,\lambda}'(x)}$ for any $x\in X'$. Hence we obtain
    $\widetilde{\alpha}_{i,\lambda}(a_{\btau}u(\bxi)\Gamma)\leq \useconE{101} e^{2(d-1)\sig m_jt}$ for all $1\leq i\leq d-1$.
\end{proof}

\begin{lem}\label{partitioncounting}
For any $j\in\bN$, $\btau\in \cD_j^*$, and $y\in X$ we have
$$\big((a_{\btau})_*\overline{\nu_j}\big)\big([y]_{\cQ^{(m_j')}}\big)\leq \useconE{109}e^{-(\lambda^5-\lambda^6)(d-1)m_jt}$$
for some constant \newconE{109}$\useconE{109}>0$.
\end{lem}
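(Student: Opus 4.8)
The plan is to unfold the pushed‑forward measure, relate the resulting count to an integral of a dynamical height function \emph{of the trajectory} $(a_{jt_{m(\lambda)}\be_i}y)_j$, and then invoke the contraction hypothesis of Proposition \ref{nontrivbdd:traj}. Since $\overline{\nu_j}=\tfrac{1}{\#S_j}\sum_{\bxi\in S_j}\delta_{u(\bxi)\Gamma}$,
$$\big((a_{\btau})_*\overline{\nu_j}\big)\big([y]_{\cQ^{(m_j')}}\big)=\frac{1}{\#S_j}\,\#\set{\bxi\in S_j:\ a_{\btau}u(\bxi)\Gamma\in[y]_{\cQ^{(m_j')}}}=:\frac{\#E}{\#S_j}.$$
For $\btau\in\cD_j^*$ every $a_{\btau}u(\bxi)\Gamma$ lies in $X'$, and the proof of Lemma \ref{eq:relationpsi} (which uses only membership in the atom $[y]_{\cQ^{(m_j')}}$, not $\bxi\in[-\tfrac12,\tfrac12]^{d-1}$) gives $\widetilde{\psi}_{m_j',2^{-l_\lambda},l_\lambda}(a_{\btau}u(\bxi)\Gamma;y)\ge 1$ for every $\bxi\in E$. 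Since enlarging $\eta$ and lowering $l$ only enlarges the supports of the sets $\Upsilon_{\cdot,i,t_m,\eta,l}(y)$, one has $\widetilde{\psi}_{m_j',2^{-l_\lambda},l_\lambda}\le\widetilde{\psi}_{m_j',2^{-(l_\lambda-1)},l_\lambda-1}$ pointwise, so it suffices to estimate the number of $e^{-2m_j}$‑separated points of $\Supp\big(\bxi\mapsto\widetilde{\psi}_{m_j',2^{-(l_\lambda-1)},l_\lambda-1}(a_{\btau}u(\bxi)\Gamma;y)\big)$ on $[-\tfrac12,\tfrac12]^{d-1}$.

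Next I would convert this count into an integral of $\widetilde{\psi}$. Writing $a_{\btau}u(\bxi)\Gamma=u(\bxi')a_{\btau}\Gamma$ with $\bxi'$ the rescaled coordinates and breaking the $\bxi'$‑domain into unit periods with base points $u(\bm)a_{\btau}\Gamma$, each such base point is again of the form $a_{\btau}u(\cdot)\Gamma$ with $\btau\in\cD_j^*$, so Lemma \ref{eq:kappaalphabound} bounds $\kappa$ and $\prod_i\widetilde{\alpha}_i$ there by $e^{O(\sigma m_j t)}$; the periods are reassembled because $\kappa_i$ is $U$‑invariant and the period count $\prod_i e^{\tau_i+\sum_k\tau_k}$ cancels the Jacobian of the rescaling. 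On each period I would use the approximation machinery behind Lemmas \ref{betatildesupp:traj}–\ref{psisupp:traj} together with Lemma \ref{compactness}: the small $U$‑perturbations coming from $\btau\in\cD_j^*$ and from the trajectory‑adapted boxes are absorbed into the shift $(\eta,l)\mapsto(2\eta,l-1)$ at the price of factors $\useconE{102}^{O(1)}\kappa(a_{\btau}\Gamma)^{O(1)}$, after which $\Supp(\cdots)$ is a controlled union of boxes whose covering number at scale $e^{-2m_j}$ is at most $e^{2(d-1)m_j}$ times its Lebesgue measure, hence at most $e^{2(d-1)m_j}$ times $\int_{[-\frac12,\frac12]^{d-1}}\widetilde{\psi}_{m_j',2^{-(l_\lambda-1)},l_\lambda-1}\big(u(\bxi)\,u(\bm)a_{\btau}\Gamma;y\big)\,d\bxi$.

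Finally I would bound this integral by Proposition \ref{nontrivbdd:traj} with $l=l_\lambda-1$, $\eta=2^{-(l_\lambda-1)}$ and $2^{l_\lambda-1}N$ equal to $m_j'$ up to a bounded error (absorbed by \eqref{alphatildeLipschitz}), producing a gain $e^{-\lambda^3(d-1)(1-2^{-(l_\lambda-1)})m_j' t_{m(\lambda)}}$ times a factor $\useconE{102}^{d-1}\big(\useconE{106}2^{d(l_\lambda-1)}\kappa^8\big)^{l_\lambda-1}\prod_i\widetilde{\alpha}_i$. Using $m_j'=\lfloor(1-\sigma)m_j\rfloor$, $t=2^{l_\lambda-1}t_{m(\lambda)}$, the defining inequality of $l_\lambda$, and the lower bound $\#S_j\gg e^{(2-\lambda^6)(d-1)m_j}$ coming from \eqref{eq:Hdimlowerbound}, the exponent in $\#E/\#S_j$ becomes $-c(d-1)m_j t+O(\sigma m_j t)+O(t)$ with an implied constant depending only on $d,\lambda,l_\lambda$ and with $c$ bounded below in terms of $\lambda$; the standing constraint $(2-\lambda^4+20dl_\lambda\sigma)(1-\sigma)>2-\lambda^5$ on $\sigma$ is precisely what makes the net bound $\useconE{109}e^{-(\lambda^5-\lambda^6)(d-1)m_j t}$, with $\useconE{109}$ depending only on $d$ and $\lambda$.

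The step I expect to be the main obstacle is the passage from the count to the integral: one must match the \emph{coarse} scale $e^{-2m_j}$ at which $S_j$ is separated to the genuinely \emph{finer} scales ($\sim e^{-2m_j' t_{m(\lambda)}}$) at which $\widetilde{\psi}_{m_j',\cdot,\cdot}$ varies along $U$‑orbits. The naive estimate — covering $E$ by its own $e^{-2m_j}$‑balls — loses a factor $e^{2(d-1)m_j' t_{m(\lambda)}}$ that would swamp the contraction gain; avoiding this requires exploiting that the support of the relevant height function is a union of boxes of the above fine size so that its covering number at scale $e^{-2m_j}$ is comparable to $e^{2(d-1)m_j}$ times its measure, and then playing this off against the lower bound on $\#S_j$. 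This is exactly where the choices of $\sigma$, $m_j'$, $t$ and the restriction $\btau\in\cD_j^*$ are used simultaneously, and where the constant $\useconE{109}$ is absorbed.
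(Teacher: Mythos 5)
Your overall route is the same as the paper's: rewrite $(a_{\btau})_*\overline{\nu_j}\big([y]_{\cQ^{(m_j')}}\big)$ as a count over $S_j$, bound that count by $\sum_{\bxi\in S_j}\widetilde{\psi}_{m_j',\cdot,\cdot}(a_{\btau}u(\bxi)\Gamma;y)$ via Lemma~\ref{eq:relationpsi}, pass from the sum to an integral using the approximation machinery of Lemmas~\ref{betatildesupp:traj}--\ref{psisupp:traj}, reparametrize $a_{\btau}u(\bxi)=u(\bxi')a_{\btau}$ and tile the rescaled domain, control the base‑point contributions by Lemma~\ref{eq:kappaalphabound} using $\btau\in\cD_j^*$, and then apply Proposition~\ref{nontrivbdd:traj} before dividing by $\#S_j$. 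You also correctly note the pointwise monotonicity $\widetilde{\psi}_{N,\eta,l}\le\widetilde{\psi}_{N,2\eta,l-1}$ and that Lemma~\ref{eq:relationpsi} only uses membership in the atom. So the tools and structure match.

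The step I would push back on is the claim that, because the support of $\bxi\mapsto\widetilde{\psi}_{m_j',\cdot,\cdot}(a_{\btau}u(\bxi)\Gamma;y)$ is a union of boxes of the fine size $\sim e^{-2m_j't_{m(\lambda)}}$, its covering number at the coarse scale $e^{-2m_j}$ is at most $e^{2(d-1)m_j}$ times its Lebesgue measure. That inequality is not true in general for a union of boxes whose side lengths are much smaller than $e^{-2m_j}$: the fine boxes can sit in pairwise distinct coarse cells, so a maximal $e^{-2m_j}$‑separated subset of the support can be as large as $\mathrm{Leb}/(\mathrm{fine\ scale})^{d-1}$, which is the much bigger factor $e^{2(d-1)m_j't_{m(\lambda)}}$. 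The paper does not sidestep this loss; it \emph{accepts} it. The argument modelled on Lemma~\ref{psisupp:traj}(2) bounds $\sum_{\bxi\in S_j}F(\bxi)$ by (a constant times) $\kappa(a_{\btau}\Gamma)^{2}e^{2(d-1)m_j't_{m(\lambda)}}\int\widetilde{\psi}_{m_j',2^{-l_\lambda+1},l_\lambda-1}$, and the saving comes entirely from the other side: Proposition~\ref{nontrivbdd:traj}, applied with $l=l_\lambda-1$ so that $(2^{l}-1)N\ge\lambda m_j'$, contracts the integral by roughly $e^{-\lambda^4(d-1)m_j't_{m(\lambda)}}$, leaving a net $e^{(2-\lambda^4)(d-1)m_j't_{m(\lambda)}}$ (plus $O(\sigma)$ corrections from $\kappa$, $\widetilde{\alpha}_i$ and the number of base points), which the lower bound on $\#S_j$ and the choice of $\sigma$ then overcome. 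So the entropy of the argument lives in the choice of $l_\lambda$ and the repeated application of Proposition~\ref{multicont:traj} behind Proposition~\ref{nontrivbdd:traj}, not in a better coarse‑scale covering estimate. As written, your ``avoiding this requires exploiting\dots'' paragraph would, if it held, prove something stronger than needed, but the geometric fact it relies on is not available; you should replace it by the fine‑scale factor and then verify that the contraction bound really does beat it together with the $\#S_j$ lower bound.
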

\begin{proof}
    Since $(a_{\btau})_*\overline{\nu_j}$ is the normalized counting measure on the set $\set{a_{\btau}u(\bxi)\Gamma: \bxi\in S_j}$, we have
    $$\big((a_{\btau})_*\overline{\nu_j}\big)\big([y]_{\cQ^{(m_j')}}\big)=\frac{1}{\# S_j}\#\set{\bxi\in S_j: a_{\btau}u(\bxi)\Gamma\in [y]_{\cQ^{(m_j')}}}.$$
    
    Let us define $F:[-\frac{1}{2},\frac{1}{2}]^{d-1}\to\bR$ by $F(\bxi)=\widetilde{\psi}_{m_j',2^{-l_\lambda},l_\lambda}(a_{\btau}u(\bxi)\Gamma;y)$. Observe that
    \eqlabel{eq:Fcounting}{\#\set{\bxi\in S_j: a_{\btau}u(\bxi)\Gamma\in [y]_{\cQ^{(m_j')}}}\leq \sum_{\bxi\in S_j}F(\bxi)}
    by Lemma \ref{eq:relationpsi}. By a similar argument to (2) of Lemma \ref{psisupp:traj} we obtain
    \eqlabel{eq:Fupperbound1}{\sum_{\bxi\in S_j}F(\bxi) \leq \useconE{102}^{d-1}2^{2(d-1)l_\lambda}\kappa(a_{\btau}\Gamma)^2e^{2(d-1)m_j't}\int_{[-\frac{1}{2},\frac{1}{2}]^{d-1}}\widetilde{\psi}_{m_j',2^{-l_\lambda+1},l_\lambda-1}(a_{\btau}u(\bxi)\Gamma;y)d\bxi.}

    Let $\set{\bxi_1,\ldots,\bxi_M}$ be a maximal $\frac{1}{2}e^{-2(1-\sig)m_j't}$-separated set so that $[-\frac{1}{2},\frac{1}{2}]^{d-1}$ is covered by $B(\bxi_k,\frac{1}{2}e^{-2(1-\sig)m_j't})$, and $M\ll e^{2(d-1)\sig m_j't}$. Then we have
    \eqlabel{eq:ksplit}{\begin{aligned}
        \int_{[-\frac{1}{2},\frac{1}{2}]^{d-1}}\widetilde{\psi}_{m_j',2^{-l_\lambda+1},l_\lambda-1}& (a_{\btau}u(\bxi)\Gamma;y)d\bxi\\&\leq \sum_{k=1}^{M}\int_{[-\frac{1}{2},\frac{1}{2}]^{d-1}}\widetilde{\psi}_{m_j',2^{-l_\lambda+1},l_\lambda-1}(u(\bxi)a_{\btau}u(\bxi_k)\Gamma;y)d\bxi.
    \end{aligned}}
    
    For each $1\leq k\leq M$ we apply Proposition \ref{nontrivbdd:traj} with $N=\frac{m_j'}{2^{l_\lambda-1}}$, $l=l_\lambda-1$, $\eta=2^{-l_\lambda+1}$, and $x=a_{\btau}u(\bxi_k)\Gamma$. Then we have
    \eqlabel{eq:Fupperbound2}{\begin{aligned}
        \int_{[-\frac{1}{2},\frac{1}{2}]^{d-1}}\widetilde{\psi}_{m_j',2^{-l_\lambda+1},l_\lambda-1}&(u(\bxi)a_{\btau}u(\bxi_k)\Gamma;y)d\bxi\\&\leq  \useconE{107}\kappa(a_{\btau}u(\bxi_k)\Gamma)^{8l_\lambda}e^{-\lambda^4(d-1)m_j't}\prod_{i=1}^{d-1}\widetilde{\alpha}_i(a_{\btau}u(\bxi_k)\Gamma)
    \end{aligned}}
    for some constant \newconE{107}$\useconE{107}>0$. Here we are using the fact from our choice of $l_\lambda$ that 
    $$(2^{l_\lambda-1}-1)N=(1-2^{-\lambda+1})m_j'\geq \lambda m_j' .$$  Combining \eqref{eq:Fcounting}, \eqref{eq:Fupperbound1}, \eqref{eq:ksplit}, \eqref{eq:Fupperbound2}, and the estimates from Lemma \ref{eq:kappaalphabound} altogether we obtain
    \eq{\begin{aligned}
        \#\set{\bxi\in S_j: a_{\btau}u(\bxi)\Gamma\in [y]_{\cQ^{(m_j')}}}&\leq \useconE{108}e^{(2-\lambda^4+20dl_\lambda \sig)(d-1)m_j't}\\&\leq \useconE{108}e^{(2-\lambda^4+20dl_\lambda \sig)(1-\sig)(d-1)m_jt}
    \end{aligned}}
    for some constant \newconE{108}$\useconE{108}>0$. Recall that $\sig$ is chosen so that 
    $$(2-\lambda^4+20dl_\lambda \sig)(1-\sig)>2-\lambda^5,$$
    and $\# S_j\gg e^{(d-1)(2-\lambda^6) m_j}$. It follows that
    $$\big((a_{\btau})_*\overline{\nu_j}\big)\big([y]_{\cQ^{(m_j')}}\big)\leq \frac{1}{\# S_j}\sum_{\bxi\in S_j}F(\bxi)\ll \frac{e^{(2-\lambda^5)(d-1)m_jt}}{e^{(2-\lambda^6)(d-1)m_jt}}=e^{-(\lambda^5-\lambda^6)(d-1)m_jt},$$
    where the implied constant depends only on $d$ and $\lambda$.    
\end{proof}

We are now ready to show Proposition \ref{measureproperties}.
\begin{proof}[Proof of Proposition \ref{measureproperties}]
(1) Since $(a_{\btau})_*\mu_j-\mu_j$ goes to zero measure as $j\to\infty$, $\mu$ is $a_{\btau}$-invariant for any $\btau\in(t\bZ)^{d-1}$.

(2) For any $\bxi\in S_j$ we have
$$\frac{1}{m_j^{d-1}}\displaystyle\sum_{\btau\in\{1,\ldots,m_j\}^{d-1}}\del_{a_{\btau}\widehat{x}_{\bxi,\btheta(\bxi)}}(K)\ge \del$$
 since $\widehat{x}_{\bxi,\btheta(\bxi)}\in \cZ_{K,\del,m_j}$. It follows that
$$\mu_j(K)=\frac{1}{\# S_j}\displaystyle\sum_{\bxi\in S_j}\frac{1}{m_j^{d-1}}\displaystyle\sum_{\btau\in\{1,\ldots,m_j\}^{d-1}}\del_{a_{\btau}\widehat{x}_{\bxi,\btheta(\bxi)}}(K)\ge \del$$
for any $j\in\bN$. Thus, $\mu(\widehat{X})\ge \del$.

(3) We show that $\mu(\widehat{X}\setminus \cL_\eps)=0$. Recall that  $a\widehat{x}_{\bxi,\btheta(\bxi)}\in \cL_\eps$ for any $\bxi\in S_j$ and $a\in A^+_{\ge T}$. It follows that
$$\frac{1}{m_j^{d-1}}\displaystyle\sum_{\btau\in\{1,\ldots,m_j\}^{d-1}}\del_{a_{\btau}\widehat{x}_{\bxi,\btheta(\bxi)}}(\widehat{X}\setminus \cL_\eps)\leq \frac{dTm_j}{m_j^2}=\frac{dT}{m_j}$$
for any $\bxi\in S_j$. It implies that
$$\mu_j(\widehat{X}\setminus \cL_\eps)=\frac{1}{\# S_j}\displaystyle\sum_{\bxi\in S_j}\frac{1}{m_j^{d-1}}\displaystyle\sum_{\btau\in\{1,\ldots,m_j\}^{d-1}}\del_{a_{\btau}\widehat{x}_{\bxi,\btheta(\bxi)}}(\widehat{X}\setminus \cL_\eps)\leq \frac{dT}{m_j}$$
for any $j\in\bN$, hence $\mu(\widehat{X}\setminus \cL_\eps)=0$.

(4) For $\btau\in \cD_j^*$ and $1\leq i\leq d-1$ we define
$$\mu_{j,\btau,i}:= \frac{1}{\lfloor(1-\sig) m_j\rfloor} \displaystyle\sum_{k=1}^{\lfloor(1-\sig) m_j\rfloor} (a_{\btau+kt\be_i})_*\nu_j.$$
Then for any $1\leq i\leq d-1$
\eqlabel{eq:mujdecomposition}{\sig^{d-1}\mu_j\leq \frac{1}{|\cD_j^*|}\displaystyle\sum_{\btau\in \cD_j^*} \mu_{j,\btau,i}.}

By Proposition \ref{partitioncounting} we have
\eqlabel{eq:atomdecay}{\big((a_{\btau})_*\overline{\nu_j}\big)\big([y]_{\cQ^{(m_j')}}\big)\leq \useconE{109}e^{-(\lambda^5-\lambda^6)(d-1)m_jt}.}
We refer the reader to \cite[Chapter 1 \& 2]{ELW} for the definitions and basic properties of the entropy. For all sufficiently large $j\in\bN$, \eqref{eq:atomdecay} impies that
\eqlabel{eq:StaticEntropyEstimate}{\begin{aligned}
    \displaystyle\sum_{i=1}^{d-1}H_{( a_{\btau})_*\overline{\nu_j}}\big(\cQ^{(m_j',i)}\big)&\geq  H_{( a_{\btau})_*\overline{\nu_j}}\left(\bigvee_{i=1}^{d-1}\cQ^{(m_j',i)}\right)\\&\geq (\lambda^5-\lambda^6)(d-1)m_j t-\log \useconE{109}\geq (\lambda^5-\lambda^6)m_jt.
\end{aligned}}

For $q\ge 1$ and large $j$, we write the Euclidean division of $m_j'-1$ by $q$
$$ m_j'-1=qm'+r, \textrm{ with }r\in\set{0,\ldots,q-1}.$$
By subadditivity of the entropy with respect to the partition,
\eq{H_{( a_{\btau})_*\overline{\nu_j}}\big(\cQ^{(m_j',i)}\big)\leq \sum_{k=1}^{m'}H_{(a_{\btau+(kq+r)t\be_i})_*\overline{\nu_j}}\big(\cQ^{(q,i)}\big)+2q\log |\cQ|}
for each $r\in \set{0,\ldots,q-1}$. Summing those inequalities for $r=0,\ldots,q-1$, and using the concavity of entropy with respect to measure, we get
\eq{\begin{aligned}
    qH_{( a_{\btau})_*\overline{\nu_j}}\big(\cQ^{(m_j',i)}\big)&\leq \sum_{k=0}^{m_j'-1}H_{(a_{\btau+kt\be_i})_*\overline{\nu_j}}\big(\cQ^{(q,i)}\big)+2q^2\log |\cQ|\\&\leq m_j'H_{\pi_*\mu_{j,\btau,i}}\big(\cQ^{(q,i)}\big)+2q^2\log |\cQ|.
\end{aligned}}
It follows from \eqref{eq:StaticEntropyEstimate} that for any $1\leq i\leq d-1$ and $\btau\in \cD_j^*$
\eq{\begin{aligned}
    \frac{1}{q}H_{\pi_*\mu_{j,\btau,i}}\big(\cQ^{(q,i)}\big)&\geq \frac{1}{m_j'}H_{(a_{\btau})_*\overline{\nu_j}}\big(\cQ^{(m_j',i)}\big)-\frac{2q\log|\cP|}{m_j'}\\&\geq (\lambda^5-\lambda^6)t-\frac{2q\log|\cP|}{m_j'}.
\end{aligned}}
Using again the concavity of entropy with respect to measure and \eqref{eq:mujdecomposition},
\eq{\begin{aligned}
    \sum_{i=1}^{d-1}\frac{1}{q}H_{\overline{\mu_j}}(\cQ^{(q,i)})&\geq \frac{\sig^{d-1}}{|\cD_j^*|}\sum_{i=1}^{d-1}\sum_{\btau\in\cD_j^*}\frac{1}{q}H_{\pi_*\mu_{j,\btau,i}}(\cQ^{(q,i)})+o(1)\\&\geq \sig^{d-1}\left((\lambda^5-\lambda^6)t-\frac{2q\log|\cP|}{m_j'}\right).
\end{aligned}}
In particular, by taking $j\to\infty$ and $q\to\infty$ we obtain the desired inequality
$$\sum_{i=1}^{d-1}h_{\overline{\mu}}(a_{t\be_i})\geq \sig^{d-1}(\lambda^5-\lambda^6)t>0.$$
\end{proof}

\def\cprime{$'$} \def\cprime{$'$} \def\cprime{$'$}
\providecommand{\bysame}{\leavevmode\hbox to3em{\hrulefill}\thinspace}
\providecommand{\MR}{\relax\ifhmode\unskip\space\fi MR }
% \MRhref is called by the amsart/book/proc definition of \MR.
\providecommand{\MRhref}[2]{%
  \href{http://www.ams.org/mathscinet-getitem?mr=#1}{#2}
}

\end{document}